\newtheorem{theorem}{Theorem}[section]
\newtheorem{observation}[theorem]{Observation}
\newtheorem{lemma}[theorem]{Lemma}
\newtheorem{proposition}[theorem]{Proposition}
\newtheorem{corollary}[theorem]{Corollary}
\numberwithin{equation}{section}
\theoremstyle{definition}
\newtheorem{definition}[theorem]{Definition}
\newtheorem{example}[theorem]{Example}
\newtheorem{question}[theorem]{Question}
\theoremstyle{remark}
\newtheorem{remark}[theorem]{Remark}
\newcommand\R{\mathbb{R}}
\newcommand\C{\mathbb{C}}
\newcommand\N{\mathbb{N}}
\newcommand\h{\mathbb{H}}
\newcommand{\cB}{\mathcal{B}}
\newcommand{\cT}{\mathcal{T}}
\newcommand{\cP}{\mathcal{P}}
\DeclareMathOperator{\id}{id}
\DeclareMathOperator{\Ran}{Ran}
\DeclareMathOperator{\im}{Im}
\DeclareMathOperator{\Hol}{Hol}
\DeclareMathOperator{\Tree}{Tree}
\DeclareMathOperator{\Perm}{Perm}
\DeclareMathOperator{\br}{br}
\DeclareMathOperator{\free}{free}
\DeclareMathOperator{\mono}{mono}
\DeclareMathOperator{\orth}{orth}
\DeclareMathOperator{\bool}{bool}
\DeclareMathOperator{\sub}{sub}
\DeclarePairedDelimiter{\norm}{\lVert}{\rVert}
\DeclareFontFamily{U}{mathb}{\hyphenchar\font45}
\DeclareFontShape{U}{mathb}{m}{n}{
	<5> <6> <7> <8> <9> <10> gen * mathb
	<10.95> mathb10 <12> <14.4> <17.28> <20.74> <24.88> mathb12
}{}
\DeclareSymbolFont{mathb}{U}{mathb}{m}{n}
\DeclareMathSymbol{\boxright}{3}{mathb}{'151}
\begin{document}
	
	\title{Tree convolution for probability distributions with unbounded support}
	
	\author{Ethan Davis}
	\address{Department of Mathematics, University of California, Los Angeles, Los Angeles, CA 90095}
	\email{edavis21@ucla.edu}
	
	\author{David Jekel}
	\address{Department of Mathematics, University of California, San Diego, La Jolla, CA, 92093}
	\email{djekel@ucsd.edu}
	
	\author{Zhichao Wang}
	\address{Department of Mathematics, University of California, San Diego, La Jolla, CA, 92093}
	\email{zhw036@ucsd.edu}
	
	\subjclass{Primary: 46L53, Secondary: 46L54, 05C76, 60F05, 60E07}
	\keywords{non-commutative probability, free convolution, Bercovici-Pata bijection, Cauchy transform, domain of attraction}
	
	\begin{abstract}
		We develop the complex-analytic viewpoint on the tree convolutions studied by the second author and Weihua Liu in \cite{JekelLiu2020}, which generalize the free, boolean, monotone, and orthogonal convolutions.  In particular, for each rooted subtree $\cT$ of the $N$-regular tree (with vertices labeled by alternating strings), we define the convolution $\boxplus_{\cT}(\mu_1,\dots,\mu_N)$ for arbitrary probability measures $\mu_1$, \dots, $\mu_N$ on $\R$ using a certain fixed-point equation for the Cauchy transforms.  The convolution operations respect the operad structure of the tree operad from \cite{JekelLiu2020}.  We prove a general limit theorem for iterated $\cT$-free convolution similar to Bercovici and Pata's results in the free case \cite{BP1999}, and we deduce limit theorems for measures in the domain of attraction of each of the classical stable laws.
	\end{abstract}
	
	\thanks{Jekel was supported by NSF grant DMS-2002826.  The data for the figures was generated using Sage on Cocalc, and the pictures were created with TikZ.  We thank the referee for suggesting several corrections and improvements to exposition and carefully checking for typos.}
	
	\maketitle
	
	\section{Introduction} \label{sec:intro}
	
	In \cite{Voiculescu1985,Voiculescu1986}, Voiculescu introduced free independence, which provided a probabilistic viewpoint on free products of operator algebras.  Two other forms of non-commutative independence were studied in non-commutative probability theory around the year 2000: boolean independence in \cite{SW1997} and monotone independence in \cite{Muraki2000,Muraki2001}.  Besides classical independence, these are the only types of independence that provide an associative natural product operation on non-commutative probability spaces \cite{Speicher1997,BGS2002,Muraki2003,Muraki2013}.  However, there are many other types of independence broadly defined.  For instance, Lenczewski defined $m$-free independences intermediate between free and boolean independence \cite{Lenczewski1998}.  One can combine several algebras using a mixture of classical and free independence \cite{Mlotkowski2004,SW2016}, boolean and monotone independence \cite{Wysoczanski2010}, or boolean and free independence \cite{KW2013}.  The notions of $c$-free \cite{BLS1996,ABT2019} and $c$-monotone \cite{Hasebe2011,Lenczewski2019} independence are another way of combining free or monotone independence with boolean independence, using pairs of states.
	
	Weihua Liu and the second author defined a general family of non-commutative independences associated to rooted trees whose vertices are labeled by alternating strings \cite{JekelLiu2020}, which would serve as a general framework for studying various convolution operations and the relationships between them, such as the relation between free, monotone, and subordination convolution in \cite{Lenczewski2007,Lenczewski2008}.  The independences defined by trees include free, monotone, and boolean independence; $m$-free independence; mixtures of free, boolean, and monotone independence.  The introduction of \cite{JekelLiu2020} noted three viewpoints on non-commutative independence (1) operator models, (2) combinatorics of moments, and (3) complex analysis of Cauchy transforms, of which that paper focused on only the first two.  Our present goal is to develop the complex-analytic viewpoint.
	
	To set the stage, let us recall some of the main ideas of \cite{JekelLiu2020}.  Let $\cT_{N,\free}$ be the tree whose vertices are alternating strings on the alphabet $[N] = \{1,\dots,N\}$ (strings where consecutive letters are distinct) and where two strings are adjacent precisely when one is obtained by appending one letter to the left of the other.  Let $\Tree(N)$ be the set of rooted subtrees of $\cT_{N,\free}$, where the root is the empty string.  Each $\cT \in \Tree(N)$ describes a way of combining $N$ Hilbert spaces with unit vectors $(H_1,\xi_1)$, \dots, $(H_N,\xi_N)$ into a new Hilbert space $(H,\xi)$ akin to the free product of pointed Hilbert spaces, which is called the $\cT$-free product of pointed Hilbert spaces \cite[\S 3]{JekelLiu2020}.  This in turn leads to a notion of $\cT$-free convolution:  Suppose $X_j$ is a bounded operator on $H_j$ whose spectral measure with respect to $\xi_j$ is $\mu_j$.  If $\tilde{X}_1$, \dots, $\tilde{X}_N$ are the corresponding operators on the product space $(H,\xi)$, then the convolution $\boxplus_{\cT}(\mu_1,\dots,\mu_N)$ is the spectral measure of $\tilde{X}_1 + \dots + \tilde{X}_N$ with respect to $\xi$.  (In fact, all of this was done in \cite{JekelLiu2020} in the more general setting where Hilbert spaces are replaced by $\cB$-$\cB$-correspondences for some $\mathrm{C}^*$-algebra $\cB$, and $\mu_j$ is a $\cB$-valued law.  But at present we are only concerned with the case $\cB = \C$ where the objects reduce to Hilbert spaces and compactly supported probability measures on $\R$.)
	
	In order to relate various convolution operations, the family $(\Tree(N))_{N \in \N}$ was equipped with the structure of a topological symmetric operad, and the convolution operations were shown to respect this operad structure \cite[\S 5]{JekelLiu2020}.  In particular, for $\cT \in \Tree(k)$ and $\cT_1 \in \Tree(n_1)$, \dots, $\cT_k \in \Tree(n_k)$, there is a well-defined composition $\cT(\cT_1,\dots,\cT_k) \in \Tree(n_1 + \dots + n_j)$ which satisfies
	\begin{multline*}
		\boxplus_{\cT(\cT_1,\dots,\cT_k)}(\mu_{1,1},\dots,\mu_{1,n_1},\dots \dots, \mu_{k,1},\dots,\mu_{k,n_k}) \\
		= \boxplus_{\cT}(\boxplus_{\cT_1}(\mu_{1,1},\dots,\mu_{1,n_1}),\dots,\boxplus_{\cT_k}(\mu_{k,1},\dots,\mu_{k,n_k}))
	\end{multline*}
	where $\mu_{i,j}$ is a compactly supported probability measure on $\R$.  Many known convolution identities can be proved in this framework \cite[\S 6]{JekelLiu2020}.
	
	As a consequence, \cite[Proposition 6.8]{JekelLiu2020} gave a decomposition of $\cT$-free convolution into boolean and orthogonal convolutions, which generalizes the decompositions of additive free convolution in \cite{Lenczewski2007}.  Let $\br_j(\cT) = \{s \in \cT_{N,\free}: sj \in \cT\}$, where $js$ denotes the string obtained by appending $j$ to the start of the string $s$.  Let $\uplus$ denote the boolean convolution and $\vdash$ the orthogonal convolution (see Examples \ref{ex:boolean} and \ref{ex:orthogonal} below).  Then
	\begin{equation} \label{eq:decomposition}
		\boxplus_{\cT}(\mu_1,\dots,\mu_N) = \biguplus_{j \in [N] \cap \cT} [\mu_j \vdash \boxplus_{\br_j(\cT)}(\mu_1,\dots,\mu_N)]
	\end{equation}
	for compactly supported probability measures on $\R$.  This relation is convenient for the complex-analytic viewpoint because the boolean and orthogonal convolutions have simple expressions in terms of the $K$-transform (an analytic function related to the Cauchy transform).
	
	In this paper, we will use \eqref{eq:decomposition} to \emph{define} the $\cT$-free convolution for arbitrary probability measures on $\R$.  More precisely, in Theorem \ref{thm:complexconvolution}, we will show that there is a unique family of operations $\boxplus_{\cT}$ on probability measures that satisfies \eqref{eq:decomposition} and depends continuously on $\cT$ (with respect to local convergence with respect to the root vertex).  The convolution $\boxplus_{\cT}(\mu_1,\dots,\mu_N)$ also depends continuously on $\mu_1$, \dots, $\mu_N$ and agrees in the compactly supported case with the prior definition from \cite{JekelLiu2020}.  Because \eqref{eq:decomposition} so directly relates with the $K$-transforms of measures, we can give self-contained proofs of the basic properties of $\cT$-free convolution without relying on operator models or on approximation of general probability measures with compactly supported ones, making the proofs in this paper essentially independent from \cite{JekelLiu2020}.  In particular, in \S \ref{sec:operadstuff}, we show directly from Theorem \ref{thm:complexconvolution} that the convolution operation on arbitrary measures respects the operad structure just as in the compactly supported case.
	
	In \S \ref{sec:limit1} and \S \ref{sec:limit2}, we discuss limit theorems for $\cT$-free independence.  Often when a new type of additive convolution is introduced, a central limit theorem and Poisson limit theorem are proved in the same paper or soon thereafter, as in e.g.\ \cite{Voiculescu1985,BLS1996,SW1997,FL1999,Muraki2001,Wysoczanski2010,KW2013,JekelLiu2020}. In classical probability, more general limit theorems for additive convolution are closely related to the study of infinitely divisible and stable distributions, as well as the L\'evy-Khintchine formula that classifies infinitely divisible distributions $\mu$ in terms of some other measure $\sigma$ and real number $\gamma$; see \cite{GK1954}.  Similar results have been obtained for non-commutative independences, both in the scalar-valued and the operator-valued settings; see for the free case \cite{Voiculescu1986,BV1992,Biane1998,Speicher1998,BP1999,PV2013,ABFN2013}, for the boolean case \cite{SW1997,PV2013,ABFN2013}, for the monotone case \cite{Muraki2001,Belinschi2006,Hasebe2010a,Hasebe2010b,HS2014,AW2014,AW2016,Jekel2020}, for the $c$-free case \cite{Krystek2007,BPV2013}.  One of the most influential works on the topic was Bercovici and Pata's paper \cite{BP1999}.  They showed that if $\mu_\ell$ is a sequence of measures and $k_\ell$ is a sequence of natural numbers tending to $\infty$, then $\mu_\ell^{*k_\ell}$ converges to a measure $\nu_*$ if and only if $\mu_\ell^{\boxplus k_\ell}$ converges to a measure $\nu_\boxplus$ if and only if $\mu_\ell^{\uplus k_\ell}$ converges to a measure $\nu_\uplus$, and the correspondence between $\nu_*$, $\nu_\boxplus$, and $\nu_\uplus$ is described in the terms of the respective L\'evy-Khintchine formulas.  From this general statement, they deduced free and boolean analogs of all classical limit theorems for additive convolution, and in particular limit theorems for the domains of attraction corresponding to each classical stable distribution.
	
	For a general choice of a tree $\cT \in \Tree(N)$, it is unclear how to define the $k$th convolution power for arbitrary $k$, as discussed in \cite[\S 8.1]{JekelLiu2020}.  However, we can define a $k$-fold composition of $\cT$ with itself, denoted $\cT^{\circ k}$; the corresponding convolution is an $N^k$-ary operation.  Let $n(\cT)$ denote the number of neighbors of the root vertex.  When $n(\cT) > 1$, \cite[\S 9]{JekelLiu2020} classified infinitely divisible laws in the $\cB$-valued setting under certain boundedness assumptions.  In this paper, in Theorem \ref{thm:BP1}, we obtain an analog of one direction of Bercovici and Pata's main result for arbitrary probability measures on $\R$.  If $\mu_\ell^{\uplus n(\cT)^{k_\ell}} \to \nu$, then $\boxplus_{\cT^{\circ k_\ell}}(\mu_\ell,\dots,\mu_\ell)$ converges to a measure $\mathbb{BP}(\cT,\nu)$ (Theorem \ref{thm:BP1}).  We do not know whether the converse implication holds.  Nonetheless, the theorem already contains the ``more practical'' implication, where the hypothesis is the relatively easy-to-check condition about boolean convolution and the conclusion describes convergence for general trees $\cT$ (and in fact gives a uniform rate over convergence over all $\cT \in \Tree(N)$).  In particular, Theorem \ref{thm:BP1} allows us to deduce limit theorems corresponding to each of the classical domains of attraction in \S \ref{sec:limit2} using similar techniques as in \cite[\S 5]{BP1999}.  We sketch some of the many open questions about $\cT$-free convolutions and limit theorems in \S \ref{sec:questions}.
	
	The paper is organized as follows:  In \S \ref{sec:Cauchy}, we explain background material on probability measures on $\R$ on their Cauchy transforms.  In \S \ref{sec:trees}, we review the operad of rooted trees from \cite{JekelLiu2020} and establish more of its basic properties.  In \S \ref{sec:convolution}, we define the $\cT$-free convolution of arbitrary probability measures on $\R$.  In \S \ref{sec:operadstuff}, we show that the convolution operations respect the operad structure.  In \S \ref{sec:limit1}, we prove the general limit theorem.  In \S \ref{sec:limit2}, we deduce as special cases limit theorems for each of the domains of attraction from classical probability theory.  In \S \ref{sec:questions}, we propose questions for future research.
	
	\section{Cauchy transforms of probability measures} \label{sec:Cauchy}
	
	$\mathcal{M}(\R)$ denotes the space of finite positive Borel measures on $\R$, $\mathcal{P}(\R)$ denotes the space of probability measures, equipped with the vague topology (that is, the weak-$*$ topology when viewed inside the dual of $C_0(\R)$; for background, see for instance \cite[\S 7.3]{Folland1999}).  Recall that $\mathcal{P}(\R)$ is metrizable using the \emph{L{\'e}vy distance}
	\[
	d_L(\mu,\nu) := \inf \Bigl\{\epsilon > 0: \mu((-\infty,x-\epsilon)) - \epsilon \leq \nu((-\infty,x)) \leq \mu(-\infty,x+\epsilon)) + \epsilon) \text{ for all } x \in \R \Bigr\}.
	\]
	Furthermore, $\mathcal{P}(\R)$ is a complete metric space with respect to $d_L$.  For proof, see for instance \cite[Theorem 6.8]{Billingsley1999}.
	
	\begin{definition}
		For a finite measure $\mu$ on $\R$, the \emph{Cauchy-Stieltjes transform} is given by
		\[
		G_\mu(z) = \int_{\R} \frac{1}{z - t} \,d\mu(t).
		\]
		The \emph{$F$-transform} is given by
		\[
		F_\mu(z) = 1 / G_\mu(z),
		\]
		and we also define
		\[
		K_\mu(z) = z - F_\mu(z).
		\]
		These functions are defined for all $z$ in $\C$ minus the closed support of $\mu$, but we usually view them as functions defined on the upper half-plane
		\[
		\h := \{z \in \C: \im(z) > 0\}.
		\]
	\end{definition}
	
	Let $\Hol(\h,-\overline{\h})$ be the space of holomorphic functions $\h \to -\overline{\h}$.  Then $\Hol(\h,-\overline{\h})$ is a normal family if we view the target space as sitting inside the Riemann sphere, hence the topology of pointwise convergence on $\Hol(\h,-\overline{\h})$ agrees with the (metrizable) topology of local uniform convergence.
	
	\begin{lemma} \label{lem:transformhomeomorphism}
		For each $m > 0$, the map
		\[
		\{\mu \in \mathcal{M}(\R): \norm{\mu} \leq m \} \to \Hol(\h,-\overline{\h}): \mu \mapsto G_\mu
		\]
		is a homeomorphism onto its image, where we use the weak-$*$ topology on $\mathcal{M}(\R)$ and the topology of local uniform convergence on $\Hol(\h,-\h)$.
	\end{lemma}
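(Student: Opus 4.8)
The plan is to realize the domain as a compact metrizable space, check that $\mu \mapsto G_\mu$ is a continuous injection, and then invoke the standard fact that a continuous injection from a compact space into a Hausdorff space is automatically a homeomorphism onto its image. For well-definedness and continuity, first note that for $z = x + iy \in \h$ and $t \in \R$ we have $\im \frac{1}{z-t} = -y/\abs{z-t}^2 \le 0$ (writing $\abs{\cdot}$ for the modulus), so $G_\mu(z)$ lies in the closed lower half-plane $-\overline{\h}$, and hence the map indeed takes values in $\Hol(\h,-\overline{\h})$. Fixing $z \in \h$, the function $t \mapsto (z-t)^{-1}$ is continuous on $\R$ and vanishes at infinity, so it lies in $C_0(\R)$; therefore vague (weak-$*$) convergence $\mu_n \to \mu$ forces $G_{\mu_n}(z) \to G_\mu(z)$ for every $z$, i.e.\ pointwise convergence of the transforms. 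By the remark preceding the lemma, pointwise convergence in $\Hol(\h,-\overline{\h})$ coincides with local uniform convergence, so $\mu \mapsto G_\mu$ is continuous.

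Injectivity follows from the Stieltjes inversion formula. Applying Fubini to $-\tfrac1\pi \im G_\mu(x+i\epsilon)$, one recognizes the measures $-\tfrac1\pi \im G_\mu(x+i\epsilon)\,dx$ as the convolutions of $\mu$ with the approximate identity $t \mapsto \tfrac1\pi\, \epsilon/((x-t)^2+\epsilon^2)$, so they converge vaguely to $\mu$ as $\epsilon \downarrow 0$. Thus $G_\mu$ determines $\mu$, and the map is injective. Only this qualitative consequence is needed, so a short derivation suffices.

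It remains to produce compactness and conclude. The set $\{\mu \in \mathcal{M}(\R): \norm{\mu} \le m\}$ lies in the closed ball of radius $m$ in the space of finite signed Radon measures, which is the dual of the separable Banach space $C_0(\R)$; by Banach--Alaoglu this ball is weak-$*$ compact, and it is metrizable by separability of $C_0(\R)$. Within it, the positive measures of total mass at most $m$ form a weak-$*$ closed subset: positivity passes to vague limits by testing against nonnegative elements of $C_0(\R)$, and the constraint $\norm{\mu} \le m$ survives because total mass is weak-$*$ lower semicontinuous. Hence the domain is compact and metrizable. Since $\Hol(\h,-\overline{\h})$ is metrizable and in particular Hausdorff, the continuous injection $\mu \mapsto G_\mu$ sends closed (hence compact) subsets to compact, hence closed, subsets, so it is a homeomorphism onto its image. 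I expect the one genuinely delicate point to be this closedness of the domain: it is essential that the constraint is the inequality $\norm{\mu} \le m$ rather than an equality, so that mass escaping to infinity yields a limit of possibly smaller---but still admissible---mass; by contrast, the continuity and injectivity steps are routine given the normal-family remark and the Stieltjes inversion formula.
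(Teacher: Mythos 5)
Your proposal is correct, but it takes a genuinely different route from the paper's. The paper treats the lemma as folklore and sketches a direct two-way equivalence: $\mu_n \to \mu$ weak-$*$ if and only if $G_{\mu_n} \to G_\mu$ pointwise, where the nontrivial implication (transforms converge $\Rightarrow$ measures converge) rests on the fact that the linear span of the kernels $\phi_z(t) = 1/(z-t)$, $z \in \h$, is dense in $C_0(\R)$ --- via Stone--Weierstrass and the identity $\phi_z(t)\phi_w(t) = (\phi_z(t)-\phi_w(t))/(z-w)$ --- combined with the uniform bound $\norm{\mu_n} \leq m$ to upgrade convergence on the dense span to convergence against every element of $C_0(\R)$. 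You prove only the easy continuity direction and then substitute two soft ingredients for the density argument: injectivity via Stieltjes inversion (smoothing by the Poisson kernel and letting $\epsilon \downarrow 0$), and compactness of the domain via Banach--Alaoglu together with weak-$*$ closedness of positivity --- where, as you correctly single out, it matters that the constraint is the inequality $\norm{\mu} \leq m$ rather than an equality of total mass, so escape of mass cannot destroy closedness --- after which the standard fact that a continuous injection from a compact space into a Hausdorff (here metrizable) space is an embedding finishes the proof. Both arguments spend the mass bound exactly once, yours in Banach--Alaoglu and the paper's in the dense-subspace approximation, and both are complete. As for what each buys: the paper's route yields the directly reusable criterion that pointwise convergence of Cauchy transforms is equivalent to weak-$*$ convergence on mass-bounded sets (the form in which the lemma is actually used later, e.g.\ in the proof of Theorem \ref{thm:complexconvolution}), whereas your route avoids Stone--Weierstrass entirely and is shorter given standard functional analysis, at the price of giving no explicit handle on the continuity of the inverse map. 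Two cosmetic points only: holomorphy of $G_\mu$ (differentiation under the integral sign) deserves a word, and your ``approximate identity'' should be written as a function of $x-t$, i.e.\ the Poisson kernel in the convolution variable; neither affects correctness.
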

	
	This lemma is well-known as folklore.  In order to show that $\mu_n \to \mu$ if and only if $G_{\mu_n} \to G_\mu$ pointwise, one can use the fact the functions of the form $\phi_z(t) = 1/(z - t)$ span a dense subspace of $C_0(\R)$, which in turn follows from the Stone-Weierstrass theorem and the fact that $\phi_z(t) \phi_w(t) = (\phi_z(t) - \phi_w(t))/(z - w)$.
	
	Next, we recall the famous theorem of Nevanlinna \cite{Nevanlinna1922} that characterizes Cauchy transforms of probability measures as functions $G(z)$ such that $z G(z) \to 1$ as $z \to \infty$ non-tangentially in $\h$.  The version we state here comes from \cite{BV1992}.  For $a > 0$, let $\Gamma_a \subseteq \overline{\h}$ be the cone
	\[
	\Gamma_a := \{z: \im z \geq a |z| \}.
	\]
	We also define for $a, b, c > 0$, the regions
	\[
	\Gamma_{a,b} := \{z: \im z \geq \max(a|z|, b) \}.
	\]
	
	\begin{definition}
		Let $Y$ be a topological space, and let $F: \h \to Y$.  We say that \emph{$F(z) \to L$ as $z \to \infty$ non-tangentially} if for every $a > 0$,
		\[
		\lim_{\substack{z \to \infty \\ z \in \Gamma_a}} F(z) = y,
		\]
		or equivalently, for every $a > 0$ and every neighborhood $U$ of $y$, there exists $b > 0$ such that $F(z) \in U$ for every $z \in \Gamma_{a,b}$.
	\end{definition}
	
	\begin{theorem}[Nevanlinna]
		Let $G: \h \to \C$ and $m > 0$.  The following are equivalent:	
		\begin{enumerate}[(1)]
			\item $G$ is the Cauchy transform of some measure of total mass $m$.
			\item $G$ maps $\h$ into $-\h$ and $z G(z) \to m$ non-tangentially as $z \to \infty$.
			\item $G$ maps $\h$ into $-\h$ and $\lim_{y \to \infty} iy G(iy) = m$ over $y > 0$.
		\end{enumerate}
	\end{theorem}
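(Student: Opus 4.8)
The plan is to establish the cycle $(1)\Rightarrow(2)\Rightarrow(3)\Rightarrow(1)$, with the last implication carrying essentially all of the content.

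For $(1)\Rightarrow(2)$, I would assume $G = G_\mu$ with $\mu(\R)=m$. Since $\im\frac{1}{z-t} = -\im(z)/|z-t|^2 < 0$ for $z\in\h$ and $t\in\R$, integrating shows $\im G_\mu(z) < 0$, so $G$ maps $\h$ into $-\h$. For the limit, write $zG_\mu(z) = \int_\R \frac{z}{z-t}\,d\mu(t)$; on the cone $\Gamma_a$ one has $|z-t|\ge\im z\ge a|z|$, hence $|z/(z-t)|\le 1/a$ uniformly, while the integrand tends to $1$ as $|z|\to\infty$. Dominated convergence against the finite measure $\mu$ then yields $zG_\mu(z)\to m$. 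The implication $(2)\Rightarrow(3)$ is immediate, since the positive imaginary axis is exactly the cone $\Gamma_1$, so $(3)$ is the restriction of $(2)$ to this ray.

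The substance is $(3)\Rightarrow(1)$. As $G$ maps $\h$ into $-\h$, the function $f:=-G$ maps $\h$ into $\h$, so it is a Pick (Nevanlinna) function and admits the Herglotz representation
\[
f(z) = \alpha + \beta z + \int_\R \frac{1+tz}{t-z}\,d\rho(t),
\]
with $\alpha\in\R$, $\beta\ge 0$, and $\rho$ a finite positive Borel measure; establishing this representation (via the Poisson representation of the positive harmonic function $\im f$) is the main analytic input, and I would cite it from the literature. The task is then to read off $\alpha,\beta,\rho$ from the hypothesis $iyf(iy)\to -m$. Setting $z=iy$ and separating real and imaginary parts gives
\[
\re\bigl(iyf(iy)\bigr) = -\beta y^2 - \int_\R \frac{y^2(1+t^2)}{t^2+y^2}\,d\rho(t),
\]
together with an analogous expression for the imaginary part. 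Since $\frac{y^2(1+t^2)}{t^2+y^2}$ increases to $1+t^2$, monotone convergence forces $\beta = 0$ and $\int_\R(1+t^2)\,d\rho = m < \infty$ in order for the real part to have the finite limit $-m$; in particular $\rho$ has finite second moment. For the imaginary part, using $\frac{ty^3}{t^2+y^2} = ty - \frac{t^3y}{t^2+y^2}$ together with the bounds $\frac{y|t|^k}{t^2+y^2}\le\tfrac12|t|^{k-1}$ (from $t^2+y^2\ge 2|t|y$), I expect to find $\im(iyf(iy)) = y\bigl(\alpha - \int_\R t\,d\rho\bigr) + o(1)$, so finiteness of the limit forces $\alpha = \int_\R t\,d\rho$.

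Substituting $\beta=0$ and $\alpha=\int_\R t\,d\rho$ back into the representation and using the algebraic identity $t + \frac{1+tz}{t-z} = \frac{1+t^2}{t-z}$ collapses the formula to
\[
-G(z) = f(z) = \int_\R \frac{1+t^2}{t-z}\,d\rho(t),
\]
whence $G(z) = \int_\R \frac{1}{z-t}\,d\mu(t)$ with $d\mu(t) := (1+t^2)\,d\rho(t)$, a positive measure of total mass $\int_\R(1+t^2)\,d\rho = m$. Thus $G=G_\mu$, completing the cycle. The main obstacle is the Herglotz representation theorem itself if a fully self-contained treatment is wanted; granting it, the only delicate point is the asymptotic bookkeeping along the imaginary axis, namely justifying the monotone- and dominated-convergence passages and isolating the linear-in-$y$ term whose vanishing encodes the value of $\alpha$.
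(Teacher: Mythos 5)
Your proposal is correct, but note that the paper does not prove this theorem at all: it states it and defers to the literature, citing Nevanlinna's original paper, \cite[\S 32.1, Theorem 3]{Lax2002} for $(1) \iff (3)$, and \cite[Proposition 5.1]{BV1992} for the exact formulation.  What you have written is essentially the standard argument contained in those references, and your bookkeeping checks out.  The implication $(1)\Rightarrow(2)$ via the uniform bound $|z/(z-t)| \leq 1/a$ on $\Gamma_a$ and dominated convergence is fine, and $(2)\Rightarrow(3)$ is indeed trivial since the positive imaginary axis lies in every cone $\Gamma_a$ with $a \leq 1$.  In $(3)\Rightarrow(1)$, your order of deductions is the right one: the real part of $iyf(iy)$ together with monotone convergence forces $\beta = 0$ and $\int_\R (1+t^2)\,d\rho = m$ \emph{first}, which in particular gives $\int_\R |t|\,d\rho \leq \frac{1}{2}\int_\R(1+t^2)\,d\rho < \infty$, so that the quantity $\alpha - \int_\R t\,d\rho$ appearing in your imaginary-part expansion is well defined; then the bound $\bigl|ty(1+t^2)/(t^2+y^2)\bigr| \leq \frac{1}{2}(1+t^2)$, coming from $t^2 + y^2 \geq 2|t|y$, dominates the error term by an integrable function and justifies $\im\bigl(iyf(iy)\bigr) = y\bigl(\alpha - \int_\R t\,d\rho\bigr) + o(1)$, forcing $\alpha = \int_\R t\,d\rho$ because the limit $-m$ is real.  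The algebraic identity $t + \frac{1+tz}{t-z} = \frac{1+t^2}{t-z}$ then collapses the representation to $G(z) = \int_\R (z-t)^{-1}\,d\mu(t)$ with $d\mu = (1+t^2)\,d\rho$ of total mass $m$, as claimed.  The one genuine dependency you correctly flag is the Herglotz--Nevanlinna representation itself (obtained from the Poisson representation of the positive harmonic function $\im f$); that is precisely the input the cited sources develop, so citing it is consistent with the level of self-containment the paper itself adopts.
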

	
	Besides Nevanlinna's original paper \cite{Nevanlinna1922}, the proof of (1) $\iff$ (3) can be found for instance in \cite[\S 32.1, Theorem 3]{Lax2002}, and the exact theorem here is in \cite[Proposition 5.1]{BV1992}.
	
	\begin{corollary}[{cf. \cite[Proposition 5.2]{BV1992}}] \label{cor:FandK}
		A function $F$ is the $F$-transform of some probability measure on $\R$ if and only if $F$ maps $\h$ into $\overline{\h}$ and $F(z) / z \to 1$ as $z \to \infty$ non-tangentially.  Similarly, $K$ is the $K$-transform of some probability measure on $\R$ if and only if $K$ maps $\h$ into $-\overline{\h}$ and $K(z) / z \to 0$ as $z \to \infty$ non-tangentially.
	\end{corollary}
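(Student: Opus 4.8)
The plan is to reduce both characterizations to Nevanlinna's theorem via the algebraic relations $F_\mu = 1/G_\mu$ and $K_\mu = z - F_\mu$, so that the analytic content is inherited from the Cauchy-transform case, with a single genuinely new estimate inserted for the $K$-transform.

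First I would treat the $F$-transform. For the forward direction, start with a probability measure $\mu$; by Nevanlinna (with $m=1$), $G_\mu$ maps $\h$ into $-\h$ and $zG_\mu(z) \to 1$ non-tangentially. Since $w \mapsto 1/w$ sends $-\h$ onto $\h$, the reciprocal $F_\mu = 1/G_\mu$ is a well-defined holomorphic map $\h \to \h \subseteq \overline{\h}$, and $F_\mu(z)/z = 1/(zG_\mu(z)) \to 1$ non-tangentially because inversion is continuous at the nonzero value $1$. For the converse, suppose $F$ is holomorphic with $F(\h) \subseteq \overline{\h}$ and $F(z)/z \to 1$ non-tangentially. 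The limit condition forces $F$ to be non-constant, so by the open mapping theorem its image is open and therefore contained in the interior $\h$; hence $G := 1/F$ maps $\h$ into $-\h$, and $zG(z) = 1/(F(z)/z) \to 1$ non-tangentially. Nevanlinna then produces a positive measure $\mu$ of total mass $1$ (a probability measure) with $G = G_\mu$, whence $F = 1/G = F_\mu$.

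The one genuinely new estimate is needed for the $K$-transform, namely that $\im F_\mu(z) \geq \im z$ on $\h$ (equivalently, that $K_\mu$ takes values in $-\overline{\h}$). I would prove this directly from the integral formula rather than from a Nevanlinna representation of Pick functions. Writing $\im G_\mu(z) = -(\im z)\int |z-t|^{-2}\,d\mu(t)$ and using $\im F_\mu = -\im G_\mu / |G_\mu|^2$, the claim $\im F_\mu(z) \geq \im z$ reduces to $\int |z-t|^{-2}\,d\mu(t) \geq |G_\mu(z)|^2$, which is exactly the Cauchy--Schwarz (Jensen) inequality $\bigl|\int (z-t)^{-1}\,d\mu(t)\bigr|^2 \le \int |z-t|^{-2}\,d\mu(t)$, valid because $\mu$ has total mass one. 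This is the step where the probability (rather than merely finite-measure) hypothesis is used, and it is the main point to get right.

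With this in hand the $K$-transform statement follows formally from the $F$-transform case. For the forward direction, $K_\mu(z) = z - F_\mu(z)$ has $\im K_\mu(z) = \im z - \im F_\mu(z) \leq 0$ by the inequality, so $K_\mu$ maps $\h$ into $-\overline{\h}$, while $K_\mu(z)/z = 1 - F_\mu(z)/z \to 0$ non-tangentially. Conversely, given holomorphic $K$ with $K(\h) \subseteq -\overline{\h}$ and $K(z)/z \to 0$, set $F = z - K$; then $\im F(z) = \im z - \im K(z) \geq \im z > 0$ shows $F(\h) \subseteq \h \subseteq \overline{\h}$, and $F(z)/z = 1 - K(z)/z \to 1$, so the already-proved characterization gives $F = F_\mu$ for some probability measure $\mu$, and therefore $K = z - F_\mu = K_\mu$. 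I expect the only delicate points to be the open-mapping argument upgrading $\overline{\h}$ to $\h$ in the converse for $F$, and the Cauchy--Schwarz estimate; everything else is bookkeeping with non-tangential limits.
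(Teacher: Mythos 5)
Your proposal is correct, and its overall skeleton matches the paper's: both reduce the two statements to Nevanlinna's theorem via $F_\mu = 1/G_\mu$ and $K_\mu = z - F_\mu$, and both identify the single substantive new estimate as $\im F_\mu(z) \geq \im z$ (equivalently $\im K_\mu(z) \leq 0$), which is where the probability hypothesis $\mu(\R)=1$ enters. Where you differ is in how that estimate is proved. The paper argues geometrically: for $c>0$ the region $\Omega_c = \{w : \im(1/w) \geq c\}$ is a disk, hence convex, and since $1/(z-t) \in \Omega_{\im z}$ for every $t \in \R$, the average $G_\mu(z) = \int (z-t)^{-1}\,d\mu(t)$ against a probability measure stays in $\Omega_{\im z}$, giving $\im F_\mu(z) \geq \im z$ directly. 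You instead compute $\im G_\mu(z) = -(\im z)\int |z-t|^{-2}\,d\mu(t)$ and reduce the claim to $\bigl|\int (z-t)^{-1}\,d\mu(t)\bigr|^2 \leq \int |z-t|^{-2}\,d\mu(t)$, which is Cauchy--Schwarz with mass one. The two arguments are essentially dual formulations of the same fact (convexity of the disk $\Omega_c$ is exactly what makes the quadratic inequality defining it stable under averaging), so neither buys more generality, but yours is quantitative and self-contained at the level of integrals, while the paper's is shorter and makes the role of convexity transparent. You are also more careful than the paper on two points it treats as immediate: the open-mapping upgrade from $F(\h) \subseteq \overline{\h}$ to $F(\h) \subseteq \h$ in the converse direction (needed so that $1/F$ lands in $-\h$ as Nevanlinna's hypothesis requires, with nonconstancy forced by $F(z)/z \to 1$), and the explicit bookkeeping that $F(z)/z = 1/(zG(z)) \to 1$ passes through inversion at the nonzero limit $1$. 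These additions are sound and fill in details the paper elides.
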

	
	\begin{proof}
		The first claim is immediate from the theorem since $F_\mu(z) = 1 / G_\mu(z)$.  Similarly, for the second claim, the only thing that remains to prove is that $\im K_\mu(z) \leq 0$ for any probability measure $\mu$.  For $c > 0$, observe that the region
		\begin{align*}
			\Omega_c &= \{x+iy: \im(1/(x+iy)) \geq c\} \\
			&= \{x+iy: -y/(x^2 + y^2) \geq c\} \\
			&= \{x+iy: c(x^2 + y^2) + y \leq 0\}
		\end{align*}
		is a disk and in particular is convex.  For $z \in \h$ and $t \in \R$, we have $1/(z - t) \in \Omega_{\im z}$, and hence $G_\mu(z) = \int_{\R} (z - t)^{-1}\,d\mu(t) \in \Omega_{\im z}$.  Thus, $\im F_\mu(z) \geq \im z$, or equivalently $\im K_\mu(z) \leq 0$.
	\end{proof}
	
	The following result is contained in \cite[Proof of Proposition 2.6]{BP1999} and thus we leave the reader to look up or reconstruct the proof.
	
	\begin{lemma} \label{lem:NTconvergence2}
	If $Y$ is a compact family of probability measures, then $z G_\mu(z) \to 1$ as $z \to \infty$ non-tangentially, uniformly over $\mu \in Y$.  Similarly, we have $F_\mu(z) / z \to 1$ and $K_\mu(z) / z \to 0$ as $z \to \infty$ non-tangentially, uniformly for $\mu \in Y$.
	\end{lemma}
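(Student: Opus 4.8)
The plan is to establish the statement for $G$ first, since the claims for $F$ and $K$ will then follow by elementary algebra. The starting point is the identity
\[
z G_\mu(z) - 1 = \int_{\R} \left( \frac{z}{z-t} - 1 \right) d\mu(t) = \int_{\R} \frac{t}{z - t} \, d\mu(t),
\]
valid for every probability measure $\mu$ and every $z \in \h$, which reduces the task to bounding this integral uniformly over $\mu \in Y$ as $z \to \infty$ inside a cone $\Gamma_a$. Fix $a > 0$ and $\epsilon > 0$. On the cone the integrand has a crude but uniform bound: for $z \in \Gamma_a$ we have $|z - t| \geq \im z \geq a|z|$, so $|z|/|z-t| \leq 1/a$, and therefore $\left| \frac{t}{z-t} \right| = \left| \frac{z}{z-t} - 1 \right| \leq 1 + \frac{1}{a}$ for all real $t$ and all $z \in \Gamma_a$.

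Next I would split the integral at $|t| \leq R$ and $|t| > R$. On the tail I bound the integrand by $1 + \frac{1}{a}$ and use the total tail mass $\mu(\{|t| > R\})$; on the bulk I use $\left| \frac{t}{z-t} \right| \leq |t|/\im z \leq R/\im z$, which on $\Gamma_{a,b}$ is at most $R/b$. The crucial input is tightness of $Y$: since $Y$ is compact in $\mathcal{P}(\R)$ with the Lévy metric (equivalently the weak topology), Prokhorov's theorem shows $Y$ is tight, so I can choose a single $R$ with $\mu(\{|t| > R\}) < \epsilon / (2(1 + \frac{1}{a}))$ for all $\mu \in Y$ \emph{simultaneously}. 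Taking $b = 2R/\epsilon$, every $z \in \Gamma_{a,b}$ gives a bulk contribution at most $R/b = \epsilon/2$ and a tail contribution at most $\epsilon/2$, hence $|z G_\mu(z) - 1| \leq \epsilon$ uniformly in $\mu \in Y$. This is precisely uniform non-tangential convergence $z G_\mu(z) \to 1$.

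For the $F$- and $K$-transforms I would argue algebraically. Applying the $G$ case with $\epsilon = 1/2$ produces a cone $\Gamma_{a,b}$ on which $|z G_\mu(z)| > 1/2$ for all $\mu \in Y$, so $F_\mu(z)/z = 1/(z G_\mu(z))$ is well defined there and
\[
\left| \frac{F_\mu(z)}{z} - 1 \right| = \frac{|1 - z G_\mu(z)|}{|z G_\mu(z)|} \leq 2\,|z G_\mu(z) - 1|.
\]
Thus uniform convergence of $z G_\mu(z) \to 1$ transfers to uniform convergence of $F_\mu(z)/z \to 1$, and since $K_\mu(z)/z = 1 - F_\mu(z)/z$, the uniform convergence $K_\mu(z)/z \to 0$ follows immediately.

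I expect the main obstacle to be the uniform control of the tail, and the key realization is that compactness of $Y$ is exactly the hypothesis that converts into the tightness needed there via Prokhorov. A purely soft argument by contradiction (extracting $\mu_n \to \mu$ in $Y$ and $z_n \to \infty$) is delicate because the transforms $G_{\mu_n}$ converge only locally uniformly on $\h$, which says nothing directly about their values at the escaping points $z_n$; the explicit split above sidesteps this entirely, which is why I would carry it out quantitatively rather than by compactness alone.
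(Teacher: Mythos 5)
Your proof is correct and is essentially the argument the paper points to: since the paper gives no proof of its own but defers to \cite[Proof of Proposition 2.6]{BP1999}, whose argument likewise writes $zG_\mu(z)-1=\int_\R \frac{t}{z-t}\,d\mu(t)$, invokes tightness of the compact family via Prokhorov, and splits the integral at $|t|\leq R$ against $|t|>R$ with the cone bounds $|z-t|\geq \im z\geq a|z|$, your reconstruction matches the intended route, including the routine algebraic transfer to $F_\mu$ and $K_\mu$.
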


	\section{An operad of rooted trees} \label{sec:trees}
	
	\begin{definition}
		For $N \in \N$, let $[N] = \{1,\dots,N\}$.  A \emph{string} on the alphabet $[N]$ is a finite sequence $j_1 \dots j_\ell$ with $j_i \in [N]$.  We denote by the $i$th letter of a string $s$ by $s(i)$.  Given two strings $s_1$ and $s_2$, we denote their concatenation by $s_1 s_2$.
	\end{definition}
	
	\begin{definition} \label{def:string}
		A string $j_1 \dots j_\ell$ is called \emph{alternating} if $j_i \neq j_{i+1}$ for every $i \in \{1,\dots,\ell-1\}$.  %For a string $s$, we define the \emph{alternating reduction} $\red(s)$ to be the alternating string obtained by replacing consecutive occurrences of the same letter by a single occurrence of that letter; for instance,
		%\[
		%\red(112331) = 1231, \qquad \red(1221311) = 12131.
		%\]
	\end{definition}
	
	\begin{definition} \label{def:stringtree}
		Let $\mathcal{T}_{N,\free}$ be the (simple) graph whose vertices are the alternating strings on the alphabet $[N]$ and where the edges are given by $s \sim j s$ for every letter $j$ and every string $s$ that does not begin with $j$.  Note that $\mathcal{T}_{N,\free}$ is an infinite $N$-regular tree.  We denote the empty string by $\emptyset$, and we view $\emptyset$ as the preferred root vertex of the graph $\mathcal{T}_{N,\free}$.
	\end{definition}
	
	\begin{definition}
		We denote by $\Tree(N)$ the set of rooted subtrees of $\mathcal{T}_{N,\free}$ (that is, connected subgraphs containing the vertex $\emptyset$).
		%We denote by $\Tree'(N)$ the set of rooted subtrees that contain all of the singleton strings $1$, \dots, $N$.
		Note that if $\mathcal{T} \in \Tree(N)$, then the edge set is uniquely determined by the vertex set and vice versa.  Thus, we may treat $\mathcal{T}$ merely as a set of vertices when it is notationally convenient.  If $s \in \cT$ and $js \in \cT$ for some string $s$ and some $j \in [N]$, then we say that $js$ is a \emph{child} of $s$ and $s$ is the \emph{parent} of $js$.
	\end{definition}
	
	\begin{observation}
		For a rooted tree $\mathcal{T} \subseteq \mathcal{T}_{N,\free}$ and $\ell \geq 0$, let $B_\ell(\mathcal{T}) \subseteq \mathcal{T}_{N,\free}$ be set of strings in $\mathcal{T}$ of length $\leq \ell$ (or equivalently the closed ball of radius $\ell$ in the graph metric).  Define $\rho_N: \mathcal{T}_{N,\free} \times \mathcal{T}_{N,\free} \to \R$ by
		\[
		\rho_N(\mathcal{T}, \mathcal{T}') = \exp(-\sup \{\ell \geq 0: B_\ell(\mathcal{T}) = B_\ell(\mathcal{T}')\}).
		\]
		Then $\rho_N$ defines a metric on $\Tree(N)$ (and in fact an ultrametric), which makes $\Tree(N)$ into a compact metric space.
	\end{observation}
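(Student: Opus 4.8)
The plan is to split the statement into two independent claims—that $\rho_N$ is an ultrametric, and that $(\Tree(N),\rho_N)$ is compact—both of which I would base on one elementary observation recorded first. For fixed $\mathcal{T},\mathcal{T}'$ and $\ell \le \ell'$ one has $B_\ell(\mathcal{T}) = \{s \in B_{\ell'}(\mathcal{T}) : |s| \le \ell\}$, so filtering by length shows that $\ell' \in A(\mathcal{T},\mathcal{T}')$ implies $\ell \in A(\mathcal{T},\mathcal{T}')$, where $A(\mathcal{T},\mathcal{T}') := \{\ell \ge 0 : B_\ell(\mathcal{T}) = B_\ell(\mathcal{T}')\}$. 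Thus $A(\mathcal{T},\mathcal{T}')$ is an initial segment of $\{0,1,2,\dots\}$, and writing $n(\mathcal{T},\mathcal{T}') = \sup A(\mathcal{T},\mathcal{T}') \in \{0,1,\dots\}\cup\{\infty\}$ we have $\rho_N = e^{-n}$, with $n$ equal to the last level before the first disagreement (or $\infty$).

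For the ultrametric claim, symmetry and $\rho_N(\mathcal{T},\mathcal{T})=0$ are immediate. If $\rho_N(\mathcal{T},\mathcal{T}')=0$ then $n=\infty$, so $B_\ell(\mathcal{T})=B_\ell(\mathcal{T}')$ for all $\ell$; since every string has finite length, the union over $\ell$ gives $\mathcal{T}=\mathcal{T}'$. The strong triangle inequality $\rho_N(\mathcal{T},\mathcal{T}'') \le \max\bigl(\rho_N(\mathcal{T},\mathcal{T}'),\rho_N(\mathcal{T}',\mathcal{T}'')\bigr)$ is equivalent to $n(\mathcal{T},\mathcal{T}'') \ge \min\bigl(n(\mathcal{T},\mathcal{T}'),n(\mathcal{T}',\mathcal{T}'')\bigr)$, which follows because for every $\ell$ below that minimum the initial-segment property gives $B_\ell(\mathcal{T})=B_\ell(\mathcal{T}')=B_\ell(\mathcal{T}'')$. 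As $\max \le$ sum for nonnegative reals, this in particular yields an ordinary metric.

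For compactness I would show the space is complete and totally bounded. Total boundedness is where finiteness of the ambient tree enters: $B_\ell(\mathcal{T}_{N,\free})$ is a finite vertex set, so as $\mathcal{T}$ varies $B_\ell(\mathcal{T})$ takes only finitely many values; picking one representative tree for each realized value gives a finite $e^{-\ell}$-dense set, and letting $\ell\to\infty$ produces nets of arbitrarily small radius. For completeness, given a Cauchy sequence $(\mathcal{T}_m)$ I would note that each truncation $B_\ell(\mathcal{T}_m)$ is eventually constant, say equal to $V_\ell$; the $V_\ell$ are compatible ($V_\ell$ is the length-$\le \ell$ part of $V_{\ell+1}$), so $\mathcal{T}:=\bigcup_\ell V_\ell$ satisfies $B_\ell(\mathcal{T})=V_\ell$, whence $\mathcal{T}_m \to \mathcal{T}$. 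The one genuinely non-formal step—the main obstacle—is verifying that this limit $\mathcal{T}$ actually lies in $\Tree(N)$. Here I would use the structure of $\mathcal{T}_{N,\free}$: the parent of a nonempty string is obtained by deleting its leftmost letter, so a vertex set is a rooted subtree exactly when it contains $\emptyset$ and is closed under passing to parents. Each $V_\ell$, being the truncation of a genuine rooted subtree, inherits both properties (the parent has strictly smaller length and so remains in the same ball), and they pass to the union, confirming $\mathcal{T}\in\Tree(N)$. Complete plus totally bounded then gives compactness.

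Alternatively—and more slickly—one may identify $\Tree(N)$ with a subset of $\{0,1\}^{\mathcal{T}_{N,\free}}$ via indicator functions. The $\rho_N$-topology coincides with the product topology there, since $\rho_N(\mathcal{T}_m,\mathcal{T})\to 0$ is equivalent to eventual agreement on each fixed vertex (using that each ball $B_\ell$ is finite to pass between ``agreement on all vertices'' and ``agreement on each ball''). Moreover $\Tree(N)$ is cut out by the closed conditions ``$\emptyset$ is present'' and ``if $v$ is present then so is its parent,'' each depending on finitely many coordinates, hence a closed subspace; compactness then follows from Tychonoff's theorem. I would present the complete-plus-totally-bounded argument as the primary proof, since it is self-contained, and record this product-topology description as a remark.
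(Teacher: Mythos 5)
Your proposal is correct, but your primary argument is genuinely different from the paper's. The paper proves the observation by exactly the route you relegate to a closing remark: it identifies $\Tree(N)$ with a subset of $\{0,1\}^{\mathcal{T}_{N,\free}}$ via indicator functions, notes that the cylinder-set metric $\tilde{\rho}_N(x,y) = \exp(-\ell)$ (where $\ell$ is the largest length up to which $x$ and $y$ agree) restricts to $\rho_N$, so the metric topology is the restriction of the product topology, and then gets compactness from Tychonoff's theorem together with the assertion---left as ``straightforward to check''---that $\Tree(N)$ is closed; the ultrametric axioms are likewise left implicit. Your main proof is instead intrinsic: you verify the ultrametric axioms from the initial-segment structure of $A(\mathcal{T},\mathcal{T}')$ and establish compactness as completeness plus total boundedness, where total boundedness uses finiteness of $B_\ell(\mathcal{T}_{N,\free})$ and completeness assembles a limit tree from eventually constant truncations $V_\ell$. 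The one substantive step you single out---that $\bigcup_\ell V_\ell$ is again a rooted subtree, via the characterization ``contains $\emptyset$ and is closed under passing to parents (deletion of the leftmost letter)''---is precisely the content of the closedness check the paper leaves to the reader, so your argument fills in what the paper elides. What each buys: the paper's embedding is shorter and places $\Tree(N)$ alongside the local-convergence topology on rooted graphs it cites, while yours is self-contained, gives the definiteness and strong triangle inequality explicitly, and isolates the parent-closure characterization that also makes the paper's closedness claim transparent. One cosmetic point: in the strong triangle inequality, ``for every $\ell$ below that minimum'' should be read as $\ell \leq \min$ (which is legitimate, since $A(\mathcal{T},\mathcal{T}')$ is an initial segment whose finite supremum is attained); with strict inequality you would lose a factor of $e$.
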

	
	The space $\Tree(N)$ is similar to the space of locally finite rooted graphs with the topology of local convergence (see e.g.\ \cite{AS2004}), and the observation is proved in a similar way to the literature on local convergence.  To summarize, $\Tree(N)$ by definition is a subset of the power set of $\cT_{N,\free}$.  This power set can be identified with $\{0,1\}^{\cT_{N,\free}}$, and thus we have a injective map $\Tree(N) \to \{0,1\}^{\cT_{N,\free}}$. The space $\{0,1\}^{\cT_{N,\free}}$ is compact in the product topology by Tychonoff's theorem.  A basis for this topology is given by cylinder sets defined by looking at finitely many coordinates.  In particular, we can use the cylinder sets defined by looking at the coordinates index by strings of length $\leq \ell$, for each $\ell \in \N$, which leads to a metric $\tilde{\rho}_N$ on $\{0,1\}^{\cT_{N,\free}}$ given by $\tilde{\rho}_N(x,y) = \exp(-\ell)$ where $\ell$ is the maximum length such that $x$ and $y$ agree on strings of length $\leq \ell$.  It follows that the topology we defined on $\Tree(N)$ is the restriction of the product topology.  It is straightforward to check that $\Tree(N)$ is closed in $\{0,1\}^{\cT_{N,\free}}$ hence compact.
	
	As explained in \cite[\S 5]{JekelLiu2020}, the sets of trees $(\Tree(k))_{k \in \N}$ form a \emph{topological symmetric operad}.  (For general background on operads, see e.g.\ \cite{Leinster2004}, and the complete definition is also explained in \cite{JekelLiu2020}.)  We have already described the topology.  The operad structure consists of composition maps
	\[
	\Tree(k) \times \Tree(n_1) \times \dots \times \Tree(n_k) \to \Tree(n_1 + \dots + n_k): (\cT,\cT_1,\dots,\cT_k) \mapsto \cT(\cT_1,\dots,\cT_k)
	\]
	for each $k \in \N$ and $n_1$, \dots, $n_k \in \N$, which are given as follows.  Let $\mathcal{T} \in \Tree(k)$ and $\mathcal{T}_1 \in \Tree(n_1)$, \dots, $\mathcal{T}_k \in \Tree(n_k)$.  Let $N_j = n_1 + \dots + n_j$ (which by convention includes $N_0 = 0$), and let $N = N_k$.  Define $\iota_j: [n_j] \to [N]$ by $\iota_j(i) = N_{j-1} + i$, so that $[N] = \bigsqcup_{j=1}^k \iota_j([n_j])$.  For a string $s \in \mathcal{T}_{n_j,\free}$, let $(\iota_j)_*(s)$ denote the string obtained by applying $\iota_j$ to each letter of $s$.  Then we define $\mathcal{T}(\mathcal{T}_1,\dots,\mathcal{T}_k) \in \Tree(N)$ to be the rooted subtree with vertex set
	\begin{equation}\label{def:operad_composition}
		\bigcup_{\ell \geq 0} \bigcup_{i_1 \dots i_\ell \in \mathcal{T}} \bigcup_{\substack{s_j \in \mathcal{T}_{i_j} \setminus \{\emptyset\} \\ \text{for } j \in [\ell]}} (\iota_{i_1})_*(s_1) \dots (\iota_{i_\ell})_*(s_\ell).
	\end{equation}
	In other words, the strings in $\mathcal{T}(\mathcal{T}_1, \dots, \mathcal{T}_k)$ are obtained by taking a string $t = i_1 \dots i_\ell$ in $\mathcal{T}$ and replacing each letter $i_j$ by a string $s_j$ from $\mathcal{T}_{i_j}$, with the indices appropriately shifted by $\iota_j: [n_j] \to [N]$.  This composition operation satisfies the operad associativity axioms.  It is also jointly continuous, and in fact, we have
	\[
	\rho_N(\cT(\cT_1,\dots,\cT_k),\cT'(\cT_1',\dots,\cT_k')) \leq \max(\rho_k(\cT,\cT'), \rho_{n_1}(\cT_1,\cT_1'),\dots,\rho_{n_k}(\cT_k,\cT_k')),
	\]
	where $\cT$, $\cT' \in \Tree(k)$ and $\cT_j$, $\cT_j' \in \Tree(n_j)$ for $j = 1$, \dots, $k$.  This is because every string of length $L$ in the composed tree has the form $(\iota_{i_1})_*(s_1) \dots (\iota_{i_\ell})_*(s_\ell)$ as above, where $\ell \leq L$ and $s_1$, \dots, $s_\ell$ have length $\leq L$.
	
	Finally, $\Tree$ is a \emph{symmetric} operad, which means that there is a right action of the permutation group $\Perm(N)$ on $\Tree(N)$ that satisfies natural compatibility properties with the operad composition (see \cite{Leinster2004}).  The permutation action on $\Tree(N)$ is defined as follows:  For a string $s = j_1 \dots j_\ell$, let $\sigma(s) = \sigma(j_1) \dots \sigma(j_\ell)$.  Then for a tree $\cT \subseteq \cT_{N,\free}$, let $\cT_\sigma = \{\sigma^{-1}(s): s \in \cT\}$.  This permutation action is continuous (and in fact isometric) on $\Tree(N)$.
	
	Central to this paper is the iterative formula from \cite[Proposition 6.8]{JekelLiu2020} which expresses convolutions over a tree $\cT$ in terms of the convolutions over the branches of $\cT$ for each neighbor of the root vertex; see \eqref{eq:fixedpointequation} and \eqref{eq:fixedpointequation2} below.  To set the stage, we define the branch operations and describe how they interact with the topological symmetric operad structure of $\Tree$.
	
	\begin{definition}
		For $j \in [N]$, we define $\br_j: \Tree(N) \to \Tree(N) \cup \{\varnothing\}$ by
		\begin{equation}\label{def:branch}
			\br_j(\mathcal{T}) = \{s \in \mathcal{T}_{N,\free}: sj \in \mathcal{T} \}.
		\end{equation}
		This gives the branch of $\mathcal{T}$ rooted at the vertex $j$ if $j \in \mathcal{T}$ and $\varnothing$ otherwise.
	\end{definition}
	
	We show an example of in Figure \ref{fig:branch-example}.
	
	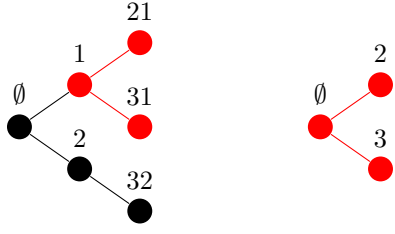
\begin{figure}
	
	\begin{center}
		
		\begin{tikzpicture}[scale=0.8]
			
			\node[circle,fill,label = above:$\emptyset$] (0) at (0,0) {};
			\node[circle,fill,red,label = above:$1$] (1) at (1,0.7) {};
			\node[circle,fill,label = above:$2$] (2) at (1,-0.7) {};
			\node[circle,fill,red,label = above:$21$] (21) at (2,1.4) {};
			\node[circle,fill,red,label = above:$31$] (31) at (2,0) {};
			\node[circle,fill,label = above:$32$] (32) at (2,-1.4) {};
			
			\draw (1) to (0) to (2) to (32); \draw[red] (21) to (1) to (31);
			
			\node[circle,fill,red,label = above:$\emptyset$] (O) at (5,0) {};
			\node[circle,fill,red,label = above:$2$] (A) at (6,0.7) {};
			\node[circle,fill,red,label = above:$3$] (B) at (6,-0.7) {};
			\draw[red] (A) to (O) to (B);
			
		\end{tikzpicture}
		
		\caption{The tree $\cT = \{\emptyset,1,2,21,31,32\}$ and a branch $\br_1(\cT) = \{\emptyset,2,3\}$.} \label{fig:branch-example}
		
	\end{center}
	
	\end{figure}
	
	\begin{observation}
	The map $\br_j$ is a continuous (and in fact $e$-Lipschitz) function from the clopen set $\{\mathcal{T} \in \Tree(N): j \in \mathcal{T}\}$ into $\Tree(N)$.
	\end{observation}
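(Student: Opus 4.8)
The plan is to establish the three assertions in order: that the stated domain is clopen, that $\br_j$ genuinely lands in $\Tree(N)$ on it, and that it is $e$-Lipschitz—the last of which yields continuity automatically. For the clopen claim, I would note that $\{\cT \in \Tree(N) : j \in \cT\}$ is exactly the intersection of $\Tree(N)$ with the cylinder set of $\{0,1\}^{\cT_{N,\free}}$ fixing the coordinate indexed by the length-one string $j$ to be $1$; since a single coordinate projection to the discrete space $\{0,1\}$ is continuous, this cylinder set is clopen, and so is its trace on $\Tree(N)$. On this domain, $j \in \cT$ gives $\emptyset \in \br_j(\cT)$, and connectivity follows from the tree structure: if $s = ks'$ is a nonempty element of $\br_j(\cT)$ with first letter $k$, then $sj = ks'j \in \cT$, and since $\cT$ is a rooted subtree its parent $s'j$ (obtained by deleting the first letter) also lies in $\cT$, i.e.\ $s' \in \br_j(\cT)$; thus $\br_j(\cT)$ is closed under parents and hence lies in $\Tree(N)$.

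The heart of the argument is a single ball-comparison observation, which I would isolate first: for every $\ell \geq 0$, the ball $B_\ell(\br_j(\cT))$ is completely determined by $B_{\ell+1}(\cT)$. Indeed, $s \in B_\ell(\br_j(\cT))$ iff $|s| \leq \ell$ and $sj \in \cT$, and because $|sj| = |s| + 1 \leq \ell+1$, the condition $sj \in \cT$ is equivalent to $sj \in B_{\ell+1}(\cT)$. Consequently $B_{\ell+1}(\cT) = B_{\ell+1}(\cT')$ forces $B_\ell(\br_j(\cT)) = B_\ell(\br_j(\cT'))$. Writing $m = \sup\{\ell \geq 0 : B_\ell(\cT) = B_\ell(\cT')\}$, so that $\rho_N(\cT,\cT') = e^{-m}$, this shows the agreement index of the two branches is at least $m-1$, whence $\rho_N(\br_j(\cT),\br_j(\cT')) \leq e^{-(m-1)} = e\,\rho_N(\cT,\cT')$, which is the desired $e$-Lipschitz bound.

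There is no deep obstacle here; the only genuine care needed is in the boundary bookkeeping of the estimate. When $m = \infty$ the two trees coincide and everything is trivial. When $m = 0$, I would use that $B_0(\cT) = \{\emptyset\}$ for every rooted tree, so the agreement index is always $\geq 0$ and hence $\rho_N \leq 1$ identically; thus the claimed bound $\rho_N(\br_j(\cT),\br_j(\cT')) \leq e \cdot 1$ holds automatically in this case without invoking the ball relation. I would also remark that the condition $sj \in \cT_{N,\free}$ (alternation) is respected throughout, since $\cT \subseteq \cT_{N,\free}$ means $\br_j$ only ever records strings $s$ with $sj$ alternating—but this requires no separate verification. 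Finally, since $e$-Lipschitz maps are uniformly continuous, continuity follows immediately, completing all parts of the observation.
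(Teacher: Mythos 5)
Your proof is correct and takes essentially the same approach as the paper: the paper's entire justification is the single remark that if $\cT$ and $\cT'$ agree on strings up to length $\ell$ and both contain $j$, then $\br_j(\cT)$ and $\br_j(\cT')$ agree on strings up to length $\ell-1$, which is precisely your ball-comparison step that $B_\ell(\br_j(\cT))$ is determined by $B_{\ell+1}(\cT)$. Your additional bookkeeping (clopen-ness of the domain via cylinder sets, verifying $\br_j(\cT)$ is closed under parents and hence lies in $\Tree(N)$, and the trivial $m=0$ boundary case using $\rho_N \leq 1$) correctly fills in details the paper leaves implicit.
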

	
	The reason for this is of course that if $\cT$ and $\cT'$ agree on strings up to the length $\ell$ and both contain $j$, then $\br_j(\cT)$ and $\br_j(\cT')$ agree on strings up to length $\ell - 1$.
	
	\begin{observation}
	For $\cT \in \Tree(N)$ and $\sigma \in \Perm(N)$, we have $\br_j(\cT_\sigma) = \br_{\sigma(j)}(\cT)_\sigma$.
	\end{observation}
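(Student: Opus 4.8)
The plan is to prove the asserted set equality by an elementwise membership argument, unwinding the definitions of the permutation action and of $\br_j$ one string at a time. The two facts I would isolate first are purely formal. First, directly from $\cT_\sigma = \{\sigma^{-1}(s) : s \in \cT\}$ one obtains, for any string $t$, the equivalence $t \in \cT_\sigma \iff \sigma(t) \in \cT$; this holds for an arbitrary subtree, and I will apply it both to $\cT$ itself and to the branch $\br_{\sigma(j)}(\cT)$. Second, since $\sigma$ acts letterwise on strings, it commutes with appending a letter on the right: $\sigma(sj) = \sigma(s)\,\sigma(j)$ for every string $s$ and every $j \in [N]$.

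With these in hand the computation becomes a short chain of equivalences on each side. For the left-hand side, a string $s$ satisfies $s \in \br_j(\cT_\sigma)$ iff $sj \in \cT_\sigma$, which by the first fact is equivalent to $\sigma(sj) \in \cT$, that is, $\sigma(s)\,\sigma(j) \in \cT$ by the second fact. For the right-hand side, $s \in \br_{\sigma(j)}(\cT)_\sigma$ iff $\sigma(s) \in \br_{\sigma(j)}(\cT)$ (the first fact applied to the subtree $\br_{\sigma(j)}(\cT)$), which by the definition of the branch is precisely the condition $\sigma(s)\,\sigma(j) \in \cT$. Both sides thus reduce to the same requirement $\sigma(s)\,\sigma(j) \in \cT$, so the two vertex sets coincide; since a rooted subtree is determined by its vertex set, the trees themselves agree.

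It remains only to reconcile the degenerate case in which the branches are empty. Here I would note that $\br_j(\cT_\sigma) = \varnothing$ exactly when $j \notin \cT_\sigma$, equivalently $\sigma(j) \notin \cT$, which is precisely the condition $\br_{\sigma(j)}(\cT) = \varnothing$; under the convention $\varnothing_\sigma = \varnothing$ the claimed identity continues to hold. I do not anticipate any genuine obstacle here: the only thing to watch is the bookkeeping between $\sigma$ and $\sigma^{-1}$, together with the fact that $\br_j$ appends its index on the right while the tree's edges prepend letters on the left, so one must confirm that the letterwise action of $\sigma$ interacts cleanly with both the right-append in the definition of $\br$ and the definition of $\cT_\sigma$.
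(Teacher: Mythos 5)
Your proof is correct: the paper states this as an Observation with no written proof, and your elementwise verification — reducing both $s \in \br_j(\cT_\sigma)$ and $s \in \br_{\sigma(j)}(\cT)_\sigma$ to the single condition $\sigma(s)\,\sigma(j) \in \cT$ via the letterwise action commuting with right-concatenation — is exactly the routine unwinding of definitions the authors intend. Your handling of the empty-branch case (both sides empty precisely when $\sigma(j) \notin \cT$) is also the right bookkeeping.
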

	
	In order to describe the relationship between the branch operation and operad composition, we need some auxiliary notions.  Let $\psi: [N] \to [N']$.  For a string $s = j_1 \dots j_\ell$ on $[N]$, let $\psi_*(s) = \psi(j_1) \dots \psi(j_\ell)$.  Viewing a tree $\cT \in \Tree(N)$ as a set of strings, we may compute the image $\psi_*(\cT)$ under the map $\psi_*$.  Of course, if $s$ is alternating, then $\psi_*(s)$ is not necessarily alternating.  Thus, $\psi_*(\cT)$ will be an element of $\Tree(N')$ if and only if $\psi_*(s)$ is alternating for every $s \in \cT$, or in other words, $\psi_*(\cT) \subseteq \cT_{N',\free}$.
	
	The branches of the composition will be expressed using $\mathcal{T}_{2,\mono} := \{\emptyset, 1, 2, 21\}$, a tree related to monotone convolution (see Example \ref{ex:monotone}).  Let $\cT_1 \in \Tree(m_1)$ and $\cT_2 \in \Tree(m_2)$.  Let $\phi_1: [m_1] \to [m_1 + m_2]$ map $[m_1]$ monotonically onto the first $m_1$ coordinates, and let $\phi_2: [m_2] \to [m_1+m_2]$ map $[m_2]$ monotonically onto the last $m_2$ coordinates.  Then $\cT_{2,\mono}(\cT_1,\cT_2)$ consists of four types of strings:\ the root vertex $\emptyset$, $(\phi_1)_*(s_1)$ for nonempty strings $s_1 \in \cT_1$, $(\phi_2)_*(s_2)$ for nonempty strings $s_2 \in \cT_2$, and $(\phi_2)_*(s_2)(\phi_1)_*(s_1)$ for nonempty strings $s_1 \in \cT_1$ and $s_2 \in \cT_2$.  This can be rewritten as
	\begin{equation} \label{eq:monotonecomposition}
	\mathcal{T}_{2,\mono}(\cT_1,\cT_2) = \{(\phi_2)_*(s_2)(\phi_1)_*(s_1): s_1 \in \cT_1, s_2 \in \cT_2 \}
	\end{equation}
	since $\emptyset = (\phi_2)_*(\emptyset) (\phi_1)_*(\emptyset)$ and $(\phi_1)_*(s_1) = (\phi_2)_*(\emptyset) (\phi_1)_*(s_1)$ and $(\phi_2)_*(s_2) = (\phi_2)_*(s_2) (\phi_1)_*(\emptyset)$.  Thus, $\cT_{2,\mono}(\cT_1,\cT_2)$ represents all strings obtained by concatenating a string from $\cT_2$ and a string from $\cT_1$ with the appropriate relabeling.
	
	\begin{lemma} \label{lem:branch}
		Let $\cT \in \Tree(k)$ and $\cT_1 \in \Tree(n_1)$, \dots, $\cT_k \in \Tree(n_k)$.  Let $N = n_1 + \dots + n_k$ and let $\iota_j: [n_j] \to [N]$ be the inclusions as above.  Fix $j \in [k]$ and $i \in [n_j]$.  Then
		\begin{equation} \label{eq:branch_composition}
		\br_{\iota_j(i)}(\cT(\cT_1,\dots,\cT_k)) = \{\tilde{s} \, (\iota_j)_*(s'): \tilde{s} \in \br_j(\cT)(\cT_1,\dots,\cT_k), s' \in \br_i(\cT_j) \},
		\end{equation}
		where $\tilde{s} \, (\iota_j)_*(s')$ denotes the concatenation of $\tilde{s}$ and $(\iota_j)_*(s')$.
		Let $\psi: [n_j+N] \to [N]$ map the first $n_j$ points monotonically onto $\iota_j([n_j])$ and map the last $N$ points monotonically onto $[N]$.  Then $\psi_*$ defines a bijection from $\cT_{2,\mono}(\br_i(\cT_j),\br_j(\cT)(\cT_1,\dots,\cT_k))$ to $\br_{\iota_j(i)}(\cT(\cT_1,\dots,\cT_k))$.
	\end{lemma}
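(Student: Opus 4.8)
The plan is to prove the two assertions in turn, with both resting on one foundational fact: the \emph{unique block-parsing} of strings in a composition tree. First I would record this. By \eqref{def:operad_composition}, every string $S \in \cT(\cT_1,\dots,\cT_k)$ is a concatenation $S = (\iota_{i_1})_*(s_1)\cdots(\iota_{i_\ell})_*(s_\ell)$ with $i_1\cdots i_\ell \in \cT$ and $s_m \in \cT_{i_m}\setminus\{\emptyset\}$. Since the images $\iota_1([n_1]),\dots,\iota_k([n_k])$ partition $[N]$ and the skeleton is alternating ($i_m \neq i_{m+1}$), consecutive segments lie in distinct blocks; hence the segments are exactly the maximal runs of $S$ contained in a single block $\iota_a([n_a])$, and the data $(i_1\cdots i_\ell;\,s_1,\dots,s_\ell)$ is uniquely recovered from $S$. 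I would assume throughout that $j \in \cT$ and $i \in \cT_j$, since otherwise \eqref{def:branch} makes both sides of the asserted identities empty.

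To prove \eqref{eq:branch_composition}, I would observe that $S \in \br_{\iota_j(i)}(\cT(\cT_1,\dots,\cT_k))$ means $S\,\iota_j(i) \in \cT(\cT_1,\dots,\cT_k)$. In the parsing of $S\,\iota_j(i)$ the final letter $\iota_j(i)$ lies in block $j$, forcing the last segment to have skeleton letter $i_\ell = j$ and component $s_\ell$ ending in $i$; writing $s_\ell = s'i$ gives $s' \in \br_i(\cT_j)$ by \eqref{def:branch}. Deleting the trailing $\iota_j(i)$ then leaves $S = \tilde{s}\,(\iota_j)_*(s')$, where $\tilde{s} = (\iota_{i_1})_*(s_1)\cdots(\iota_{i_{\ell-1}})_*(s_{\ell-1})$ has skeleton $i_1\cdots i_{\ell-1} \in \br_j(\cT)$ (because $i_1\cdots i_{\ell-1}j \in \cT$), so $\tilde{s} \in \br_j(\cT)(\cT_1,\dots,\cT_k)$. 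This is the inclusion $\subseteq$; the reverse inclusion runs the same computation backwards, taking $i_\ell := j$ and $s_\ell := s'i \in \cT_j\setminus\{\emptyset\}$ to exhibit $S\,\iota_j(i)$ in the composition.

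For the bijection I would first compute $\psi_*$ explicitly. By construction $\psi \circ \phi_1 = \iota_j$ and $\psi \circ \phi_2 = \id_{[N]}$, so applying $\psi_*$ to the generic element $(\phi_2)_*(s_2)(\phi_1)_*(s_1)$ of $\cT_{2,\mono}(\br_i(\cT_j),\br_j(\cT)(\cT_1,\dots,\cT_k))$ described by \eqref{eq:monotonecomposition} yields $s_2\,(\iota_j)_*(s_1)$ with $s_2 \in \br_j(\cT)(\cT_1,\dots,\cT_k)$ and $s_1 \in \br_i(\cT_j)$. Comparing with \eqref{eq:branch_composition} immediately identifies the image as exactly $\br_{\iota_j(i)}(\cT(\cT_1,\dots,\cT_k))$, giving surjectivity; and the parametrization $(s_1,s_2)\mapsto(\phi_2)_*(s_2)(\phi_1)_*(s_1)$ of the monotone composition is itself injective because $s_2$ occupies the block $\phi_2([N])$ and $s_1$ the disjoint block $\phi_1([n_j])$.

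The hard part will be injectivity of $\psi_*$ \emph{after} the fold, since $\psi$ is not injective: it collapses the block $\phi_1([n_j])$ onto $\iota_j([n_j]) \subseteq [N]$, which is already covered by the image of $\phi_2$. The key observation that rescues injectivity is that \emph{no nonempty element of $\br_j(\cT)(\cT_1,\dots,\cT_k)$ ends in block $j$}: its skeleton lies in $\br_j(\cT)$, and any $w \in \br_j(\cT)$ satisfies $wj \in \cT \subseteq \cT_{k,\free}$, so $wj$ is alternating and the last letter of $w$ differs from $j$; hence the terminal segment lands in a block other than $j$. Consequently, in $s_2\,(\iota_j)_*(s_1)$ the maximal trailing run of letters from block $j$ is precisely $(\iota_j)_*(s_1)$, which recovers $s_1$ by injectivity of $\iota_j$ and then $s_2$ as the remaining prefix. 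This shows $(s_1,s_2)\mapsto s_2\,(\iota_j)_*(s_1)$ is injective on the monotone composition, so $\psi_*$ is a bijection onto $\br_{\iota_j(i)}(\cT(\cT_1,\dots,\cT_k))$, as claimed.
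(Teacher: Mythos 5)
Your proof is correct and follows essentially the same route as the paper's: the same parsing of $s\,\iota_j(i)$ forcing the last skeleton letter to be $j$ and $s_\ell = s'i$, the same surjectivity computation via $\psi \circ \phi_1 = \iota_j$ and $\psi \circ \phi_2 = \id_{[N]}$, and the same injectivity argument --- the paper likewise recovers $(\iota_j)_*(s')$ as the maximal trailing run of letters from $\iota_j([n_j])$, using exactly your observation that a nonempty $\tilde{s} \in \br_j(\cT)(\cT_1,\dots,\cT_k)$ cannot end in that block. Your explicit statement of the unique block-parsing of strings in a composition tree makes precise something the paper uses implicitly, but this is a presentational difference rather than a different method.
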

	
	\begin{proof}
		To compute the left-hand side of \eqref{eq:branch_composition}, suppose that $s$ is a string on the alphabet $[N]$ with $s \, \iota_j(i) \in \cT(\cT_1,\dots,\cT_k)$.  Since $s \iota_j(i)$ is not the empty string, we can express it as
		\[
		s \, \iota_j(i) = (\iota_{j_1})_*(s_1) \dots (\iota_{j_\ell})_*(s_\ell),
		\]
		where $j_1 \dots j_\ell \in \cT \setminus \{\emptyset\}$ and $s_t \in \cT_{j_t}$ for $t = 1$, \dots, $\ell$.  Since the last letter is $\iota_j(i)$, we must have $j_\ell = j$.  Since $j_1 \dots j_{\ell-1} j \in \cT$, we have by definition $j_1 \dots j_{\ell-1} \in \br_j(\cT)$ and hence
		\[
		(\iota_{j_1})_*(s_1) \dots (\iota_{j_{\ell-1}})_*(s_{\ell-1}) \in \br_j(\cT)(\cT_1,\dots,\cT_k).
		\]
		Moreover, the string $s_\ell$ from $\cT_j$ has $i$ as its last letter, and therefore, $s_\ell = s'i$ for some $s' \in \br_i(\cT_j)$.  Hence,
		\[
		s = (\iota_{j_1})_*(s_1) \dots (\iota_{j_{\ell-1}})_*(s_{\ell-1}) (\iota_j)_*(s_\ell') = \tilde{s} (\iota_j)_*(s'),
		\]
		where $\tilde{s} := (\iota_{j_1})_*(s_1) \dots (\iota_{j_{\ell-1}})_*(s_{\ell-1})$ is in $\br_j(\cT)(\cT_1,\dots,\cT_k)$ and $s' \in \br_i(\cT_j)$.  Conversely, similar reasoning shows that whenever $\tilde{s}$ is in $\br_j(\cT)(\cT_1,\dots,\cT_k)$ and $s' \in \br_i(\cT_j)$, then the concatenation $\tilde{s} \, (\iota_j)_*(s')$ is in $\br_{\iota_j(i)}(\cT(\cT_1,\dots,\cT_k))$, and hence \eqref{eq:branch_composition} holds.
		
		Next, we show that $\psi_*$ maps $\cT_{2,\mono}(\br_i(\cT_j),\br_j(\cT)(\cT_1,\dots,\cT_k))$ onto $\br_{\iota_j(i)}(\cT(\cT_1,\dots,\cT_k))$.  Let $\phi_1: [n_j] \to [n_j + N]$ be the map sending $[n_j]$ monotonically onto the first $n_j$ coordinates, and let $\phi_2: [N] \to [n_j + N]$ be the map sending $[N]$ monotonically onto the last $N$ coordinates.  By our earlier observation \eqref{eq:monotonecomposition} about composition with $\cT_{2,\mono}$,
		\[
		\cT_{2,\mono}(\br_i(\cT_j),\br_j(\cT)(\cT_1,\dots,\cT_k)) = \{(\phi_2)_*(\tilde{s}) (\phi_1)_*(s'): \tilde{s} \in \br_j(\cT)(\cT_1,\dots,\cT_k), s' \in \br_i(\cT_j) \}.
		\]
		When we apply $\psi_*$ to this set, because $\psi \circ \phi_1 = \iota_j$ and $\psi \circ \phi_2 = \id_{[N]}$, we obtain the right-hand side of \eqref{eq:branch_composition}.  Thus, the image of $\cT_{2,\mono}(\br_i(\cT_j),\br_j(\cT)(\cT_1,\dots,\cT_k))$ under $\psi_*$ is what we asserted.
		
		In order to show that $\psi_*$ is injective on $\cT_{2,\mono}(\br_i(\cT_j),\br_j(\cT)(\cT_1,\dots,\cT_k))$, it suffices to show uniqueness of the decomposition of $s \in \br_{\iota_j(i)}(\cT(\cT_1,\dots,\cT_k))$ into $\tilde{s}$ and $(\iota_j)_*(s')$.  Note that if $\tilde{s}$ is not the empty string, then the last letter of $\tilde{s}$ cannot be in $\iota_j([n_j])$.  Thus, let $m$ be the position of the last letter in $s$ that does not come from $\iota_j([n_j])$, and let $m = 0$ if all the letters come from $\iota_j([n_j])$.   Then $\tilde{s}$ is the substring consisting of the first $m$ letters of $s$, and $\iota_j(s')$ is the remainder of $s$.  Since $\iota_j$ is injective, $s'$ is also uniquely determined.
	\end{proof}
	
	Next, we define isomorphism of rooted trees and describe how isomorphism relates to the branch maps.
	
	\begin{definition} \label{def:homomorphism}
		Let $\cT_1 \in \Tree(N_1)$ and $\cT_2 \in \Tree(N_2)$.  We say a map $\phi: \cT_1 \to \cT_2$ is a \emph{homomorphism} if $\phi(\emptyset)=\emptyset$ and for each vertex $s\in \cT_1$ and each child $s'$ of $s$, $\phi(s')$ is a child of $\phi(s)$.  We say that $\phi$ is an \emph{isomorphism} if it is a bijective homomorphism, and in this case, we write $\cT_1 \cong \cT_2$.
	\end{definition}
	
	\begin{observation} \label{obs:isomorphismbranch}
		Let $\cT_1 \in \Tree(N_1)$ and $\cT_2 \in \Tree(N_2)$ and let $\phi: \cT_1 \to \cT_2$ be an isomorphism.  Then $\phi$ defines a bijection $[N_1] \cap \cT_1 \to [N_2] \cap \cT_2$ and we have $\br_j(\cT_1) \cong \br_{\phi(j)}(\cT_2)$.
	\end{observation}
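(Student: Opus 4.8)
The plan is to reduce everything to two structural facts about a rooted-tree isomorphism $\phi$: that it preserves depth (equivalently string length, since in $\cT_{N,\free}$ the graph distance from $\emptyset$ to a string equals its length), and that its inverse is again a homomorphism. First I would record depth-preservation. Along the unique path $\emptyset = s_0, s_1, \dots, s_d = s$ from the root to a vertex $s$ of $\cT_1$, each $s_{i+1}$ is a child of $s_i$, so each $\phi(s_{i+1})$ is a child of $\phi(s_i)$ by Definition \ref{def:homomorphism}; since in any rooted tree a child has depth one greater than its parent, induction gives $\depth(\phi(s)) = d = \depth(s)$. In particular $\phi$ carries the length-$1$ strings of $\cT_1$, namely $[N_1] \cap \cT_1$, into the length-$1$ strings $[N_2] \cap \cT_2$.

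Second, I would show $\phi^{-1}$ is a homomorphism, upgrading the one-sided condition in Definition \ref{def:homomorphism} to a genuine two-sided correspondence. Given $t \in \cT_2$ with a child $t'$, write $s = \phi^{-1}(t)$ and $s' = \phi^{-1}(t')$. As $\cT_1$ is a rooted subtree, the non-root vertex $s'$ has a parent $s'' \in \cT_1$; then $\phi(s')$ is a child of $\phi(s'')$, so $\phi(s'')$ is the unique parent $t$ of $t' = \phi(s')$, whence $s'' = \phi^{-1}(t) = s$ by injectivity and $s'$ is a child of $s$. With both $\phi$ and $\phi^{-1}$ homomorphisms, $\phi$ preserves the entire ancestor chain of every vertex; combined with depth-preservation, the length-$1$ ancestor of a string $u$ (that is, its last letter, since ancestors are obtained by stripping from the front) is sent to the length-$1$ ancestor of $\phi(u)$. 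This already yields that the restriction $[N_1]\cap\cT_1 \to [N_2]\cap\cT_2$ is a bijection, with inverse induced by $\phi^{-1}$.

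For the branch isomorphism I would exploit that the append map $s \mapsto sj$ identifies $\br_j(\cT_1) = \{s : sj \in \cT_1\}$ with the set of strings of $\cT_1$ ending in $j$, i.e.\ the descendants of the vertex $j$, and that this is a rooted-tree isomorphism because prepending commutes with appending: $ks \mapsto ksj$, and $ksj$ is exactly the child of $sj$ obtained by prepending $k$. By the last-letter observation, for $s \in \br_j(\cT_1)$ the string $\phi(sj)$ ends in $\phi(j)$, so I may define $\Phi(s)$ by $\phi(sj) = \Phi(s)\,\phi(j)$; this is precisely the composite of $s \mapsto sj$, the restriction of $\phi$ to the descendants of $j$, and the inverse re-rooting $t\,\phi(j) \mapsto t$. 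I would then check $\Phi(\emptyset) = \emptyset$ and that $\Phi$ sends children to children (prepending a letter to $sj$ corresponds under $\phi$ to prepending a letter to $\Phi(s)\,\phi(j)$), so $\Phi$ is a homomorphism $\br_j(\cT_1) \to \br_{\phi(j)}(\cT_2)$. Injectivity is immediate from injectivity of $\phi$, and surjectivity follows by running the identical construction with $\phi^{-1}$, which sends strings of $\cT_2$ ending in $\phi(j)$ to strings of $\cT_1$ ending in $j$.

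The delicate point, and the main obstacle, is the interplay between the two ways strings are extended: the tree structure of $\cT_{N,\free}$ is generated by prepending letters, whereas $\br_j$ is defined by appending $j$, so the re-rooting bookkeeping must be handled with care. The enabling technical step is establishing that $\phi^{-1}$ is also a homomorphism; without it one knows only that $\phi$ maps each child of $s$ to some child of $\phi(s)$, which is insufficient to conclude either that $\phi(sj)$ ends in $\phi(j)$ or that $\Phi$ is surjective.
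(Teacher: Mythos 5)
Your proof is correct. The paper states this as an Observation without any proof, and your argument---depth preservation, upgrading $\phi^{-1}$ to a homomorphism via uniqueness of parents, the last-letter correspondence, and the re-rooted map $\Phi$ defined by $\phi(sj) = \Phi(s)\,\phi(j)$---is exactly the routine verification the authors leave implicit, with the one genuinely delicate point (that $\phi^{-1}$ is again a homomorphism, which is what makes the root-neighbor restriction surjective and the branch map $\Phi$ invertible) correctly identified and handled.
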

	
	\begin{definition} \label{def:maxchildren}
		For $\cT \in \Tree(N)$, let us write
		\begin{align*}
			n(\cT) &= |[N] \cap \cT| \\
			m(\cT) &= \max_{s \in \cT \setminus \{\emptyset\}} |\{j \in [N]: js \in \cT\}|,
		\end{align*}
		that is, $n(\cT)$ is the number of children of the root vertex and $m(\cT)$ is the maximum number of children of any other vertex of the tree.
	\end{definition}
	
	\begin{observation} \label{obs:isomorphictrees}
		The quantities $n(\cT)$ and $m(\cT)$ are isomorphism-invariant.  If $\cT \in \Tree(N)$, then $\max(n(\cT),m(\cT) + 1) \leq N$, and $\cT$ is isomorphic to some $\cT' \in \Tree(N')$ with $N' = \max(n(\cT),m(\cT)+1)$.
	\end{observation}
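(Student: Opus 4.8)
The statement splits into three claims: isomorphism-invariance of $n$ and $m$; the inequality $\max(n(\cT),m(\cT)+1)\le N$; and the existence of an isomorphic copy over a smaller alphabet $[N']$ with $N'=\max(n(\cT),m(\cT)+1)$. For the first, I would begin by recording that a homomorphism preserves depth (distance from the root): applying $\phi$ to a root-path of child-steps $\emptyset=v_0,v_1,\dots,v_d=v$ yields a root-path of child-steps of the same length ending at $\phi(v)$, and depth in a tree is the unique such length. Consequently, if $\phi\colon\cT_1\to\cT_2$ is an isomorphism, then for each $s\in\cT_1$ it restricts to a bijection from the children of $s$ onto the children of $\phi(s)$: injectivity is immediate, and for surjectivity any child $t'$ of $\phi(s)$ equals $\phi(v)$ for a unique $v$, whose parent $u$ satisfies $\phi(u)=\phi(s)$, whence $u=s$. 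Taking $s=\emptyset$ recovers the bijection $[N_1]\cap\cT_1\to[N_2]\cap\cT_2$ of Observation \ref{obs:isomorphismbranch}, so $n(\cT_1)=n(\cT_2)$; and since $\phi$ maps $\cT_1\setminus\{\emptyset\}$ bijectively onto $\cT_2\setminus\{\emptyset\}$ while preserving the number of children at each vertex, the maxima agree, giving $m(\cT_1)=m(\cT_2)$.

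For the inequality, $n(\cT)=|[N]\cap\cT|\le N$ is immediate. For $m(\cT)$, the children of a non-root vertex $s$ are the strings $js\in\cT$, and $js$ is alternating only if $j\neq s(1)$; hence $s$ has at most $N-1$ children, so $m(\cT)\le N-1$ and $\max(n(\cT),m(\cT)+1)\le N$.

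The substantive claim is the third, and the plan is to relabel $\cT$ edge-by-edge using the smaller alphabet. Set $N'=\max(n(\cT),m(\cT)+1)$, the case $\cT=\{\emptyset\}$ being trivial. I would construct a label $\lambda(v)\in[N']$ for every non-root vertex $v$ by induction on depth: assign the $n(\cT)$ children of the root pairwise-distinct labels in $[N']$, possible since $n(\cT)\le N'$; and at a non-root vertex $u$ whose label $\lambda(u)$ has already been fixed, assign its at most $m(\cT)$ children pairwise-distinct labels drawn from $[N']\setminus\{\lambda(u)\}$, possible since $|[N']\setminus\{\lambda(u)\}|=N'-1\ge m(\cT)$. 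Given $\lambda$, define $\phi\colon\cT\to\cT_{N',\free}$ by $\phi(\emptyset)=\emptyset$ and, for a vertex $v$ with root-path $\emptyset=v_0,v_1,\dots,v_d=v$, by $\phi(v)=\lambda(v_d)\lambda(v_{d-1})\cdots\lambda(v_1)$, so that $\phi(v)=\lambda(v)\,\phi(\text{parent of }v)$ is obtained by prepending $\lambda(v)$ on the left.

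Finally I would verify that $\phi$ is an isomorphism onto $\cT':=\phi(\cT)$. The rule that a child's label differs from its non-root parent's label forces consecutive letters of $\phi(v)$ to differ, so each $\phi(v)$ is alternating and $\phi(\cT)\subseteq\cT_{N',\free}$; moreover $\phi(v)=\lambda(v)\,\phi(\text{parent})$ exhibits $\phi(v)$ as a child of $\phi(\text{parent})$, so $\phi$ is a homomorphism. Injectivity follows because the root-path of $v$ can be read back off the string $\phi(v)$ one letter at a time, using that distinct siblings carry distinct labels. Hence $\phi$ is a bijective homomorphism onto the rooted subtree $\cT'\in\Tree(N')$, i.e.\ $\cT\cong\cT'$. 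The only point requiring care—the crux of the argument—is the counting that makes the inductive labeling possible: $n(\cT)\le N'$ labels suffice at the root, while only $m(\cT)\le N'-1$ labels are ever needed away from the root once the single forbidden parent-label is removed from $[N']$.
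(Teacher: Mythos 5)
Your proposal is correct and takes essentially the same approach as the paper: the paper's proof is a terse three-line version that, after noting the degree bounds, simply asserts that any isomorphism class of trees with root degree at most $N'$ and all other degrees at most $N'-1$ can be realized inside $\cT_{N',\free}$, which is exactly the inductive sibling-labeling construction you carry out in detail. The counting you flag as the crux ($n(\cT)\leq N'$ labels at the root, $m(\cT)\leq N'-1$ labels once the parent's label is forbidden) is precisely the content the paper leaves implicit, and your verifications of alternation, injectivity, and the children bijection are all sound.
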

	
	\begin{proof}
		The first claim is immediate.  By construction, for $\cT \in \Tree(N)$, the root has at most $N$ children and the other vertices have at most $N - 1$ children.  Finally, letting $N' = \max(n(\cT),m(\cT) + 1)$, any isomorphism class of trees where the root has at most $N'$ children and the other vertices have at most $N'-1$ children can be realized by some subtree of $\cT_{N',\free}$.
	\end{proof}
	
	The final set of notation and results relates to compositions of several copies of the same tree; these remarks will be used in \S \ref{sec:limit1}. 
	
	\begin{definition} \label{def:samecomposition}
		Given trees $\cT_1 \in \Tree(N_1)$ and $\cT_2 \in \Tree(N_2)$, let
		\[
		\cT_1 \circ \cT_2 := \cT_1(\underbrace{\cT_2,\dots,\cT_2}_{N_1 \text{ times}}).
		\]
	\end{definition}
	
	This operation is associative because of the operad associativity property for $\Tree$.  Thus, the following definition also makes sense without parentheses.
	
	\begin{definition} \label{def:selfcomposition}
		For $\cT \in \Tree(N)$, let $\cT^{\circ k}$ be given by
		\[
		\cT^{\circ k} := \underbrace{\cT \circ \dots \circ \cT}_{k \text{ times}}.
		\]
	\end{definition}
	
	\begin{lemma} \label{lem:maxchildren}
		For $\cT_1 \in \Tree(N_1)$ and $\cT_2 \in \Tree(N_2)$, we have
		\begin{align*}
			n(\cT_1 \circ \cT_2) &= n(\cT_1) n(\cT_2) \\
			m(\cT_1 \circ \cT_2) &= m(\cT_1) n(\cT_2) + m(\cT_2).
		\end{align*}
		Moreover, (as in \cite[Lemma 8.7]{JekelLiu2020}) for $\cT \in \Tree(N)$ with $n(\cT) > 1$, we have
		\begin{align*}
			n(\cT^{\circ k}) &= n(\cT)^k \\
			m(\cT^{\circ k}) &= m(\cT) \frac{n(\cT)^k - 1}{n(\cT) - 1}.
		\end{align*}
	\end{lemma}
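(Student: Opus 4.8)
The plan is to establish the two single-composition identities first and then derive the self-composition formulas by induction, using operad associativity in the form $\cT^{\circ k} = \cT \circ \cT^{\circ(k-1)}$. Throughout I would write $c_{\cT}(v)$ for the number of children of a vertex $v$ in a tree $\cT$, so that $n(\cT) = c_{\cT}(\emptyset)$ and $m(\cT) = \max_{v \neq \emptyset} c_{\cT}(v)$. The key structural fact is that, by \eqref{def:operad_composition}, every non-root vertex of $\cT_1 \circ \cT_2 = \cT_1(\cT_2,\dots,\cT_2)$ has a \emph{unique} block decomposition $(\iota_{i_1})_*(s_1)\cdots(\iota_{i_\ell})_*(s_\ell)$ with $i_1\cdots i_\ell \in \cT_1 \setminus \{\emptyset\}$ and each $s_t \in \cT_2 \setminus \{\emptyset\}$; uniqueness holds because the blocks are exactly the maximal runs of letters sharing a common outer index $\iota_{i_t}$, and consecutive outer indices differ since $i_1 \cdots i_\ell$ is alternating. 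For $n(\cT_1 \circ \cT_2)$ I would just enumerate the length-one strings of the composite: these are precisely the letters $\iota_{i_0}(p)$ with $i_0 \in [N_1]\cap \cT_1$ and $p \in [N_2]\cap \cT_2$, and since the maps $\iota_{i_0}$ have disjoint images these are distinct, so their number is $n(\cT_1)n(\cT_2)$.

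The formula for $m(\cT_1 \circ \cT_2)$ is the crux. Fix a non-root vertex $s = (\iota_{i_1})_*(s_1)\cdots(\iota_{i_\ell})_*(s_\ell)$ and count its children, i.e.\ the letters $a = \iota_{i_0}(p)$ (with $i_0 \in [N_1]$, $p \in [N_2]$) for which $as \in \cT_1 \circ \cT_2$. There are two disjoint possibilities. If $i_0 = i_1$, the new letter lengthens the first block, giving $as = (\iota_{i_1})_*(p\,s_1)(\iota_{i_2})_*(s_2)\cdots$, which is admissible exactly when $p\,s_1 \in \cT_2$; the number of such $p$ is $c_{\cT_2}(s_1)$. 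If $i_0 \neq i_1$, the new letter opens a fresh block $(\iota_{i_0})_*(p)$, and $as$ is admissible exactly when $i_0 i_1\cdots i_\ell \in \cT_1$ and $p \in [N_2]\cap \cT_2$; the number of such pairs $(i_0,p)$ is $c_{\cT_1}(i_1\cdots i_\ell)\,n(\cT_2)$. Therefore
\[
c_{\cT_1\circ\cT_2}(s) = c_{\cT_2}(s_1) + c_{\cT_1}(i_1\cdots i_\ell)\,n(\cT_2).
\]
Maximizing over $s$ gives the upper bound $m(\cT_2) + m(\cT_1)n(\cT_2)$. To see it is attained, I would pick the outer string $i_1\cdots i_\ell$ to realize $m(\cT_1)$ and, independently, the first block $s_1$ to realize $m(\cT_2)$, padding the remaining blocks $s_2,\dots,s_\ell$ with single-letter children of the root of $\cT_2$; since the outer string and the first block may be chosen with no interaction, the two maxima decouple and equality holds. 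The degenerate cases in which one of $m(\cT_1)$, $m(\cT_2)$, or $n(\cT_2)$ is zero are checked directly under the convention that a maximum over the empty set equals $0$.

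Finally, for the self-composition identities I would induct on $k$. Writing $n = n(\cT)$, $m = m(\cT)$ and using $\cT^{\circ k} = \cT \circ \cT^{\circ(k-1)}$, the first identity gives $n(\cT^{\circ k}) = n\cdot n(\cT^{\circ(k-1)}) = n^k$, while the second yields the recursion $m(\cT^{\circ k}) = m\,n^{k-1} + m(\cT^{\circ(k-1)})$, whence $m(\cT^{\circ k}) = m\sum_{t=0}^{k-1} n^t = m\,\frac{n^k-1}{n-1}$, where the closed form uses the hypothesis $n(\cT) > 1$. I expect the only genuine difficulty to be the bookkeeping in the children count for $m(\cT_1\circ\cT_2)$: keeping the prepending convention straight (so that the ``first block'' is the most recently attached one) and confirming that the optimizing outer vertex in $\cT_1$ and the optimizing block from $\cT_2$ can be selected independently. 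The remaining steps are routine.
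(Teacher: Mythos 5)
Your proposal is correct and follows essentially the same route as the paper's proof: the same two-case count of children of a non-root vertex (extend the first block, giving at most $m(\cT_2)$ options, or open a new block, giving at most $m(\cT_1)n(\cT_2)$ options), the same independent choice of an optimizing outer string and first block to show the bound is attained, and the same recursion $m(\cT^{\circ k}) = m(\cT)n(\cT)^{k-1} + m(\cT^{\circ(k-1)})$ with telescoping for the self-composition formula. Your explicit appeal to uniqueness of the block decomposition and your treatment of the degenerate cases are minor refinements the paper leaves implicit, not a different argument.
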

	
	\begin{proof}
		For $j \in [N_1]$, let $\iota_j: [N_2] \to [N_1N_2]$ be given by $\iota_j(i) = (j-1)N_2 + i$.  The neighbors of $\emptyset$ in $\cT_1 \circ \cT_2$ have the form $\iota_j(i)$ where $j$ is a neighbor of $\emptyset$ in $\cT_1$ and $i$ is a neighbor of $\emptyset$ in $\cT_2$, and hence $n(\cT_1 \circ \cT_2) = n(\cT_1) n(\cT_2)$.
		
		Next, consider the children of some non-root vertex of $\cT_1 \circ \cT_2$.  This vertex has the form $s = (\iota_{j_1})_*(s_1) \dots (\iota_{j_\ell})_*(s_\ell)$ where $j_1 \dots j_\ell \in \cT_1$ and $s_1$, \dots, $s_\ell \in \cT_2 \setminus \{\emptyset\}$.  There are two ways to append a letter to the front of this string and remain in $\cT_1 \circ \cT_2$.  First, we could append a letter $i$ to the front of $s_1$ in $\cT_2$ to obtain $(\iota_{j_1})_*(is_1) (\iota_{j_2})_*(s_2) \dots (\iota_{j_\ell})_*(s_\ell)$; there are at most $m(\cT_2)$ possible ways to do this.  Second, we could append $\iota_j(i)$ to $s$ for some $j$ such that $j j_1 \dots j_\ell \in \cT_1$ and some $i \in [N_2] \cap \cT_2$; there are at most $m(\cT_1) n(\cT_2)$ possible ways to do this.  Thus, the number of children of $s$ in $\cT_1 \circ \cT_2$ is at most $m(\cT_1) n(\cT_2) + m(\cT_2)$.  To show that this number of children is achieved in $\cT_1 \circ \cT_2 \setminus \{\emptyset\}$, pick some $j_1 \dots j_\ell \in \cT_1 \setminus \{\emptyset\}$ with $m(\cT_1)$ children, pick $s_1 \in \cT_2 \setminus \{\emptyset\}$ with $m(\cT_2)$ children, and pick $s_2$, \dots, $s_\ell \in \cT_2 \setminus \{\emptyset\}$ arbitrarily.  Then $s = (\iota_{j_1})_*(s_1) \dots (\iota_{j_\ell})_*(s_\ell)$ will have exactly $m(\cT_1) n(\cT_2) + m(\cT_2)$ children in $\cT_1 \circ \cT_2$ by the foregoing argument.
		
		Clearly, $n(\cT^{\circ k}) = n(\cT)^k$ follows by induction on $k$.  For the next formula, note that
		\[
		m(\cT^{\circ (k+1)}) = m(\cT \circ \cT^{\circ k}) = m(\cT) n(\cT^{\circ k}) + m(\cT^{\circ k}) = m(\cT) n(\cT)^k + m(\cT^{\circ k}).
		\]
		Hence,
		\[
		m(\cT^{\circ k}) = m(\cT) + \sum_{j=1}^{k-1} [m(\cT^{\circ (j+1)}) - m(\cT^{\circ j})] = \sum_{j=0}^{k-1} m(\cT) n(\cT)^j = m(\cT) \frac{n(\cT)^k - 1}{n(\cT) - 1}.  \qedhere
		\]
	\end{proof}
	
	\begin{lemma} \label{lem:isomorphismcomposition}
		Let $\cT_1 \in \Tree(N_1)$, $\cT_2 \in \Tree(N_2)$, $\cT_1' \in \Tree(N_1')$, $\cT_2' \in \Tree(N_2')$.  If $\cT_1 \cong \cT_1'$ and $\cT_2 \cong \cT_2'$ as rooted trees, then $\cT_1 \circ \cT_2 \cong \cT_1' \circ \cT_2'$.
	\end{lemma}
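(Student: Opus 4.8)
The plan is to build an explicit isomorphism $\Phi\colon \cT_1\circ\cT_2\to\cT_1'\circ\cT_2'$ directly from given isomorphisms $\alpha\colon\cT_1\to\cT_1'$ and $\beta\colon\cT_2\to\cT_2'$, using the composition formula \eqref{def:operad_composition} specialized to $\cT_1\circ\cT_2=\cT_1(\cT_2,\dots,\cT_2)$. Here the inclusions are $\iota_j\colon[N_2]\to[N_1N_2]$ with $\iota_j(i)=(j-1)N_2+i$, and the first step is to record a \emph{unique block decomposition} of vertices. By \eqref{def:operad_composition}, every nonempty $v\in\cT_1\circ\cT_2$ can be written as $v=(\iota_{i_1})_*(s_1)\cdots(\iota_{i_\ell})_*(s_\ell)$ with $i_1\cdots i_\ell\in\cT_1\setminus\{\emptyset\}$ and $s_t\in\cT_2\setminus\{\emptyset\}$. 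Because $i_1\cdots i_\ell$ is alternating (Definition \ref{def:string}), consecutive indices satisfy $i_t\neq i_{t+1}$, so the intervals $\iota_{i_t}([N_2])=\{(i_t-1)N_2+1,\dots,i_tN_2\}$ of adjacent blocks are disjoint. Thus the block boundaries of $v$ are exactly the positions where consecutive letters of $v$ fall into different length-$N_2$ intervals, and the data $(i_1,s_1),\dots,(i_\ell,s_\ell)$ is recovered uniquely from $v$.

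Next I would define $\Phi$ on this data: send the empty string to the empty string, and send $v$ with block data $(i_t,s_t)_{t=1}^\ell$ to
\[
\Phi(v)=(\iota'_{i_1'})_*(\beta(s_1))\cdots(\iota'_{i_\ell'})_*(\beta(s_\ell)),
\qquad i_1'\cdots i_\ell':=\alpha(i_1\cdots i_\ell),
\]
where $\iota'_j\colon[N_2']\to[N_1'N_2']$ is the analogous inclusion for $\cT_1'\circ\cT_2'$. Since any homomorphism preserves depth (children map to children), $\alpha(i_1\cdots i_\ell)$ again has length $\ell$ and lies in $\cT_1'$, its letters $i_t'$ form an alternating string, and each $\beta(s_t)\in\cT_2'\setminus\{\emptyset\}$; hence $\Phi(v)$ is a genuine vertex of $\cT_1'\circ\cT_2'$ whose block data is exactly $(i_t',\beta(s_t))$. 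Because $\alpha$ and $\beta|_{\cT_2\setminus\{\emptyset\}}$ are bijections and block decomposition is unique on both sides, $\Phi$ is a bijection, with inverse built the same way from $\alpha^{-1}$ and $\beta^{-1}$.

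It then remains to check that $\Phi$ is a homomorphism in the sense of Definition \ref{def:homomorphism}, and here I would use the description of children extracted in the proof of Lemma \ref{lem:maxchildren}: a child $w$ of $v$ is obtained either by (Type 1) extending the outermost $\cT_2$-block $s_1$ to a child $c\,s_1$ in $\cT_2$, leaving $i_1\cdots i_\ell$ unchanged, or by (Type 2) prepending a new block $(\iota_j)_*(i)$ with $j\,i_1\cdots i_\ell\in\cT_1$ a child of $i_1\cdots i_\ell$ and $i\in[N_2]\cap\cT_2$ a child of the root. In Type 1 the underlying $\cT_1$-vertex is unchanged, so $\Phi(v)$ and $\Phi(w)$ share the same indices $i_1'\cdots i_\ell'$, and $\beta(c\,s_1)$ is a child of $\beta(s_1)$; hence $\Phi(w)$ is a Type 1 child of $\Phi(v)$. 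In Type 2, $\alpha$ sends the child $j\,i_1\cdots i_\ell$ to a child $j'\,i_1'\cdots i_\ell'$ of $i_1'\cdots i_\ell'$, while $\beta$ sends the root-child $i$ (a single letter) to a root-child $\beta(i)\in[N_2']\cap\cT_2'$, so $\Phi(w)$ is a Type 2 child of $\Phi(v)$. Thus $\Phi$ (and by the symmetric argument $\Phi^{-1}$) preserves the child relation, so $\Phi$ is an isomorphism.

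I expect the main obstacle to be the homomorphism check for Type 2 children, where both $\alpha$ and $\beta$ act simultaneously; the crux is the ``suffix-consistency'' of $\alpha$, namely that $\alpha(j\,i_1\cdots i_\ell)$ is obtained from $\alpha(i_1\cdots i_\ell)$ by prepending a single letter, which is precisely what guarantees that modifying only the leading block of $v$ corresponds under $\Phi$ to modifying only the leading block of $\Phi(v)$. The uniqueness of block decomposition (relying on the alternating property) is the other load-bearing ingredient, since it is what makes $\Phi$ well defined and bijective.
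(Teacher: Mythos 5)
Your proposal is correct and follows essentially the same route as the paper's proof: the paper defines exactly the same map $\psi((\iota_{j_1})_*(s_1)\cdots(\iota_{j_\ell})_*(s_\ell)) = (\iota'_{j_1'})_*(\phi_2(s_1))\cdots(\iota'_{j_\ell'})_*(\phi_2(s_\ell))$ with $j_1'\cdots j_\ell' = \phi_1(j_1\cdots j_\ell)$, and likewise verifies the child relation by the same two-case analysis (extending the leading $\cT_2$-block versus prepending a new block) from the proof of Lemma \ref{lem:maxchildren}. Your explicit justification of the unique block decomposition via the alternating property is a detail the paper leaves implicit, but it is the same argument.
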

	
	\begin{proof}
		Let $\phi_1: \cT_1 \to \cT_1'$ and $\phi_2: \cT_2 \to \cT_2'$ be isomorphisms.  Let $\iota_j: [N_2] \to [N_1N_2]$ be given by $\iota_j(i) = (j-1)N_2 + i$, and define $\iota_j'$ analogously for $N_1'$ and $N_2'$ instead of $N_1$ and $N_2$.  Then we define an isomorphism $\psi: \cT_1 \circ \cT_2 \to \cT_1' \circ \cT_2'$ as follows.  Any vertex of $\cT_1 \circ \cT_2$ has the form $(\iota_{j_1})_*(s_1) \dots (\iota_{j_\ell})_*(s_\ell)$, where $j_1 \dots j_\ell \in \cT_1$ (here $\ell \geq 0$) and $s_i \in \cT_2 \setminus \{\emptyset\}$.  Now $\phi_1(j_1 \dots j_\ell)$ must be a string of the same length, so suppose that $\phi_1(j_1 \dots j_\ell) = j_1' \dots j_\ell'$.  Then we define
		\[
		\psi((\iota_{j_1})_*(s_1) \dots (\iota_{j_\ell})_*(s_\ell)) = (\iota_{j_1'}')_*(\phi_2(s_1)) \dots (\iota_{j_\ell'}')_*(\phi_2(s_\ell)).
		\]
		Since $\phi_1$ and $\phi_2$ are isomorphisms, the right-hand side will realize every possible string from $\cT_1' \circ \cT_2'$, and in fact will be a bijection.  The only thing left to prove is that $\psi$ preserves parent-child relationships, and this is done by examining the two cases of children as in the proof of the previous lemma.
	\end{proof}
	
	\section{Tree convolutions} \label{sec:convolution}
	
	The main result of this section is the following theorem:
	
	\begin{theorem} \label{thm:complexconvolution}
		There exists a unique function
		\[
		\Tree(N) \times \mathcal{P}(\R)^N \to \mathcal{P}(\R): (\mathcal{T},\mu_1,\dots,\mu_N) \mapsto \boxplus_{\mathcal{T}}(\mu_1,\dots,\mu_N)
		\]
		that is continuous in $\mathcal{T}$ and satisfies
		\begin{equation} \label{eq:fixedpointequation}
			K_{\boxplus_{\mathcal{T}}(\mu_1,\dots,\mu_N)}(z) = \sum_{j \in [N] \cap \mathcal{T}} K_{\mu_j}(z - K_{\boxplus_{\br_j(\mathcal{T})}(\mu_1,\dots,\mu_N)}(z)).
		\end{equation}
		In fact, this map is jointly continuous $\Tree(N) \times \mathcal{P}(\R)^N \to \mathcal{P}(\R)$.
	\end{theorem}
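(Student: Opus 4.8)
The plan is to work entirely on the level of $K$-transforms. By Lemma~\ref{lem:transformhomeomorphism} together with Corollary~\ref{cor:FandK}, the assignment $\mu \mapsto K_\mu$ is a homeomorphism from $\mathcal{P}(\R)$ onto the set of holomorphic $K \colon \h \to -\overline{\h}$ satisfying $K(z)/z \to 0$ non-tangentially, where the target carries the topology of local uniform convergence. Thus it suffices to construct, for each $\cT$, a function $K_\cT := K_{\boxplus_\cT(\mu_1,\dots,\mu_N)}$ in this class and to verify the stated continuity. For uniqueness I would induct on the depth of $\cT$ (the length of its longest string): when $\cT = \{\emptyset\}$ the sum in \eqref{eq:fixedpointequation} is empty, forcing $K_\cT \equiv 0$ and $\boxplus_\cT(\mu_1,\dots,\mu_N) = \delta_0$; and since each $\br_j(\cT)$ has strictly smaller depth, the right-hand side of \eqref{eq:fixedpointequation} is determined by the inductive hypothesis, while $K$ determines $\mu$. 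As $B_\ell(\cT) \to \cT$ in $\rho_N$, the finite-depth trees are dense in $\Tree(N)$, so continuity in $\cT$ pins down any solution on all of $\Tree(N)$.

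The analytic linchpin is the Lipschitz estimate $|K_\mu'(w)| \le -\im K_\mu(w)/\im w \le |K_\mu(w)|/\im w$. This follows from the Nevanlinna--Herglotz representation of the Pick function $F_\mu(w) - w = -K_\mu(w)$ (which maps $\h \to \overline{\h}$ since $\im F_\mu(w) \ge \im w$, as shown in the proof of Corollary~\ref{cor:FandK}); the representation has no linear term because $K_\mu(w)/w \to 0$, giving $-K_\mu(w) = b + \int (1+tw)/(t-w)\,d\rho(t)$ and hence $K_\mu'(w) = -\int (1+t^2)/(w-t)^2\,d\rho(t)$, from which the bound is immediate. I would then treat finite-depth trees by induction on depth, defining $K_\cT$ through \eqref{eq:fixedpointequation} and checking via Corollary~\ref{cor:FandK} that it is a genuine $K$-transform: each inner argument is $z - K_{\br_j(\cT)}(z) = F_{\boxplus_{\br_j(\cT)}}(z) \in \h$, so each summand, and the finite sum, maps $\h \to -\overline{\h}$; and since $F_{\boxplus_{\br_j(\cT)}}(z)/z \to 1$ non-tangentially, each $F_{\boxplus_{\br_j(\cT)}}$ carries a truncated cone into a slightly wider one, whence $K_{\mu_j}(F_{\boxplus_{\br_j(\cT)}}(z))/z \to 0$ and the normalization holds. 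Running the same induction, I would establish the crucial uniform estimate: for every $a$ and $\epsilon > 0$ there is $b$ with $|K_\cT(z)| \le \epsilon|z|$ on $\Gamma_{a,b}$ simultaneously for all finite-depth $\cT$, and (via Lemma~\ref{lem:NTconvergence2}) uniformly as $(\mu_1,\dots,\mu_N)$ ranges over a compact set. The inductive step is self-improving: bounding $|K_\cT(z)| \le \sum_j |K_{\mu_j}(w_j)|$ with each $w_j$ in a cone (by the hypothesis for the branches) and using $\sup_{\Gamma_{a',b}} |K_{\mu_j}(w)|/|w| \to 0$ from Corollary~\ref{cor:FandK} closes the loop for $b$ large.

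The heart of the argument, and the step I expect to be the main obstacle, is passing from finite-depth to arbitrary $\cT$ by showing that $K_{B_\ell(\cT)}$ converges. On a truncated cone $\Gamma_{a,b}$ with $b$ large the Lipschitz estimate furnishes a contraction: if $\cS, \cS'$ agree to depth $m$, then $[N] \cap \cS = [N] \cap \cS'$ and $\br_j(\cS), \br_j(\cS')$ agree to depth $m-1$, so subtracting the two instances of \eqref{eq:fixedpointequation} and applying the mean value bound along the segment between the inner arguments (which lies in a slightly wider cone by the uniform estimate) yields $\sup_{\Gamma_{a,b}} |K_\cS - K_{\cS'}| \le c \max_j \sup_{\Gamma_{a,b}} |K_{\br_j(\cS)} - K_{\br_j(\cS')}|$ with $c = \sum_j \sup_{\Gamma_{a',b}} |K_{\mu_j}'| < 1$ small by the preceding paragraph. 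Iterating gives geometric decay $c^m$, so $\{K_{B_\ell(\cT)}\}_\ell$ is uniformly Cauchy on the cone; since these functions form a normal family into $-\overline{\h}$, Vitali's theorem promotes cone convergence to local uniform convergence on all of $\h$. The uniform bound survives in the limit, so by Corollary~\ref{cor:FandK} the limit is the $K$-transform of a probability measure, which I define to be $\boxplus_\cT(\mu_1,\dots,\mu_N)$. The genuine difficulty is assembling the two uniform ingredients---the cone-localization of the inner arguments and the smallness of the contraction factor---into a single estimate valid uniformly over all trees and over compact families of input measures.

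Finally I would verify the two remaining claims. The fixed-point equation \eqref{eq:fixedpointequation} for general $\cT$ follows by passing to the limit $\ell \to \infty$ in the equation for $B_\ell(\cT)$, using $\br_j(B_\ell(\cT)) = B_{\ell-1}(\br_j(\cT))$ together with the local uniform convergence of all transforms and continuity of composition. Joint continuity follows from the same uniform estimates: continuity in $(\mu_1,\dots,\mu_N)$ for a fixed finite-depth $\cT$ is a routine induction on depth using Lemma~\ref{lem:transformhomeomorphism} and continuity of composition, while the geometric contraction shows that $\cT \mapsto K_\cT$ is uniformly continuous with respect to $\rho_N$, uniformly over compact families of measures. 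Combining the two, and transporting back through the homeomorphism of Lemma~\ref{lem:transformhomeomorphism}, gives joint continuity of $\Tree(N) \times \mathcal{P}(\R)^N \to \mathcal{P}(\R)$.
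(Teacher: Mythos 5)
Your proposal is correct in substance but runs on a different engine than the paper's proof. The paper proves uniqueness exactly as you do (induction on depth plus density of finite trees), and its existence argument iterates the very same sequence you consider: the $n$-th iterate of its map $\mathcal{F}$ starting from the identity coincides with the exact solution for the depth-$n$ truncation $B_n(\cT)$, as the paper observes for the subordination tree in Example \ref{ex:free}. The difference is how convergence is obtained. The paper applies the Earle--Hamilton holomorphic fixed-point theorem on the Banach space $C(\Tree(N) \times Y^N \times \Gamma_{a_0,tb_0,tc_0})$, with the cone-mapping Lemma \ref{lem:mappingregions} providing the strict inclusion of $\mathcal{F}(\Omega)$ into $\Omega$; no derivative estimate is ever needed, and joint continuity in $(\cT,\mu_1,\dots,\mu_N)$ falls out automatically because the fixed point is found inside a space of continuous functions with $\Tree(N) \times Y^N$ built into the domain (a technique the paper reuses in Theorem \ref{thm:equicontinuity}). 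You instead extract an explicit contraction constant from the Schwarz--Pick-type bound $|K_\mu'(w)| \leq -\im K_\mu(w)/\im w \leq |K_\mu(w)|/\im w$ --- your Nevanlinna--Herglotz derivation is correct, the linear term vanishing precisely because $K_\mu(z)/z \to 0$ non-tangentially --- combined with the self-improving cone estimate $|K_\cT(z)| \leq \epsilon |z|$. Your route is more elementary (no Banach-space holomorphy) and buys a quantitative geometric rate $c^m$ in the depth of agreement of two trees, i.e., an explicit modulus of continuity in $\rho_N$ uniform over compact families of measures, which the Earle--Hamilton argument yields only qualitatively and which is relevant to the numerical-error question raised in \S\ref{sec:questions}.

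One step needs repair, though the fix is within your own toolkit. You take suprema of $K_{\cS} - K_{\cS'}$ over $\Gamma_{a,b}$, which is unbounded, and differences of $K$-transforms need not be bounded there: for measures with infinite mean, $K_\mu(z)$ can grow like a power of $z$, so the base case of your geometric iteration could be infinite as written, making the inequality vacuous. Either run the contraction in the weighted norm $\sup_{z \in \Gamma_{a,b}} |K_{\cS}(z) - K_{\cS'}(z)|/|z|$, whose base value is at most $2\epsilon$ by your uniform estimate (the mean-value step divides through by $|z|$ harmlessly), or work on the compact truncated cones $\Gamma_{a,b,c}$ as the paper does, enlarging the outer radius slightly at each application to accommodate the inner arguments $z - K_{\br_j(\cS)}(z)$, which satisfy $|z - K| \leq (1+\epsilon)|z|$. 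With either normalization the iterates are uniformly Cauchy on a set with interior, normal families upgrade this to local uniform convergence on $\h$, and the rest of your argument --- passing to the limit in \eqref{eq:fixedpointequation} via $\br_j(B_\ell(\cT)) = B_{\ell-1}(\br_j(\cT))$, identifying the limit as a $K$-transform, and assembling joint continuity --- stands.
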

	
	The convolution will be constructed by iteration to a fixed point similar to the description of free and subordination convolutions in \cite{BMS2013}.  One of the main ingredients in the proof is the Earle-Hamilton theorem, which is a fixed-point theorem for holomorphic functions between Banach spaces.
	
	\begin{definition}[See {\cite{Zorn1945a,Zorn1945b,Zorn1946}}]
		Let $\mathcal{X}$ and $\mathcal{Y}$ be Banach spaces and let $\Omega$ be an open subset of $\mathcal{X}$.  A function $f: \Omega \to \mathcal{Y}$ is \emph{holomorphic} if
		\begin{enumerate}[(1)]
			\item For each $x \in \mathcal{X}$, there exists $r > 0$ such that $B(x,r) \subseteq \Omega$ and $f(B(x,r))$ is bounded.
			\item For each $x, x' \in \mathcal{X}$ and $\phi \in \mathcal{Y}'$, the function $\C \to \C$ mapping $z$ to $\phi[f(x + zx')]$ is holomorphic on the region where it is defined.
		\end{enumerate}
	\end{definition}
	
	\begin{theorem}[Earle-Hamilton {\cite{EH1970}}]
		Let $\mathcal{X}$ be a Banach space and $\Omega$ a connected open subset of $\mathcal{X}$.  Suppose that $\mathcal{F}: \Omega \to \Omega$ is holomorphic, $\mathcal{F}(\Omega)$ is bounded, and $d(\mathcal{F}(\Omega), \Omega^c) > 0$.  Then $\mathcal{F}$ has a unique fixed point in $\Omega$ and for any $x \in \Omega$, the iterates $\mathcal{F}^{\circ n}(x)$ converge to the fixed point as $n \to \infty$.
	\end{theorem}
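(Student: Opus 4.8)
The plan is to deduce the theorem from the Banach contraction principle, but with the norm replaced by the Carath\'eodory--Reiffen--Finsler (CRF) pseudometric on $\Omega$, with respect to which every holomorphic self-map is automatically non-expansive and, under the strict-interior hypothesis, genuinely contractive. Write $\epsilon := d(\mathcal{F}(\Omega),\Omega^c) > 0$ and let $M := \operatorname{diam}\mathcal{F}(\Omega) < \infty$. First I would reduce to the case that $\Omega$ is bounded: the open set $\Omega_0 := \mathcal{F}(\Omega) + \epsilon B$ (with $B$ the open unit ball of $\mathcal{X}$) is connected, has diameter at most $M + 2\epsilon$, satisfies $\mathcal{F}(\Omega_0) \subseteq \mathcal{F}(\Omega) \subseteq \Omega_0$, and still obeys $d(\mathcal{F}(\Omega_0),\Omega_0^c) \ge \epsilon$ and $\operatorname{diam}\mathcal{F}(\Omega_0) \le M$. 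Since $\mathcal{F}$ maps every point of $\Omega$ into $\Omega_0$, every fixed point of $\mathcal{F}$ in $\Omega$ lies in $\Omega_0$ and all iterates enter $\Omega_0$ after one step; thus it suffices to prove the theorem on $\Omega_0$, which I rename $\Omega$.

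Next I would introduce the infinitesimal CRF metric
\[
\alpha_\Omega(x;v) = \sup\{\, |Df(x)v| : f \in \Hol(\Omega,\D),\ f(x) = 0 \,\}, \qquad x \in \Omega,\ v \in \mathcal{X},
\]
and the integrated distance $\rho_\Omega(x,y) = \inf_\gamma \int_0^1 \alpha_\Omega(\gamma(t);\gamma'(t))\,dt$ over $C^1$ paths $\gamma$ in $\Omega$ from $x$ to $y$. Applying the Schwarz--Pick lemma on $\D$ along complex lines shows that any holomorphic $h \colon \Omega \to \Omega'$ is non-expansive, $\alpha_{\Omega'}(h(x);Dh(x)v) \le \alpha_\Omega(x;v)$, by precomposing any competitor $f$ for $\Omega'$ with $h$. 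The heart of the proof is to promote this to a strict contraction for $\mathcal{F}$. Given $x \in \Omega$ and a competitor $g \in \Hol(\Omega,\D)$ with $g(\mathcal{F}(x)) = 0$, I would introduce the auxiliary holomorphic map
\[
G(w) = \mathcal{F}(w) + \tfrac{\epsilon}{M}\bigl(\mathcal{F}(w) - \mathcal{F}(x)\bigr),
\]
which maps $\Omega$ into $\mathcal{F}(\Omega) + \epsilon B \subseteq \Omega$ because $\operatorname{diam}\mathcal{F}(\Omega) \le M$. Since $G(x) = \mathcal{F}(x)$ and $DG(x) = \tfrac{M+\epsilon}{M}D\mathcal{F}(x)$, the map $g \circ G \in \Hol(\Omega,\D)$ vanishes at $x$, so $\tfrac{M+\epsilon}{M}|Dg(\mathcal{F}(x))[D\mathcal{F}(x)v]| = |D(g\circ G)(x)v| \le \alpha_\Omega(x;v)$. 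Taking the supremum over $g$ yields the infinitesimal contraction $\alpha_\Omega(\mathcal{F}(x);D\mathcal{F}(x)v) \le k\,\alpha_\Omega(x;v)$ with $k = M/(M+\epsilon) < 1$, and integrating along paths gives $\rho_\Omega(\mathcal{F}(x),\mathcal{F}(y)) \le k\,\rho_\Omega(x,y)$. (To make the closed-ball containment strict one runs the same argument with $\epsilon' < \epsilon$, changing only the value of $k$.) I expect this construction of $G$ --- the precise conversion of the interior-distance hypothesis into the contraction factor --- to be the main obstacle.

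Finally I would compare $\rho_\Omega$ with the norm on the bounded domain. Choosing for each $v$ a norming functional $\phi$ and setting $f = \phi(\cdot - x)/(2R)$ with $R = \operatorname{diam}\Omega$ gives a competitor showing $\alpha_\Omega(x;v) \ge \norm{v}/(2R)$, hence $\norm{a-b} \le 2R\,\rho_\Omega(a,b)$; conversely, for $p$ on a compact subset of $\Omega$ the Cauchy estimate from a ball $B(p,\delta)\subseteq\Omega$ bounds $\alpha_\Omega(p;v) \le \norm{v}/\delta$, so joining any two points by a polygonal path through a compact subset shows $\rho_\Omega$ is finite. Consequently the iterates $x_n = \mathcal{F}^{\circ n}(x_0)$ satisfy $\rho_\Omega(x_n,x_{n+1}) \le k^n \rho_\Omega(x_0,x_1)$, so $(x_n)$ is $\rho_\Omega$-Cauchy and therefore norm-Cauchy; its norm limit $\bar x$ lies in $\overline{\mathcal{F}(\Omega)} \subseteq \Omega$ and satisfies $\mathcal{F}(\bar x) = \bar x$ by continuity of $\mathcal{F}$. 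Uniqueness follows because any two fixed points lie in $\mathcal{F}(\Omega)$ at finite $\rho_\Omega$-distance, and $\rho_\Omega(\bar x,\bar y) \le k\,\rho_\Omega(\bar x,\bar y)$ forces $\rho_\Omega(\bar x,\bar y) = 0$, whence $\bar x = \bar y$. This yields a unique fixed point together with convergence of the iterates $\mathcal{F}^{\circ n}(x)$ from every $x \in \Omega$, as claimed.
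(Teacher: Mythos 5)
The paper offers no proof of this statement to compare against: it imports the Earle--Hamilton theorem directly from \cite{EH1970} and uses it as a black box in the proofs of Theorems \ref{thm:complexconvolution} and \ref{thm:equicontinuity}. Your argument is correct, and it is essentially the original Earle--Hamilton argument in the streamlined form later popularized by Harris: equip $\Omega$ with the integrated Carath\'eodory--Reiffen--Finsler pseudometric, observe that holomorphic maps are non-expansive for it, and convert the hypothesis $d(\mathcal{F}(\Omega),\Omega^c)>0$ into a uniform contraction factor via the dilation $G(w)=\mathcal{F}(w)+\frac{\epsilon}{M}\bigl(\mathcal{F}(w)-\mathcal{F}(x)\bigr)$, which is exactly the load-bearing trick; your computation $DG(x)=\frac{M+\epsilon}{M}D\mathcal{F}(x)$ and the resulting factor $k=M/(M+\epsilon)$ are right, and you correctly flag and repair the closed-versus-open ball issue by working with $\epsilon'<\epsilon$. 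The norm comparisons in both directions (the Hahn--Banach competitor giving $\alpha_\Omega(x;v)\geq \norm{v}/(2R)$ on the bounded domain, and the Schwarz-lemma bound $\alpha_\Omega(p;v)\leq\norm{v}/\delta$ giving finiteness of $\rho_\Omega$ along polygonal paths) are the standard ones and suffice to run the contraction-mapping argument, including uniqueness. Two small points to patch for completeness: (i) if $\mathcal{F}$ is constant then $M=0$ and your formula for $G$ is undefined, but the theorem is trivial in that case (alternatively replace $M$ by $\max(M,\epsilon)$ throughout); (ii) you should remark that $t\mapsto\alpha_\Omega(\gamma(t);\gamma'(t))$ is measurable---it is lower semicontinuous as a supremum of continuous functions and bounded along compact paths by your Schwarz estimate---and that continuity and Fr\'echet differentiability of $\mathcal{F}$ (needed for the chain rule $D(g\circ G)(x)=Dg(\mathcal{F}(x))DG(x)$ and for passing to the limit $\mathcal{F}(\bar x)=\bar x$) follow from the local-boundedness clause in the paper's definition of holomorphy. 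Neither point affects the structure of the proof.
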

	
	We also use the following lemma about $K$-transforms and truncated cones.  For $a \in (0,1)$ and $0 < b < c$, we define
	\[
	\Gamma_{a,b,c} := \{z: \im z \geq \max(a|z|,b), |z| \leq c\}.
	\]
	Note that $\Gamma_{a,b,c}$ is convex.  Of course, if $a$ were greater than $1$, this set would be empty since $\im z \leq |z|$.  We also remark that
	\[
	\Gamma_{a_1,b_1,c_1} \subseteq \Gamma_{a_2,b_2,c_2} \text{ if and only if } a_1 \geq a_2, b_1 \geq b_2, c_1 \leq c_2.
	\]
	
	\begin{lemma} \label{lem:mappingregions}
		Let $Y \subseteq \mathcal{P}(\R)$ be compact, let $N > 0$, and suppose that
		\[
		1 > a_0 > a_1 > 0, \qquad 0 < b_1 < b_0 < c_0 < c_1.
		\]
		and
		\[
		1 > a_2 > 0, \qquad 0 < b_2 < c_2.
		\]
		Then for sufficiently large $t$, we have
		\[
		\mu \in Y, z \in \Gamma_{a_0,tb_0,tc_0}, w \in \Gamma_{a_2,tb_2,tc_2} \implies z - N K_\mu(w) \in \Gamma_{a_1,tb_1,tc_1}.
		\]
	\end{lemma}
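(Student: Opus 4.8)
The plan is to verify directly the three inequalities defining membership in $\Gamma_{a_1,tb_1,tc_1}$ for the point $z' := z - NK_\mu(w)$, namely $\im z' \geq tb_1$, $\im z' \geq a_1|z'|$, and $|z'| \leq tc_1$. The governing principle is that $K_\mu(w)$ is negligible compared to $|w|$ once $w$ is large, so $NK_\mu(w)$ is only a small perturbation of $z$ relative to the scale $t$, and the strict gaps $a_0 > a_1$, $b_0 > b_1$, and $c_0 < c_1$ leave exactly enough room to absorb this perturbation.

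The key estimate comes from Lemma \ref{lem:NTconvergence2}: since $Y$ is compact, $K_\mu(w)/w \to 0$ as $w \to \infty$ non-tangentially, uniformly over $\mu \in Y$. Applying this with the cone aperture $a_2$, for any prescribed $\epsilon > 0$ there is $b > 0$ with $|K_\mu(w)| \leq \epsilon|w|$ for all $w \in \Gamma_{a_2,b}$ and all $\mu \in Y$. Once $t \geq b/b_2$ we have $\Gamma_{a_2,tb_2,tc_2} \subseteq \Gamma_{a_2,tb_2} \subseteq \Gamma_{a_2,b}$, and since $|w| \leq tc_2$ on the truncated cone, this yields the clean bound $N|K_\mu(w)| \leq N\epsilon|w| \leq N\epsilon t c_2$ for every $\mu \in Y$ and every $w \in \Gamma_{a_2,tb_2,tc_2}$.

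With this bound in hand I would verify the three conditions. The two imaginary-part conditions come essentially for free: by Corollary \ref{cor:FandK} we have $\im K_\mu(w) \leq 0$, so $\im z' = \im z - N\im K_\mu(w) \geq \im z \geq tb_0 > tb_1$, which gives the second defining inequality immediately, while the cone membership of $z$ gives $\im z \geq a_0|z|$. For the radius condition, $|z'| \leq |z| + N|K_\mu(w)| \leq tc_0 + N\epsilon tc_2 = t(c_0 + N\epsilon c_2)$, which is at most $tc_1$ provided $\epsilon < (c_1 - c_0)/(Nc_2)$. For the remaining cone condition I combine $\im z' \geq \im z \geq a_0|z|$ with $|z'| \leq |z| + N\epsilon t c_2$ and the fact that $|z| \geq \im z \geq tb_0$; a short computation then shows $\im z' \geq a_1|z'|$ as soon as $\epsilon \leq (a_0 - a_1)b_0/(a_1 Nc_2)$.

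Thus choosing $\epsilon$ smaller than both thresholds and then $t \geq b/b_2$ (with $b$ produced by the uniform convergence) completes the argument. There is no serious obstacle: the only substantive input is the uniform non-tangential decay of $K_\mu$ from Lemma \ref{lem:NTconvergence2}, and the rest is bookkeeping to confirm that the parameter gaps dominate the error term of size $O(\epsilon t)$. The one point requiring care is that every estimate must be uniform in $\mu \in Y$ and in $z, w$ ranging over the truncated cones, which is precisely why the compactness of $Y$ and the uniform form of Nevanlinna's asymptotics are invoked rather than mere pointwise convergence.
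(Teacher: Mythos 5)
Your proposal is correct and takes essentially the same approach as the paper: both arguments rest on the uniform bound $|K_\mu(w)| \leq \epsilon |w|$ over the truncated cone $\Gamma_{a_2,tb_2,tc_2}$ and $\mu \in Y$ supplied by Lemma \ref{lem:NTconvergence2}, followed by the same bookkeeping verifying the three defining inequalities of $\Gamma_{a_1,tb_1,tc_1}$, with the parameter gaps $a_0 > a_1$, $b_0 > b_1$, $c_0 < c_1$ absorbing the $O(\epsilon t)$ perturbation. Your one deviation---invoking $\im K_\mu(w) \leq 0$ from Corollary \ref{cor:FandK} to get $\im(z - NK_\mu(w)) \geq \im z$ outright---is a mild simplification of the paper's estimate $\im(z - NK_\mu(w)) \geq \im z - \epsilon(t)|w|$, not a genuinely different method.
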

	
	\begin{proof}
		Let
		\[
		\epsilon(t) = \sup_{\mu \in Y} \sup_{w \in \Gamma_{a_2,tb_2,tc_2}} \frac{N|K_\mu(w)|}{|w|}.
		\]
		Note that $\epsilon(t) \to 0$ as $t \to \infty$ using Lemma \ref{lem:NTconvergence2}.  Let $z \in \Gamma_{a_0,tb_0,tc_0}$ and $w \in \Gamma_{a_2,tb_2,tc_2}$.  Note that
		\[
		\frac{\im(z - N K_\mu(w))}{|z - NK_\mu(w)|} \geq \frac{\im(z) - \epsilon(t) |w|}{|z| + \epsilon(t) |w|}
		\geq \frac{\im(z) - \epsilon(t)(tc_2/tb_0) \im(z) }{|z| + \epsilon(t) (tc_2/tb_0)|z|} = \frac{b_0 - \epsilon(t) c_2}{b_0 + \epsilon(t) c_2} \frac{\im z}{|z|} \geq \frac{b_0 - \epsilon(t) c_2}{b_0 + \epsilon(t) c_2} a_0,
		\]
		where we have used the fact that $|z| \geq \im z \geq tb_0$.  Since $\epsilon(t) \to 0$, we have for sufficiently large $t$ that
		\[
		\frac{b_0 - \epsilon(t) c_2}{b_0 + \epsilon(t) c_2} a_0 \geq a_1.
		\]
		Next, note that
		\[
		\im(z - N K_\mu(w))
		\geq \im(z) - \epsilon(t) |w|
		\geq t[b_0 - \epsilon(t) c_2].
		\]
		This will be greater than or equal to $tb_1$ provided that $t$ is large enough that $b_0 - \epsilon(t) c_2 \geq b_1$.  Finally,
		\[
		|z - NK_\mu(w)| \leq |z| + \epsilon(t) |w| \leq tc_0 + \epsilon(t) tc_2 = t(c_0 + \epsilon(t) c_2).
		\]
		This will be less than or equal to $t c_1$ provided that $t$ is large enough that $c_0 + \epsilon(t) c_2 \leq c_1$.
	\end{proof}
	
	\begin{proof}[Proof of Theorem \ref{thm:complexconvolution}]
		First, let us prove the uniqueness claim.  Note that if $\mathcal{T}$ is a finite tree of depth $d$, then \eqref{eq:fixedpointequation} expresses $K_{\boxplus_{\mathcal{T}}(\mu_1,\dots,\mu_N)}$ in terms of the branches of $\mathcal{T}$, which are trees of depth at most $d - 1$.  Therefore, by induction, $\boxplus_{\mathcal{T}}(\mu_1,\dots,\mu_N)$ is uniquely determined for all finite trees in $\Tree(N)$.  However, finite trees are dense in $\Tree(N)$, so by continuity, $\boxplus_{\mathcal{T}}(\mu_1,\dots,\mu_N)$ is uniquely determined for every tree.
		
		To prove the existence and continuity claims, we begin more generally.  Let $Y$ be a compact subset of $\mathcal{P}(\R)$, and fix some
		\[
		1 > a_0 > a_1 > a_2 > 0, \qquad 0 < b_2 < b_1 < b_0 < c_0 < c_1 < c_2.
		\]
		Fix some $t$ as in the conclusion of Lemma \ref{lem:mappingregions}.  We will apply the Earle-Hamilton theorem with
		\[
		\mathcal{X} = C(\Tree(N) \times Y^N \times \Gamma_{a_0,tb_0,tc_0}), \qquad \Omega = C(\Tree(N) \times Y^N \times \Gamma_{a_0,tb_0,tc_0},\, (\Gamma_{a_2,tb_2,tc_2})^\circ).
		\]
		To check that $\Omega$ is open, note that because $\Tree(N) \times Y^N \times \Gamma_{a_0,tb_0,tc_0}$ is compact, any continuous function $f$ from this space into $(\Gamma_{a_2,tb_2,tc_2})^\circ$ will have compact image, hence the image will be separated by a positive distance $\delta$ from $\C \setminus \Gamma_{a_2,tb_2,tc_2}$, and then $C(\Tree(N) \times Y^N \times \Gamma_{a_0,tb_0,tc_0}, (\Gamma_{a_2,tb_2,tc_2})^\circ)$ contains the ball of radius $\delta / 2$ around $f$ in $C(\Tree(N) \times Y^N \times \Gamma_{a_0,tb_0,tc_0})$.  Clearly, $\Omega$ is connected (and in fact convex) because it consists of functions with the convex target space $\Gamma_{a_0,tb_0,tc_0}$.
		
		Now let $\mathcal{F}: \Omega \to \mathcal{X}$ be given by
		\[
		\mathcal{F}(f)(\mathcal{T},\mu_1,\dots,\mu_N,z) = z - \sum_{j \in \mathcal{T} \cap [N]} K_{\mu_j}(f(\br_j(\mathcal{T}),\mu_1,\dots,\mu_N,z)).
		\]
		Because $\br_j$ is continuous, it is straightforward to check that $\mathcal{F}(f)$ is continuous, hence is an element of $\mathcal{X}$.  Because $K_{\mu_j}$ is holomorphic, it follows that $\mathcal{F}$ is a holomorphic function $\Omega \to \mathcal{X}$.  Indeed, it suffices to check for each $j$ the holomorphicity of the map $\mathcal{F}_j$ given by
		\[
		\mathcal{F}_j(f)(\mathcal{T},\mu_1,\dots,\mu_N,z) = \mathbf{1}_{j \in \mathcal{T}} K_{\mu_j}(f(\br_j(\mathcal{T}),\mu_1,\dots,\mu_N,z)).
		\]
		Letting $\Tree(N)_j = \{\mathcal{T} \in \Tree(N): j \in \mathcal{T}\}$, we can write $\mathcal{F}_j$ as the composition of the following maps:
		\begin{itemize}
			\item The map $C(\Tree(N) \times Y^N \times \Gamma_{a_0,tb_0,tc_0},\Gamma_{a_2,tb_2,tc_2}) \to C(\Tree(N)_j \times Y^N \times \Gamma_{a_0,tb_0,tc_0},\Gamma_{a_2,tb_2,tc_2})$ given by precomposition in the $\mathcal{T}$-coordinate with $\br_j: \Tree(N)_j \to \Tree(N)$.  This is the restriction of a linear transformation $C(\Tree(N) \times Y^N \times \Gamma_{a_0,tb_0,tc_0}) \to C(\Tree(N)_j \times Y^N \times \Gamma_{a_0,tb_0,tc_0})$, hence is holomorphic.
			\item Pointwise application of $K_{\mu_j}$, which maps $C(\Tree(N)_j \times Y^N \times \Gamma_{a_0,tb_0,tc_0},\Gamma_{a_2,tb_2,tc_2})$ holomorphically into $C(\Tree(N)_j \times Y^N \times \Gamma_{a_0,tb_0,tc_0})$.
			\item The inclusion map $C(\Tree(N)_j \times Y^N \times \Gamma_{a_0,tb_0,tc_0}) \to C(\Tree(N) \times Y^N \times \Gamma_{a_0,tb_0,tc_0})$ given by extension by zero (recall that $\Tree(N)_j$ is clopen in $\Tree(N)$).  This map is linear, hence holomorphic.
		\end{itemize}
		We claim that
		\begin{equation} \label{eq:imageinclusion}
			\mathcal{F}(\Omega) \subseteq C(\Tree(N) \times Y^N \times \Gamma_{a_0,tb_0,tc_0}, \Gamma_{a_1,tb_1,tc_1}) \subseteq \Omega.
		\end{equation}
		Fix $f \in \Omega$, and fix $\mathcal{T}$, $\mu_1$, \dots, $\mu_N$, and $z$.  By our choice of $t$ (see Lemma \ref{lem:mappingregions}), since $\mu_j \in Y$ and $z \in \Gamma_{a_0,tb_0,tc_0}$ and $f(\br_j(\mathcal{T}),\mu_1,\dots,\mu_N,z) \in \Gamma_{a_2,tb_2,tc_2}$, we have
		\[
		z - N K_{\mu_j}(f(\br_j(\mathcal{T}),\mu_1,\dots,\mu_N,z)) \in \Gamma_{a_1,tb_1,tc_1}.
		\]
		Now because $\Gamma_{a_1,tb_1,tc_1}$ is convex and contains $z$, the point
		\[
		z - \sum_{j \in \mathcal{T} \cap [N]} K_{\mu_j}(f(\br_j(\mathcal{T}),\mu_1,\dots,\mu_N,z)) = \frac{|[N] \setminus \mathcal{T}|}{N} z + \sum_{j \in [N] \cap \mathcal{T}} \frac{1}{N} \left( z - NK_{\mu_j}(f(\br_j(\mathcal{T}),\mu_1,\dots,\mu_N,z)) \right)
		\]
		is in $\Gamma_{a_1,tb_1,tc_1}$.  Therefore, $\mathcal{F}(f)(\mathcal{T},\mu_1,\dots,\mu_N,z) \in \Gamma_{a_1,tb_1,tc_1}$, demonstrating \eqref{eq:imageinclusion}.
		
		Now $\Gamma_{a_1,tb_1,tc_1}$ is separated by a positive distance $\delta$ from $\Gamma_{a_2,tb_2,tc_2}^c$.  This implies that $C(\Tree(N) \times Y^N \times \Gamma_{a_0,tb_0,tc_0}, \Gamma_{a_1,tb_1,tc_1})$ is separated by $\delta$ from the complement of $\Omega = C(\Tree(N) \times Y^N \times \Gamma_{a_0,tb_0,tc_0}, (\Gamma_{a_2,tb_2,tc_2})^\circ)$.  Therefore, the Earle-Hamilton theorem applies and there is a unique $f \in \Omega$ that satisfies $\mathcal{F}(f) = f$.  Moreover, the iterates $\mathcal{F}^{\circ n}(z)$ (where $z$ represents the constant function with value $z$) converge to $f$ in $C(\Tree(N) \times Y^N \times \Gamma_{a_0,tb_0,tc_0}, (\Gamma_{a_2,tb_2,tc_2})^\circ)$ as $n \to \infty$.
		
		Now we can prove the existence claim.  Fix $\mu_1$, \dots, $\mu_N$.  In the foregoing argument, we can take $Y = \{\mu_1,\dots,\mu_N\}$, which is clearly compact.  Let
		\[
		F_{\mathcal{T},\mu_1,\dots,\mu_N}^{(0)}(z) = z
		\]
		and
		\[
		F_{\mathcal{T},\mu_1,\dots,\mu_N}^{(n+1)}(z) = z - \sum_{j \in [N] \cap \mathcal{T}} K_{\mu_j}(F_{\br_j(\mathcal{T}),\mu_1,\dots,\mu_N}^{(n)}(z)).
		\]
		By a straightforward induction argument, $F_{\mathcal{T},\mu_1,\dots,\mu_N}^{(n+1)}(z)$ is well-defined and is a holomorphic map from the upper half-plane to itself.  Moreover,
		\[
		F_{\mathcal{T},\mu_1,\dots,\mu_N}^{(n)}|_{\Gamma_{a_0,tb_0,tc_0}} = \mathcal{F}^{\circ n}(z)(\mathcal{T},\mu_1,\dots,\mu_N,\cdot).
		\]
		Hence, the preceding argument shows that $F_{\mathcal{T},\mu_1,\dots,\mu_N}^{(n)}$ converges uniformly on $\Gamma_{a_0,tb_0,tc_0}$ as $n \to \infty$.  Because $\Hol(\h,\h)$ is a normal family when the target space is viewed as a subset of the Riemann sphere, it follows that $F_{\mathcal{T},\mu_1,\dots,\mu_N}^{(n)}$ converges locally uniformly on all of $\h$ as $n \to \infty$ to some function $F_{\mathcal{T},\mu_1,\dots,\mu_N}$ taking values in the closure of $\h$ in the Riemann sphere.  But $F_{\cT,\mu_1,\dots,\mu_N}$ maps $\Gamma_{a_0,tb_0,tc_0}$ into $\Gamma_{a_2,tb_2,tc_2}$ which is in $\h$, and therefore, the open mapping theorem implies that $F_{\cT,\mu_1,\dots,\mu_N}$ maps $\h$ into $\h$.  The identity
		\[
		F_{\mathcal{T},\mu_1,\dots,\mu_N}(z) = z - \sum_{j \in [N] \cap \mathcal{T}} K_{\mu_j}(F_{\br_j(\mathcal{T}),\mu_1,\dots,\mu_N}(z))
		\]
		holds on $\Gamma_{a_0,tb_0,tc_0}$ by the foregoing argument, and hence it holds on all of $\h$ by the identity theorem.
		
		Next, we argue that $F_{\mathcal{T},\mu_1,\dots,\mu_N}$ is the $F$-transform of some probability measure $\mu$.  By Nevanlinna's theorem, it suffices to show that $F_{\mathcal{T},\mu_1,\dots,\mu_N}(it) / t \to i$ as $t \to +\infty$ on the positive real axis.  For this purpose, let us forget the original values of $a_j, b_j, c_j$ and $t$.  Given a neighborhood $U$ of $i$, we may choose $a_0 > a_1 > a_2 > 0$ and $0 < b_2 < b_1 < b_0 < c_0 < c_1 < c_2$ such that
		\[
		\Gamma_{a_1,b_1,c_1} \subseteq U.
		\]
		If $t$ is sufficiently large, then the foregoing argument shows that
		\[
		F_{\mathcal{T},\mu_1,\dots,\mu_N}(\Gamma_{a_0,tb_0,tc_0}) \subseteq \Gamma_{a_1,tb_1,tc_1}
		\]
		since the fixed point of $\mathcal{F}$ must clearly be in $\mathcal{F}(\Omega)$.  In particular, since $it \in t \Gamma_{a_0,b_0,c_0} = \Gamma_{a_0,tb_0,tc_0}$, we get $F_{\mathcal{T},\mu_1,\dots,\mu_N}(it) \in \Gamma_{a_1,tb_1,tc_1} = t \Gamma_{a_1,b_1,c_1}$ and hence $F_{\mathcal{T},\mu_1,\dots,\mu_N}(it) / t \in U$.  Thus, $F_{\mathcal{T},\mu_1,\dots,\mu_N}$ is the $F$-transform of some probability measure $\boxplus_{\mathcal{T}}(\mu_1,\dots,\mu_N)$.  This concludes the existence claim.
		
		Finally, we must show joint continuity of $(\mathcal{T},\mu_1,\dots,\mu_N) \mapsto \boxplus_{\mathcal{T}}(\mu_1,\dots,\mu_N)$.  Since $\mathcal{P}(\R)$ is metrizable by Prokhorov's theorem, it suffices to show sequential continuity, which in turn will follow if we show that the map is continuous on $\Tree(N) \times Y^N$ for every compact $Y \subseteq \mathcal{P}(\R)$.  Fix constants $a_0 > a_1 > a_2 > 0$ and $0 < b_2 < b_1 < b_0 < c_0 < c_1 < c_2$ and let $t$ be as in Lemma \ref{lem:mappingregions}.  Then by the previous argument involving the Earle-Hamilton theorem, the map
		\[
		\Tree(N) \times Y^N \times \Gamma_{a_0,tb_0,tc_0} \to \Gamma_{a_1,tb_1,tc_1}: (\mathcal{T},\mu_1,\dots,\mu_N,z) \mapsto F_{\mathcal{T},\mu_1,\dots,\mu_N}(z)
		\]
		is jointly continuous, due to the definition of the set $\Omega$.  Since the domain of this function is compact, it is uniformly continuous, and hence
		\[
		\Tree(N) \times Y^N \to C(\Gamma_{a_0,tb_0,tc_0},\Gamma_{a_1,tb_1,tc_1}): (\mathcal{T},\mu_1,\dots,\mu_N) \mapsto F_{\mathcal{T},\mu_1,\dots,\mu_N}|_{\Gamma_{a_0,tb_0,tc_0}}
		\]
		is continuous.  Because $\Hol(\h,\h)$ is a normal family, uniform convergence on $\Gamma_{a_0,tb_0,tc_0}$ of a sequence $F_n$ in $\Hol(\h,\h)$ to some $F \in \Hol(\h,\h)$ implies local uniform convergence $F_n \to F$ on all of $\h$.  Hence, we have continuity of the map
		\[
		\Tree(N) \times Y^N \to \Hol(\h,\h): (\mathcal{T},\mu_1,\dots,\mu_N) \mapsto F_{\mathcal{T},\mu_1,\dots,\mu_N}.
		\]
		But by Lemma \ref{lem:transformhomeomorphism}, this is equivalent to continuity of $(\mathcal{T},\mu_1,\dots,\mu_N) \mapsto \boxplus_{\mathcal{T}}(\mu_1,\dots,\mu_N)$, which is what we wanted to prove.
	\end{proof}
	
	\begin{corollary}
		The convolution operation defined in Theorem \ref{thm:complexconvolution} in the case of compactly supported measures agrees with the one defined in \cite{JekelLiu2020} for $\cB = \C$.
	\end{corollary}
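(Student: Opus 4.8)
The plan is to show that the convolution $\boxplus_\cT^{\mathrm{old}}$ built from the operator model in \cite{JekelLiu2020} satisfies the very same fixed-point equation \eqref{eq:fixedpointequation} that characterizes the convolution $\boxplus_\cT$ of Theorem \ref{thm:complexconvolution}, and then to deduce equality from the structure of that equation. The starting point is the decomposition \eqref{eq:decomposition} from \cite[Proposition 6.8]{JekelLiu2020}, valid for compactly supported measures. Recall that boolean convolution linearizes the $K$-transform, $K_{\mu \uplus \nu} = K_\mu + K_\nu$, and that orthogonal convolution satisfies $K_{\mu \vdash \nu}(z) = K_\mu(z - K_\nu(z))$ (Examples \ref{ex:boolean} and \ref{ex:orthogonal}). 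Substituting these two identities into \eqref{eq:decomposition} immediately yields
\[
K_{\boxplus_\cT^{\mathrm{old}}(\mu_1,\dots,\mu_N)}(z) = \sum_{j \in [N] \cap \cT} K_{\mu_j}\bigl(z - K_{\boxplus_{\br_j(\cT)}^{\mathrm{old}}(\mu_1,\dots,\mu_N)}(z)\bigr),
\]
so $\boxplus_\cT^{\mathrm{old}}$ obeys \eqref{eq:fixedpointequation}. Throughout, if $\mu_1,\dots,\mu_N$ are supported in $[-R,R]$, then every measure $\boxplus_{\cT'}^{\mathrm{old}}(\mu_1,\dots,\mu_N)$ is the spectral measure of a self-adjoint operator of norm at most $NR$, so all of these measures have support in $[-NR,NR]$; in particular their moments and the Laurent expansions of their $K$-transforms near $\infty$ are well-defined.

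For finite trees I would argue by induction on the depth. When $\cT = \{\emptyset\}$ both sides reduce to $\delta_0$, and for a finite tree of depth $d$ each branch $\br_j(\cT)$ has depth at most $d-1$; since the right-hand side of \eqref{eq:fixedpointequation} involves only the $\mu_j$ and the branch convolutions, the inductive hypothesis $\boxplus_{\br_j(\cT)}^{\mathrm{old}} = \boxplus_{\br_j(\cT)}$ forces $K_{\boxplus_\cT^{\mathrm{old}}} = K_{\boxplus_\cT}$ and hence $\boxplus_\cT^{\mathrm{old}} = \boxplus_\cT$. This settles the dense subset of finite trees with no analytic input beyond \eqref{eq:fixedpointequation}.

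The remaining, and main, difficulty is to pass to infinite trees. The clean route is a moment-truncation estimate: I claim that the coefficients of $K_{\boxplus_\cT(\mu_1,\dots,\mu_N)}$ in its expansion in powers of $1/z$ up to order $k$ --- equivalently the moments of $\boxplus_\cT$ up to order $k+1$ --- depend only on the ball $B_{\ell}(\cT)$ once $\ell$ is large compared to $k$. This follows by tracking orders in \eqref{eq:fixedpointequation}: writing $w_j = z - K_{\boxplus_{\br_j(\cT)}}(z)$ and expanding $K_{\mu_j}(w_j)$, the coefficients of $K_{\boxplus_{\br_j(\cT)}}$ enter the order-$k$ part of $K_{\boxplus_\cT}$ only up to order $k-1$, so the relevant depth strictly decreases at each level of the recursion. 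Since the same equation \eqref{eq:fixedpointequation} governs both $\boxplus_\cT$ and $\boxplus_\cT^{\mathrm{old}}$, both have moments through order $k+1$ equal to those of the corresponding convolution over the finite tree $B_\ell(\cT)$; combined with the finite-tree case and the fact that measures supported in $[-NR,NR]$ are determined by their moments, this gives $\boxplus_\cT^{\mathrm{old}} = \boxplus_\cT$ for every tree.

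I expect the moment-truncation bookkeeping to be the only genuinely technical step; everything else is a direct transcription of \eqref{eq:decomposition} into $K$-transform language plus the uniqueness mechanism already present in Theorem \ref{thm:complexconvolution}. An alternative to the moment argument would be to establish continuity of $\cT \mapsto \boxplus_\cT^{\mathrm{old}}$ directly --- for instance from the operator model, where a length-$k$ word in $\tilde X_1 + \dots + \tilde X_N$ only reaches basis vectors indexed by strings of length $\le k$, so that $\boxplus_{B_\ell(\cT)}^{\mathrm{old}} \to \boxplus_\cT^{\mathrm{old}}$ --- and then invoke continuity of $\boxplus_\cT$ from Theorem \ref{thm:complexconvolution} together with density of finite trees. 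Either way, the content is confined to the infinite-tree limit.
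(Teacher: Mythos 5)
Your proposal follows essentially the same route as the paper's proof, which is a citation-based version of the same argument: the paper observes that the convolution from \cite{JekelLiu2020} satisfies \eqref{eq:fixedpointequation} (citing \cite[\S 6.3, equation (6.3)]{JekelLiu2020}, i.e., exactly the $K$-transform transcription of \eqref{eq:decomposition} that you carry out by hand), notes that it depends continuously on $\cT$ because the moments do (citing \cite[\S 5.2]{JekelLiu2020}) while all the convolutions of measures supported in $[-R,R]$ live in the common compact set $[-NR,NR]$, and then invokes the uniqueness clause of Theorem \ref{thm:complexconvolution}. Your closing ``alternative'' paragraph is precisely this; your induction on depth for finite trees merely re-runs the uniqueness mechanism already inside Theorem \ref{thm:complexconvolution}, which is harmless.

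The one wrinkle is in your primary route: applying the moment-truncation claim directly to $\boxplus_{\cT}(\mu_1,\dots,\mu_N)$ for an infinite tree $\cT$ presupposes that the measure produced by Theorem \ref{thm:complexconvolution} has finite moments of the relevant orders (equivalently, that $K_{\boxplus_{\cT}}$ admits a non-tangential expansion to order $k$), which is not known a priori --- that theorem only yields a probability measure, with no compact support or moment information. The tracking of orders through \eqref{eq:fixedpointequation} is legitimate for the \emph{old} convolution, where compact support in $[-NR,NR]$ (hence convergent Laurent expansions) comes from the operator model, but not yet for the new one. The clean repair is your own alternative: by continuity in Theorem \ref{thm:complexconvolution}, $\boxplus_{\cT} = \lim_{\ell \to \infty} \boxplus_{B_\ell(\cT)}$; the finite-tree case identifies $\boxplus_{B_\ell(\cT)}$ with the old convolution; and moment locality for the old convolution gives $\boxplus_{B_\ell(\cT)}^{\mathrm{old}} \to \boxplus_{\cT}^{\mathrm{old}}$, closing the limit. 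So the content is right --- just do not run the truncation bookkeeping on the new convolution before you know it has moments.
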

	
	\begin{proof}
		Let $\cT \in \Tree(N)$, and let $\mu_1$, \dots, $\mu_N$ be probability measures supported in $[-R,R]$.  The paper \cite{JekelLiu2020} took the viewpoint of treating the measures as positive linear functionals on the polynomial algebra, which is equivalent in the case of compactly supported measures.  The convolution operation in \cite{JekelLiu2020} was shown to satisfy \eqref{eq:fixedpointequation}; see \cite[\S 6.3, equation (6.3)]{JekelLiu2020}.  Now the $\cT$-free convolution of $\mu_1$, \dots, $\mu_N$ is supported in $[-NR,NR]$, and the moments depend continuously on $\cT$; see \cite[\S 5.2]{JekelLiu2020}.  Therefore, the convolution in \cite{JekelLiu2020} for $\mu_1$, \dots, $\mu_N$ satisfies the fixed point equation and continuity property of Theorem \ref{thm:complexconvolution}, so it agrees with the convolution defined in that theorem.
	\end{proof}
	
	Here are a few simple cases of convolution operations that we will use later.
	
	\begin{example}
		Let $\cT = \{\emptyset\} \in \Tree(N)$.  Then
		\[
		K_{\boxplus_{\cT}(\mu_1,\dots,\mu_N)}(z) = 0
		\]
		because it is the sum over an empty index set.  Hence, $\boxplus_{\cT}(\mu_1,\dots,\mu_N) = \delta_0$.
	\end{example}
	
	\begin{example} \label{ex:boolean}
		Let $\cT_{N,\bool} = \{\emptyset\} \cup [N] \in \Tree(N)$.  Then $\br_j(\cT_{N,\bool}) = \{\emptyset\}$.  Therefore,
		\[
		K_{\boxplus_{\cT_{N,\bool}}(\mu_1,\dots,\mu_N)}(z) = \sum_{j=1}^N K_{\mu_j}(z - K_{\delta_0}(z)) = \sum_{j=1}^N K_{\mu_j}(z).
		\]
		The convolution $\boxplus_{\cT_{N,\bool}}(\mu_1,\dots,\mu_N)$ is called the \emph{boolean convolution} of $\mu_1$, \dots, $\mu_N$ and it is commonly denoted $\biguplus_{j=1}^N \mu_j$ or $\mu_1 \uplus \dots \uplus \mu_N$ (see \cite{SW1997}).  Boolean convolution corresponds to addition of the $K$-transforms.  Hence, the binary boolean convolution operation $\uplus$ is commutative and associative.  Since the boolean convolution is independent of the order of the measures, we may unambiguously write $\biguplus_{s \in S} \mu_s$ where $S$ is a finite set.
	\end{example}
	
	\begin{example} \label{ex:orthogonal}
		Let $\cT_{\orth} = \{\emptyset, 1, 21\}$.  Then $\boxplus_{\cT_{\orth}}(\mu_1,\mu_2)$ is called the \emph{orthogonal convolution} and is denoted by $\mu_1 \vdash \mu_2$ (see \cite{Lenczewski2007}).  Note that $\br_1(\cT_{\orth}) = \{\emptyset,2\}$ and $\br_2(\cT_{\orth}) = \{\emptyset\}$ and $\br_2(\br_1(\cT_{\orth})) = \{\emptyset\}$.  Therefore,
		\[
		K_{\mu_1 \vdash \mu_2}(z) = K_{\mu_1}(z - K_{\mu_2}(z)) = K_{\mu_1} \circ F_{\mu_2}(z).
		\]
	\end{example}
	
	\begin{remark} \label{rem:decomposition}
		The fixed point equation \eqref{eq:fixedpointequation} can be expressed alternatively in terms of the boolean and orthogonal convolution as
		\begin{equation} \label{eq:fixedpointequation2}
			\boxplus_{\cT}(\mu_1,\dots,\mu_N) = \biguplus_{j \in [N] \cap \cT} \left( \mu_j \vdash \boxplus_{\br_j(\cT)}(\mu_1,\dots,\mu_N) \right).
		\end{equation}
		Iterating this formula enables us to express the convolution associated to any finite tree in terms of the boolean and orthogonal convolutions.  The case of compactly supported measures was already done in \cite[\S 6.3]{JekelLiu2020}, and this is a generalization of Lenczewski's earlier work on decompositions of the free convolution \cite{Lenczewski2007}.
	\end{remark}
	
	\begin{example} \label{ex:monotone}
		Let $\cT_{2,\mono} = \{\emptyset, 1, 2, 21\}$.  Then $\boxplus_{\cT_{2,\mono}}(\mu_1,\mu_2)$ is called the \emph{monotone convolution} of $\mu_1$ and $\mu_2$ and is denoted $\mu_1 \rhd \mu_2$ (see \cite{Muraki2000,Muraki2001}).  Computing iteratively with \eqref{eq:fixedpointequation} yields
		\begin{equation} \label{eq:monotonedecomposition}
			\mu_1 \rhd \mu_2 = (\mu_1 \vdash \mu_2) \uplus \mu_2
		\end{equation}
		or equivalently
		\[
		K_{\mu_1 \rhd \mu_2}(z) = K_{\mu_1}(z - K_{\mu_2}(z)) + K_{\mu_2(z)},
		\]
		which implies that $F_{\mu_1 \rhd \mu_2} = F_{\mu_1} \circ F_{\mu_2}$.  In the next section, we will use two more simple identities relating boolean, monotone, and orthogonal convolution.  First,
		\begin{equation} \label{eq:orthmonoassoc}
			\lambda \vdash (\mu \rhd \nu) = (\lambda \vdash \mu) \vdash \nu,
		\end{equation}
		holds because $K_\lambda \circ (F_\mu \circ F_\nu) = (K_\lambda \circ F_\mu) \circ F_\nu$.  Second,
		\begin{equation} \label{eq:boolorthoassoc}
			\left( \biguplus_{j=1}^N \mu_j \right) \vdash \nu = \biguplus_{j=1}^N (\mu_j \vdash \nu)
		\end{equation}
		holds because $(\sum_{j=1}^N K_{\mu_j}) \circ F_\nu = \sum_{j=1}^N K_{\mu_j} \circ F_\nu$.
	\end{example}
	
	\begin{example} \label{ex:free}
		Let us explain the connection between $\cT_{2,\free}$ and prior work on free convolution more precisely.  The free convolution $\mu \boxplus \nu$ is defined in \cite{BV1992} by the relation that
		\[
		F_{\mu \boxplus \nu}^{-1} = F_\mu^{-1} + F_\nu^{-1} - \id
		\]
		holds in a non-tangential neighborhood of $\infty$ in the upper half-plane.  In order to show that $F_{\boxplus_{\cT_{2,\free}}(\mu,\nu)}^{-1} = F_\mu^{-1} + F_\nu^{-1} - \id$, we look at \eqref{eq:fixedpointequation2} says in the case of $\cT_{2,\free}$, which of course entails looking at the branches of $\cT_{2,\free}$.  Let $\cT_{\sub} = \{\emptyset,1,21,121,\dots\}$, and let $\cT_{\sub}^\dagger = \{\emptyset,2,12,212,\dots\}$.  We observe that
		\begin{align*}
		\br_1(\cT_{2,\free}) &= \cT_{\sub}^\dagger, &
		\br_2(\cT_{2,\free}) &= \cT_{\sub}, \\
		\br_1(\cT_{\sub}) &= \cT_{\sub}^\dagger, &
		\br_2(\cT_{\sub}) &= \varnothing, \\
		\br_1(\cT_{\sub}^\dagger) &= \varnothing &
		\br_2(\cT_{\sub}^\dagger) &= \cT_{\sub} .
		\end{align*}
		Thus, \eqref{eq:fixedpointequation2} yields
		\begin{align*}
			\boxplus_{\cT_{2,\free}}(\mu,\nu) &= (\mu \vdash \boxplus_{\cT_{\sub}^\dagger}(\mu,\nu)) \uplus (\nu \vdash \boxplus_{\cT_{\sub}}(\mu,\nu)) \\
			\boxplus_{\cT_{\sub}}(\mu,\nu) &= \mu \vdash \boxplus_{\cT_{\sub}^\dagger}(\mu,\nu) \\
			\boxplus_{\cT_{\sub}^\dagger}(\mu,\nu) &= \mu \vdash \boxplus_{\cT_{\sub}}(\mu,\nu).
		\end{align*}
		Back-substituting the last two equations into the first yields
		\begin{align*}
		\boxplus_{\cT_{2,\free}}(\mu,\nu) = \boxplus_{\cT_{\sub}}(\mu,\nu) \uplus \boxplus_{\cT_{\sub}^\dagger}(\mu,\nu).
		\end{align*}
		Similarly, using back-substitution and \eqref{eq:monotonedecomposition},
		\[
		\boxplus_{\cT_{2,\free}}(\mu,\nu) = (\mu \vdash \boxplus_{\cT_{\sub}^\dagger}(\mu,\nu)) \uplus \boxplus_{\cT_{\sub}^\dagger}(\mu,\nu) = \mu \rhd \boxplus_{\cT_{\sub}^\dagger}(\mu,\nu).
		\]
		and symmetrically, $\boxplus_{\cT_{2,\free}}(\mu,\nu) = \nu \rhd \boxplus_{\cT_{\sub}}(\mu,\nu)$.  In terms of the $F$-transform, this means that
		\[
		F_{\boxplus_{\cT_{2,\free}}(\mu,\nu)} = F_{\boxplus_{\cT_{\sub}}(\mu,\nu)} + F_{\boxplus_{\cT_{\sub}^\dagger}(\mu,\nu)} - \id = F_\mu \circ F_{\boxplus_{\cT_{\sub}^{\dagger}}(\mu,\nu)} = F_\nu \circ F_{\boxplus_{\cT_{\sub}}(\mu,\nu)}.
		\]
		Hence, in a non-tangential neighborhood of $\infty$, we have
		\begin{align*}
		(F_\mu^{-1} + F_\nu^{-1} - \id) \circ F_{\boxplus_{\cT_{2,\free}}(\mu,\nu)} &= F_\mu^{-1} \circ F_{\boxplus_{\cT_{2,\free}}(\mu,\nu)} + F_\nu^{-1} \circ F_{\boxplus_{\cT_{2,\free}}(\mu,\nu)} - F_{\boxplus_{\cT_{2\,free}}(\mu,\nu)} \\
		&= F_{\boxplus_{\cT_{\sub}^\dagger}(\mu,\nu)} + F_{\boxplus_{\cT_{\sub}}(\mu,\nu)} - F_{\boxplus_{\cT_{2,\free}}(\mu,\nu)} \\
		&= \id,
		\end{align*}
		so that $F_\mu^{-1} + F_\nu^{-1} - \id = F_{\boxplus_{\cT_{2,\free}}}^{-1}$ and therefore, $\boxplus_{\cT_{2,\free}}(\mu,\nu) = \mu \boxplus \nu$ as desired.
		
		In the process of the argument, we showed that $F_{\mu \boxplus \nu} = F_\nu \circ F_{\boxplus_{\cT_{\sub}}(\mu,\nu))}$, which means in particular that $F_{\mu \boxplus \nu}$ is \emph{analytically subordinated} to $F_\nu$ as functions on the upper half-plane; this result has been studied by many authors in free probability \cite[Proposition 4.4]{VoiculescuFE1}, \cite[Theorem 3.1]{Biane1998}, \cite{Voiculescu2000}, \cite{Voiculescu2002b}, and \cite{BMS2013}, \cite[\S 7]{Lenczewski2007}, \cite{Nica2009}, \cite[Proposition 7.2]{Liu2018}.  The convolution operation associated to $\cT_{2,\sub}$ is called the \emph{subordination convolution} and is denoted $\mu \boxright \nu$. Furthermore, it is easy to check (and follows from Proposition \ref{prop:convolutionidentity} below) that $\boxplus_{\cT_{\sub}^\dagger}(\mu,\nu) = \boxplus_{\cT_{\sub}}(\nu,\mu) = \nu \boxright \mu$.
		
		The above relations between free and subordination convolutions imply that $F_{\mu \boxright \nu}$ and $F_{\nu \boxright \mu}$ satisfy the fixed-point equation system
		\begin{align*}
		F_{\mu \boxright \nu} &= \id - K_\mu \circ F_{\nu \boxright \mu} \\
		F_{\nu \boxright \mu} &= \id - K_\nu \circ F_{\mu \boxright \nu}.
		\end{align*}
		In order to study the subordination theory for free convolution, \cite{BMS2013} used iteration to construct solutions for this fixed-point equation system (and this was done in the more general operator-valued setting).  In fact, the iterates from their paper are, in the notation of our proof of Theorem \ref{thm:complexconvolution}, exactly $F_{\cT_{\sub},\mu,\nu}^{(n)}$ and $F_{\cT_{\sub}^\dagger,\mu,\nu}^{(n)}$.  Hence, our fixed-point iteration is a direct generalization of the one used for subordination convolution.  However, the subordination case is simpler in that $F_{\cT_{\sub},\mu,\nu}^{(n+1)}$ and $F_{\cT_{\sub}^\dagger,\mu,\nu}^{(n+1)}$ are computed in terms of $F_{\cT_{\sub},\mu,\nu}^{(n)}$ and $F_{\cT_{\sub}^\dagger,\mu,\nu}^{(n)}$ and $K_\mu$ and $K_\nu$; no other trees besides $\cT_{\sub}$ and $\cT_{\sub}^\dagger$ are involved in the computation because $\{\cT_{\sub},\cT_{\sub}^\dagger,\varnothing\}$ is closed under the branch operations.
		
		One can check also that $F_{\cT_{\sub},\mu,\nu}^{(n)} = F_{\cT_{\sub}^{(n)},\mu,\nu}$, where $\cT_{\sub}^{(n)}$ is the truncation of the tree $\cT_{\sub}$ to depth $n$, and
		\[
		\boxplus_{\cT_{\sub}^{(n)}}(\mu,\nu) = \underbrace{\mu \vdash (\nu \vdash (\mu \vdash \dots ))}_{n \text{ terms}}.
		\]
		Hence, $\mu \boxright \nu$ is the limit of iterated orthogonal convolutions of $\mu$ and $\nu$, which was observed by Lenczewski \cite{Lenczewski2007}.
	\end{example}
	
	\section{Convolution and the operad structure} \label{sec:operadstuff}
	
	In this section, we describe how the convolution operation of Theorem \ref{thm:complexconvolution} relates to the operations in the operad $\Tree$.  We start out with two propositions that prove permutation-equivariance as well as more general convolution identities.  We remark that Propositions \ref{prop:convolutionidentity} and \ref{prop:convolutionidentity2} imply that all the same convolution identities as in \cite[\S 6]{JekelLiu2020} hold for arbitrary probability measures on $\R$ since the only ingredients needed in the proofs are the relations \eqref{eq:convolutionidentity} and \eqref{eq:convolutionidentity2}.
	
	\begin{proposition} \label{prop:convolutionidentity}
		Let $\psi: [N] \to [N']$ be surjective.  Let $\Tree(\psi)$ be the set of trees $\cT \in \Tree(N)$ such that $\psi_*(s)$ is alternating for every $s \in \cT$ and such that $\psi_*|_{\cT}$ is injective.  Let $\mu_1$, \dots, $\mu_{N'} \in \cP(\R)$ and $\cT \in \Tree(\psi)$.  Then
		\begin{equation} \label{eq:convolutionidentity}
			\boxplus_{\cT}(\mu_{\psi(1)},\dots,\mu_{\psi(N)}) = \boxplus_{\psi_*(\cT)}(\mu_1,\dots,\mu_{N'}).
		\end{equation}
	\end{proposition}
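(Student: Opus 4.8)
The plan is to prove \eqref{eq:convolutionidentity} by reducing both sides to the fixed-point equation \eqref{eq:fixedpointequation} and showing that the two sides have the same $K$-transform. The key observation is that $\Tree(\psi)$ is defined precisely so that $\psi_*$ restricts to a tree isomorphism $\cT \to \psi_*(\cT)$ (it is injective on $\cT$ by hypothesis, and it sends children to children because appending a letter $j$ to the left of $s$ corresponds to appending $\psi(j)$ to the left of $\psi_*(s)$). So at the level of trees the two sides are isomorphic; the content is that this isomorphism is compatible with the measures under the relabeling $\mu_j \mapsto \mu_{\psi(j)}$.

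\emph{First} I would reduce to the case of finite trees. Since finite trees are dense in $\Tree(N)$ and the convolution is continuous in $\cT$ (Theorem \ref{thm:complexconvolution}), and since $\psi_*$ is continuous as a map on trees, it suffices to verify \eqref{eq:convolutionidentity} for finite $\cT \in \Tree(\psi)$; the general case then follows by approximating $\cT$ by its truncations $B_\ell(\cT)$, which also lie in $\Tree(\psi)$. \emph{Then} I would induct on the depth of the finite tree $\cT$. In the base case $\cT = \{\emptyset\}$, both sides equal $\delta_0$. For the inductive step, I apply \eqref{eq:fixedpointequation} to both sides. The left-hand side has $K$-transform
\[
K_{\boxplus_{\cT}(\mu_{\psi(1)},\dots,\mu_{\psi(N)})}(z) = \sum_{j \in [N] \cap \cT} K_{\mu_{\psi(j)}}\bigl(z - K_{\boxplus_{\br_j(\cT)}(\mu_{\psi(1)},\dots,\mu_{\psi(N)})}(z)\bigr),
\]
while the right-hand side has
\[
K_{\boxplus_{\psi_*(\cT)}(\mu_1,\dots,\mu_{N'})}(z) = \sum_{k \in [N'] \cap \psi_*(\cT)} K_{\mu_k}\bigl(z - K_{\boxplus_{\br_k(\psi_*(\cT))}(\mu_1,\dots,\mu_{N'})}(z)\bigr).
\]

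\emph{The key combinatorial step} is to match the summands. Because $\psi_*|_{\cT}$ is injective and respects the parent-child structure, $\psi$ restricts to a bijection $[N] \cap \cT \to [N'] \cap \psi_*(\cT)$, and for each root-neighbor $j \in [N] \cap \cT$ one has $\psi_*(\br_j(\cT)) = \br_{\psi(j)}(\psi_*(\cT))$ as an identity of trees, with $\br_j(\cT) \in \Tree(\psi)$ as well. \emph{The main obstacle} I anticipate is verifying this branch identity $\psi_*(\br_j(\cT)) = \br_{\psi(j)}(\psi_*(\cT))$ carefully: unwinding the definition \eqref{def:branch}, $s \in \br_j(\cT)$ means $sj \in \cT$, so $\psi_*(s)\,\psi(j) = \psi_*(sj) \in \psi_*(\cT)$, giving $\psi_*(s) \in \br_{\psi(j)}(\psi_*(\cT))$; the reverse inclusion uses injectivity of $\psi_*$ on $\cT$ to recover a unique preimage. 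Granting this, the inductive hypothesis applied to the branch $\br_j(\cT)$ (which has strictly smaller depth) yields $\boxplus_{\br_j(\cT)}(\mu_{\psi(1)},\dots,\mu_{\psi(N)}) = \boxplus_{\psi_*(\br_j(\cT))}(\mu_1,\dots,\mu_{N'}) = \boxplus_{\br_{\psi(j)}(\psi_*(\cT))}(\mu_1,\dots,\mu_{N'})$, so the two sums above become term-by-term equal under $k = \psi(j)$. Since equality of $K$-transforms gives equality of measures, this closes the induction and, after the density argument, proves \eqref{eq:convolutionidentity}.
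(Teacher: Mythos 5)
Your proposal is correct and follows essentially the same route as the paper's proof: reduce to finite trees by density and continuity (using that $\Tree(\psi)$ is closed under branches and truncations), then induct on depth via the fixed-point equation, matching summands through the bijection $[N] \cap \cT \to [N'] \cap \psi_*(\cT)$ and the branch identity $\psi_*(\br_j(\cT)) = \br_{\psi(j)}(\psi_*(\cT))$. The only cosmetic difference is that you phrase the recursion with $K$-transforms via \eqref{eq:fixedpointequation} while the paper uses the equivalent boolean/orthogonal form \eqref{eq:fixedpointequation2}, and you in fact supply more detail than the paper on the branch identity (where the key point, which you correctly invoke, is that injectivity of $\psi_*|_{\cT}$ pins down the last letter of any preimage).
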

	
	In the case of compactly supported measures, this proposition follows from \cite[Corollary 5.15]{JekelLiu2020}.  One can deduce the general case by continuity because compactly supported measures are dense in $\cP(\R)$.  But below we give an alternative self-contained argument directly from Theorem \ref{thm:complexconvolution}.
	
	\begin{proof}[Proof of Proposition \ref{prop:convolutionidentity}]
		Note that $\Tree(\psi)$ is closed under taking branches and rooted subtrees.  Therefore, finite trees are dense in $\Tree(\psi)$, so by continuity, it suffices to prove \eqref{eq:convolutionidentity} when $\cT$ is finite.  We proceed by induction on the depth of $\cT$.  When the depth of $\cT$ is zero, \eqref{eq:convolutionidentity} holds because both sides are $\delta_0$.  For the inductive step, consider a finite tree $\cT$ of depth $d$.  By Theorem \ref{thm:complexconvolution},
		\[
		\boxplus_{\cT}(\mu_{\psi(1)},\dots,\mu_{\psi(N)}) = \biguplus_{j \in [N] \cap \cT} \left( \mu_{\psi(j)} \vdash \boxplus_{\br_j(\cT)}(\mu_{\psi(1)},\dots,\mu_{\psi(N)}) \right).
		\]
		Since $\psi_*|_{\cT}$ is injective, each neighbor $i$ of the root vertex in $\psi_*(\cT)$ is the image of a single neighbor $j$ of the root vertex in $\cT$.  Moreover, $\br_i(\psi_*(\cT)) = \psi_*(\br_j(\cT))$.  Since $\br_j(\cT)$ has depth strictly less than $d$, the inductive hypothesis implies that
		\[
		\boxplus_{\br_j(\cT)}(\mu_{\psi(1)},\dots,\mu_{\psi(N)}) = \boxplus_{\psi_*(\br_j(\cT))}(\mu_1,\dots,\mu_{N'}).
		\]
		Therefore, the above expression equals
		\[
		\boxplus_{\cT}(\mu_{\psi(1)},\dots,\mu_{\psi(N)}) = \biguplus_{i \in [N'] \cap \psi_*(\cT)} \left( \mu_i \vdash \boxplus_{\br_i(\psi_*(\cT))}(\mu_1,\dots,\mu_{N'}) \right) = \boxplus_{\psi_*(\cT)}(\mu_1,\dots,\mu_{N'}),
		\]
		which completes the inductive step and hence the proof.
	\end{proof}
	
	Since any function is the composition of a surjection and injection, to understand the general case of $\psi: [N] \to [N']$, all that is left is to handle the injective case.  In order to simplify notation, we restrict our attention to the canonical inclusion $[N] \to [N']$ for $N' > N$ that maps $j$ to itself.  Because of permutation-equivariance, whatever results we prove for this will have analogs for a general injective map.
	
	\begin{proposition} \label{prop:convolutionidentity2}
		Let $N < N'$.  Let $\iota: [N] \to [N']$ be the canonical inclusion.  Then for $\cT \in \Tree(N)$ and $\mu_1$, \dots, $\mu_{N'} \in \cP(\R)$, we have
		\begin{equation} \label{eq:convolutionidentity2}
			\boxplus_{\cT}(\mu_1,\dots,\mu_N) = \boxplus_{\iota_*(\cT)}(\mu_1,\dots,\mu_{N'}).
		\end{equation}
	\end{proposition}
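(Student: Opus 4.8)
The plan is to mirror the argument for Proposition \ref{prop:convolutionidentity} almost verbatim: reduce first to finite trees, then induct on depth using the boolean–orthogonal decomposition \eqref{eq:fixedpointequation2}. The starting observation is that because $\iota$ is injective, $\iota_*(s)$ is alternating whenever $s$ is, so $\cT \mapsto \iota_*(\cT)$ is a well-defined map $\Tree(N) \to \Tree(N')$; moreover it is continuous (indeed isometric, since if $\cT$ and $\cT'$ agree on strings of length $\le \ell$ then so do $\iota_*(\cT)$ and $\iota_*(\cT')$). The left-hand side of \eqref{eq:convolutionidentity2} is continuous in $\cT$ by Theorem \ref{thm:complexconvolution}, and the right-hand side is the composition of the continuous map $\cT \mapsto \iota_*(\cT)$ with the jointly continuous convolution, hence also continuous in $\cT$. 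Since finite trees are dense in $\Tree(N)$, it suffices to establish \eqref{eq:convolutionidentity2} for finite $\cT$.

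For finite trees I would induct on depth. The two combinatorial facts driving the induction are $[N'] \cap \iota_*(\cT) = [N] \cap \cT$ (the length-one strings of $\iota_*(\cT)$ are exactly $\iota(j)=j$ for $j \in [N]\cap\cT$) and $\br_{j}(\iota_*(\cT)) = \iota_*(\br_j(\cT))$ for each such $j$ (a string $sj$ lies in $\iota_*(\cT)$ iff it equals $\iota_*(s''j)$ with $s'' \in \br_j(\cT)$). In the base case $\cT = \{\emptyset\}$ both sides are $\delta_0$. For the inductive step, applying \eqref{eq:fixedpointequation2} to $\iota_*(\cT)$ and substituting these identities gives
\begin{align*}
\boxplus_{\iota_*(\cT)}(\mu_1,\dots,\mu_{N'}) &= \biguplus_{j \in [N'] \cap \iota_*(\cT)} \bigl(\mu_j \vdash \boxplus_{\br_j(\iota_*(\cT))}(\mu_1,\dots,\mu_{N'})\bigr) \\
&= \biguplus_{j \in [N] \cap \cT} \bigl(\mu_j \vdash \boxplus_{\iota_*(\br_j(\cT))}(\mu_1,\dots,\mu_{N'})\bigr).
\end{align*}
Since each $\br_j(\cT)$ has strictly smaller depth, the inductive hypothesis replaces $\boxplus_{\iota_*(\br_j(\cT))}(\mu_1,\dots,\mu_{N'})$ by $\boxplus_{\br_j(\cT)}(\mu_1,\dots,\mu_N)$, and the resulting expression is exactly the decomposition \eqref{eq:fixedpointequation2} of $\boxplus_{\cT}(\mu_1,\dots,\mu_N)$, which closes the induction.

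A point worth emphasizing is that the extra measures $\mu_{N+1},\dots,\mu_{N'}$ never enter: every string in $\iota_*(\cT)$, and hence in each branch appearing in the iterated decomposition, uses only letters from $[N]$, so no index in $\{N+1,\dots,N'\}$ occurs in any neighbor set. Consequently there is no genuine analytic content beyond bookkeeping, and the only real obstacle is checking $\br_{j}(\iota_*(\cT)) = \iota_*(\br_j(\cT))$ carefully; but this is immediate from the definitions of $\br_j$ and $\iota_*$. I therefore expect the completed proof to be as short as that of Proposition \ref{prop:convolutionidentity}. (Alternatively one could route this through Proposition \ref{prop:convolutionidentity} by factoring, but since $\iota$ is injective rather than surjective, the direct induction above is cleaner.)
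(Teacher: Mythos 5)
Your proof is correct, and the two combinatorial identities you isolate --- $[N'] \cap \iota_*(\cT) = \iota([N] \cap \cT)$ and $\br_j(\iota_*(\cT)) = \iota_*(\br_j(\cT))$ --- are exactly the ones the paper uses. The packaging, however, differs. You reduce to finite trees by density and continuity and then induct on depth, mirroring the proof of Proposition \ref{prop:convolutionidentity}. The paper instead sets $M(\cT) = \boxplus_{\iota_*(\cT)}(\mu_1,\dots,\mu_{N'})$, observes that $M$ depends continuously on $\cT$ (because $\iota_*$ is isometric, as you also note) and satisfies the fixed-point equation \eqref{eq:fixedpointequation} with the measures $\mu_1,\dots,\mu_N$, and then invokes the \emph{uniqueness} clause of Theorem \ref{thm:complexconvolution} to conclude $M(\cT) = \boxplus_{\cT}(\mu_1,\dots,\mu_N)$ in one stroke, with no induction and no explicit appeal to density. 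Your route inlines precisely the argument by which that uniqueness clause was itself established (induction on depth for finite trees, then density), so nothing is lost mathematically; but the paper's version is shorter and better illustrates the intended use of Theorem \ref{thm:complexconvolution} as a characterization: to identify any continuous-in-$\cT$ family of measures with $\boxplus_{\cT}$, it suffices to verify the fixed-point equation. Your closing observation that $\mu_{N+1},\dots,\mu_{N'}$ never enter is correct and is implicit in the paper's computation as well.
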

	
	\begin{proof}
		Let $M(\cT) = \boxplus_{\iota_*(\cT)}(\mu_1,\dots,\mu_{N'})$.  Note that
		\[
		M(\cT) = \biguplus_{j \in [N'] \cap \iota_*(\cT)} \mu_j \vdash \boxplus_{\br_j(\iota_*(\cT))}(\mu_1,\dots,\mu_{N'}).
		\]
		But $[N'] \cap \iota_*(\cT) = \iota([N] \cap \cT)$ and $\br_j(\iota_*(\cT)) = \iota_*(\br_j(\cT))$.  Thus,
		\[
		M(\cT) = \biguplus_{j \in [N] \cap \cT} \mu_j \vdash \boxplus_{\iota_*(\br_j(\cT))}(\mu_1,\dots,\mu_{N'}) = \biguplus_{j \in [N] \cap \cT} \mu_j \vdash M(\br_j(\cT)).
		\]
		Thus, $M(\cT)$ satisfies the fixed-point equation \eqref{eq:fixedpointequation}.  It also depends continuously on $\cT$ since $\cT \mapsto \iota_*(\cT)$ is isometric.  Therefore, by Theorem \ref{thm:complexconvolution}, $M(\cT) = \boxplus_{\cT}(\mu_1,\dots,\mu_N)$.
	\end{proof}
	
	The next theorem shows that the convolution operation $\boxplus_{\cT}$ respects operad composition.
	
	\begin{theorem} \label{thm:convolutioncomposition}
		Let $\cT \in \Tree(k)$ and $\cT_1 \in \Tree(n_1)$, \dots, $\cT_k \in \Tree(n_k)$.  Let $N = n_1 + \dots + n_k$.  For each $j \in [k]$ and $i \in [n_j]$, let $\mu_{j,i} \in \mathcal{P}(\R)$.  Then we have
		\[
		\boxplus_{\cT(\cT_1,\dots,\cT_k)}(\mu_{1,1},\dots,\mu_{1,n_1},\dots \dots, \mu_{k,1},\dots,
		\mu_{k,n_k}) = \boxplus_{\cT}(\boxplus_{\cT_1}(\mu_{1,1},\dots,\mu_{1,n_1}),\dots,\boxplus_{\cT_k}(\mu_{k,1},\dots,\mu_{k,n_k})).
		\]
	\end{theorem}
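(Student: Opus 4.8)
The plan is to fix the inner trees $\cT_1,\dots,\cT_k$ together with all the measures $\mu_{j,i}$, to abbreviate $\nu_j := \boxplus_{\cT_j}(\mu_{j,1},\dots,\mu_{j,n_j})$, and to regard
\[
\Phi(\cT) := \boxplus_{\cT(\cT_1,\dots,\cT_k)}(\mu_{1,1},\dots,\mu_{k,n_k})
\]
as a function of the outer tree $\cT \in \Tree(k)$. The goal is then to prove $\Phi(\cT) = \boxplus_{\cT}(\nu_1,\dots,\nu_k)$, and the cleanest route is to invoke the \emph{uniqueness} half of Theorem \ref{thm:complexconvolution}: it suffices to check that $\Phi$ depends continuously on $\cT$ and that it satisfies the fixed-point equation \eqref{eq:fixedpointequation2} with the measures $\nu_1,\dots,\nu_k$, namely
\[
\Phi(\cT) = \biguplus_{j \in [k]\cap\cT}\bigl(\nu_j \vdash \Phi(\br_j(\cT))\bigr).
\]
Continuity is immediate, since $\cT \mapsto \cT(\cT_1,\dots,\cT_k)$ is continuous (by the Lipschitz estimate for operad composition) and $\boxplus$ is jointly continuous. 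A pleasant feature of this approach is that no separate induction on $\cT$ is needed for the main theorem: the depth induction is already packaged inside the uniqueness statement, and $\Phi(\br_j(\cT))$ is literally the value of $\Phi$ on the branch.

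To verify the fixed-point equation I would expand $\Phi(\cT)$ using \eqref{eq:fixedpointequation2} applied to the composite tree. Its root-neighbors are exactly the letters $\iota_j(i)$ with $j \in [k]\cap\cT$ and $i \in [n_j]\cap\cT_j$, so
\[
\Phi(\cT) = \biguplus_{j\in[k]\cap\cT}\ \biguplus_{i\in[n_j]\cap\cT_j}\bigl(\mu_{j,i} \vdash \boxplus_{\br_{\iota_j(i)}(\cT(\cT_1,\dots,\cT_k))}(\cdots)\bigr).
\]
The heart of the matter is to evaluate the branch convolution. By Lemma \ref{lem:branch} the branch $\br_{\iota_j(i)}(\cT(\cT_1,\dots,\cT_k))$ is isomorphic, via the relabeling $\psi_*$, to the monotone composition $\cT_{2,\mono}(\br_i(\cT_j),\br_j(\cT)(\cT_1,\dots,\cT_k))$, and Proposition \ref{prop:convolutionidentity} transfers the convolution across this relabeling with the correct measure assignment. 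This forces me to understand $\boxplus_{\cT_{2,\mono}(\cS_1,\cS_2)}$, which is the main obstacle and which I would isolate as a standalone lemma: for $\cS_1 \in \Tree(m_1)$ and $\cS_2 \in \Tree(m_2)$, one has $\boxplus_{\cT_{2,\mono}(\cS_1,\cS_2)}(\cdots) = \boxplus_{\cS_1}(\cdots) \rhd \boxplus_{\cS_2}(\cdots)$.

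I would prove this monotone lemma by induction on the depth of $\cS_1$, using only Theorem \ref{thm:complexconvolution}, Lemma \ref{lem:branch}, the relabeling propositions, and the algebraic identities from Example \ref{ex:monotone}; crucially it does \emph{not} use the theorem being proved, so there is no circularity. The base case $\cS_1 = \{\emptyset\}$ reduces to a relabeling via \eqref{eq:monotonecomposition}. For the inductive step I apply \eqref{eq:fixedpointequation2} to $\cT_{2,\mono}(\cS_1,\cS_2)$ and compute both families of branches through Lemma \ref{lem:branch}: since $\br_1(\cT_{2,\mono}) = \{\emptyset,2\}$ selects the second slot, the branches coming from $\cS_1$ are isomorphic to $\cT_{2,\mono}(\br_i(\cS_1),\cS_2)$, to which the inductive hypothesis applies because $\br_i(\cS_1)$ has smaller depth; and since $\br_2(\cT_{2,\mono}) = \{\emptyset\}$, the branches coming from $\cS_2$ are isomorphic to $\br_{i'}(\cS_2)$. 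Feeding these back in, the $\cS_2$-terms reassemble to $W := \boxplus_{\cS_2}(\cdots)$ by \eqref{eq:fixedpointequation2}, while the $\cS_1$-terms take the shape $\biguplus_i \mu \vdash (\boxplus_{\br_i(\cS_1)}(\cdots)\rhd W)$; the associativity identities \eqref{eq:orthmonoassoc} and \eqref{eq:boolorthoassoc} pull the $\rhd W$ out of the boolean sum and collapse it via \eqref{eq:fixedpointequation2} for $\cS_1$, leaving $(\boxplus_{\cS_1}(\cdots)\vdash W)\uplus W$, which is $\boxplus_{\cS_1}(\cdots)\rhd W$ by \eqref{eq:monotonedecomposition}. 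The genuinely fiddly part throughout is the bookkeeping of measure indices under the relabelings $\psi_*$, which is exactly what Proposition \ref{prop:convolutionidentity} is designed to handle.

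With the monotone lemma in hand, the branch convolution equals $\boxplus_{\br_i(\cT_j)}(\mu_{j,\cdot})\rhd\Phi(\br_j(\cT))$, because the second argument $\br_j(\cT)(\cT_1,\dots,\cT_k)$ convolved against the full measure list is, by definition, $\Phi(\br_j(\cT))$. Substituting into the expansion of $\Phi(\cT)$ and applying the same two identities exactly as before — first rewriting $\mu_{j,i}\vdash(\boxplus_{\br_i(\cT_j)}(\cdots)\rhd\Phi(\br_j(\cT)))$ as $(\mu_{j,i}\vdash\boxplus_{\br_i(\cT_j)}(\cdots))\vdash\Phi(\br_j(\cT))$ through \eqref{eq:orthmonoassoc}, then summing over $i$ through \eqref{eq:boolorthoassoc} and recognizing $\biguplus_i \mu_{j,i}\vdash\boxplus_{\br_i(\cT_j)}(\cdots) = \boxplus_{\cT_j}(\mu_{j,\cdot}) = \nu_j$ via \eqref{eq:fixedpointequation2} for $\cT_j$ — collapses the inner boolean convolution to $\nu_j\vdash\Phi(\br_j(\cT))$. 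This is precisely the fixed-point equation for $\Phi$, so the uniqueness in Theorem \ref{thm:complexconvolution} yields $\Phi(\cT) = \boxplus_{\cT}(\nu_1,\dots,\nu_k)$, which is the assertion.
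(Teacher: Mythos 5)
Your proposal is correct and matches the paper's proof essentially step for step: the paper likewise isolates the monotone identity $\boxplus_{\cT_{2,\mono}(\cS_1,\cS_2)}(\cdots) = \boxplus_{\cS_1}(\cdots) \rhd \boxplus_{\cS_2}(\cdots)$ as the key lemma, proved by depth induction through Lemma \ref{lem:branch} together with \eqref{eq:orthmonoassoc}, \eqref{eq:boolorthoassoc}, and \eqref{eq:monotonedecomposition}, and then establishes the general case exactly as you do, by checking that $\Phi(\cT)$ is continuous in $\cT$ and satisfies the fixed-point equation \eqref{eq:fixedpointequation2} with the measures $\nu_j$, so that the uniqueness half of Theorem \ref{thm:complexconvolution} concludes. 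The one detail to make explicit is that your depth induction for the monotone lemma must first be reduced to finite trees via continuity of both sides in $(\cS_1,\cS_2)$ (as the paper does before inducting), since for infinite trees the induction on depth does not terminate.
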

	
	In the case of compactly supported measures, this result follows immediately from \cite[Corollary 5.13]{JekelLiu2020} taking $\cB = \C$.  Because compactly supported measures are dense in $\mathcal{P}(\R)$ and because of continuity of the convolution operations in Theorem \ref{thm:complexconvolution}, it follows that the identity holds for all measures in $\mathcal{P}(\R)$.  Although this argument is satisfactory, we will also present an alternative proof directly from Theorem \ref{thm:complexconvolution} that is self-contained and elucidates the connection between the fixed-point equation in Theorem \ref{thm:complexconvolution}, the operad structure, and the branch maps.
	
	\begin{proof}[Proof of Theorem \ref{thm:convolutioncomposition}]
		First, we prove the case of the theorem where $\cT = \cT_{2,\mono}$.  In other words, we want to establish the identity
		\begin{equation}\label{eq:identity_2mono}
			\boxplus_{\cT_{2,\mono}(\cT_1,\cT_2)}(\mu_{1,1},\dots,\mu_{1,n_1},\mu_{2,1},\dots,\mu_{2,n_2}) = \boxplus_{\cT_1}(\mu_{1,1},\dots,\mu_{1,n_1}) \rhd \boxplus_{\cT_2}(\mu_{2,1},\dots,\mu_{2,n_2})
		\end{equation}
		for $\cT_1 \in \Tree(n_1)$ and $\cT_2 \in \Tree(n_2)$ and for probability measures $\mu_{j,i}$.  Note that both sides depend continuously on $\cT_1$ and $\cT_2$, using continuity of the operad composition in $\Tree$ and continuity of the convolution operation in Theorem \ref{thm:complexconvolution}.  Therefore, it suffices to prove the statement when $\cT_1$ and $\cT_2$ are finite trees.
		
		We proceed by induction on the depth of $\cT_1$ plus the depth of $\cT_2$.  In the base case of combined depth $0$, we have $\cT_1 = \{\emptyset\}$, and hence $\cT_{2,\mono}(\cT_1,\cT_2) = \cT_2$ and $\boxplus_{\cT_1}(\mu_{1,1},\dots,\mu_{1,n_1}) = \delta_0$, so the claim holds.
		
		For the inductive step, consider trees $\cT_1$ and $\cT_2$ with combined depth $d$.  Let $\cT' = \cT_{2,\mono}(\cT_1,\cT_2)$. Let $\mu_1 = \boxplus_{\cT_1}(\mu_{1,1},\dots,\mu_{1,n_1})$ and $\mu_2 = \boxplus_{\cT_2}(\mu_{2,1},\dots,\mu_{2,n_2})$.  Note that 
		\[
		[n_1+n_2] \cap \cT' = \iota_1([n_1] \cap \cT_1) \sqcup \iota_2([n_2] \cap \cT_2).
		\]
		Thus, by equation (\ref{eq:fixedpointequation2}),
		\begin{multline*}
			\boxplus_{\cT'}(\mu_{1,1},\dots,\mu_{1,n_1},\mu_{2,1},\dots,\mu_{2,n_2})
			= \biguplus_{i \in [n_1] \cap \cT_1} \left( \mu_{1,i} \vdash \boxplus_{\br_{\iota_1(i)}(\cT')}(\mu_{1,1},\dots,\mu_{1,n_1},\mu_{2,1},\dots,\mu_{2,n_2}) \right) \\ \uplus \biguplus_{i \in [n_2] \cap \cT_2} \left( \mu_{2,i} \vdash \boxplus_{\br_{\iota_2(i)}(\cT')}(\mu_{1,1},\dots,\mu_{1,n_1},\mu_{2,1},\dots,\mu_{2,n_2}) \right).
		\end{multline*}
		Now by Lemma \ref{lem:branch}, letting $\psi_1: [2n_1+n_2] \to [n_1+n_2]$ be the map sending the first $n_1$ points monotonically onto $[n_1]$ and the last $n_1 + n_2$ points monotonically onto $[n_1+n_2]$, we have
		\begin{align*}
			\br_{\iota_1(i)}(\cT') &= \br_{\iota_1(i)}(\cT_{2,\mono}(\cT_1,\cT_2)) \\
			&= (\psi_1)_*[\cT_{2,\mono}(\br_i(\cT_1), \br_1(\cT_{2,\mono})(\cT_1,\cT_2))] \\
			&= (\psi_1)_*[\cT_{2,\mono}(\br_i(\cT_1), (\iota_2)_*(\cT_2))] \\
			&= \cT_{2,\mono}(\br_i(\cT_1),\cT_2).
		\end{align*}
		Similarly,
		\[
		\br_{\iota_2(i)}(\cT') = \br_{\iota_2(i)}(\cT_{2,\mono}(\cT_1,\cT_2)) = (\psi_2)_*[\cT_{2,\mono}(\br_i(\cT_2), \br_2(\cT_{2,\mono})(\cT_1,\cT_2))] = (\iota_2)_* (\br_i(\cT_2)).
		\]
		where $\psi_2: [n_1+2n_2] \to [n_1+n_2]$ sends the first $n_2$ coordinates monotonically onto $\iota_2([n_2])$ and the last $n_1 + n_2$ coordinates monotonically onto $[n_1+n_2]$.  Therefore, using the induction hypothesis,
		\begin{align*}
			\boxplus_{\br_{\iota_1(i)}(\cT')}(\mu_{1,1},\dots,\mu_{1,n_1},\mu_{2,1},\dots,\mu_{2,n_2}) &= \boxplus_{\br_i(\cT_1)}(\mu_{1,1},\dots,\mu_{1,n_1}) \rhd \boxplus_{\cT_2}(\mu_{2,1},\dots,\mu_{2,n_2}) \\
			&= \boxplus_{\br_i(\cT_1)}(\mu_{1,1},\dots,\mu_{1,n_1}) \rhd \mu_2,
		\end{align*}
		and by applying Proposition \ref{prop:convolutionidentity2} to $\iota_2$,
		\begin{align*}
			\boxplus_{\br_{\iota_2(i)}(\cT')}(\mu_{1,1},\dots,\mu_{1,n_1},\mu_{2,1},\dots,\mu_{2,n_2}) 
			&= \boxplus_{(\iota_2)_*[\br_i(\cT_2)]}(\mu_{1,1},\dots,\mu_{1,n_1},\mu_{2,1},\dots,\mu_{2,n_2}).\\
			&= \boxplus_{\br_i(\cT_2)}(\mu_{2,1},\dots,\mu_{2,n_2}).
		\end{align*}
		Therefore, using \eqref{eq:orthmonoassoc} and \eqref{eq:boolorthoassoc},
		\begin{align*}
			& \biguplus_{i \in [n_1] \cap \cT_1} \left( \mu_{1,i} \vdash \boxplus_{\br_{\iota_1(i)}(\cT')}(\mu_{1,1},\dots,\mu_{1,n_1},\mu_{2,1},\dots,\mu_{2,n_2}) \right) \\
			=& \biguplus_{i \in [n_1] \cap \cT_1} \left( \mu_{1,i} \vdash \left(
			\boxplus_{\br_i(\cT_1)}(\mu_{1,1},\dots,\mu_{1,n_1}) \rhd \mu_2 \right) \right) \\
			=& \biguplus_{i \in [n_1] \cap \cT_1} \left( \mu_{1,i} \vdash \left(
			\boxplus_{\br_i(\cT_1)}(\mu_{1,1},\dots,\mu_{1,n_1}) \rhd \mu_2 \right) \right) \\
			=& \biguplus_{i \in [n_1] \cap \cT_1} \left( \left( \mu_{1,i} \vdash
			\boxplus_{\br_i(\cT_1)}(\mu_{1,1},\dots,\mu_{1,n_1}) \right) \vdash \mu_2 \right) \\
			=& \left( \biguplus_{i \in [n_1] \cap \cT_1} \left( \mu_{1,i} \vdash
			\boxplus_{\br_i(\cT_1)}(\mu_{1,1},\dots,\mu_{1,n_1}) \right) \right) \vdash \mu_2 \\
			&= \mu_1 \vdash \mu_2.
		\end{align*}
		Similarly, 
		\begin{align*}
			&  \biguplus_{i \in [n_2] \cap \cT_2} \left( \mu_{2,i} \vdash \boxplus_{\br_{\iota_2(i)}(\cT')}(\mu_{1,1},\dots,\mu_{1,n_1},\mu_{2,1},\dots,\mu_{2,n_2}) \right) \\
			=& \biguplus_{i \in [n_2] \cap \cT_2} \left( \mu_{2,i} \vdash \boxplus_{\br_i(\cT_2)}(\mu_{2,1},\dots,\mu_{2,n_2}) \right) \\
			=& \mu_2.
		\end{align*}
		Therefore,
		\[
		\boxplus_{\cT'}(\mu_{1,1},\dots,\mu_{1,n_1},\mu_{2,1},\dots,\mu_{2,n_2}) = (\mu_1 \vdash \mu_2) \uplus \mu_2 = \mu_1 \rhd \mu_2
		\]
		as desired, which completes the inductive step.
		
		Finally, we begin the main argument to prove the general case of the theorem.  Let $\cT$, $\cT_1$, \dots, $\cT_k$ and $\mu_{j,i}$ be as in the theorem statement.  Let
		\[
		\mu_j = \boxplus_{\cT_j}(\mu_{j,1},\dots,\mu_{j,n_j}).
		\]
		Let
		\[
		M(\cT) = \boxplus_{\cT(\cT_1,\dots,\cT_k)}(\mu_{1,1},\dots,\mu_{1,n_1},\dots\dots, \mu_{k,1},\dots,\mu_{k,n_k}).
		\]
		Note that $\cT \mapsto M(\cT)$ is continuous because composition and convolution are continuous.  Thus, by Theorem \ref{thm:complexconvolution}, to show that $M(\cT) = \boxplus_{\cT}(\mu_1,\dots,\mu_k)$, it suffices to show that
		\begin{equation} \label{eq:compositionfixedpoint}
			M(\cT) = \biguplus_{j \in [k] \cap \cT} \left(\mu_j \vdash M(\br_j(\cT)) \right).
		\end{equation}
		Let $\cT' = \cT(\cT_1,\dots,\cT_k)$.  Let $\psi_j: [n_j + N] \to [N]$ map the first $n_j$ elements monotonically onto $\iota_j([n_j])$ and the last $N$ elements monotonically onto $[N]$.  Applying (\ref{eq:fixedpointequation2}), Lemma \ref{lem:branch}, Proposition \ref{prop:convolutionidentity}, \eqref{eq:identity_2mono}, and \eqref{eq:orthmonoassoc}, 
		\begin{align*}
			M(\cT) =& \biguplus_{\substack{j \in [k], i \in [n_j], \\  \iota_{j}(i)\in [N]\cap\cT'}} \left(\mu_{j,i} \vdash \boxplus_{\br_{\iota_j(i)}(\cT(\cT_1,\dots,\cT_k))}(\mu_{1,1},\dots,\mu_{k,n_k}) \right)\\
			=& \biguplus_{\substack{j \in [k], i \in [n_j], \\  \iota_{j}(i)\in [N]\cap\cT'}} \left(\mu_{j,i} \vdash \boxplus_{(\psi_j)_*[\cT_{2,\mono}(\br_i(\cT_j), \br_j(\cT)(\cT_1,\dots, \cT_k))]}(\mu_{1,1},\dots,\mu_{k,n_k}) \right)\\
			=& \biguplus_{j \in [k] \cap \cT} \biguplus_{i \in [n_j] \cap \cT_j} \left(\mu_{j,i} \vdash \boxplus_{\cT_{2,\mono}(\br_i(\cT_j), \br_j(\cT)(\cT_1,\dots, \cT_k))}(\mu_{j,1},\dots,\mu_{j,n_j}, \mu_{1,1},\dots,\mu_{k,n_k}) \right) \\
			=& \biguplus_{j \in [k] \cap \cT} \biguplus_{i \in [n_j] \cap \cT_j} \left(\mu_{j,i} \vdash (\boxplus_{\br_i(\cT_j)}(\mu_{j,1},\dots,\mu_{j,n_j})\rhd \boxplus_{ \br_j(\cT)(\cT_1,\dots, \cT_k)}(\mu_{1,1},\dots,\mu_{k,n_k}) )\right)\\
			=& \biguplus_{j \in [k] \cap \cT} \biguplus_{i \in [n_j] \cap \cT_j} \left((\mu_{j,i} \vdash \boxplus_{\br_i(\cT_j)}(\mu_{j,1},\dots,\mu_{j,n_j}))\vdash M(\br_j(\cT)) \right)\\
			=& \biguplus_{j \in [k] \cap \cT} \left(\mu_j \vdash M(\br_j(\cT)) \right),
		\end{align*}
		which demonstrates \eqref{eq:compositionfixedpoint} and hence finishes the proof.
	\end{proof}
	
	Knowing that the convolution operations respect the operad structure of $\Tree$, we can now discuss the examples of boolean, free, and monotone convolution in more generality.  However, we will not give detailed justification for the claims here because the boolean, free, and monotone convolution were already discussed in depth in \cite[\S 3.2, \S 5.5, and throughout]{JekelLiu2020}.
	
	\begin{example} \label{ex:boolean2}
	Let $\cT_{N,\bool} = \{\emptyset\} \cup [N]$.  We saw in Example \ref{ex:boolean} that $\boxplus_{\cT_{N,\bool}}$ is the $N$-fold boolean convolution.  Let $\id = \{\emptyset,1\} \in \Tree(1)$.  The operad identity
	\[
	\cT_{2,\bool}(\id,\cT_{2,\bool}) = \cT_{2,\bool}(\cT_{2,\bool},\id)
	\]
	can be checked by direct computation, and it implies that $\mu_1 \uplus (\mu_2 \uplus \mu_3) = (\mu_1 \uplus \mu_2) \uplus \mu_3$, that is, the binary boolean convolution operation is associative.  Furthermore, $\cT_{2,\bool}(\id,\cT_{2,\bool}) = \cT_{3,\bool}$ implies that the ternary boolean convolution can be obtained by iterating the binary boolean convolution.  More generally,
	\[
	\cT_{k,\bool}(\cT_{n_1,\bool},\dots,\cT_{n_k,\bool}) = \cT_{n_1+\dots+n_k,\bool}.
	\]
	Hence, the $N$-ary boolean convolution can be obtained by iterating lower order boolean convolutions.  Finally, $\cT_{N,\bool}$ is permutation-invariant and therefore $\boxplus_{\cT_{N,\bool}}$ is permutation-invariant.
	\end{example}
	
	\begin{example} \label{ex:free2}
	We saw in Example \ref{ex:free} that $\cT_{2,\free}$ produces the binary free convolution operation.  One can check that
	\[
	\cT_{2,\free}(\id,\cT_{2,\free}) = \cT_{2,\free}(\cT_{2,\free},\id) = \cT_{3,\free},
	\]
	and hence the binary free convolution is associative.  We also deduce that $\boxplus_{\cT_{3,\free}}(\mu_1,\mu_2,\mu_3) = (\mu_1 \boxplus \mu_2) \boxplus \mu_3$, so that $\boxplus_{\cT_{3,\free}}$ agrees with any other definition of the ternary free convolution.  Similar reasoning shows that $\cT_{N,\free}$ produces the $N$-ary free convolution; the free convolution can be obtained by iterating lower-order free convolutions; the free convolution is permutation-invariant.  Alternatively, the argument in Example \ref{ex:free} can be generalized to $N$ variables to show that
	\[
	F_{\boxplus_{\cT_{N,\free}}(\mu_1,\dots,\mu_N)}^{-1} - \id = \sum_{j=1}^N (F_{\mu_j}^{-1} - \id)
	\]
	on an appropriate domain.
	\end{example}
	
	\begin{example} \label{ex:monotone2}
	Let
	\[
	\cT_{N,\mono} := \{\emptyset\} \cup \{j_1 \dots j_\ell: N \geq j_1 > j_2 > \dots > j_\ell \geq 1, \ell \geq 1\}.
	\]
	Similar to the previous examples,
	\[
	\cT_{2,\mono}(\id,\cT_{2,\mono}) = \cT_{2,\mono}(\cT_{2,\mono},\id) = \cT_{3,\mono}.
	\]
	Hence, we have associativity of monotone convolution, and $\cT_{3,\mono}$ produces the ternary monotone convolution.  More generally,
	\[
	\cT_{k,\mono}(\cT_{n_1,\mono},\dots,\cT_{n_k,\mono}) = \cT_{n_1+\dots+n_k,\mono}.
	\]
	The mirror image of $\cT_{N,\mono}$ is
	\[
	\cT_{N,\mono \dagger} := \{\emptyset\} \cup \{j_1 \dots j_\ell: 1 \leq j_1 < j_2 < \dots < j_\ell \leq N, \ell \geq 1\},
	\]
	which relates to the anti-monotone convolution instead of the monotone convolution.  The permutation of $[N]$ that reverses the order of all the elements transforms $\cT_{N,\mono}$ into $\cT_{N,\mono \dagger}$, which corresponds to the fact that the anti-monotone convolution and monotone convolution are related by reversing the order of indices.
	\end{example}
	
	Our final observation is that the $\cT$-free convolution of several copies of the same measure depends only on the isomorphism class of $\cT$.  We remark that the case of compactly supported measures also follows from Theorem 7.8 and Proposition 7.19 (1) of \cite{JekelLiu2020}.
	
	\begin{lemma} \label{lem:isomorphismconvolution}
		Suppose $\cT_1 \in \Tree(N_1)$ and $\cT_2 \in \Tree(N_2)$.  If $\cT_1 \cong \cT_2$, then $\boxplus_{\cT_1}(\mu, \dots, \mu) = \boxplus_{\cT_2}(\mu, \dots, \mu)$ for all $\mu \in \mathcal{P}(\R)$.
	\end{lemma}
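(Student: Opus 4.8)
The plan is to prove the statement first for finite trees by induction on depth, using the fixed-point equation \eqref{eq:fixedpointequation} with all the input measures equal to $\mu$, and then to pass to arbitrary (possibly infinite) trees by truncation together with the continuity of $\boxplus_{\cT}$ in the tree variable. Throughout, write $\nu_{\cT} := \boxplus_{\cT}(\mu,\dots,\mu)$, so that the goal is exactly $\nu_{\cT_1} = \nu_{\cT_2}$ whenever $\cT_1 \cong \cT_2$.

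First I would record that an isomorphism $\phi \colon \cT_1 \to \cT_2$ preserves the length of strings (equivalently, the distance to the root): since $\phi(\emptyset) = \emptyset$ and $\phi$ carries each child of a vertex to a child of the image vertex, an easy induction shows that $\phi$ sends every vertex of length $\ell$ to a vertex of length $\ell$. Combined with the bijectivity of $\phi$, this shows that for each $\ell \ge 0$ the map $\phi$ restricts to a bijection $B_\ell(\cT_1) \to B_\ell(\cT_2)$, which is then an isomorphism of these finite rooted trees.

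For the finite case I would induct on the depth $d$ of the trees. When $d = 0$ both trees equal $\{\emptyset\}$ and $\nu_{\cT} = \delta_0$. For the inductive step, suppose $\cT_1 \cong \cT_2$ via $\phi$, both of depth at most $d$. By Observation \ref{obs:isomorphismbranch}, $\phi$ induces a bijection $[N_1] \cap \cT_1 \to [N_2] \cap \cT_2$ and $\br_j(\cT_1) \cong \br_{\phi(j)}(\cT_2)$ for each such $j$; these branches have depth at most $d-1$, so the inductive hypothesis gives $\nu_{\br_j(\cT_1)} = \nu_{\br_{\phi(j)}(\cT_2)}$. Now \eqref{eq:fixedpointequation} with all measures equal to $\mu$ reads $K_{\nu_{\cT_1}}(z) = \sum_{j \in [N_1]\cap \cT_1} K_\mu\bigl(z - K_{\nu_{\br_j(\cT_1)}}(z)\bigr)$, and reindexing the sum by $i = \phi(j)$ turns the right-hand side into the corresponding sum for $\cT_2$. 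Hence $K_{\nu_{\cT_1}} = K_{\nu_{\cT_2}}$, so $\nu_{\cT_1} = \nu_{\cT_2}$, completing the induction.

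Finally, for arbitrary trees $\cT_1 \cong \cT_2$, I would apply the finite case to the truncations, which by the second paragraph remain isomorphic, obtaining $\nu_{B_\ell(\cT_1)} = \nu_{B_\ell(\cT_2)}$ for every $\ell$; since $B_\ell(\cT_i) \to \cT_i$ in $\Tree(N_i)$ and $\boxplus_{\cT}$ is continuous in the tree variable by Theorem \ref{thm:complexconvolution}, letting $\ell \to \infty$ yields $\nu_{\cT_1} = \nu_{\cT_2}$. The only step requiring genuine care is this reduction from infinite to finite trees, namely the verification that truncations of isomorphic trees remain isomorphic; this is precisely the distance-to-root preservation noted at the start, and once it is in hand the remainder is routine index bookkeeping in the fixed-point equation.
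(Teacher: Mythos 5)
Your proof is correct and follows essentially the same route as the paper's: reduce to arbitrary trees via truncations and the continuity in the tree variable from Theorem \ref{thm:complexconvolution}, then handle finite trees by induction on depth using Observation \ref{obs:isomorphismbranch} and the fixed-point equation. The only cosmetic differences are that you work with \eqref{eq:fixedpointequation} directly where the paper uses the equivalent form \eqref{eq:fixedpointequation2}, and that you spell out the check (left implicit in the paper) that isomorphisms preserve distance to the root, so truncations of isomorphic trees remain isomorphic.
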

	
	\begin{proof}
		Clearly, if $\cT_1$ and $\cT_2$ are isomorphic, then their truncations $\cT_1^{(k)}$ and $\cT_2^{(k)}$ to depth $k$ are also isomorphic for every $k$.  Since $\cT_1^{(k)} \to \cT_1$ and $\cT_2^{(k)} \to \cT_2$ in $\Tree(N_1)$ and $\Tree(N_2)$ respectively, and since the convolution operations are continuous, it suffices to show that $\boxplus_{\cT_1^{(k)}}(\mu, \dots, \mu) = \boxplus_{\cT_2^{(k)}}(\mu, \dots, \mu)$.
		
		Therefore, to prove the lemma, it suffices to prove the case where $\cT_1$ and $\cT_2$ are finite trees.  We proceed by induction on the depth, the depth-$0$ case being trivial.  Let $\phi: \cT_1 \to \cT_2$ be an isomorphism.  By Observation \ref{obs:isomorphismbranch}, $\phi$ defines a bijection $[N_1] \cap \cT_1 \to [N_2] \cap \cT_2$, and $\br_j(\cT_1) \cong \br_{\phi(j)}(\cT_2)$ for each $j \in [N_1] \cap \cT_1$.  We may apply the induction hypothesis to each of these branches since they have strictly smaller depth than the original trees.  Hence,
		\begin{align*}
			\boxplus_{\cT_1}(\mu,\dots,\mu) &= \biguplus_{j \in [N_1] \cap \cT} \mu \vdash \boxplus_{\br_j(\cT_1)}(\mu,\dots,\mu) \\
			&= \biguplus_{j' \in [N_2] \cap \cT_2} \mu \vdash \boxplus_{\br_{j'}(\cT_2)}(\mu,\dots,\mu) = \boxplus_{\cT_2}(\mu,\dots,\mu),
		\end{align*}
		which completes the inductive step and hence the proof.
	\end{proof}

	\section{A general limit theorem} \label{sec:limit1}
	
	Bercovici and Pata \cite[Theorem 6.3]{BP1999} showed a bijection between limit theorems for classical, free, and boolean convolution in the following sense:  Given a sequence $(\mu_\ell)_{\ell \in \N}$ in $\mathcal{P}(\R)$ and a sequence $(k_\ell)_{\ell \in \N}$ in $\N$ tending to infinity, $\mu_\ell^{*k_\ell}$ converges weakly as $\ell \to \infty$ if and only if $\mu_\ell^{\boxplus k_\ell}$ converges if and only if $\mu_\ell^{\uplus k_\ell}$ converges weakly as $\ell \to \infty$.  Theorem \ref{thm:BP1} will generalize one direction of this result to trees $\cT$ with $n(\cT) > 1$; namely, we will show that if convergence holds for the boolean case, then it holds for all such trees $\cT$.  Applications of this result as well as open questions will be discussed in \S \ref{sec:limit2}.
	
	In preparation, we establish some notation.  For $\cT \in \Tree(N)$ and $\mu \in \mathcal{P}(\R)$, let
	\[
	\boxplus_{\cT}(\mu) := \boxplus_{\cT}(\underbrace{\mu,\dots,\mu}_{N \text{ times}}).
	\]
	We also use boolean convolution powers defined as follows:  For $c > 0$ and $\mu \in \mathcal{P}(\R)$, let $\mu^{\uplus c}$ be given by
	\[
	K_{\mu^{\uplus c}} = c K_\mu.
	\]
	For each $\mu$, such a measure $\mu^{\uplus c}$ exists because a function $K$ is the $K$-transform of a measure if and only if $K$ maps $\h$ to $-\overline{\h}$ and $K(z) / z \to 0$ as $z \to \infty$ in $\h$ non-tangentially.  Clearly, $\mu^{\uplus c}$ is well-defined since a measure is uniquely determined by its $K$-transform.  If $N \in \N$, then $\biguplus_{j=1}^N \mu = \mu^{\uplus N}$.  We also have $(\mu^{\uplus c_1})^{\uplus c_2} = \mu^{\uplus c_1 c_2}$.  Recall also Definition \ref{def:maxchildren} and Lemma \ref{lem:maxchildren}.
	
	\begin{theorem} \label{thm:BP1}
		Let $(\mu_\ell)_{\ell \in \N}$ be a sequence in $\mathcal{P}(\R)$ and let $(k_\ell)_{\ell \in \N}$ be a sequence of natural numbers tending to $\infty$.  Let $N \in \N$ and $\cT \in \Tree(N)$ with $n(\cT) > 1$.  If $\mu_\ell^{\uplus n(\cT)^{k_\ell}}$ converges to some probability measure $\nu$ as $\ell \to \infty$, then $\boxplus_{\cT^{\circ k_\ell}}(\mu_\ell)$ converges as $\ell \to \infty$ to some probability measure $\mathbb{BP}(\cT,\nu)$ only depending on $\cT$ and $\nu$.  Moreover, the convergence is uniform over all $\cT \in \Tree(N)$ with $n(\cT) > 1$.
	\end{theorem}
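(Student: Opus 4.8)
The plan is to pass to $K$-transforms, use the composition identity to convert self-composition into an iteration, and linearize $\boxplus_{\cT}$ around $\delta_0$. Throughout write $\boxplus_{\cS}(\sigma):=\boxplus_{\cS}(\sigma,\dots,\sigma)$ and set $n:=n(\cT)$. Applying Theorem~\ref{thm:convolutioncomposition} with all inner trees equal and all inputs equal gives the recursion
\[
\boxplus_{\cT^{\circ k}}(\mu) = \boxplus_{\cT}\bigl(\boxplus_{\cT^{\circ(k-1)}}(\mu)\bigr) = \boxplus_{\cT^{\circ(k-1)}}\bigl(\boxplus_{\cT}(\mu)\bigr).
\]
Since $\mu_\ell^{\uplus n^{k_\ell}} \to \nu$, Lemma~\ref{lem:transformhomeomorphism} shows $n^{k_\ell}K_{\mu_\ell} \to K_\nu$ locally uniformly on $\h$, hence uniformly on any truncated cone; in particular $\sup_{\Gamma}|K_{\mu_\ell}| \le C_0 n^{-k_\ell}$ for large $\ell$, with $C_0$ controlled by $\sup_\Gamma|K_\nu|$. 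Because $\im K_\sigma \le 0$ for every $\sigma \in \cP(\R)$ (Corollary~\ref{cor:FandK}), each subordination map $z \mapsto z - K_\sigma(z)$ only increases imaginary parts, so choosing the truncation height large relative to $C_0$ fixes a pair of cones $\Gamma \subset \Gamma'$, compactly contained in $\h$ and with large gap $r:=\operatorname{dist}(\Gamma,\partial\Gamma')$, such that $z \mapsto z - K_{\boxplus_{\cS}(\sigma)}(z)$ maps $\Gamma$ into $\Gamma'$ for all $\cS \in \Tree(N)$ and all $\sigma$ with $\sup_{\Gamma'}|K_\sigma| \le 2C_0$. By Lemma~\ref{lem:transformhomeomorphism} and the normal-families argument of Theorem~\ref{thm:complexconvolution}, it then suffices to prove uniform convergence of the $K$-transforms on $\Gamma$.

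The analytic core is two estimates on $\Gamma$, uniform over $\cT\in\Tree(N)$, obtained from the branch decomposition \eqref{eq:fixedpointequation}. Once the cone is preserved, \eqref{eq:fixedpointequation} immediately gives $\sup_{\Gamma}|K_{\boxplus_{\cS}(\sigma)}| \le N \sup_{\Gamma'}|K_\sigma|$, and Cauchy's estimate gives $\sup_{\Gamma}|K_\sigma'| \le r^{-1}\sup_{\Gamma'}|K_\sigma|$. Expanding each of the $n$ root summands of \eqref{eq:fixedpointequation} as $K_\sigma(z-K_{\boxplus_{\br_j(\cT)}(\sigma)}(z)) = K_\sigma(z)+O\bigl(|K_{\boxplus_{\br_j(\cT)}(\sigma)}(z)|\cdot\sup_\Gamma|K_\sigma'|\bigr)$ yields the linearization estimate
\[
\sup_{\Gamma}\bigl|K_{\boxplus_{\cT}(\sigma)} - n K_\sigma\bigr| \le C\bigl(\sup_{\Gamma'}|K_\sigma|\bigr)^2,
\]
with $C$ depending only on $N$ and the cones. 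A parallel computation comparing $\boxplus_{\cT}(\sigma_1)$ and $\boxplus_{\cT}(\sigma_2)$, with a contractive recursion over branches (valid since $NA/r<1$) bounding all branch-Lipschitz constants uniformly, shows that for $\sup_{\Gamma'}|K|\le A$ the map $\boxplus_{\cT}$ has Lipschitz constant $n(1+O(A))$ on $K$-transforms. Consequently $\boxplus_{\cT^{\circ k}}$ has Lipschitz constant at most $K_* n^k$, with $K_*$ uniform because the sizes $A_m \sim C_0 n^{m-k}$ encountered along the iteration satisfy $\sum_m A_m<\infty$.

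Existence and identification of the limit then follow by telescoping with the canonical representatives $\mu^{[k]} := \nu^{\uplus n^{-k}}$, so that $K_{\mu^{[k]}}=n^{-k}K_\nu$. Put $\rho_k := \boxplus_{\cT^{\circ k}}(\mu^{[k]})$. By the second form of the recursion $\rho_{k+1} = \boxplus_{\cT^{\circ k}}\bigl(\boxplus_{\cT}(\mu^{[k+1]})\bigr)$; the linearization estimate gives $\sup_{\Gamma}|K_{\boxplus_{\cT}(\mu^{[k+1]})}-K_{\mu^{[k]}}| \le C(\sup_\Gamma|K_\nu|)^2 n^{-2(k+1)}$, which the Lipschitz estimate amplifies by at most $K_* n^k$, so $\sup_\Gamma|K_{\rho_{k+1}}-K_{\rho_k}| = O(n^{-k})$ is summable. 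Thus $(K_{\rho_k})$ is uniformly Cauchy on $\Gamma$, so by normal families and Lemma~\ref{lem:transformhomeomorphism} the $\rho_k$ converge, and the limit is the $F$-transform of a probability measure $\mathbb{BP}(\cT,\nu)$ by Nevanlinna's theorem, the non-tangential normalization (and hence unit mass) being controlled through a tightness argument from the uniform estimate of Lemma~\ref{lem:NTconvergence2} applied to the convergent family $\{\mu_\ell^{\uplus n^{k_\ell}}\}\cup\{\nu\}$. For a general sequence with $\mu_\ell^{\uplus n^{k_\ell}}\to\nu$ one has $\sup_\Gamma|K_{\mu_\ell}-K_{\mu^{[k_\ell]}}| = n^{-k_\ell}\sup_\Gamma|n^{k_\ell}K_{\mu_\ell}-K_\nu| = o(n^{-k_\ell})$, amplified by the Lipschitz estimate to $o(1)$; hence $\boxplus_{\cT^{\circ k_\ell}}(\mu_\ell)$ and $\rho_{k_\ell}$ share the limit $\mathbb{BP}(\cT,\nu)$. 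Since all constants depend only on $N$, on $\nu$, and on the cones, and $\cT\mapsto\mathbb{BP}(\cT,\nu)$ is continuous by Theorem~\ref{thm:complexconvolution}, the convergence is uniform over $\{\cT:n(\cT)=n\}$ for each fixed $n$, giving the asserted uniformity.

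The main obstacle is exactly this error accumulation: each application of $\boxplus_{\cT}$ expands $K$-transforms by the factor $n>1$, so the $k_\ell$-fold iteration is strongly expanding, and one must verify that the quadratic defects, though amplified by $\sim n^{k}$, enter with inputs of size $\sim n^{-k}$ and therefore telescope into a convergent geometric series rather than blowing up. Making this rigorous rests on the two uniform estimates above together with the observation that $\im K\le 0$ keeps a single truncated cone invariant under all $k_\ell$ iterations, so the estimates do not degrade as $k_\ell\to\infty$.
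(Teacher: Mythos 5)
Your proposal is essentially correct in outline, but it takes a genuinely different route from the paper. The paper never linearizes: it introduces the normalized maps $\Phi_N(\cT,c,\mu) = \boxplus_{\cT}(\mu^{\uplus c/N},\dots,\mu^{\uplus c/N})^{\uplus 1/c}$ (Theorem \ref{thm:equicontinuity}), proves a uniform-in-$N$ equicontinuity statement for the whole family $(\Phi_N)_N$ by running Earle--Hamilton on a Banach space of \emph{sequences} of functions, then uses the tree-isomorphism bookkeeping (Observation \ref{obs:isomorphictrees}, Lemmas \ref{lem:isomorphismconvolution} and \ref{lem:isomorphismcomposition}) to realize $\cT^{\circ k}$ inside $\Tree(Mn^k)$ and writes $\boxplus_{\cT^{\circ(k+\ell)}}(\mu^{\uplus n^{-(k+\ell)}}) = \Phi_{Mn^k}(\cT_k,1,\Phi_{Mn^\ell}(\cT_\ell,\tfrac{1}{Mn^k},\mu^{\uplus M^2}))$; the Cauchy property in $d_L$ then follows softly from continuity of $\Phi$ in the parameter $c$ at $c=0$, where $\Phi_{Mn^\ell}(\cT_\ell,0,\cdot)$ degenerates to a boolean dilation. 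Your telescoping scheme with the canonical representatives $\nu^{\uplus n^{-k}}$, the expansion $K_{\boxplus_{\cT}(\sigma)} = nK_\sigma + O(\sup|K_\sigma|^2)$, and the Lipschitz constant $n(1+O(A))$ compounded to $K_* n^k$ replaces that soft compactness argument by explicit quantitative control; the trade-off is that your route yields an explicit $O(n^{-k})$ Cauchy rate (of independent interest given the paper's open question on numerical error bounds), while the paper's route is uniform in $N$ by construction and reuses machinery needed elsewhere (e.g.\ in Proposition \ref{prop:Cclosure}).

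Two technical points in your write-up need shoring up, though both are repairable. First, your claim that $\im K \le 0$ ``keeps a single truncated cone invariant'' is false as stated: the subordination shifts raise imaginary parts but also move the real part and modulus, so a truncated cone $\Gamma_{a,b,c}$ is not literally preserved, and a single pair $\Gamma \subset \Gamma'$ cannot serve for unboundedly many iterations since each application degrades the cone. The correct repair is exactly the summability you observe: since the shift at stage $m$ has size $O(n^{m-k})$, the total accumulation is bounded uniformly in $k$, so a nested family of truncated cones $C_k \subseteq \cdots \subseteq C_0$ with summable gaps, all contained in one fixed compact region of $\h$ (with one extra fixed margin $r$ reserved for the Cauchy derivative estimates), carries the whole induction with $k$-independent constants. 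Second, your uniform bounds over \emph{infinite} trees --- both $\sup_\Gamma|K_{\boxplus_{\cS}(\sigma)}| \le N\sup_{\Gamma'}|K_\sigma|$ and the branch-Lipschitz recursion $L \le N + (NA/r)L$ --- are self-referential: the fixed-point equation for an infinite tree invokes the very quantities being bounded, so the inequality presupposes finiteness of $L$ and cone-preservation for all branches simultaneously. This chicken-and-egg is precisely what the paper resolves with Earle--Hamilton; you can close it either by citing the cone-preservation already established in the proof of Theorem \ref{thm:complexconvolution} (via Lemma \ref{lem:mappingregions}), or by proving your estimates first for finite-depth truncations, where the recursion terminates and the contraction $NA/r < 1$ gives bounds independent of the depth, and passing to the limit using the continuity in $\cT$ from Theorem \ref{thm:complexconvolution}. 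With those two repairs your argument goes through and gives the stated uniformity over $\{\cT \in \Tree(N): n(\cT) = n\}$ for each of the finitely many $n \in \{2,\dots,N\}$.
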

	
	Our proof relies on the following result, which gives certain continuity estimates for the $\cT$-free convolution operations that are independent of $N$.
	
	\begin{theorem} \label{thm:equicontinuity}
		For $N \in \N$, we define
		\[
		\Phi_N: \Tree(N) \times [0,1] \times \cP(\R) \to \cP(\R)
		\]
		by
		\[
		\Phi_N(\cT,c,\mu) := \begin{cases}
			\boxplus_{\cT}(\mu^{\uplus c/N}, \dots, \mu^{\uplus c/N})^{\uplus 1/c}, & c \in (0,1], \\
			\mu^{\uplus \frac{n(\cT)}{N}}, & c = 0.
		\end{cases}
		\]
		The map $\Phi_N$ satisfies the fixed-point equation
		\begin{equation} \label{eq:Phifixedpoint}
			K_{\Phi_N(\cT,c,\mu)}(z) = \frac{1}{N} \sum_{j \in [N] \cap \cT} K_\mu(z - c K_{\Phi_N(\br_j(\cT),c,\mu)}(z)).
		\end{equation}
		Moreover, the maps $(\Phi_N)_{N \in \N}$ have the following equicontinuity property:  For each compact $Y \subseteq \mathcal{P}(\R)$ and $\epsilon > 0$, there exists $\delta > 0$ such that for all $N$, for all $\cT_1$, $\cT_2 \in \Tree(N)$ and $c_1$, $c_2 \in [0,1]$ and $\mu \in Y$ and $\nu \in \cP(\R)$, if $\rho_N(\cT_1,\cT_2) + |c_1 - c_2| + d_L(\mu,\nu) < \delta$, then $d_L(\Phi_N(\cT_1,c_1,\mu),\Phi_N(\cT_2,c_2,\nu)) < \epsilon$.
	\end{theorem}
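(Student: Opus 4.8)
The plan is to first read off the fixed-point equation \eqref{eq:Phifixedpoint} from the definition of $\Phi_N$ together with Theorem \ref{thm:complexconvolution}, and then to prove the equicontinuity by re-running the Earle--Hamilton iteration from the proof of Theorem \ref{thm:complexconvolution} while tracking every estimate uniformly in $N$. The entire gain in uniformity will come from the normalizing factor $1/N$ together with the bound $n(\cT) = |[N] \cap \cT| \le N$, which makes each sum $\frac{1}{N}\sum_{j \in [N] \cap \cT}$ behave like a sub-probability average. For the fixed-point equation itself I would use $K_{\mu^{\uplus c/N}} = (c/N)K_\mu$ and $K_{\nu^{\uplus 1/c}} = (1/c)K_\nu$: writing $\Psi_\cT := \boxplus_\cT(\mu^{\uplus c/N},\dots,\mu^{\uplus c/N})$, Theorem \ref{thm:complexconvolution} gives $K_{\Psi_\cT}(z) = \tfrac{c}{N}\sum_{j \in [N]\cap\cT} K_\mu(z - K_{\Psi_{\br_j(\cT)}}(z))$, and since $K_{\Phi_N(\cT,c,\mu)} = \tfrac1c K_{\Psi_\cT}$ for $c>0$, substituting $K_{\Psi_{\br_j(\cT)}} = c\,K_{\Phi_N(\br_j(\cT),c,\mu)}$ yields \eqref{eq:Phifixedpoint}. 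At $c=0$ the right-hand side of \eqref{eq:Phifixedpoint} collapses to $\tfrac1N\sum_{j\in[N]\cap\cT}K_\mu(z) = \tfrac{n(\cT)}{N}K_\mu(z) = K_{\mu^{\uplus n(\cT)/N}}(z)$, matching the definition, and the continuity as $c \to 0^+$ will be recovered at the end as a special case of continuity in $c$.

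The core of the argument is a family of a priori bounds independent of $N$. Writing $F_{\Phi_N(\cT,c,\mu)}(z) = z - K_{\Phi_N(\cT,c,\mu)}(z)$, the argument appearing in \eqref{eq:Phifixedpoint} is $w_j := z - c\,K_{\Phi_N(\br_j(\cT),c,\mu)}(z) = (1-c)z + c\,F_{\Phi_N(\br_j(\cT),c,\mu)}(z)$, a convex combination of $z \in \h$ and a point of $\overline{\h}$ whose imaginary part is $\ge \im z$ by Corollary \ref{cor:FandK}; hence $\im w_j \ge \im z$ while $|w_j| \le |z| + c\,|K_{\Phi_N(\br_j(\cT),c,\mu)}(z)|$. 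Feeding this into \eqref{eq:Phifixedpoint} and invoking Lemma \ref{lem:NTconvergence2} to make $|K_\mu(w)|/|w|$ small on deep cones uniformly over the compact set $Y$, the decisive point is that the outer average has at most $n(\cT)\le N$ terms, so the factor $n(\cT)/N \le 1$ absorbs the number of branches. This produces a bound $|K_{\Phi_N(\cT,c,\mu)}(z)| \le \eta(t)|z|$ on $\Gamma_{a,tb,tc}$ with $\eta(t) \to 0$, uniformly in $N$, $\cT$, $c \in [0,1]$, and $\mu \in Y$. In particular $F_{\Phi_N(\cT,c,\mu)}(z)/z \to 1$ non-tangentially uniformly over the whole family, so by the Nevanlinna criterion the family $\{\Phi_N(\cT,c,\mu)\}$ is tight, hence relatively compact in $\cP(\R)$; this compactness is what will let me convert transform estimates into $d_L$ estimates.

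Next I would set up the iteration $F^{(n)}_{\cT,c,\mu}$ exactly as in the proof of Theorem \ref{thm:complexconvolution}, but with $N K_\mu$ replaced by the average $\frac1N\sum_j K_\mu$, so that the region-mapping step (the analog of Lemma \ref{lem:mappingregions}) carries no $N$-dependence and the Earle--Hamilton iteration lives on a fixed truncated cone $\Gamma_{a_0,tb_0,tc_0}$. Differentiating the iteration operator, its Lipschitz constant in the unknown is at most $\tfrac{n(\cT)}{N}\,c\,\sup_{\mu\in Y}\sup_w |K_\mu'(w)|$ over the relevant cone; applying a Cauchy estimate to $|K_\mu(w)| \le \eta(t)|w|$ shows $\sup|K_\mu'| < \tfrac12$ once $t$ is large, uniformly over $\mu \in Y$. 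Since $n(\cT)/N \le 1$ and $c \le 1$, this is a contraction of rate $<\tfrac12$ uniformly in $N$, so $F^{(n)}_{\cT,c,\mu} \to F_{\Phi_N(\cT,c,\mu)}$ geometrically with constants independent of $N$. Equicontinuity in $\cT$ then follows because one pass of the iteration replaces $\cT$ by its branches, so $F^{(n)}_{\cT,c,\mu}$ depends on $\cT$ only through $B_n(\cT)$; thus $\rho_N(\cT_1,\cT_2) \le e^{-n}$ forces $F^{(n)}_{\cT_1,c,\mu} = F^{(n)}_{\cT_2,c,\mu}$, and the uniform geometric convergence bridges the remaining gap to the fixed points. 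Equicontinuity in $(c,\mu)$ follows from the standard perturbation estimate for fixed points of a uniformly contracting family: the operator depends on $(c,\mu)$ only through $K_\mu(z - c\,\cdot\,)$, which varies uniformly continuously because $d_L(\mu,\nu)$ small forces $\sup_{\text{cone}}|K_\mu - K_\nu|$ small uniformly over compact $Y$ (Lemma \ref{lem:transformhomeomorphism}) and the $c$-dependence is Lipschitz by the a priori bounds. Combining these, $K_{\Phi_N(\cT,c,\mu)}$ varies uniformly on $\Gamma_{a_0,tb_0,tc_0}$ with modulus independent of $N$; since $\Hol(\h,-\overline{\h})$ is a normal family this upgrades to local uniform convergence on $\h$, and together with the tightness above it converts, via Lemma \ref{lem:transformhomeomorphism}, into the claimed $d_L$ bound. (Enlarging $Y$ to a compact set containing every $\nu$ with $d_L(\mu,\nu)<\delta$ is legitimate since a small neighborhood of a tight set is tight, so I may assume $\mu,\nu$ lie in one compact set; the endpoint $c=0$ is subsumed by the continuity in $c$.)

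The main obstacle is exactly this uniformity in $N$: the estimates inherited from Theorem \ref{thm:complexconvolution} and Lemma \ref{lem:mappingregions} degrade as $N \to \infty$ because of the $N$ summands, and a contradiction/subsequence argument is awkward since trees drawn from different $\Tree(N)$ cannot be compared by passing to a limit. The resolution, and the reason the theorem holds, is the single observation that in $\Phi_N$ every sum $\sum_{j\in[N]\cap\cT}$ is normalized by $1/N$ and has at most $n(\cT)\le N$ terms, so it is a sub-probability average; this makes both the region-mapping bound and the contraction rate independent of $N$, after which the rest is a parametrized version of the proof of Theorem \ref{thm:complexconvolution}.
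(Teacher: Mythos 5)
Your overall strategy is the same as the paper's: the fixed-point equation \eqref{eq:Phifixedpoint} is verified by exactly your $K_{\mu^{\uplus c}} = cK_\mu$ computation, and the equicontinuity is obtained by recasting \eqref{eq:Phifixedpoint} as a holomorphic fixed-point problem on a truncated cone and invoking Earle--Hamilton, with precisely your key observation that the $\tfrac1N$-normalized sum over $[N]\cap\cT$ is a sub-probability average, so the region-mapping estimate of Lemma \ref{lem:mappingregions} and the invariance of the region carry no $N$-dependence. The two places where you diverge are harmless and, if anything, more quantitative: the paper encodes uniformity in $N$ by working on a single Banach space of \emph{sequences} $(f_N)_{N\in\N}$ that are uniformly bounded and uniformly equicontinuous (so the Earle--Hamilton fixed point inherits equicontinuity automatically), whereas you extract an explicit geometric contraction rate via a Cauchy estimate on $K_\mu'$ and exploit that the $n$-th iterate depends on $\cT$ only through $B_n(\cT)$, so $\rho_N(\cT_1,\cT_2)\le e^{-n}$ forces the iterates to coincide. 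Both of those steps are sound.

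There is, however, one genuine gap: your closing claim that ``a small neighborhood of a tight set is tight,'' used to replace the arbitrary $\nu\in\cP(\R)$ by an element of a compact set, is false. For fixed $\delta>0$ the measures $\nu_n=(1-\delta)\delta_0+\delta\,\delta_n$ satisfy $d_L(\delta_0,\nu_n)\le\delta$ for every $n$, yet $\{\nu_n\}$ is not tight, so even the closed $\delta$-ball around a single measure is not relatively compact. This is not a cosmetic issue for your scheme: the uniform non-tangential decay of Lemma \ref{lem:NTconvergence2}, which you use to fix $t$ and to force $\sup|K_\mu'|<\tfrac12$, genuinely fails on such a neighborhood (for $\nu_n$ as above one computes $|K_{\nu_n}(iy)|/y$ of order $\delta$ along $y=n$, so no single $t$ serves the enlarged family), and likewise ``$d_L(\mu,\nu)$ small implies $\sup_{\mathrm{cone}}|K_\mu-K_\nu|$ small'' does not follow from Lemma \ref{lem:transformhomeomorphism} alone once $\nu$ leaves $Y$. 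The paper closes this hole by a subsequence/contradiction argument: it proves uniform equicontinuity on $\Tree(N)\times[0,1]\times Y'$ for every compact $Y'$, and if the full statement failed there would be $\mu_k\in Y$ and $\nu_k\in\cP(\R)$ with $d_L(\mu_k,\nu_k)<1/k$ violating it; since $d_L(\mu_k,\nu_k)\to 0$, the set $Y'=Y\cup\{\nu_k:k\in\N\}$ \emph{is} compact (the $\nu_k$ can only accumulate at limit points of $Y$), giving a contradiction. Alternatively, your route can be repaired directly by proving the quantitative bound $\sup_{\Gamma_{a,tb,tc}}|G_\mu-G_\nu|\le C(\Gamma)\,d_L(\mu,\nu)$ together with a uniform positive lower bound on $|G_\mu|$ over $\mu\in Y$ on the compact cone, which yields $\sup_{\mathrm{cone}}|K_\mu-K_\nu|\le C'(\Gamma,Y)\,d_L(\mu,\nu)$ for \emph{arbitrary} $\nu$ near $Y$; since $\delta$ is chosen after $t$, this suffices to transfer the invariant region and the contraction rate to the $\nu$-operator. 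As written, though, the reduction to a compact set fails, and this step needs one of these substitutes.
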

	
	\begin{proof}
		To check \eqref{eq:Phifixedpoint} for $c > 0$, observe that
		\begin{align*}
			K_{\Phi_N(\cT,c,\mu)}(z) &= \frac{1}{c} K_{\boxplus_{\cT}(\mu^{\uplus c/N},\dots,\mu^{\uplus c/N})}(z) \\
			&= \sum_{j \in [N] \cap \cT} \frac{1}{c} K_{\mu^{\uplus c/N}}(z - K_{\boxplus_{\br_j(\cT)}(\mu^{\uplus c/N},\dots,\mu^{\uplus c/N})}(z)) \\
			&= \frac{1}{N} \sum_{j \in [N] \cap \cT} K_\mu(z - c K_{\Phi_N(\br_j(\cT),c,\mu)}(z)).
		\end{align*}
		The case $c = 0$ is immediate and left to the reader.
		
		Now we turn to the claim about continuity.  We will show below that the family $(\Phi_N)_{N \in \N}$ is uniformly equicontinuous on $\Tree(N) \times [0,1] \times Y$.  By this, we mean more precisely that the functions are uniformly continuous with a modulus of continuity that is independent of $N$; even though the domains are different, equicontinuity still makes sense because we have fixed a metric $\rho_N$ for each $\Tree(N)$ from the beginning.  This claim about equicontinuity for each compact $Y$ is enough to finish the proof.  Indeed, if the conclusion of the theorem failed, then there would be a compact set $Y$ and $\epsilon > 0$ such that for each $k > 0$, there exist $\mu_k \in Y$ and $\nu_k \in \mathcal{P}(\R)$ and $N_k \in \N$ and $\cT_k, \cT_k' \in \Tree(N_k)$ and $c_k, c_k' \in [0,1]$ such that
		\[
		\rho_N(\cT_k,\cT_k') + |c_k - c_k'| + d_L(\mu_k,\nu_k) < 1/k, \qquad d_L(\Phi_{N_k}(\cT_k,c_k,\mu_k), \Phi_{N_k}(\cT_k',c_k',\nu_k)) \geq \epsilon.
		\]
		Then $Y' = Y \cup \{\nu_k: k \in \N\}$ would be compact, and the above conditions would contradict the equicontinuity on $\Tree(N) \times [0,1] \times Y'$.
		
		As before, the strategy is to reframe \eqref{eq:Phifixedpoint} as a fixed-point equation for some analytic function $\mathcal{F}$ on a Banach space $\mathcal{X}$ and apply the Earle-Hamilton theorem.  Fix $Y \subseteq \mathcal{P}(\R)$ compact, and fix
		\[
		1 > a_0 > a_1 > a_2 > 0, \qquad 0 < b_2 < b_1 < b_0 < c_0 < c_1 < c_2,
		\]
		and let $t$ be as in the conclusion of Lemma \ref{lem:transformhomeomorphism}.  Let $\mathcal{X}$ to be the space of sequences $(f_N)_{N \in \N}$ where $f_N: \Tree(N) \times [0,1] \times Y \times \Gamma_{a_0,tb_0,tc_0} \to \C$ and where $(f_N)_{N \in \N}$ is uniformly bounded and uniformly equicontinuous, with the norm given by
		\[
		\norm{(f_N)_{N \in \N}}_{\mathcal{X}} = \sup_{N \in \N} \norm{f_N}_{C(\Tree(N) \times [0,1] \times Y \times \Gamma_{a_0,tb_0,tc_0})};
		\]
		it is easy to check that this is a Banach space because uniform equicontinuity is preserved under limits in this norm.  Let
		\[
		\Omega = \left\{(f_N)_{N \in \N} \in \mathcal{X}: \overline{\bigcup_{N \in \N} \Ran(f_N)} \subseteq (\Gamma_{a_2,tb_2,tc_2})^\circ \right\},
		\]
		where $\Ran(f_N)$ denotes the range (image) of $f_N$.  Note that $\Omega$ is open in $\mathcal{X}$.  Define $\mathcal{F}: \Omega \to \mathcal{X}$ by
		\[
		\mathcal{F}((f_N)_{N \in \N}) := (g_N)_{N \in \N}, \quad \text{where} \quad  g_N(\cT,c,\mu,z) = z - \frac{1}{N} \sum_{j \in [N] \cap \cT} K_\mu((1-c)z + c f_N(\br_j(\cT),c,\mu,z)).
		\]
		The motivation for this definition is that $f_N(\cT,c,\mu,z)$ is intended to approximate $z - K_{\Phi_N(\cT,c,\mu)}(z)$, and hence the intended approximation for $z - c K_{\Phi_N(\br_j(\cT),c,\mu)}(z)$ is $(1 - c)z + c f_N(\cT,c,\mu,z)$.
		
		We must check that $(g_N)_{N \in \N}$ is actually in $\mathcal{X}$, that $\mathcal{F}$ is analytic, and $\mathcal{F}(\Omega)$ is separated by a positive distance from $\Omega^c$.  First, to show that $(g_N)_{N \in \N}$ is uniformly bounded and equicontinuous, one combines the following facts:
		\begin{enumerate}[(1)]
			\item The modulus of continuity of the map $\br_j$ (on its domain) is independent of $N$ since it is $e^{-1}$ Lipschitz.
			\item The map $\mu \mapsto K_\mu$ is continuous on $Y$ where we use the weak topology on $Y \subseteq \mathcal{P}(\R)$ and the topology of uniform convergence on $\Gamma_{a_2,tb_2,tc_2}$.  Hence, the map $(\mu,z) \mapsto K_\mu(z)$ is jointly continuous on $Y \times \Gamma_{a_2,tb_2,tc_2}$, hence uniformly continuous and uniformly bounded by compactness of $Y$ and $\Gamma_{a_2,tb_2,tc_2}$.
		\end{enumerate}
		To show the separation of $\mathcal{F}(\Omega)$ from $\Omega^c$, we proceed similarly to the proof of Theorem \ref{thm:complexconvolution}.  By our choice of $f_N$, we have
		\[
		f_N(\br_j(\cT),c,\mu,z) \in \Gamma_{a_2,tb_2,tc_2},
		\]
		and by convexity of $\Gamma_{a_2,tb_2,tc_2}$, we have
		\[
		(1 - c)z + c f_N(\br_j(\cT),c,\mu,z) \in \Gamma_{a_2,tb_2,tc_2}.
		\]
		Then by our choice of $t$,
		\[
		z - K_\mu((1 - c)z + c f_N(\br_j(\cT),c,\mu,z)) \in \Gamma_{a_0,tb_0,tc_0}.
		\]
		Hence, as in Theorem \ref{thm:complexconvolution}, $g_N(\cT,c,\mu,z)$ is a convex combination of points in $\Gamma_{a_0,tb_0,tc_0}$ and so is in $\Gamma_{a_0,tb_0,tc_0}$.  This implies the separation of $\mathcal{F}(\Omega)$ from $\Omega^c$.  The analyticity of $\mathcal{F}$ is straightforward to check as in the proof of Theorem \ref{thm:complexconvolution}.
		
		For each $N \in \N$, let $f_N^0(\cT,c,\mu,z) = z$.  Then the Earle-Hamilton theorem implies that $\mathcal{F}^{\circ n}((f_N^0)_{N \in \N})$ converges as $n \to \infty$ to the unique fixed point $(f_N)_{N \in \N}$.  As in the proof of Theorem \ref{thm:complexconvolution}, the iterates themselves are $F$-transforms of measures, and therefore the convergence extends to the entire upper half-plane.  And there is a measure $\Psi_N(\cT,c,\mu)$ such that $f_N(\cT,c,\mu,z) = F_{\Psi_N(\cT,c,\mu)}(z)$.  Because $(f_N)$ is uniformly equicontinuous, we see that $(\Psi_N)_{N \in \N}$ is uniformly equicontinuous, since uniform convergence of a sequence of $F$-transforms on $\Gamma_{a_2,tb_2,tc_2}$ is equivalent to weak convergence of the associated sequence of measures.  Finally, reversing our computations above shows that for $c > 0$, $\Phi_N(\cT,c,\mu)^{\uplus c}$ satisfies the fixed point equation defining $\boxplus_{\cT}(\mu^{\uplus (c/N)}, \dots, \mu^{\uplus (c/N)}) = \Phi_N(\cT,c,\mu)^{\uplus c}$.  We have also just shown that $\Phi_N(\cT,c,\mu)$ depends continuously on $\cT$, and thus Theorem \ref{thm:complexconvolution} implies that $\Psi_N = \Phi_N$ for $c > 0$; the $c = 0$ case can be checked directly.  Therefore, the equicontinuity properties proved for $\Psi_N$ hold for $\Phi_N$.
	\end{proof}
	
	\begin{theorem} \label{thm:BP2}
		Let $N \in \N$ and $\cT \in \Tree(N)$ with $n(\cT) > 1$.  For $\mu \in \mathcal{P}(\R)$, we have existence of the limit
		\[
		\mathbb{BP}(\cT,\mu) := \lim_{k \to \infty} \boxplus_{\cT^{\circ k}}(\mu^{\uplus \frac{1}{n(\cT)^k}}).
		\]
		Moreover, for each $N$, the convergence is uniform on $\{\cT \in \Tree(N): n(\cT) > 1\} \times Y$ for every compact subset of $\mathcal{P}(\R)$, and hence $\mathbb{BP}$ is a continuous map $\{\cT \in \Tree(N): n(\cT) > 1\} \times \mathcal{P}(\R) \to \mathcal{P}(\R)$.
	\end{theorem}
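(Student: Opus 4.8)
The plan is to show that the sequence $\nu_k := \boxplus_{\cT^{\circ k}}(\mu^{\uplus 1/n(\cT)^k})$ is Cauchy in $(\cP(\R),d_L)$, with all estimates uniform over $\{\cT : n(\cT)>1\}$ and over $\mu$ in a fixed compact $Y$, by feeding the operad self-similarity into the uniform-in-$N$ equicontinuity of the maps $\Phi_N$ from Theorem \ref{thm:equicontinuity}. Write $n=n(\cT)$. The starting point is the recursion $\cT^{\circ(k+1)}=\cT^{\circ k}\circ\cT$, which by Theorem \ref{thm:convolutioncomposition} lets me peel off a single innermost copy of $\cT$. Since $\boxplus_{\cT}(\rho^{\uplus p})=\Phi_N(\cT,pN,\rho)^{\uplus pN}$ by definition of $\Phi$, applying the composition identity to $N^k$ equal copies gives
\[
\nu_{k+1}=\boxplus_{\cT^{\circ k}}\bigl(\theta_{k+1}^{\uplus\, N/n^{k+1}}\bigr),\qquad \theta_{k+1}:=\Phi_N\bigl(\cT,\tfrac{N}{n^{k+1}},\mu\bigr),
\]
while, rewriting the coarse normalization, $\nu_k=\boxplus_{\cT^{\circ k}}\bigl((\mu^{\uplus n/N})^{\uplus\, N/n^{k+1}}\bigr)$. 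Thus $\nu_k$ and $\nu_{k+1}$ are the same map $R_k\colon\rho\mapsto\boxplus_{\cT^{\circ k}}(\rho^{\uplus N/n^{k+1}})$ evaluated at the two measures $\mu^{\uplus n/N}$ and $\theta_{k+1}$.

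The second ingredient is that these two input measures become close at a \emph{geometric} rate. By continuity of $\Phi_N$ at $c=0$ (Theorem \ref{thm:equicontinuity}, where $\Phi_N(\cT,0,\mu)=\mu^{\uplus n/N}$), and since the parameter $N/n^{k+1}\to 0$ like $n^{-k}$, I expect a quantitative bound $d_L(\theta_{k+1},\mu^{\uplus n/N})\lesssim n^{-k}$, uniform over $\cT$ (using $n\ge 2$) and over $\mu\in Y$; this uses only the fixed tree $\cT\in\Tree(N)$, so no uniformity in dimension is needed here. The point of the whole argument is then to convert this geometric smallness of the inputs into a geometric bound on $d_L(\nu_k,\nu_{k+1})$, which, being summable, yields the Cauchy property and hence existence of $\mathbb{BP}(\cT,\mu)=\lim_k\nu_k$.

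The conversion is where the uniform equicontinuity is indispensable: I must bound $d_L(R_k(\mu^{\uplus n/N}),R_k(\theta_{k+1}))$ by a modulus of continuity for $R_k$ that is \emph{independent of $k$}, even though $\cT^{\circ k}$ lives in $\Tree(N^k)$ with $N^k\to\infty$. This is exactly what Theorem \ref{thm:equicontinuity} provides, once one recognizes $R_k$ as $\boxplus_{\cT^{\circ k}}(\,\cdot^{\,\uplus N/n^{k+1}})$ renormalized; the equicontinuity must be applied with the outputs rescaled by $n(\cT^{\circ k})=n^k$ (rather than by the ambient dimension $N^k$), so that the effective boolean weight is the bounded quantity $N/n^{k}\cdot n^{k-1}=N/n$ and the governing parameter stays in the controlled range while $k$ varies. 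Establishing this uniform modulus — equivalently, running the fixed-point/Earle--Hamilton estimate of Theorem \ref{thm:equicontinuity} along the $n(\cT)$-normalized recursion $\tilde\Phi(\cT^{\circ(k+1)},c,\mu)=\Phi_N(\cT,\cdot,\tilde\Phi(\cT^{\circ k},c/n,\mu))$, in which the parameter contracts by the factor $1/n<1$ at each peel and never leaves a bounded interval — is the main obstacle, and it is the sole reason Theorem \ref{thm:equicontinuity} was proved with constants independent of $N$.

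Finally, I would assemble the uniform conclusion. Because the bound on $d_L(\nu_k,\nu_{k+1})$ is geometric with a ratio controlled by $n\ge 2$ and with constants depending only on $Y$, the convergence $\nu_k\to\mathbb{BP}(\cT,\mu)$ is uniform over $\{\cT\in\Tree(N):n(\cT)>1\}\times Y$; a routine argument (as in the proof of Theorem \ref{thm:equicontinuity}) upgrades ``for each compact $Y$'' to the asserted equicontinuity. Each map $(\cT,\mu)\mapsto\boxplus_{\cT^{\circ k}}(\mu^{\uplus 1/n(\cT)^k})$ is jointly continuous on $\{n(\cT)>1\}$: the quantity $n(\cT)$ is determined by $B_1(\cT)$ and hence locally constant, $\cT\mapsto\cT^{\circ k}$ and $\boxplus$ are continuous by the operad continuity and Theorem \ref{thm:complexconvolution}, and $\mu\mapsto\mu^{\uplus 1/n^k}$ is continuous. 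Being a locally uniform limit of continuous maps, $\mathbb{BP}$ is therefore continuous, completing the statement.
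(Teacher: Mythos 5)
Your overall strategy---deducing uniform Cauchyness from the operad self-similarity combined with the $N$-independent equicontinuity of $\Phi_N$---is the same as the paper's, and your one-peel algebra is correct: writing $\nu_k := \boxplus_{\cT^{\circ k}}(\mu^{\uplus 1/n^k})$, one has for large $k$ that $\nu_{k+1}=R_k(\theta_{k+1})$ and $\nu_k=R_k(\mu^{\uplus n/N})$ with $\theta_{k+1}=\Phi_N(\cT,N/n^{k+1},\mu)$ and $R_k(\rho)=\boxplus_{\cT^{\circ k}}(\rho^{\uplus N/n^{k+1}})$. But the execution has two genuine gaps. First, the claimed geometric rate $d_L(\theta_{k+1},\mu^{\uplus n/N})\lesssim n^{-k}$ does not follow from Theorem \ref{thm:equicontinuity}: equicontinuity in $c$ at $c=0$ yields only $d_L(\theta_{k+1},\mu^{\uplus n/N})\to 0$ with no rate, and converting closeness of transforms on a truncated cone into closeness in $d_L$ destroys any Lipschitz control one might hope to extract from the Earle--Hamilton machinery. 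Your telescoping plan needs summable increments; $d_L(\nu_k,\nu_{k+1})\to 0$ alone does not make the sequence Cauchy (compare harmonic partial sums). Second, the ``$k$-independent modulus for $R_k$'' is not a direct application of Theorem \ref{thm:equicontinuity} as stated: expressing $R_k$ through $\Phi_{N^k}(\cT^{\circ k},c,\cdot)$ forces the boolean parameter $c=(N/n)^{k+1}$, which leaves the allowed range $[0,1]$ whenever $N>n$. One must first replace $\cT^{\circ k}$ by an isomorphic copy $\cT_k\in\Tree(Mn^k)$ with $M\geq (N-1)/(n-1)$ fixed (Observation \ref{obs:isomorphictrees}, Lemmas \ref{lem:maxchildren}, \ref{lem:isomorphismcomposition}, \ref{lem:isomorphismconvolution}), after which $R_k(\rho)=\Phi_{Mn^k}(\cT_k,1,\rho^{\uplus MN/n})$ and the uniform modulus does follow. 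You gesture at this rescaling (``by $n(\cT^{\circ k})$ rather than $N^k$'') but never carry it out, and your proposed substitute---a new Earle--Hamilton analysis along the recursion $\tilde\Phi$---is precisely the unproven step you yourself flag as ``the main obstacle.''

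The paper's proof closes both gaps simultaneously by avoiding any rate: instead of consecutive terms, it compares all tails at once, writing $\nu_{k+\ell}=\Phi_{Mn^k}\bigl(\cT_k,1,\Phi_{Mn^\ell}\bigl(\cT_\ell,\tfrac{1}{Mn^k},\mu^{\uplus M^2}\bigr)\bigr)$ and observing that as $k\to\infty$ the inner measure converges, uniformly in $\ell$ and in $\mu\in Y$, to the $\ell$-\emph{independent} measure $\Phi_{Mn^\ell}(\cT_\ell,0,\mu^{\uplus M^2})=\mu^{\uplus M}$. The $k$-uniform equicontinuity of $\Phi_{Mn^k}(\cT_k,1,\cdot)$ then places every $\nu_{k+\ell}$ with $\ell\geq 1$ within $\epsilon/2$ of the single reference measure $\Phi_{Mn^k}(\cT_k,1,\mu^{\uplus M})$, so the whole tail is $\epsilon$-clustered and the sequence is uniformly Cauchy with no summability required. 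If you replace your one-copy peel by this inner-block peel (splitting $\cT^{\circ(k+\ell)}$ as $\cT_k\circ\cT_\ell$ and sending the inner parameter $\tfrac{1}{Mn^k}$ to $0$), the rest of your argument---including your final continuity upgrade via locally uniform limits, which is fine as written---goes through.
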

	
	\begin{remark}
		We call the map ``$\mathbb{BP}$''in honor of Bercovici and Pata's work \cite{BP1999}.
	\end{remark}
	
	\begin{proof}
		Let $\Tree(N,n) = \{\cT \in \Tree(N): n(\cT) = n\}$, which is a clopen subset of $\Tree(N)$.  Note that $\{\cT \in \Tree(N): n(\cT) > 1\} = \bigcup_{n=2}^N \Tree(N,n)$.
		
		Fix $n \in \{2,\dots,N\}$, and let $Y$ be a compact subset of $\mathcal{P}(\R)$, and we will show uniform convergence of $\boxplus_{\cT^{\circ k}}(\mu^{\uplus \frac{1}{n^k}})$ on $\Tree(N,n) \times Y$.  This of course will imply continuity of the limit function.  And to show uniform convergence, it suffices to show that the sequence is uniformly Cauchy with respect to the L\'evy distance $d_L$ since $(\mathcal{P}(\R),d_L)$ is complete.  Fix an integer
		\[
		M \geq \frac{N - 1}{n - 1} \geq 1.
		\]
		For each $\cT \in \Tree(N,n)$, we have by Lemma \ref{lem:maxchildren} and Observation \ref{obs:isomorphictrees} that
		\[
		m(\cT^{\circ k}) + 1 = m(\cT) \frac{n^k - 1}{n - 1} \leq \frac{N-1}{n-1}(n^k - 1) + 1 \leq \frac{N-1}{n-1} n^k \leq Mn^k.
		\]
		Therefore, by Observation \ref{obs:isomorphictrees}, there exists some tree $\cT_k \in \Tree(Mn^k)$ such that $\cT_k \cong \cT^{\circ k}$.  Hence, by Lemmas \ref{lem:isomorphismconvolution} and \ref{lem:isomorphismcomposition}, we have for $k, \ell \geq 1$, and $\mu \in \mathcal{P}(\R)$ that
		\[
		\boxplus_{\cT^{\circ (k+\ell)}}(\mu) = \boxplus_{\cT^{\circ k}}(\boxplus_{\cT^{\circ \ell}}(\mu)) = \boxplus_{\cT_k}(\boxplus_{\cT_\ell}(\mu)) = \boxplus_{\cT_k \circ \cT_\ell}(\mu).
		\]
		In particular,
		\begin{align*}
			\boxplus_{\cT^{\circ(k+\ell)}}(\mu^{\uplus \frac{1}{n^{k+\ell}}})
			&= \boxplus_{\cT_k}(\boxplus_{\cT_\ell}(\mu^{\uplus \frac{1}{n^{k+\ell}}})) \\
			&= \boxplus_{\cT_k}\left( \left( \boxplus_{\cT_\ell}((\mu^{\uplus M^2})^{\uplus \frac{1}{Mn^\ell Mn^k}})^{\uplus Mn^k} \right)^{\uplus \frac{1}{Mn^k}} \right) \\
			&= \Phi_{Mn^k}\left(\cT_k,1, \Phi_{Mn^\ell}\left(\cT_\ell,\frac{1}{Mn^k}, \mu^{\uplus M^2}\right) \right).
		\end{align*}
		To show the sequence is Cauchy, fix $\epsilon > 0$.  Note that $Y^{\uplus M} = \{\mu^{\uplus M}: \mu \in Y\}$ is compact because $Y$ is compact and $\mu \mapsto \mu^{\uplus M}$ is continuous by Theorem \ref{thm:complexconvolution}, and the same holds for $Y^{\uplus M^2}$.  Thus, by Theorem \ref{thm:equicontinuity}, there exists $\eta > 0$ such that for all $k$, for all $\lambda \in Y^{\uplus M}$ and $\nu \in \mathcal{P}(\R)$ and $\cT' \in \Tree(Mn^k)$, we have
		\[
		d_L(\lambda,\nu) < \eta \implies d_L(\Phi_{Mn^k}(\cT',1,\lambda), \Phi_{Mn^k}(\cT',1,\nu)) < \frac{\epsilon}{2}.
		\]
		In particular, this estimate applies with $\cT' = \cT_k$, for any $\cT \in \Tree(N,n)$.  Applying Theorem \ref{thm:equicontinuity} again, there exists $\delta > 0$ such that for all $\ell \in \N$, for all $\lambda \in Y^{\uplus M^2}$, for all $\cT \in \Tree(N,n)$, we have
		\[
		c \in [0,\delta) \implies d_L(\Phi_{Mn^\ell}(\cT_\ell,c,\lambda), \Phi_{Mn^\ell}(\cT_\ell,0,\lambda)) < \eta.
		\]
		Note that
		\[
		\Phi_{Mn^\ell}(\cT_\ell,0,\mu^{\uplus M^2}) = (\mu^{\uplus M^2})^{\uplus \frac{n^\ell}{Mn^\ell}} = \mu^{\uplus M}.
		\]
		Hence, if $k > -\log_n(M \delta)$ and $\mu \in Y$ and $\ell \geq 1$, then
		\[
		d_L(\Phi_{Mn^\ell}\left(\cT_\ell,\frac{1}{Mn^k},\mu^{\uplus M^2}), \mu^{\uplus M}\right) < \eta,
		\]
		hence
		\[
		d_L\left(\Phi_{Mn^k}\left(\cT_k,1, \Phi_{Mn^\ell}\left(\cT_\ell,\frac{1}{Mn^k}, \mu^{\uplus M^2}\right) \right), \Phi_{Mn^k}(\cT_k,1,\mu^{\uplus M}) \right) < \frac{\epsilon}{2}.
		\]
		So for $\mu \in Y$ and $\ell, \ell' \geq 1$,
		\[
		d_\ell\left( \boxplus_{\cT^{\circ(k+\ell)}}(\mu^{\uplus \frac{1}{n^{k+\ell}}}), \boxplus_{\cT^{\circ(k+\ell')}}(\mu^{\uplus \frac{1}{n^{k+\ell'}}}) \right) < \epsilon.
		\]
		Therefore, the sequence is uniformly Cauchy, as desired.
	\end{proof}
	
	\begin{proof}[Proof of Theorem \ref{thm:BP1}]
		Let $\cT \in \Tree(N)$ and $\mu_\ell, \nu \in \mathcal{P}(\R)$.  Suppose that $\nu_\ell := \mu_\ell^{\uplus n(\cT)^{k_\ell}} \to \nu$ as $\ell \to \infty$.  Let $Y \subseteq \mathcal{P}(\R)$ be a compact set containing all the measures $\nu_\ell$.  Theorem \ref{thm:BP2} implies uniform convergence of $\boxplus_{\cT^{\circ k_\ell}}(\lambda^{\uplus n(\cT)^{-k_\ell}}) \to \mathbb{BP}(\cT,\lambda)$ over $\lambda \in Y$ as $\ell \to \infty$.  Since $\mathbb{BP}(\cT,\lambda)$ is continuous and because of the uniform convergence, we can still take limits as $\ell \to \infty$ with $\lambda$ replaced by the sequence $\nu_\ell$ that depends on $\ell$.  Thus,
		\[
		\boxplus_{\cT^{\circ k_\ell}}(\mu_\ell) = \boxplus_{\cT^{\circ k_\ell}}(\nu_\ell^{\uplus \frac{1}{n(\cT)^{k_\ell}}}) \to \mathbb{BP}(\cT,\nu)
		\]
		as desired.  The convergence is uniform over $\cT \in \Tree(N)$ because the convergence in Theorem \ref{thm:BP2} is uniform.
	\end{proof}
	
	\begin{example}
	Theorem \ref{thm:BP1} relates to earlier work on free and monotone convolution as follows.  In light of Example \ref{ex:free2}, $\cT_{n,\free}^{\circ k} = \cT_{n^k,\free}$.  Hence, if $\mu_\ell \in \mathcal{P}(\R)$ and $k_\ell \in \N$ with $k_\ell \to \infty$, and if $\mu_\ell^{\uplus n^{k_\ell}} \to \nu$ as $\ell \to \infty$, then $\mu_\ell^{\boxplus n^{k_\ell}} \to \mathbb{BP}(\cT_{n,\free},\nu)$ as $\ell \to \infty$.  The same can be said for monotone convolution using Example \ref{ex:monotone2}.  This result can be deduced from \cite{BP1999} in the free case and \cite{AW2014} in the monotone case.  Of course, the results of \cite{BP1999} and \cite{AW2014} apply to arbitrary sequences $m_\ell$ tending to $\infty$ rather than only those of the form $m_\ell = n^{k_\ell}$.  The restriction on the size of indices is an artifact of our working with general trees $\cT \in \Tree(N)$, since in the general case it is unclear how to define an $m$-ary $\cT$-free convolution for all $m$.
	\end{example}
	
	\begin{remark}
	Although Theorem \ref{thm:BP1} does not recover the full free and monotone results, the techniques in this paper could still be useful in future work about more general limit theorems.  For instance, suppose that $(\cT_k)_{k \in \N}$ is a sequence of trees with $\cT_k \in \Tree(Mn_k,n_k)$ for some $M \in \N$ and $n_k \in \N$ with $n_k \to \infty$.  Suppose we could show using combinatorial methods that for every compactly supported measure $\mu \in \mathcal{P}(\R)$, the sequence $\boxplus_{\cT_k}(\mu^{\uplus 1/n_k}) = \Phi_{Mn_k}(\cT_k,1,\mu^{\uplus M})$ converges to some measure $\Lambda(\mu)$.  Then using the equicontinuity result of Theorem \ref{thm:BP2} and the density of compactly supported measures, $\Phi_{Mn_k}(\cT_k,1,\mu)$ converges as $k \to \infty$ for arbitrary $\mu \in \mathcal{P}(\R)$, and the limiting function $\Lambda(\mu)$ is continuous on $\mathcal{P}(\R)$.  Furthermore, the same argument as in Theorem \ref{thm:BP1} would show that if $\mu_k^{\uplus n_k} \to \nu$ as $k \to \infty$, then $\Phi_{Mn_k}(\cT_k,1,\mu_k) \to \Lambda(\nu)$.  For similar remarks in the context of the central limit theorem, see Proposition 8.9 and the following discussion in \cite{JekelLiu2020}.
	\end{remark}
	
	\section{Limit theorems for classical domains of attraction} \label{sec:limit2}
	
	Practically speaking, Theorem \ref{thm:BP1} means that any known limit theorems for additive boolean convolution implies a corresponding theorem for $\cT$-free convolution.  First, we have the following central limit theorem.  Below, if $\mu \in \mathcal{P}(\R)$ and $c \in \R$, then $c \cdot \mu$ denotes the dilation of $\mu$ by $c$, that is, the push-forward of $\mu$ by the function $t \mapsto ct$.
	
	\begin{proposition}[Central limit theorem] \label{prop:CLT}
		Let $\cT \in \Tree(N)$ with $n(\cT) > 1$ and let $\mu \in \mathcal{P}(\R)$ be a measure with mean zero and variance $1$.  Let $\nu_2$ be the Bernoulli distribution $(1/2)(\delta_{-1} + \delta_1)$.  Then
		\[
		\lim_{k \to \infty} n(\cT)^{-k/2} \boxplus_{\cT^{\circ k}}(\mu) = \mathbb{BP}(\cT,\nu_2),
		\]
		and the convergence is uniform in the L\'evy distance over all $\cT \in \Tree(N)$ with $n(\cT) > 1$.
	\end{proposition}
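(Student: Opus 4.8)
The plan is to deduce the statement from Theorem \ref{thm:BP1} by feeding it the correctly rescaled sequence of measures. Two ingredients are needed first: the scaling behaviour of $\cT$-free convolution, and the boolean central limit theorem. I would begin by recording that $\cT$-free convolution is equivariant under dilations, i.e.\ for $c > 0$,
\[
c \cdot \boxplus_{\cT}(\mu_1,\dots,\mu_N) = \boxplus_{\cT}(c \cdot \mu_1,\dots,c \cdot \mu_N).
\]
Since $K_{c \cdot \mu}(z) = c\, K_\mu(z/c)$, a direct substitution into the right-hand side of \eqref{eq:fixedpointequation} shows that the operation $(\mu_1,\dots,\mu_N) \mapsto c^{-1} \cdot \boxplus_{\cT}(c \cdot \mu_1,\dots,c \cdot \mu_N)$ satisfies the fixed-point equation \eqref{eq:fixedpointequation} and depends continuously on $\cT$; by the uniqueness clause of Theorem \ref{thm:complexconvolution} it coincides with $\boxplus_{\cT}$, which gives the displayed identity. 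Taking $c = n(\cT)^{-k/2}$ yields $n(\cT)^{-k/2}\,\boxplus_{\cT^{\circ k}}(\mu) = \boxplus_{\cT^{\circ k}}\bigl(n(\cT)^{-k/2} \cdot \mu\bigr)$, which is the object we must understand.

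Next I would verify the boolean central limit theorem: if $\mu$ has mean $0$ and variance $1$, then $(m^{-1/2} \cdot \mu)^{\uplus m} \to \nu_2$ weakly as $m \to \infty$. This is a short $K$-transform computation. Finiteness of the second moment gives the non-tangential asymptotic $w\, K_\mu(w) \to 1$ as $w \to \infty$, while the scaling and boolean power rules give $K_{(m^{-1/2} \cdot \mu)^{\uplus m}}(z) = m^{1/2} K_\mu(m^{1/2} z)$. Hence for each fixed $z \in \h$ this equals $z^{-1}\,(m^{1/2} z)\, K_\mu(m^{1/2} z) \to z^{-1} = K_{\nu_2}(z)$, using that $m^{1/2} z \to \infty$ along a fixed non-tangential ray. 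Pointwise convergence of the $K$-transforms together with the normal-family and homeomorphism statement of Lemma \ref{lem:transformhomeomorphism} upgrades this to weak convergence $(m^{-1/2} \cdot \mu)^{\uplus m} \to \nu_2$ (one checks $F_{\nu_2}(z) = z - 1/z$ has no zeros in $\h$, so the passage through $G = 1/F$ is legitimate).

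Finally I would combine these via Theorem \ref{thm:BP1}. The delicate point is that the scaling factor $n(\cT)^{-k/2}$ depends on $\cT$, so a single invocation with one fixed sequence is not available. I resolve this by partitioning $\{\cT : n(\cT) > 1\} = \bigcup_{n=2}^N \Tree(N,n)$ into the finitely many clopen classes $\Tree(N,n) := \{\cT \in \Tree(N) : n(\cT) = n\}$ on which $n(\cT) = n$ is constant. Fixing $n$ and setting $\mu_k := n^{-k/2} \cdot \mu$, the boolean CLT with $m = n^k$ gives $\mu_k^{\uplus n^k} \to \nu_2$, so Theorem \ref{thm:BP1} applies (with $k_\ell = \ell$ and $\nu = \nu_2$) to give $\boxplus_{\cT^{\circ k}}(n^{-k/2} \cdot \mu) \to \mathbb{BP}(\cT,\nu_2)$ uniformly in the L\'evy distance over $\Tree(N,n)$. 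By the dilation identity this is exactly $n(\cT)^{-k/2}\,\boxplus_{\cT^{\circ k}}(\mu) \to \mathbb{BP}(\cT,\nu_2)$, and uniform convergence on each of the finitely many classes $\Tree(N,n)$ yields uniform convergence on their union $\{\cT : n(\cT) > 1\}$.

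The main obstacle is precisely this $\cT$-dependence of the scaling factor, which blocks a direct single application of Theorem \ref{thm:BP1}; the resolution is the finite partition by the value of $n(\cT)$, which succeeds because $n(\cT) \in \{2,\dots,N\}$ is bounded and, crucially, because the boolean CLT produces the \emph{same} limit $\nu_2$ for every value of $n$. The remaining steps (the dilation identity and the boolean CLT) are routine once the $K$-transform asymptotics are in hand.
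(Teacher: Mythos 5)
Your proof is correct and follows essentially the same route as the paper: the paper likewise combines the dilation identity $n(\cT)^{-k/2}\cdot\boxplus_{\cT^{\circ k}}(\mu) = \boxplus_{\cT^{\circ k}}(n(\cT)^{-k/2}\cdot\mu)$ (its Lemma \ref{lem:dilation}, proved exactly by your uniqueness-of-fixed-point argument) with the boolean central limit theorem and Theorem \ref{thm:BP1}. The only differences are cosmetic: the paper cites \cite[Theorem 3.4]{SW1997} for the boolean CLT where you derive it directly from the $K$-transform asymptotics $wK_\mu(w)\to 1$, and your partition of $\{\cT : n(\cT)>1\}$ into the finitely many clopen classes $\Tree(N,n)$ makes explicit a uniformity step that the paper leaves implicit in the uniformity clause of Theorem \ref{thm:BP1}.
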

	
	We use the notation $\nu_2$ because the central limit theorem fits into a general class of limit theorems corresponding to stability indices $\alpha \in (0,2]$, which we discuss below.  The central limit distributions for boolean, free, and monotone independence were computed early on in the development of non-commutative probability theory, \cite{SW1997} for the boolean case, \cite{Voiculescu1985,Voiculescu1986} for the free case, and \cite{Muraki2000,Muraki2001} for the monotone case.  For another example, see \cite[Corollary 9.23]{JekelLiu2020}, which computes the central limit distribution for a tree $\cT$ where the root vertex has $n$ children, and all the other vertices have $d$ children.
	
	The proposition will be an immediate consequence of \cite[Theorem 3.4]{SW1997} and Theorem \ref{thm:BP1}, once we first establish the basic properties of dilations.
	
	\begin{lemma} ~ \label{lem:dilation}
		\begin{enumerate}[(1)]
			\item For $c \neq 0$, have $K_{c \cdot \mu}(z) = c K_\mu(z/c)$.
			\item For $\cT \in \Tree(N)$, we have
			\[
			\boxplus_{\cT}(c\cdot\mu_1,\dots,c\cdot\mu_N) = c \cdot \boxplus_{\cT}(\mu_1,\dots,\mu_N).
			\]
			\item When $n(\cT) > 1$, the map $\mathbb{BP}$ from Theorem \ref{thm:BP2} satisfies
			\[
			\mathbb{BP}(\cT,c \cdot \mu) = c \cdot \mathbb{BP}(\cT,\mu).
			\]
			\item When $n(\cT) > 1$, we have
			\[
			\boxplus_{\cT}(\mathbb{BP}(\cT,\mu)) = \mathbb{BP}(\cT,\mu^{\uplus n(\cT)}).
			\]
		\end{enumerate}
	\end{lemma}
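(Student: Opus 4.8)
The four parts build on one another, so the plan is to prove them in order, leaning on the uniqueness and continuity in Theorem~\ref{thm:complexconvolution} to avoid any separate induction. Part (1) is a direct computation from the definition of the dilation as a push-forward: since $c \cdot \mu$ is the image of $\mu$ under $t \mapsto ct$, the change of variables gives $G_{c\cdot\mu}(z) = \int (z-ct)^{-1}\,d\mu(t) = c^{-1} G_\mu(z/c)$, whence $F_{c\cdot\mu}(z) = c\,F_\mu(z/c)$ and $K_{c\cdot\mu}(z) = z - c\,F_\mu(z/c) = c\bigl(z/c - F_\mu(z/c)\bigr) = c\,K_\mu(z/c)$. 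This algebraic identity of transforms is the engine for everything that follows.

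For part (2), instead of inducting I would fix the dilated data $c\cdot\mu_1,\dots,c\cdot\mu_N$ and show that $\cT \mapsto c \cdot \boxplus_{\cT}(\mu_1,\dots,\mu_N)$ satisfies the two characterizing properties of $\boxplus_{\cT}(c\cdot\mu_1,\dots,c\cdot\mu_N)$ in Theorem~\ref{thm:complexconvolution}, exactly as in the proof of Proposition~\ref{prop:convolutionidentity2}. Continuity in $\cT$ is immediate, since $\cT \mapsto \boxplus_\cT(\mu_1,\dots,\mu_N)$ is continuous and $\nu \mapsto c\cdot\nu$ is weakly continuous. For the fixed-point equation, writing $P(\cT) = c\cdot\boxplus_\cT(\mu_1,\dots,\mu_N)$ and applying part (1), I would compute
\[
K_{P(\cT)}(z) = c\,K_{\boxplus_\cT(\mu_1,\dots,\mu_N)}(z/c) = c\sum_{j \in [N]\cap\cT} K_{\mu_j}\bigl(z/c - K_{\boxplus_{\br_j(\cT)}(\mu_1,\dots,\mu_N)}(z/c)\bigr).
\]
Using $c\,K_{\mu_j}(w) = K_{c\cdot\mu_j}(cw)$ and $c\,K_{\boxplus_{\br_j(\cT)}(\mu_1,\dots,\mu_N)}(z/c) = K_{P(\br_j(\cT))}(z)$ (both instances of part (1)), each summand rewrites as $K_{c\cdot\mu_j}\bigl(z - K_{P(\br_j(\cT))}(z)\bigr)$, which is precisely \eqref{eq:fixedpointequation} for the dilated measures. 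Uniqueness in Theorem~\ref{thm:complexconvolution} then yields $P(\cT) = \boxplus_\cT(c\cdot\mu_1,\dots,c\cdot\mu_N)$.

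For part (3), I would first record that dilation commutes with boolean powers: from part (1), $K_{(c\cdot\mu)^{\uplus s}}(z) = s\,K_{c\cdot\mu}(z) = sc\,K_\mu(z/c) = c\,K_{\mu^{\uplus s}}(z/c) = K_{c\cdot(\mu^{\uplus s})}(z)$, so $(c\cdot\mu)^{\uplus s} = c\cdot(\mu^{\uplus s})$. Recalling $n(\cT^{\circ k}) = n(\cT)^k$ from Lemma~\ref{lem:maxchildren} and combining this commutation with part (2) applied to the $N^k$ equal measures $\mu^{\uplus n(\cT)^{-k}}$ for $\cT^{\circ k}$ gives
\[
\boxplus_{\cT^{\circ k}}\bigl((c\cdot\mu)^{\uplus n(\cT)^{-k}}\bigr) = c\cdot\boxplus_{\cT^{\circ k}}\bigl(\mu^{\uplus n(\cT)^{-k}}\bigr);
\]
letting $k \to \infty$ via Theorem~\ref{thm:BP2} and the continuity of $\nu \mapsto c\cdot\nu$ produces $\mathbb{BP}(\cT, c\cdot\mu) = c\cdot\mathbb{BP}(\cT,\mu)$.

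Part (4) is the main point, and its key input is the operad compatibility of convolution. Since $\cT^{\circ(k+1)} = \cT \circ \cT^{\circ k}$ equals $\cT(\cT^{\circ k},\dots,\cT^{\circ k})$ by Definitions~\ref{def:samecomposition} and~\ref{def:selfcomposition}, Theorem~\ref{thm:convolutioncomposition} applied with all inner trees equal to $\cT^{\circ k}$ and all measures equal to a single $\lambda$ gives $\boxplus_{\cT^{\circ(k+1)}}(\lambda) = \boxplus_\cT\bigl(\boxplus_{\cT^{\circ k}}(\lambda)\bigr)$. Writing $n = n(\cT)$ and using $(\mu^{\uplus n})^{\uplus n^{-k}} = \mu^{\uplus n^{-(k-1)}}$, I would unwind the definition of $\mathbb{BP}$ with the reindexing $k = j+1$:
\[
\mathbb{BP}(\cT, \mu^{\uplus n}) = \lim_{j\to\infty} \boxplus_{\cT^{\circ(j+1)}}\bigl(\mu^{\uplus n^{-j}}\bigr) = \lim_{j\to\infty}\boxplus_\cT\bigl(\boxplus_{\cT^{\circ j}}(\mu^{\uplus n^{-j}})\bigr).
\]
By Theorem~\ref{thm:BP2} the inner limit is $\mathbb{BP}(\cT,\mu)$, and by continuity of $\boxplus_\cT$ (Theorem~\ref{thm:complexconvolution}) I may pass the limit inside to obtain $\boxplus_\cT(\mathbb{BP}(\cT,\mu))$. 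The only genuinely delicate step is this index bookkeeping: one must match the exponent $n^{-(k-1)}$ arising from $(\mu^{\uplus n})^{\uplus n^{-k}}$ against the exponent $n^{-j}$ required by Theorem~\ref{thm:BP2}, which the shift $k=j+1$ accomplishes, and one must confirm that the composition identity $\boxplus_{\cT^{\circ(k+1)}}(\lambda) = \boxplus_\cT(\boxplus_{\cT^{\circ k}}(\lambda))$ persists under the specialization to a single repeated measure. Everything else reduces to the transform identity of part (1) together with the uniqueness and continuity already established.
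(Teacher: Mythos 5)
Your proposal is correct and takes essentially the same route as the paper's own proof: part (1) by the change-of-variables computation for $G_{c\cdot\mu}$, part (2) by verifying continuity in $\cT$ and the fixed-point equation \eqref{eq:fixedpointequation} for the dilated measures and invoking the uniqueness in Theorem \ref{thm:complexconvolution}, part (3) by the identity $(c\cdot\mu)^{\uplus s} = c\cdot(\mu^{\uplus s})$ together with part (2) and the limit in Theorem \ref{thm:BP2}, and part (4) by the composition identity $\boxplus_{\cT^{\circ(k+1)}}(\lambda) = \boxplus_{\cT}(\boxplus_{\cT^{\circ k}}(\lambda))$ from Theorem \ref{thm:convolutioncomposition} with the reindexing $(\mu^{\uplus n})^{\uplus n^{-(k+1)}} = \mu^{\uplus n^{-k}}$. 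The only (trivial) point you omit is the case $c = 0$ in part (2), where both sides equal $\delta_0$ and which the paper disposes of separately since part (1), and hence your fixed-point computation, requires $c \neq 0$.
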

	
	\begin{proof}
		(1) Note that
		\[
		G_{c \cdot \mu}(z) = \int_{\R} \frac{1}{z - ct}\,d\mu(t) = \frac{1}{c} \int_{\R} \frac{1}{z/c - t}\,d\mu(t) = \frac{1}{c} G_\mu(z/c).
		\]
		Hence, $F_{c \cdot \mu}(z) = c F_\mu(z/c)$ and $K_{c \cdot \mu}(z) = K_\mu(z / c)$.
		
		(2) In the case $c = 0$, both sides are $\delta_0$.  For $c \neq 0$, note that $c \cdot \boxplus_{\cT}(\mu_1,\dots,\mu_N)$ depends continuously on $\cT$ and satisfies the fixed-point equation
		\[
		K_{c \cdot \boxplus_{\cT}(\mu_1,\dots,\mu_N)}(z) = \sum_{j \in [N] \cap \cT} K_{c \cdot \mu_j}(z - K_{c \cdot \boxplus_{\br_j(\cT)}(\mu_1,\dots,\mu_N)}(z)),
		\]
		hence, by Theorem \ref{thm:complexconvolution}, we have the desired equality.
		
		(3) From (1) it follows that $(c \cdot \mu)^{\uplus t} = c \cdot (\mu^{\uplus t})$ for $c \in \R$ and $t > 0$.  Therefore, using (2),
		\begin{align*}
			\mathbb{BP}(\cT,c \cdot \mu)
			&= \lim_{k \to \infty} \boxplus_{\cT^{\circ k}}((c \cdot \mu)^{\uplus \frac{1}{n(\cT)^k}}) \\
			&= \lim_{k \to \infty} \boxplus_{\cT^{\circ k}}(c \cdot (\mu^{\uplus \frac{1}{n(\cT)^k}})) \\
			&= \lim_{k \to \infty} c \cdot \boxplus_{\cT^{\circ k}}(\mu^{\uplus \frac{1}{n(\cT)^k}}) \\
			&= c \cdot \mathbb{BP}(\cT,\mu).
		\end{align*}
		
		(4) Observe that
		\begin{align*}
			\boxplus_{\cT}(\mathbb{BP}(\cT,\mu)) &= \lim_{k \to \infty} \boxplus_{\cT}(\boxplus_{\cT^{\circ k}}(\mu^{\uplus \frac{1}{n(\cT)^k}})) \\
			&= \lim_{k \to \infty} \boxplus_{\cT^{\circ (k+1)}}((\mu^{\uplus n(\cT)})^{\uplus \frac{1}{n(\cT)^{k+1}}}) \\
			&= \mathbb{BP}(\cT,\mu^{\uplus n(\cT)}).  \qedhere
		\end{align*}
	\end{proof}
	
	\begin{proof}[Proof of Proposition \ref{prop:CLT}]
		It follows from \cite[Theorem 3.4]{SW1997} that 
		\[
		n(\cT)^{-k/2} \cdot \mu^{\uplus n(\cT)^k} = (n(\cT)^{-k/2} \cdot \mu)^{\uplus n(\cT)^k} \to (1/2)(\delta_{-1} + \delta_1).
		\]
		Therefore, the proposition follows from Theorem \ref{thm:BP1} and the fact that $n(\cT)^{-k/2} \cdot \boxplus_{\cT^{\circ k}}(\mu) = \boxplus_{\cT^{\circ k}}(n(\cT)^{-k/2} \cdot \mu)$.
	\end{proof}
	
	Following a similar strategy as Bercovici and Pata \cite{BP1999}, we can use Theorem \ref{thm:BP1} to prove analogs of classical limit theorems associated to other stable distributions.  To set the stage, we recall some terminology used in the classification of domains of attraction in classical probability theory; see \cite[\S 5]{BP1999}.
	
	\begin{definition}
		We say that two measures $\mu$ and $\nu$ are \emph{equivalent} if $\mu = a + b \cdot \nu$ for some $a \in \R$ and $b > 0$.  A measure $\mu$ is said to be \emph{$*$-stable} if its equivalence class is closed under the classical convolution operation $*$; \emph{$\boxplus$-stable} is defined analogously.
	\end{definition}
	
	\begin{definition}
		A function $f: [0,\infty) \to [0,\infty)$ \emph{varies slowly} if
		\[
		\lim_{y \to \infty} \frac{f(ty)}{f(y)} = 1 \text{ for } t > 0.
		\]
		We say that $f$ \emph{varies regularly with index $\alpha$} if $f(y) / y^\alpha$ varies slowly, or equivalently
		\[
		\lim_{y \to \infty} \frac{f(ty)}{t^\alpha f(y)} = 1 \text{ for } t > 0.
		\]
		We make the same definitions for functions only defined on $[m,\infty)$ for some $m > 0$.
	\end{definition}
	
	\begin{definition}
		We say that a measure $\mu$ belongs to $\mathcal{C}_2$ if the function $y \mapsto \int_{-y}^y t^2\,d\mu(t)$ varies slowly.
	\end{definition}
	
	\begin{definition}
		For $\alpha \in (0,2)$ and $\theta \in [-1,1]$, we say that $\mu$ belongs to $\mathcal{C}_{\alpha,\theta}$ if
		\begin{enumerate}[(1)]
			\item the function $y \mapsto \int_{-y}^y t^2 \,d\mu(t)$ varies regularly with index $2 - \alpha$;
			\item we have
			\[
			\lim_{t \to \infty} \frac{\mu((t,\infty)) - \mu((-\infty,-t)}{\mu((t,\infty)) + \mu((-\infty,-t)} = \theta.
			\]
		\end{enumerate}
	\end{definition}
	
	The following is a classical result due to \cite{Levy1937}, \cite{GK1954}.
	
	\begin{theorem}
		There exists a unique equivalence class of $*$-stable laws in each of the sets $\mathcal{C}_2$ and $\mathcal{C}_{\alpha,\theta}$ for $\alpha \in (0,2)$ and $\theta \in [-1,1]$.  Let $\nu_2^*$ and $\nu_{\alpha,\theta}^*$ be representatives of these equivalence classes.  Then for each $\mu \in \mathcal{C}_2$ or $\mu \in \mathcal{C}_{\alpha,\theta}$, there exists a sequence of measures $\mu_n \sim \mu$ such that $\mu_n^{*n} \to \nu_2$ or $\mu_n^{*n} \to \nu_{\alpha,\theta}$ respectively (that is, $\mu$ is in the domain of attraction of $\nu_2^*$ or $\nu_{\alpha,\theta}^*$).
	\end{theorem}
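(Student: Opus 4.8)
The plan is to pass to the classical Fourier transform (characteristic function) $\widehat{\mu}(\xi) = \int_{\R} e^{i\xi t}\,d\mu(t)$, under which $*$-convolution becomes pointwise multiplication and the affine action $\nu \mapsto a + b\cdot\nu$ becomes $\widehat{\nu}(\xi) \mapsto e^{ia\xi}\widehat{\nu}(b\xi)$. First I would classify the $*$-stable laws using the L\'evy--Khintchine representation of infinitely divisible distributions: a nondegenerate law $\nu$ is $*$-stable precisely when
\[
\log\widehat{\nu}(\xi) = i\gamma\xi - c|\xi|^{\alpha}\bigl(1 - i\beta\operatorname{sgn}(\xi)\,\omega(\xi,\alpha)\bigr),
\]
where $\alpha \in (0,2]$, $\beta \in [-1,1]$, $c > 0$, $\gamma \in \R$, and $\omega(\xi,\alpha) = \tan(\pi\alpha/2)$ for $\alpha \neq 1$ while $\omega(\xi,1) = (2/\pi)\log|\xi|$ (for $\alpha = 2$ this is the Gaussian and $\beta$ is irrelevant). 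Since an affine change with $b > 0$ only rescales $c$ and shifts $\gamma$, the equivalence class of a stable law is determined exactly by the pair $(\alpha,\beta)$, with $\beta$ playing the role of the skewness parameter $\theta$.

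Next I would match the classes $\mathcal{C}_2$ and $\mathcal{C}_{\alpha,\theta}$ to these indices, the key analytic input being Karamata's theory of regularly varying functions. For $\alpha \in (0,2)$, Karamata's theorem shows that regular variation with index $2 - \alpha$ of the truncated second moment $y \mapsto \int_{-y}^y t^2\,d\mu(t)$ is equivalent to regular variation with index $-\alpha$ of the tail $t \mapsto \mu(\{|s| > t\})$, and condition (2) then pins down the relative weight of the two tails, hence $\theta = \beta$. For $\mathcal{C}_2$, slow variation of the truncated second moment is the classical criterion placing $\mu$ in the Gaussian ($\alpha = 2$) domain. This already shows there is at most one stable equivalence class in each set and identifies its parameters, giving the uniqueness assertion.

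Finally I would exhibit the normalizing sequence, which is the substance of the Gnedenko--Kolmogorov domain-of-attraction theorem. Using the regular variation from the previous step, one selects a scaling sequence $b_n \to \infty$ (so that $n\,b_n^{-2}\int_{-b_n}^{b_n}t^2\,d\mu(t)$ tends to a finite positive constant) together with a centering sequence $a_n$, sets $\mu_n = a_n + b_n^{-1}\cdot\mu \sim \mu$, and evaluates $n\log\widehat{\mu}(b_n^{-1}\xi) - i a_n\xi$ as $n \to \infty$. The regular variation of the tails, fed through the identity $\re\bigl(1 - \widehat{\mu}(\xi)\bigr) = \int_{\R}(1 - \cos\xi t)\,d\mu(t)$ and its imaginary analogue, shows that this converges to the log-characteristic function of the first step, and L\'evy's continuity theorem upgrades convergence of characteristic functions to the weak convergence $\mu_n^{*n} \to \nu$.

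The main obstacle will be the regular-variation bookkeeping linking the second and third steps: the classes $\mathcal{C}_2$ and $\mathcal{C}_{\alpha,\theta}$ are phrased via truncated second moments, whereas the limit of characteristic functions is naturally controlled by the tails, so one must repeatedly invoke Karamata's theorem (and its converse) together with the uniform-on-compacta convergence of slowly varying functions in order to justify passing to the limit inside $\int_{\R}(1 - \cos(\xi t/b_n))\,d\mu(t)$ and its imaginary counterpart. All of this is classical and may be found in \cite{GK1954,Levy1937}; for the present paper its role is only to supply the input measures to which the boolean-side hypothesis of Theorem~\ref{thm:BP1} will be applied.
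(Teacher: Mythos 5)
The paper gives no proof of this theorem at all: it is quoted as a classical result with citations to L\'evy \cite{Levy1937} and Gnedenko--Kolmogorov \cite{GK1954}, and your sketch is exactly the standard characteristic-function argument contained in those references --- L\'evy--Khintchine classification of $*$-stable laws by $(\alpha,\beta)$, Karamata's equivalence between regular variation of the truncated second moment $y \mapsto \int_{-y}^y t^2\,d\mu(t)$ with index $2-\alpha$ and regular variation of the tails with index $-\alpha$, and the Gnedenko--Kolmogorov normalization $\mu_n = a_n + b_n^{-1}\cdot \mu$ with convergence verified via L\'evy's continuity theorem. Your outline is correct (modulo the usual sign conventions in the $\omega(\xi,\alpha)$ term and the extra $\log b$ drift one must absorb into the centering when $\alpha = 1$), so it coincides with the intended classical proof rather than diverging from it.
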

	
	Bercovici and Pata used this theorem together with their \cite[Theorem 6.3]{BP1999} to deduce limit laws for free and boolean convolution.  We want to do the same thing for $\cT$-free convolution.  One obstacle for the general case is that translation of measures does not behave well with respect to $\cT$-free convolutions.  If $c + \mu$ denotes the translation of $\mu$ by $c \in \R$, then we do not have $\boxplus_{\cT}(c + \mu) = n(\cT) c + \boxplus_{\cT}(\mu)$.  For instance, the measure $\delta_c \boxplus \mu = \delta_c * \mu = \mu \rhd \delta_c = c + \mu$ has $K$-transform equal to $K_\mu(z - c) + c$; however, $\delta_c \uplus \mu = \delta_c \rhd \mu$ has $K$-transform $K_\mu(z) + c$, and hence does not agree with $c + \mu$.
	
	In the case $\alpha \in (0,1)$, the measure has a large enough tail that the translation is irrelevant to the limiting behavior.  In the case $\alpha \in (1,2)$, it is known that any measure in $\mathcal{C}_{\alpha,\theta}$ has finite mean, and hence we will restrict our attention to the set of measures in $\mathcal{C}_{\alpha,\theta}$ with mean zero, which we denote by $\mathcal{C}_{\alpha,\theta}^0$.  The case $\alpha = 1$ is difficult because the mean may or may not be defined, and one must inevitably deal with drift, which brings up the tricky question of translation.  In Theorem \ref{thm:limittheorem}, we handle the cases of $\mathcal{C}_{\alpha,\theta}$ with $\alpha \in (0,1)$ and $\mathcal{C}_{\alpha,\theta}^0$ with $\alpha \in (1,2)$; the proof is based on Cauchy transforms and thus independent of the classical results.  For the case of $\alpha = 1$ and $\alpha = 2$, we will deduce a less sharp result from the classical theory and the work of Bercovici and Pata.
	
	\begin{proposition} \label{prop:limitmeasures}
		For $\alpha \in (0,2)$ and $\theta \in [-1,1]$, there is a measure $\nu_{\alpha,\theta}$ with
		\[
		K_{\nu_{\alpha,\theta}}(z) = \begin{cases} -(i - \theta \tan \frac{\pi \alpha}{2}) (-iz)^{1 - \alpha}, & \alpha \neq 1, \\
			2 \theta \log (-iz) - i \pi, & \alpha = 1,
		\end{cases}
		\]
		for $z$ in the upper half-plane, where we use the branch of the logarithm with argument in $(-\pi,\pi]$.  For $\alpha \in (0,1) \cup (1,2)$ and $c > 0$, we have $c \cdot \nu_{\alpha,\theta} = \nu_{\alpha,\theta}^{\uplus c^\alpha}$.  Moreover, for $c > 0$, we have $c \cdot \nu_{\alpha,\theta} = (\delta_{-2 \theta \log c} \uplus \nu_{\alpha,\theta})^{\uplus c}$.
	\end{proposition}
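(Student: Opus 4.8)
The plan is to first establish existence of the measure $\nu_{\alpha,\theta}$ via the characterization of $K$-transforms in Corollary \ref{cor:FandK}, and then to verify both scaling relations by direct computation of $K$-transforms using Lemma \ref{lem:dilation}(1) together with the fact that boolean convolution adds $K$-transforms (Example \ref{ex:boolean}). By Corollary \ref{cor:FandK}, it suffices to check that the prescribed function $K$ maps $\h$ into $-\overline{\h}$ and satisfies $K(z)/z \to 0$ non-tangentially as $z \to \infty$. The key observation throughout is that $z \mapsto -iz$ maps $\h$ onto the right half-plane: writing $z = re^{i\phi}$ with $\phi \in (0,\pi)$, we have $-iz = re^{i(\phi - \pi/2)}$ with $\phi - \pi/2 \in (-\pi/2,\pi/2)$, so the principal branches of $(-iz)^{1-\alpha}$ and of $\log(-iz)$ are well-behaved on $\h$.

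The decay condition is the easy half. For $\alpha \neq 1$, since $|(-iz)^{1-\alpha}| = |z|^{1-\alpha}$, we get $|K(z)/z| = |i - \theta\tan\tfrac{\pi\alpha}{2}|\,|z|^{-\alpha} \to 0$ as $|z| \to \infty$, a fortiori non-tangentially. For $\alpha = 1$ the same follows because $|\log(-iz)| = O(\log|z|)$ grows sublinearly.

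The condition $\im K \le 0$ is where the real content lies. Setting $\psi = (1-\alpha)(\phi - \pi/2)$, one finds $|\psi| < \pi/2$ because $|1-\alpha| < 1$, so $\cos\psi > 0$; writing $K(z) = -(i - \gamma)(-iz)^{1-\alpha}$ with $\gamma = \theta\tan\tfrac{\pi\alpha}{2} \in \R$, a short computation gives
\[
\im K(z) = -r^{1-\alpha}(\cos\psi - \gamma\sin\psi).
\]
Thus $\im K \le 0$ is equivalent to $\gamma\tan\psi \le 1$. Since $|\theta| \le 1$ and $|\tan\psi| < \tan\tfrac{|1-\alpha|\pi}{2}$, the quantity $\gamma\tan\psi$ is bounded by $|\tan\tfrac{\pi\alpha}{2}|\tan\tfrac{|1-\alpha|\pi}{2}$, which equals $1$: rewriting $\tfrac{(1-\alpha)\pi}{2} = \tfrac{\pi}{2} - \tfrac{\alpha\pi}{2}$ turns $\tan\tfrac{(1-\alpha)\pi}{2}$ into $\cot\tfrac{\alpha\pi}{2}$ (with the analogous rewriting for $\alpha \in (1,2)$), so the product collapses to $\tan\tfrac{\alpha\pi}{2}\cot\tfrac{\alpha\pi}{2} = 1$. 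For $\alpha = 1$, one computes directly $\im K(z) = 2\theta(\phi - \pi/2) - \pi \le 0$ using $|\theta| \le 1$ and $|\phi - \pi/2| < \pi/2$. This establishes that $\nu_{\alpha,\theta}$ exists.

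For the scaling relations I would compute with Lemma \ref{lem:dilation}(1), which gives $K_{c\cdot\mu}(z) = cK_\mu(z/c)$, and the identity $K_{\mu^{\uplus t}} = tK_\mu$. For $\alpha \neq 1$ and $c > 0$, since $c > 0$ does not change the argument we have $(-iz/c)^{1-\alpha} = c^{-(1-\alpha)}(-iz)^{1-\alpha}$, whence $K_{c\cdot\nu_{\alpha,\theta}}(z) = c\cdot c^{-(1-\alpha)}K_{\nu_{\alpha,\theta}}(z) = c^\alpha K_{\nu_{\alpha,\theta}}(z) = K_{\nu_{\alpha,\theta}^{\uplus c^\alpha}}(z)$, proving the first relation. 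For $\alpha = 1$, using $\log(-iz/c) = \log(-iz) - \log c$ one finds $K_{c\cdot\nu_{1,\theta}}(z) = cK_{\nu_{1,\theta}}(z) - 2c\theta\log c$; since $K_{\delta_a} \equiv a$ and boolean convolution adds $K$-transforms, the measure $(\delta_{-2\theta\log c}\uplus\nu_{1,\theta})^{\uplus c}$ has $K$-transform $c(-2\theta\log c + K_{\nu_{1,\theta}}(z))$, which matches. The main obstacle is the $\im K \le 0$ verification, and specifically recognizing that the worst case $|\theta| = 1$ together with $\psi$ at the cone boundary saturates the trigonometric identity to exactly $1$; everything else is routine.
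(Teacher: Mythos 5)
Your proposal is correct and takes essentially the same route as the paper: the paper's proof likewise verifies the hypotheses of Corollary \ref{cor:FandK} by direct computation and obtains both scaling relations from Lemma \ref{lem:dilation}(1) together with the definition of boolean convolution powers, which is exactly what you do, with the computations (in particular the saturation $\left|\tan\frac{\pi\alpha}{2}\right|\tan\frac{|1-\alpha|\pi}{2}=1$ behind $\im K_{\nu_{\alpha,\theta}}\leq 0$) spelled out in full. Note also that your form $K_{c\cdot\mu}(z)=cK_\mu(z/c)$ is the correct one, matching the statement of Lemma \ref{lem:dilation}(1) rather than the typo in its proof.
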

	
	\begin{proof}
		Let $K_{\alpha,\theta}$ be the function on the right-hand side.  One can verify by direct computation that $K_{\alpha,\theta}$ maps the upper half-plane into the lower half-plane and that $K_{\alpha,\theta}(z) / z \to 0$ as $z \to \infty$ non-tangentially.  Thus, by Corollary \ref{cor:FandK}, $K_{\alpha,\theta}$ is the $K$-transform of some measure $\nu_{\alpha,\theta}$.  The final claim follows from direct computation using Lemma \ref{lem:dilation} (1) and the definition of boolean convolution powers.
	\end{proof}
	
	\begin{theorem} \label{thm:limittheorem}
		Suppose that $\alpha \in (0,1) \cup (1,2)$, $\theta \in [-1,1]$, and $\mu \in \mathcal{C}_{\alpha,\theta}$.  If $\alpha \in (1,2)$, then assume in addition that $\mu$ has mean zero.  Then there exists some $\phi: [0,+\infty) \to [0,+\infty)$ which varies regularly with index $-1/\alpha$ such that for all $N$ and for all $\cT \in \Tree(N)$ with $n(\cT) > 1$,
		\[
		\phi(n(\cT)^k) \cdot \boxplus_{\cT^{\circ k}}(\mu) \to \mathbb{BP}(\cT,\nu_{\alpha,\theta}).
		\]
		For each $(\alpha,\theta)$ and for each $N$, the convergence is uniform over $\cT \in \Tree(N)$ with $n(\cT) > 1$.
	\end{theorem}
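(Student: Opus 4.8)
The plan is to reduce the statement to a purely boolean domain-of-attraction result and then feed it into Theorem \ref{thm:BP1}. By Lemma \ref{lem:dilation}(2), $\phi(n(\cT)^k)\cdot\boxplus_{\cT^{\circ k}}(\mu) = \boxplus_{\cT^{\circ k}}(\phi(n(\cT)^k)\cdot\mu)$, so fixing $n = n(\cT)$ and setting $\mu_k := \phi(n^k)\cdot\mu$, the conclusion for this single tree reads $\boxplus_{\cT^{\circ k}}(\mu_k)\to\mathbb{BP}(\cT,\nu_{\alpha,\theta})$. By Theorem \ref{thm:BP1} (applied with $k_\ell = \ell$) this follows once we know that $\mu_k^{\uplus n^k} = (\phi(n^k)\cdot\mu)^{\uplus n^k}\to\nu_{\alpha,\theta}$. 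Thus everything reduces to producing one function $\phi$, depending only on $\mu$, that varies regularly with index $-1/\alpha$ and satisfies the boolean limit law
\[
(\phi(m)\cdot\mu)^{\uplus m}\longrightarrow\nu_{\alpha,\theta}\qquad(m\to\infty).
\]
Since $m = n(\cT)^k$ is merely a subsequence of $m\to\infty$, this single statement covers every $\cT$. The uniformity over $\{\cT\in\Tree(N):n(\cT)>1\}$ is inherited from Theorem \ref{thm:BP1}: for fixed $N$ there are only finitely many possible values $n(\cT)\in\{2,\dots,N\}$, the measures $\mu_k^{\uplus n^k}$ all converge to the same $\nu_{\alpha,\theta}$ and lie in a common compact set, so the uniform convergence of Theorem \ref{thm:BP2} applies simultaneously to all of them.

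To establish the boolean limit law I would work with $K$-transforms. By Lemma \ref{lem:transformhomeomorphism} together with Corollary \ref{cor:FandK}, weak convergence $(\phi(m)\cdot\mu)^{\uplus m}\to\nu_{\alpha,\theta}$ is equivalent to local uniform convergence of the associated $K$-transforms, and by Lemma \ref{lem:dilation}(1) and the definition of boolean powers,
\[
K_{(\phi(m)\cdot\mu)^{\uplus m}}(z) = m\,\phi(m)\,K_\mu\bigl(z/\phi(m)\bigr).
\]
These are all $K$-transforms of probability measures, hence form a normal family in $\Hol(\h,-\overline{\h})$, so it suffices to prove pointwise convergence to $K_{\nu_{\alpha,\theta}}$ along the imaginary axis $z = iy$ and then invoke a Montel/Vitali argument to upgrade to local uniform convergence on all of $\h$ and to identify the limit (every subsequential limit agrees with the explicit $K_{\nu_{\alpha,\theta}}$ of Proposition \ref{prop:limitmeasures} on $i(0,\infty)$, hence everywhere). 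Thus the crux is the asymptotics of $K_\mu(iy)$ as $y\to+\infty$.

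For those asymptotics I would write $K_\mu(w) = w\kappa(w)/(1+\kappa(w))$ with $\kappa(w) = wG_\mu(w)-1 = \int_{\R}\tfrac{t}{w-t}\,d\mu(t)$, so that $K_\mu(w)\sim w\kappa(w)$ as $w\to\infty$. Evaluating at $w = iy$ separates real and imaginary parts,
\[
\kappa(iy) = -\int_{\R}\frac{t^2}{t^2+y^2}\,d\mu(t) - iy\int_{\R}\frac{t}{t^2+y^2}\,d\mu(t),
\]
and the two integrals are governed by the defining data of $\mathcal{C}_{\alpha,\theta}$: the even integral by the regular variation of $y\mapsto\int_{-y}^y t^2\,d\mu$ with index $2-\alpha$, and the odd integral by the tail-balance parameter $\theta$. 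A Karamata/Tauberian computation, using the standard values $\int_0^\infty u^{s-1}/(1+u^2)\,du = \tfrac{\pi}{2}\csc(\pi s/2)$ whose ratio at $s = 2-\alpha$ and $s = 1-\alpha$ produces exactly $\tan(\pi\alpha/2)$, yields $K_\mu(iy)\sim\bigl(\theta\tan\tfrac{\pi\alpha}{2}-i\bigr)\,y^{1-\alpha}\ell(y)$ for a slowly varying $\ell$, matching $K_{\nu_{\alpha,\theta}}(iy) = -(i-\theta\tan\tfrac{\pi\alpha}{2})\,y^{1-\alpha}$ up to the slowly varying factor. One then defines $\phi$ through the de Bruijn asymptotic inverse so that $m\,\phi(m)^{\alpha}\,\ell(1/\phi(m))\to1$; standard regular-variation theory makes $\phi$ vary regularly with index $-1/\alpha$ and forces $m\phi(m)K_\mu\bigl(iy/\phi(m)\bigr)\to K_{\nu_{\alpha,\theta}}(iy)$.

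I expect the Tauberian step to be the main obstacle: extracting the precise leading asymptotics of both integrals with the correct constant $i - \theta\tan\tfrac{\pi\alpha}{2}$ from the regular-variation hypotheses, and confirming that the normalizing $\phi$ so obtained genuinely varies regularly with index $-1/\alpha$. The two excluded exponent ranges enter here through the drift term: for $\alpha\in(1,2)$ the mean exists and $K_\mu(w)\to\int t\,d\mu$ as $w\to\infty$, so the mean-zero hypothesis is exactly what removes a constant term that would otherwise blow up under the factor $m\phi(m)\to\infty$, whereas for $\alpha\in(0,1)$ no first moment is available and $K_\mu(w)\to0$ automatically, leaving the heavy-tail term leading. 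The borderline cases $\alpha = 1$ (logarithmic correction and genuine drift) and $\alpha = 2$ fall outside this clean dichotomy, which is why they are treated separately via the classical theory.
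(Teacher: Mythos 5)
Your proposal is correct in outline and its architecture coincides with the paper's: both proofs use dilation equivariance (Lemma \ref{lem:dilation}) to move the scaling inside the convolution, reduce the theorem to a single boolean statement $(\phi(m)\cdot\mu)^{\uplus m} \to \nu_{\alpha,\theta}$ as $m \to \infty$ (with $m = n(\cT)^k$ a subsequence), and then conclude via Theorem \ref{thm:BP1}; your treatment of uniformity (finitely many values $n(\cT) \in \{2,\dots,N\}$, common compact set, uniformity from Theorem \ref{thm:BP2}) is in fact slightly more explicit than the paper's one-line appeal to Theorem \ref{thm:BP1}. Where you genuinely diverge is in how the boolean limit law is obtained. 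The paper does not re-derive any Tauberian asymptotics: it quotes Bercovici--Pata's characterization of $\mathcal{C}_{\alpha,\theta}$ via Cauchy transforms (\cite[Propositions 5.10--5.11]{BP1999}, with a sign correction), reformulates it as Proposition \ref{prop:CandK} -- whose part (3) is exactly the boolean law you need, in the form $c^{-1/\alpha} \cdot \mu^{\uplus c/h(c^{1/\alpha})} \to \nu_{\alpha,\theta}$ -- and then produces $\phi$ by inverting the regularly varying function $c \mapsto c/h(c^{1/\alpha})$ using the elementary Lemma \ref{lem:regularvariation}(5), rather than a de Bruijn conjugate; given Proposition \ref{prop:CandK}, the proof is three lines. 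You instead propose to prove the $K$-transform asymptotics $K_\mu(iy) \sim -(i - \theta\tan\frac{\pi\alpha}{2})\,y^{1-\alpha}\ell(y)$ from scratch by a Karamata computation on $\kappa(iy)$. That computation is sound in its key points -- the identity $K_\mu(w) = w\kappa(w)/(1+\kappa(w))$, the splitting into even and odd integrals, the constant $\tan(\pi\alpha/2)$ arising from the ratio of $\frac{\pi}{2}\csc(\pi s/2)$ at $s = 2-\alpha$ and $s = 1-\alpha$, the role of the mean-zero hypothesis in killing the constant drift term when $m\phi(m) \to \infty$ for $\alpha \in (1,2)$, and the normal-family/identity-theorem upgrade from the imaginary axis (which is verbatim the paper's own technique in proving (2)$\implies$(3) of Proposition \ref{prop:CandK}) -- but it remains a sketch of what is essentially the content of \cite[Propositions 5.10--5.11]{BP1999}. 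So your route buys self-containedness at the cost of redoing the hardest analytic step, which the paper's statement of Proposition \ref{prop:CandK}(3) would have let you bypass entirely; conversely, the paper's route is shorter but leans on the cited literature for the Tauberian core. Since that core is established (modulo the sign convention the paper already corrects), your argument does go through.
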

	
	For examples of the distributions $\mathbb{BP}(\cT,\nu_{\alpha,\theta})$, see Figures \ref{fig:pictures1} and \ref{fig:pictures2} in \S \ref{sec:questions}.  The proof of the theorem relies on the following characterization of $\mathcal{C}_{\alpha,\theta}$ in terms of the Cauchy transform, which is due to Bercovici and Pata.
	
	\begin{proposition}[{\cite[Proposition 5.10-5.11]{BP1999}}]
		Let $\alpha \in (0,1) \cup (1,2)$, $\theta \in [-1,1]$, and $\mu \in \mathcal{P}(\R)$.  In the case $\alpha > 1$, assume in addition that $\mu$ has mean zero.  Then $\mu \in \mathcal{C}_{\alpha,\theta}$ if and only if there exists some $f$ that varies regularly with index $-1 - \alpha$ such that
		\[
		G_\mu(iy) - \frac{1}{iy} = \left( i - \theta \tan \frac{\pi \alpha}{2} \right) f(y)(1 + o(1)) \text{ as } y \to \infty.
		\]
	\end{proposition}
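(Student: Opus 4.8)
The plan is to recast membership in $\mathcal{C}_{\alpha,\theta}$ purely in terms of the one-sided tails $S^+(t) = \mu((t,\infty))$ and $S^-(t) = \mu((-\infty,-t))$ and their sum $T = S^+ + S^-$, then read off the behavior of $G_\mu(iy) - 1/(iy)$ from $T$ by a direct computation for the forward direction, and recover $T$ from the transform by a Tauberian inversion for the converse. First I would dispose of the reduction step. Writing $H(y) = \int_{-y}^y t^2\,d\mu(t)$, integration by parts gives $H(y) = -y^2 T(y) + 2\int_0^y t\,T(t)\,dt$; by Karamata's theorem (with the monotone density theorem for the converse), $H$ varies regularly with index $2-\alpha$ if and only if $T$ varies regularly with index $-\alpha$, and then $H(y) \sim \frac{\alpha}{2-\alpha}\,y^2 T(y)$. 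Since condition (2) defining $\mathcal{C}_{\alpha,\theta}$ is exactly $S^{\pm}(t) \sim \frac{1\pm\theta}{2}T(t)$, membership in $\mathcal{C}_{\alpha,\theta}$ becomes equivalent to: $T$ varies regularly with index $-\alpha$ and $S^+ - S^- \sim \theta T$.

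The core of the argument is the identity obtained by separating real and imaginary parts of $G_\mu(iy) = \int_\R (iy-t)^{-1}\,d\mu(t)$ and using $\int d\mu = 1$:
\[
\im\Bigl(G_\mu(iy) - \tfrac1{iy}\Bigr) = \frac1y\int_0^\infty T(t)\,\frac{2ty^2}{(t^2+y^2)^2}\,dt, \qquad \re\Bigl(G_\mu(iy) - \tfrac1{iy}\Bigr) = -\int_0^\infty (S^+-S^-)(t)\,\frac{y^2-t^2}{(t^2+y^2)^2}\,dt,
\]
where the second forms follow by integration by parts. For the forward direction I substitute $t = ys$ to turn each integral into a multiplicative convolution $\int_0^\infty (\text{tail})(ys)\,k(s)\,ds$ and invoke the uniform convergence theorem for regularly varying functions, with Potter bounds providing domination, to replace the tail by its power law $s^{-\alpha}$. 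The imaginary part gives $f(y) := \im(G_\mu(iy)-1/(iy)) \sim \frac{T(y)}{y}\int_0^\infty \frac{2s^{1-\alpha}}{(s^2+1)^2}\,ds$; evaluating this integral by $u = s^2$ and the reflection formula produces the constant $\frac{\pi\alpha}{2\sin(\pi\alpha/2)}$, so $f$ varies regularly with index $-1-\alpha$. The real part produces $\frac{\pi\alpha}{2\cos(\pi\alpha/2)}$ by the same Beta-integral technique, whence $\re(\cdots)/\im(\cdots) \to -\theta\tan(\pi\alpha/2)$, giving precisely the asserted factor $i - \theta\tan(\pi\alpha/2)$ after setting $f(y) = \im(G_\mu(iy)-1/(iy))$.

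The one genuine subtlety in the forward direction is the real part when $\alpha \in (1,2)$: the kernel $\frac{y^2-t^2}{(t^2+y^2)^2}$ does not vanish at $t=0$, and the naive substitution yields a divergent $s^{-\alpha}$ near $s=0$. This is exactly where the mean-zero hypothesis is used. Using $\int t\,d\mu = 0$ I rewrite $\int_\R \frac{t}{t^2+y^2}\,d\mu = -\frac1{y^2}\int_\R \frac{t^3}{t^2+y^2}\,d\mu$; the recentered kernel vanishes to second order at the origin and to order $s^{-\alpha}$ at infinity, so the substitution converges precisely for $\alpha \in (1,2)$, and the same Beta computation recovers $\frac{\pi\alpha}{2\cos(\pi\alpha/2)}$. (For $\alpha \in (0,1)$ the kernels decay fast enough that no recentering is needed, and the mean need not exist.)

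I expect the converse to be the main obstacle, since it is a Tauberian inversion rather than a computation. From the hypothesis, taking imaginary parts gives $\psi(y) := \int_0^\infty T(ys)\,\frac{2s}{(s^2+1)^2}\,ds \sim y f(y)$, which varies regularly with index $-\alpha$. Because $T$ is monotone and the Mellin transform of the kernel at the critical exponent equals $\frac{\pi\alpha}{2\sin(\pi\alpha/2)} \neq 0$ for $\alpha \in (0,2)$, a standard Tauberian theorem for Mellin convolutions in the theory of regular variation forces $T$ itself to vary regularly with index $-\alpha$, which yields condition (1) through the reduction step. Feeding this back into the real-part asymptotic and applying the analogous inversion (using $\frac{\pi\alpha}{2\cos(\pi\alpha/2)} \neq 0$) recovers $S^+ - S^- \sim \theta T$, i.e.\ condition (2). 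The delicate analytic points throughout are the non-vanishing of these Mellin transforms on the critical line and the justification of the limit interchanges by Potter bounds; these are the inputs packaged in \cite[Propositions 5.10--5.11]{BP1999}, which I would cite for the Tauberian step if a self-contained reconstruction proved too lengthy.
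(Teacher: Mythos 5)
The first thing to note is that the paper does not prove this proposition at all: it is imported verbatim from Bercovici--Pata, the only change being the correction $\theta \mapsto -\theta$ to fix a sign error in the original statement, so your attempt can only be measured against \cite{BP1999} itself. With that caveat, your forward direction is correct and is essentially the computation underlying the original proof. The tail identities $\im\bigl(G_\mu(iy)-\frac{1}{iy}\bigr) = \frac{1}{y}\int_0^\infty T(t)\,\frac{2ty^2}{(t^2+y^2)^2}\,dt$ and $\re\bigl(G_\mu(iy)-\frac{1}{iy}\bigr) = -\int_0^\infty (S^+-S^-)(t)\,\frac{y^2-t^2}{(t^2+y^2)^2}\,dt$ check out; the Beta-integral constants $\frac{\pi\alpha}{2\sin(\pi\alpha/2)}$ and $\frac{\pi\alpha}{2\cos(\pi\alpha/2)}$ are correct; the mean-zero recentering $\int_\R \frac{t}{t^2+y^2}\,d\mu = -y^{-2}\int_\R \frac{t^3}{t^2+y^2}\,d\mu$ is exactly the right device for $\alpha\in(1,2)$ (the recentered kernel $\frac{t^2(t^2+3y^2)}{(t^2+y^2)^2}$ is in fact positive, which simplifies the domination); and your final factor $i-\theta\tan\frac{\pi\alpha}{2}$ carries the \emph{corrected} sign, a nontrivial check given that the sign error is the very reason the paper restates the result. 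One small mislabel in the reduction step: the equivalence of $H(y)=\int_{-y}^y t^2\,d\mu$ varying regularly with index $2-\alpha$ and $T$ varying regularly with index $-\alpha$ is not the monotone density theorem (the ``density'' $t^2\,d\mu$ is not monotone); it is the standard Feller/Bingham--Goldie--Teugels sandwich $\frac{H(xy)-H(y)}{(xy)^2} \leq T(y)-T(xy) \leq \frac{H(xy)-H(y)}{y^2}$, summed over geometric scales.

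The genuine gap is in the converse, specifically the real-part inversion. For the imaginary part your plan is sound: the kernel $\frac{2s}{(1+s^2)^2}$ is positive, the relevant exponent is interior to its Mellin strip $(-1,3)$ with the transform nonzero there, so the Drasin--Shea Mercerian theorem (see \cite{BGT1987}, Chapter 5) does yield $T$ regularly varying with index $-\alpha$. But that theorem does not apply to the real part as you state it: $S^+-S^-$ is neither of one sign nor monotone, and the kernel $\frac{1-u^2}{(1+u^2)^2}$ changes sign, so neither Drasin--Shea (positive kernel) nor Jordan's monotone variant covers the equation directly. Moreover, nonvanishing of the Mellin transform \emph{at the critical exponent} is the Mercerian hypothesis, whereas the uniqueness argument you actually need is Wiener-type and requires nonvanishing on the whole critical line --- which happens to hold here, since $\frac{\pi a}{2\cos(\pi a/2)}$ with $a=\alpha-i\tau$ vanishes only at $a=0$, but you never verify it. The standard repair: once $T$ is known to vary regularly with index $-\alpha$, apply Potter bounds and Helly selection to the monotone families $S^{\pm}(y\,\cdot)/T(y)$ to extract scaling limits $g^{\pm}$ with $g^++g^-=u^{-\alpha}$, pass to the limit in the real-part asymptotics at every scale $c>0$, and identify $g^+-g^-=\theta u^{-\alpha}$ from the resulting Mellin convolution equation via the nonvanishing transform on the line. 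Finally, your proposed fallback of citing \cite[Propositions 5.10--5.11]{BP1999} for the Tauberian step is circular --- that citation \emph{is} the statement being proved; the legitimate external inputs are the Mercerian/Tauberian theorems of \cite{BGT1987} and the classical domain-of-attraction equivalences.
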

	
	Although the proof of this proposition in \cite{BP1999} is correct, the statement contains a sign error.  Thus, we have corrected the $\theta$ to $-\theta$ in the statement of the proposition and in the definition of $\nu_{\alpha,\theta}$.  This result can be restated in terms of the $K$-transform and boolean convolution as follows.
	
	\begin{proposition} \label{prop:CandK}
		Let $\alpha \in (0,1) \cup (1,2)$.  Then the following are equivalent:
		\begin{enumerate}[(1)]
			\item $\mu \in \mathcal{C}_{\alpha,\theta}$ for $\alpha < 1$ or $\mu \in \mathcal{C}_{\alpha,\theta}^0$ for $\alpha > 1$.
			\item There exists a function $g$ that varies regularly with index $1 - \alpha$ such that
			\[
			K_\mu(iy) = -\left( i - \theta \tan \frac{\pi \alpha}{2} \right) g(y)(1 + o(1)) \text{ as } y \to \infty.
			\]
			\item There exists a slowly varying function $h$ such that
			\[
			c^{-1/\alpha} \cdot \mu^{\uplus c / h(c^{1/\alpha})} \to \nu_{\alpha,\theta}.
			\]
		\end{enumerate}
		Furthermore, in (2), we can take $g(y) = -\im K_\mu(iy)$.
	\end{proposition}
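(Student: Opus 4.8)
The plan is to prove the two equivalences (1) $\Leftrightarrow$ (2) and (2) $\Leftrightarrow$ (3) separately, since each is a reversible chain of asymptotic computations, and then extract the ``furthermore'' clause by taking imaginary parts. Throughout I let $c,y \to +\infty$, and whenever a function $g$ varies regularly with index $1-\alpha$ I write $g(y) = y^{1-\alpha} L(y)$ with $L(y) := g(y)/y^{1-\alpha}$ slowly varying, which is just the definition of regular variation.

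For (1) $\Leftrightarrow$ (2), I would start from the cited characterization \cite[Proposition 5.10-5.11]{BP1999} and translate it through the algebraic relation $K_\mu(z) = z - 1/G_\mu(z)$ evaluated at $z = iy$. Writing $G_\mu(iy) = \frac{1}{iy} + (i - \theta \tan \frac{\pi\alpha}{2}) f(y)(1+o(1))$ with $f$ regularly varying of index $-1-\alpha$, I note that $y f(y)$ is regularly varying of index $-\alpha < 0$, hence tends to $0$, so I may expand $F_\mu(iy) = 1/G_\mu(iy)$ as a geometric series whose higher-order terms are smaller by a factor $\sim y f(y)$ and are thus absorbed into $(1+o(1))$. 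This gives $K_\mu(iy) = iy - F_\mu(iy) = -(i - \theta \tan \frac{\pi\alpha}{2}) y^2 f(y)(1+o(1))$, so setting $g(y) = y^2 f(y)$ (regularly varying of index $1-\alpha$) yields (2). The converse is the same computation run backwards: from (2), $g(y)/y \to 0$ lets me invert $F_\mu = z - K_\mu$ to recover the $G_\mu$ asymptotic with $f(y) = g(y)/y^2$. Finally, taking imaginary parts of the asymptotic in (2) and using that the coefficient $-(i - \theta \tan \frac{\pi\alpha}{2})$ has imaginary part $-1$ gives $-\im K_\mu(iy) = g(y)(1+o(1))$; being asymptotic to a regularly varying function, $-\im K_\mu(iy)$ is itself regularly varying of index $1-\alpha$ and may be used as $g$, which is the ``furthermore'' claim.

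For (2) $\Leftrightarrow$ (3), the key computation is to express the $K$-transform of $\lambda_c := c^{-1/\alpha} \cdot \mu^{\uplus c/h(c^{1/\alpha})}$ using Lemma \ref{lem:dilation}(1) together with $K_{\mu^{\uplus t}} = t K_\mu$, namely
\[
K_{\lambda_c}(z) = \frac{c^{1-1/\alpha}}{h(c^{1/\alpha})}\, K_\mu(c^{1/\alpha} z).
\]
Assuming (2) with $g(y) = y^{1-\alpha} L(y)$ and choosing $h = L$, I evaluate at $z = iy$ and insert $K_\mu(ic^{1/\alpha}y) \sim -(i - \theta \tan \frac{\pi\alpha}{2})(c^{1/\alpha}y)^{1-\alpha} L(c^{1/\alpha}y)$; the powers of $c$ cancel exactly (since $c^{1-1/\alpha}(c^{1/\alpha})^{1-\alpha} = c^0 = 1$) and slow variation gives $L(c^{1/\alpha}y)/L(c^{1/\alpha}) \to 1$, so $K_{\lambda_c}(iy) \to -(i - \theta \tan \frac{\pi\alpha}{2}) y^{1-\alpha} = K_{\nu_{\alpha,\theta}}(iy)$ for each fixed $y$ (recall $-iz = y$ at $z=iy$, and Proposition \ref{prop:limitmeasures}). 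Conversely, (3) forces $K_{\lambda_c}(iy) \to K_{\nu_{\alpha,\theta}}(iy)$; evaluating the displayed identity at $y = 1$ and substituting $y' = c^{1/\alpha}$ yields $K_\mu(iy') \sim -(i - \theta \tan \frac{\pi\alpha}{2})\, y'^{\,1-\alpha} h(y')$, which is (2) with $g(y) = y^{1-\alpha} h(y)$.

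The step requiring the most care, and the main obstacle, is passing between pointwise convergence of the $K$-transforms along the imaginary axis and genuine weak convergence of the measures $\lambda_c$. I would handle this with the normal-family machinery already in place: since $\{G_{\lambda_c}\} \subseteq \Hol(\h, -\overline{\h})$ is a normal family and I have shown $G_{\lambda_c}(iy) = (iy - K_{\lambda_c}(iy))^{-1} \to G_{\nu_{\alpha,\theta}}(iy)$ on the imaginary axis (which has a limit point in $\h$), the identity theorem forces every subsequential limit to equal $G_{\nu_{\alpha,\theta}}$, so $G_{\lambda_c} \to G_{\nu_{\alpha,\theta}}$ locally uniformly, and Lemma \ref{lem:transformhomeomorphism} upgrades this to weak convergence $\lambda_c \to \nu_{\alpha,\theta}$. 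The same equivalence (Lemma \ref{lem:transformhomeomorphism}) supplies the reverse implication in (3) $\Rightarrow$ (2). The case $\alpha \in (1,2)$ is handled identically, the mean-zero hypothesis entering only where it is needed to invoke \cite[Proposition 5.10-5.11]{BP1999}.
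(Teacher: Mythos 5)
Your route is, in substance, the paper's own: the equivalence (1) $\iff$ (2) via \cite[Propositions 5.10--5.11]{BP1999} and the algebraic translation between $G_\mu$ and $K_\mu$ asymptotics (your geometric-series expansion of $1/G_\mu$ is the same computation the paper packages as $K_\mu(iy) = iyF_\mu(iy)\bigl(G_\mu(iy) - \tfrac{1}{iy}\bigr)$ together with $F_\mu(iy)/iy \to 1$); the equivalence (2) $\iff$ (3) via the scaling identity $K_{\lambda_c}(z) = \frac{c^{1-1/\alpha}}{h(c^{1/\alpha})}K_\mu(c^{1/\alpha}z)$, slow variation of $h$, the normal-family reduction of weak convergence to pointwise convergence of $K$-transforms on the imaginary axis (Lemma \ref{lem:transformhomeomorphism}), and, for the converse, evaluation at $y=1$ with the substitution $c = y^\alpha$ --- all exactly as in the paper; and the furthermore clause by taking imaginary parts. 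One cosmetic omission there: you should record that $-\im K_\mu(iy) \geq 0$ (Corollary \ref{cor:FandK}), so that it is a legitimate nonnegative regularly varying choice of $g$.

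There is, however, one genuine gap, and it is precisely the point where the paper's proof does extra work. For $\alpha \in (1,2)$, your closing claim that ``the mean-zero hypothesis enters only where it is needed to invoke \cite[Proposition 5.10--5.11]{BP1999}'' breaks down in the direction (2) $\implies$ (1): there, mean zero is part of the \emph{conclusion} ($\mu \in \mathcal{C}_{\alpha,\theta}^0$), not a hypothesis you possess, whereas Bercovici--Pata's characterization takes mean zero as a standing assumption when $\alpha > 1$. So your backward computation recovers the $G_\mu$ asymptotic but cannot by itself conclude membership in $\mathcal{C}_{\alpha,\theta}^0$; you must first show that condition (2) is incompatible with a nonzero mean. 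The paper supplies exactly this step: if $\mu \in \mathcal{C}_{\alpha,\theta}$ has mean $c$, then writing $\nu = (-c) + \mu$ and using $K_\nu(z) = -c + K_\mu(z+c)$ one finds
\[
K_\mu(iy) = c - \left(i - \theta\tan\frac{\pi\alpha}{2}\right) g(y)(1 + o(1)),
\]
where $g$ varies regularly with index $1 - \alpha < 0$, so the regularly varying term tends to zero, the mean $c$ is recoverable from $K_\mu$, and the right-hand side has the form demanded by (2) if and only if $c = 0$. Without this translation argument (or an equivalent one ruling out nonzero drift), your (2) $\implies$ (1) for $\alpha \in (1,2)$ is incomplete; the rest of the proposal is correct and matches the paper.
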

	
	\begin{remark}
		It follows immediately that $\nu_{\alpha,\theta} \in \mathcal{C}_{\alpha,\theta}$ when $\alpha \in (0,1)$ and $\nu_{\alpha,\theta} \in \mathcal{C}_{\alpha,\theta}^0$ when $\alpha \in (1,2)$.
	\end{remark}
	
	\begin{proof}
		(1) $\iff$ (2).  Observe that
		\[
		K_\mu(iy) = iy - F_\mu(iy) = iy F_\mu(iy)\left(G_\mu(iy) - \frac{1}{iy}\right) = -y^2 \left(G_\mu(y) - \frac{1}{iy}\right)(1 + o(1)).
		\]
		Of course, $f(y)$ varies regularly with index $-1-\alpha$ if and only if $g(y) = y^2 f(y)$ varies regularly with index $1-\alpha$.  Thus, the previous proposition immediately implies the case where $\alpha < 1$ and the case $\alpha > 1$ and $\mu$ has mean zero.  Now consider a general measure $\mu \in \mathcal{C}_{\alpha,\theta}$ for $\alpha > 1$. Let $c$ be the mean and let $\nu = (-c) + \mu$.  Then $K_\nu(z) = -c + K_\mu(z + c)$ and hence
		\[
		K_\mu(iy) = c - \left( i - \theta \tan \frac{\pi \alpha}{2} \right) g(y)(1 + o(1)) \text{ as } y \to \infty,
		\]
		where $g$ varies regularly with index $1 - \alpha < 0$. In particular, this implies that the second term on the right-hand side goes to zero.  Hence, the mean $c$ is uniquely recoverable from $K_\mu$.  Furthermore, the right-hand side has the form $-\left( i - \theta \tan \frac{\pi \alpha}{2} \right) g(y)(1 + o(1))$ where $g$ varies regularly with index $1 - \alpha$ if and only if $c = 0$.
		
		(2) $\implies$ (3).  Let $g$ be as in (2), and write $g(y) = y^{1-\alpha} h(y)$ for some slowly varying function $h$.  Because $K$-transforms are contained in the normal family $\Hol(\h,-\overline{\h})$ (where the target space is the closure in the Riemann sphere), to show $c^{-1/\alpha} \cdot \mu^{\uplus c / h(c^{1/\alpha})} \to \nu_{\alpha,\theta}$ it suffices to prove pointwise convergence of the $K$-transforms on the imaginary axis.  By the definition of the boolean convolution power,
		\[
		K_{c^{-1/\alpha} \cdot \mu^{\uplus c/h(c^{1/\alpha})}}(iy) = \frac{c^{1-1/\alpha}}{h(c^{1/\alpha})} K_\mu(c^{1/\alpha} iy).
		\]
		By (2), this is equal to
		\[
		-\frac{c^{1-1/\alpha}}{h(c^{1/\alpha})} \left(i - \theta \tan \frac{\pi \alpha}{2} \right) (c^{1/\alpha} y)^{1-\alpha} h(c^{1/\alpha}y)(1 + o_{c^{1/\alpha} y}(1)) = -\left(i - \theta \tan \frac{\pi \alpha}{2} \right) y^{1-\alpha} \frac{h(c^{1/\alpha}y)}{h(c^{1/\alpha})}(1 + o_{c^{1/\alpha} y}(1)),
		\]
		where the subscript on the $o(1)$ term means that it vanishes as $c^{1/\alpha} y \to \infty$.  If $y$ is fixed and $c \to \infty$, then because $g$ varies slowly, we obtain
		\[
		-\left(i - \theta \tan \frac{\pi \alpha}{2} \right) y^{1-\alpha} \frac{h(c^{1/\alpha}y)}{h(c^{1/\alpha})}(1 + o_{c^{1/\alpha} y}(1)) \to -\left(i - \theta \tan \frac{\pi \alpha}{2} \right) y^{1-\alpha} = K_{\nu_{\alpha,\theta}}(iy).
		\]
		
		(3) $\implies$ (2).  Suppose that (3) holds for some function $h$.  Let $g(y) = y^{1-\alpha} h(y)$, so that $g$ varies regularly with index $1 - \alpha$.  Observe that
		\begin{align*}
			K_\mu(c^{1/\alpha} i) &= c^{1-1/\alpha} h(c^{1/\alpha}) K_{c^{-1/\alpha} \mu^{\uplus c/h(c^{1/\alpha})}}(i) \\
			&= c^{1-1/\alpha} h(c^{1/\alpha}) (K_{\nu_{\alpha,\theta}}(i) + o(1)) \\
			&= \left(i - \theta \tan \frac{\pi \alpha}{2} \right) g(c^{1/\alpha})(1 + o(1)).
		\end{align*}
		where the error $o(1)$ goes to zero as $c \to \infty$.  Then we substitute $c = y^\alpha$ and obtain (2).
		
		For the final claim regarding $g$ in (2), observe that $-\im K_\mu(iy) \geq 0$ and $-\im K_\mu(iy) = g(y)(1 + o(1))$.  It is straightforward to check that this function varies regularly of index $1 - \alpha$.  (The $1 + o(1)$ term in the original theorem statement is complex-valued, but the one used here is positive.)  Thus, we can replace $g(y)$ with $K_\mu(iy)$ by absorbing $g(y) / K_\mu(iy)$ into the $1 + o(1)$ term.
	\end{proof}
	
	We also need the following facts about regularly varying functions.  They can be found in \cite{BGT1987}, but we include an elementary proof here for the reader's convenience.
	
	\begin{lemma}~ \label{lem:regularvariation}
		\begin{enumerate}[(1)]
			\item If $f$ varies regularly with index $\alpha$ and $a > 0$, then $a f$ varies regularly with index $\alpha$.
			\item If $f$ varies regularly with index $\alpha$ and if $\beta > 0$, then $f(y)^\beta$ and $f(y^\beta)$ regularly with index $\alpha \beta$.
			\item If $f$ varies regularly with index $\alpha$ and if $\beta \in \R$, then $y^\beta f(y)$ varies regularly with index $\alpha + \beta$.
			\item If $f$ is bounded above and below on any compact set and varies regularly with index $\alpha \neq 0$, then
			\[
			\lim_{y \to \infty} f(y) = \begin{cases} \infty, & \alpha > 0, \\
				0, & \alpha < 0.
			\end{cases}
			\]
			\item Let $f: [m,\infty) \to [0,\infty)$ be continuous and vary regularly with index $\alpha > 0$.  Let
			\[
			g(y) = \inf \{x \geq m: f(x) \geq y\}.
			\]
			Then $f \circ g(y) = y$ for $y > f(m)$, and $g$ varies regularly with index $1/\alpha$.
		\end{enumerate}
	\end{lemma}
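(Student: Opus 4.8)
The plan is to dispatch parts (1)--(3) directly from the defining relation $\lim_{y\to\infty} f(ty)/(t^\alpha f(y)) = 1$, observing in each case that the relevant constant or exponent simply cancels. For (1) the factor $a$ cancels in $\tfrac{af(ty)}{t^\alpha af(y)}$; for (3) one writes $\tfrac{(ty)^\beta f(ty)}{t^{\alpha+\beta}y^\beta f(y)} = \tfrac{f(ty)}{t^\alpha f(y)}$. For (2), the statement about $f(y)^\beta$ uses continuity of $x\mapsto x^\beta$ at $1$, while $f(y^\beta)$ follows from the substitution $u = y^\beta$, $s = t^\beta$, reducing $\tfrac{f(t^\beta y^\beta)}{t^{\alpha\beta}f(y^\beta)}$ to $\tfrac{f(su)}{s^\alpha f(u)}\to 1$ as $u\to\infty$. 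None of these steps poses any difficulty.

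For (4) I would argue by geometric comparison. Taking $\alpha > 0$, fix $\lambda$ with $1 < \lambda < 2^\alpha$; regular variation gives $Y_0$ with $f > 0$ and $f(2y) \ge \lambda f(y)$ for $y \ge Y_0$. Iterating, every $z \ge 2Y_0$ is of the form $z = 2^n w$ with $w \in [Y_0, 2Y_0)$, whence $f(z) \ge \lambda^n \inf_{[Y_0,2Y_0]} f$. The hypothesis that $f$ is bounded away from $0$ on the compact interval makes this infimum positive, so $f(z) \to \infty$ as $z \to \infty$. The case $\alpha < 0$ is symmetric: choose $2^\alpha < \lambda < 1$, obtain $f(2y) \le \lambda f(y)$, and use the finite supremum on the compact interval to force $f(z) \to 0$. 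Here it is the lower and upper boundedness on compacts, respectively, that come into play.

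Part (5) is the substance of the lemma. First I would establish $f\circ g(y) = y$ for $y > f(m)$. Since $\alpha>0$, part (4) gives $f\to\infty$, so the set $\{x\ge m : f(x)\ge y\}$ is nonempty; it is closed by continuity, so its infimum $g(y)$ lies in it, giving $f(g(y))\ge y$. Because $y > f(m)$ forces $g(y) > m$, points just left of $g(y)$ satisfy $f < y$, and continuity yields $f(g(y)) \le y$; hence equality. I would also record that $g(y)\to\infty$, since otherwise $f$ would be unbounded on a compact interval, contradicting continuity.

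The crux is that $g$ varies regularly with index $1/\alpha$, i.e. $g(ty)/g(y)\to t^{1/\alpha}$, and the obstacle is that $f$ need not be monotone, so the naive comparison yields only a one-sided bound. The plan is to prove just the upper bound and then bootstrap. Fix $t>0$, $\epsilon>0$, and set $s = g(y)$ (so $f(s)=y$ and $s\to\infty$). With $\lambda_+ = t^{1/\alpha}(1+\epsilon)$ we have $f(\lambda_+ s)/f(s)\to \lambda_+^\alpha = t(1+\epsilon)^\alpha > t$, so $f(\lambda_+ s) > ty$ for large $y$; since $\lambda_+ s$ then lies in $\{x : f(x)\ge ty\}$, the definition of the infimum gives $g(ty) \le t^{1/\alpha}(1+\epsilon)\,g(y)$, whence $\limsup_y g(ty)/g(y) \le t^{1/\alpha}$. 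The key trick is to apply this \emph{same} inequality with $t$ replaced by $1/t$ and $y$ by $ty$: it reads $g(y) \le (1/t)^{1/\alpha}(1+\epsilon)\,g(ty)$, i.e. $g(ty)/g(y) \ge t^{1/\alpha}(1+\epsilon)^{-1}$, so $\liminf_y g(ty)/g(y) \ge t^{1/\alpha}$. Letting $\epsilon\to 0$ gives the limit $t^{1/\alpha}$. I expect this bootstrapping step, which sidesteps any appeal to the uniform convergence theorem for regularly varying functions, to be the main point to get right; everything else is routine once the generalized inverse $g$ and the identity $f\circ g = \mathrm{id}$ are in hand.
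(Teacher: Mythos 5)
Your proposal is correct and follows essentially the same route as the paper: parts (1)--(3) by direct cancellation in the defining limit, part (4) by iterating the doubling ratio $f(2y)/f(y)$ against the bound on a compact reference interval, and part (5) by constructing the generalized inverse via continuity and the intermediate value theorem, proving the one-sided bound $g(ty) \leq t^{1/\alpha}(1+\epsilon)g(y)$ from regular variation of $f$ at $s = g(y)$, and then obtaining the matching lower bound by applying the same inequality with $t$ replaced by $1/t$ --- precisely the bootstrap the paper uses. No gaps; the minor cosmetic difference is that in (4) the paper factors $f(y) = y^{\alpha}g(y)$ with $g$ slowly varying before iterating, whereas you iterate on $f$ directly, but the argument is the same.
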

	
	\begin{proof}
		Claims (1), (2), (3) are straightforward to check from the definition.
		
		To prove (4), suppose $\alpha > 0$ and write $f(y) = y^\alpha g(y)$, where $g$ varies slowly.  Then there exists $M > 0$ such that
		\[
		2^{-\alpha/2} \leq \frac{f(2y)}{f(y)} \leq 2^{\alpha/2} \text{ for } y \geq M.
		\]
		By hypothesis, $f$ is bounded below by some $\delta$ on the set $[M,2M]$. Any $y \geq M$ can be written as $2^n y'$ for $y' \in [M,2M]$ and $n \geq 0$, and then we have
		\[
		g(y) = g(2^n y') \geq 2^{-n \alpha/2} g(y') \geq M^{\alpha/2} (2^nM)^{-\alpha/2} \delta \geq M^{\alpha/2} \delta y^{-\alpha/2}.
		\]
		Therefore, $y^{\alpha/2} g(y)$ has a positive lower bound for sufficiently large $y$, which implies that $y^\alpha g(y) \to \infty$.  The case for $\alpha < 0$ follows by considering $1 / f$.
		
		(5) Because $f(x) \to \infty$ as $x \to \infty$, the infimum in the definition of $g$ is well-defined.  If $y > f(m)$, then by continuity $f$ must achieve the value $y$ by the intermediate value theorem.  Furthermore, $f(x) < y$ for $x$ in a neighborhood of $m$, and hence the infimum $x_0$ of $\{x: f(x) \geq y\}$ must be strictly larger than $m$.  Then we have $f(x) < y$ for $x < x_0$ and there is a sequence of points converging to $x_0$ from above that satisfy $f(x) \geq y$, so by continuity $f(x_0) = y$, or $f(g(y)) = y$.
		
		Because $f$ is bounded on any compact set, we must have $g(y) \to \infty$ as $y \to \infty$.  Given any $t > 0$ and $\epsilon > 0$, since $f$ varies regularly with index $\alpha$, we have
		\[
		\lim_{y \to \infty} \frac{f((1 + \epsilon)^{1/\alpha} c^{1/\alpha} g(y)}{(1 + \epsilon) c f(g(y))} = 1.
		\]
		If $y$ is large enough that the left-hand side is larger than $1 / (1 + \epsilon)$, then we obtain
		\[
		f((1 + \epsilon)^{1/\alpha} c^{1/\alpha} g(y)) \geq cy.
		\]
		Thus, by definition of $g$,
		\[
		g(cy) \leq (1 + \epsilon)^{1/\alpha} c^{1/\alpha} g(y),
		\]
		so that
		\[
		\limsup_{n \to \infty} \frac{g(cy)}{c^{1/\alpha} g(y)} \leq (1 + \epsilon)^{1/\alpha}.
		\]
		Since $\epsilon$ was arbitrary, the $\limsup$ is bounded above by $1$.  However, because the same thing holds with $c$ replaced by $1/c$, we get
		\[
		\liminf_{n \to \infty} \frac{c^{-1/\alpha} g(cy)}{g(c(1/c)y)} \geq 1.
		\]
		Therefore, $g$ varies regularly with index $1/\alpha$.
	\end{proof}
	
	We can conclude the proof as follows:
	
	\begin{proof}[Proof of Theorem \ref{thm:limittheorem}]
		Let $\mu \in \mathcal{C}_{\alpha,\theta}$ if $\alpha \in (0,1)$ and $\mu \in \mathcal{C}_{\alpha,\theta}^0$ if $\alpha \in (1,2)$.  Let $h$ be as in Proposition \ref{prop:CandK}.  By Lemma \ref{lem:regularvariation}, the function $c \mapsto c / h(c^{1/\alpha})$ varies regularly with index $1$.  Let $\psi(t)$ be the function associated to $t / h(t^{1/\alpha})$ as in Lemma \ref{lem:regularvariation} (5), so that $\psi(t) / h(\psi(t)^{1/\alpha}) = t$ for sufficiently large $t$ and $\psi(t)$ varies regularly with index $1$.  Then let $\phi(t) = \psi(t)^{-1/\alpha}$.  Then $\phi$ varies regularly with index $-1/\alpha$, and 
		\[
		\phi(t) \cdot \mu^{\uplus t} = \psi(t)^{-1/\alpha} \cdot \mu^{\uplus \psi(t) / h(\psi(t)^{1/\alpha})} \to \nu_{\alpha,\theta}.
		\]
		In particular, for each $\cT \in \Tree(N)$ with $n(\cT) > 1$, we have
		\[
		\phi(n(\cT)^k) \cdot \mu^{\uplus n(\cT)^k} \to \nu_{\alpha,\theta},
		\]
		and hence by Theorem \ref{thm:BP1}, we have
		\[
		\phi(n(\cT)^k) \cdot \boxplus_{\cT^{\circ k}}(\mu) \to \mathbb{BP}(\cT,\nu_{\alpha,\theta})
		\]
		for $\cT \in \Tree(N)$ with $n(\cT) > 1$.  That theorem also implies that the convergence is uniform over $\cT$.
	\end{proof}
	
	By appealing to Theorem \ref{thm:BP1}, we did not have to check that $\mathcal{C}_{\alpha,\theta}$ is closed under the operations $\mu \mapsto \boxplus_{\cT}(\mu)$ or $\mu \mapsto \mathbb{BP}(\cT,\mu)$ in order to prove Theorem \ref{thm:limittheorem}.  However, as one would intuitively hope, this is indeed the case.
	
	\begin{proposition} \label{prop:Cclosure}
		Let $\alpha \in (0,1)$ and $\theta \in [-1,1]$.  Suppose $\mu \in \mathcal{C}_{\alpha,\theta}$.
		\begin{enumerate}[(1)]
			\item For $t > 0$, we have $\mu^{\uplus t} \in \mathcal{C}_{\alpha,\theta}$.
			\item For any $\cT \in \Tree(N)$ with $n(\cT) > 1$, we have $\boxplus_{\cT}(\mu) \in \mathcal{C}_{\alpha,\theta}$
			\item For $\cT \in \Tree(N)$ with $n(\cT) > 1$, we have $\mathbb{BP}(\cT,\mu) \in \mathcal{C}_{\alpha,\theta}$.
		\end{enumerate}
		The same claims hold with $\mathcal{C}_{\alpha,\theta}$ replaced by $\mathcal{C}_{\alpha,\theta}^0$ for $\alpha \in (1,2)$.
	\end{proposition}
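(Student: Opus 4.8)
The plan is to run everything through the Cauchy-transform characterization of Proposition \ref{prop:CandK}(2): writing $\beta = i - \theta\tan\frac{\pi\alpha}{2}$, membership $\mu\in\mathcal{C}_{\alpha,\theta}$ (resp.\ $\mathcal{C}_{\alpha,\theta}^0$ for $\alpha>1$) is equivalent to $K_\mu(iy) = -\beta g(y)(1+o(1))$ with $g(y) = -\im K_\mu(iy)$ regularly varying of index $1-\alpha$. For each of the three operations I would track how this leading asymptotic transforms; in every case the coefficient $-\beta$ (hence $\theta$) is preserved and the regularly varying factor $g$ is only multiplied by a positive constant, so Lemma \ref{lem:regularvariation}(1) keeps us in the class. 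The repeated analytic input is a \emph{non-tangential strengthening}: if $\mu\in\mathcal{C}_{\alpha,\theta}$ and $g(y)=y^{1-\alpha}h(y)$, then $K_\mu(w) = -\beta(-iw)^{1-\alpha}h(|w|)(1+o(1))$ as $w\to\infty$ non-tangentially. Granting this, whenever $w\to\infty$ with $w = iy(1+o(1))$ and $|w|\sim y$ one has $(-iw)^{1-\alpha} = y^{1-\alpha}(1+o(1))$ and $h(|w|)\sim h(y)$ (uniform convergence theorem for slowly varying functions), so $K_\mu(w)\sim K_\mu(iy)$. Deriving this cone asymptotic from the axis asymptotic, using the Pick-function structure of $K_\mu$ and the theory of regularly varying Stieltjes transforms, is the first main obstacle.

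Part (1) is immediate: since $K_{\mu^{\uplus t}} = tK_\mu$ we get $K_{\mu^{\uplus t}}(iy) = -\beta\,(tg(y))(1+o(1))$, and $tg$ is regularly varying of index $1-\alpha$ by Lemma \ref{lem:regularvariation}(1). For $\alpha\in(1,2)$ the mean (recovered as the non-tangential limit of the $K$-transform, as in the proof of Proposition \ref{prop:CandK}) scales by $t$, hence stays $0$, so $\mu^{\uplus t}\in\mathcal{C}_{\alpha,\theta}^0$.

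For part (2) I would read off the fixed-point equation \eqref{eq:fixedpointequation} as $K_{\boxplus_{\cT}(\mu)}(iy) = \sum_{j\in[N]\cap\cT} K_\mu\bigl(F_{\boxplus_{\br_j(\cT)}(\mu)}(iy)\bigr)$, a sum of exactly $n(\cT)$ terms. For each $j$ the point $w_j = F_{\boxplus_{\br_j(\cT)}(\mu)}(iy)$ satisfies $\im w_j\geq y$ and $w_j/(iy)\to1$ by Corollary \ref{cor:FandK}, so $w_j\to\infty$ non-tangentially with $|w_j|\sim y$; the cone asymptotic above then gives $K_\mu(w_j) = -\beta g(y)(1+o(1))$. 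Summing, $K_{\boxplus_{\cT}(\mu)}(iy) = -\beta\,(n(\cT)g(y))(1+o(1))$, and $n(\cT)g$ is regularly varying of index $1-\alpha$, so $\boxplus_{\cT}(\mu)\in\mathcal{C}_{\alpha,\theta}$. When $\alpha>1$ and $\mu$ has mean $0$ each $K_\mu(w_j)\to0$, whence $K_{\boxplus_{\cT}(\mu)}(iy)\to0$ and $\boxplus_{\cT}(\mu)\in\mathcal{C}_{\alpha,\theta}^0$.

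For part (3), write $\Lambda = \mathbb{BP}(\cT,\mu) = \lim_k \Lambda_k$ with $\Lambda_k = \boxplus_{\cT^{\circ k}}(\mu^{\uplus 1/n(\cT)^k})$. Applying part (2) to the tree $\cT^{\circ k}$ (for which $n(\cT^{\circ k}) = n(\cT)^k$ by Lemma \ref{lem:maxchildren}) together with part (1) shows each $\Lambda_k\in\mathcal{C}_{\alpha,\theta}$ with the \emph{same} leading asymptotic $K_{\Lambda_k}(iy) = -\beta g(y)(1+o(1))$, because the tree factor $n(\cT)^k$ cancels the boolean factor $1/n(\cT)^k$ exactly. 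Since $K_\Lambda(iy) = \lim_k K_{\Lambda_k}(iy)$ pointwise by weak convergence, it then suffices to show $K_{\Lambda_k}(iy)/(-\beta g(y))\to1$ as $y\to\infty$ \emph{uniformly in} $k$, for then $K_\Lambda(iy) = -\beta g(y)(1+o(1))$ and Proposition \ref{prop:CandK} gives $\Lambda\in\mathcal{C}_{\alpha,\theta}$ (with the mean-zero bookkeeping of part (2) yielding the $\mathcal{C}_{\alpha,\theta}^0$ case for $\alpha>1$). This uniformity is precisely where the $N$-independent estimates of Lemma \ref{lem:mappingregions} and the equicontinuity of Theorem \ref{thm:equicontinuity} are meant to enter, bounding the deviation of each $F_{\br_j}(iy)$ from $iy$ uniformly over all trees in $\Tree(n(\cT)^k)$ and hence the error in the leading-order cancellation. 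The interchange of the limits $k\to\infty$ and $y\to\infty$ is the second, and principal, obstacle in the proof.
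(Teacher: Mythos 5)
Your route is genuinely different from the paper's---the paper never touches the axis asymptotics of Proposition \ref{prop:CandK}(2) in this proof, but instead uses the characterization (3): $\mu \in \mathcal{C}_{\alpha,\theta}$ if and only if $c^{-1/\alpha} \cdot \mu^{\uplus f(c)} \to \nu_{\alpha,\theta}$ for some $f$ varying regularly with index $1$. Each part then becomes a statement about weak limits of measures: for (2) one writes $c^{-1/\alpha} \cdot \boxplus_{\cT}(\mu)^{\uplus f(c)/n(\cT)} = \Phi_N\bigl(\cT, N/f(c), (c^{-1/\alpha}\cdot\mu^{\uplus f(c)})^{\uplus 1/f(c)}\bigr)^{\uplus N/n(\cT)}$ and invokes joint continuity of $\Phi_N$ at $c = 0$; for (3) the analogous identity with $\Phi_{Mn^k}(\cT_k, M/f(c), \cdot)$ plus the \emph{equicontinuity} of Theorem \ref{thm:equicontinuity} gives convergence as $c \to \infty$ uniform in $k$, which legitimizes the interchange of $\lim_{c}$ and $\lim_k$. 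Your plan instead requires two unproven analytic inputs, both of which you flag yourself. The first---upgrading the axis asymptotic $K_\mu(iy) = -\beta g(y)(1+o(1))$ to a non-tangential one---is a true Abelian-type statement for Pick functions (provable from the Nevanlinna representation of $K_\mu$ together with Potter bounds on the regularly varying tail), and with it your parts (1) and (2) would go through; but it is a nontrivial lemma that must actually be supplied, and the paper's proof needs nothing of the sort.

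The second gap is the fatal one as proposed: Theorem \ref{thm:equicontinuity} cannot deliver the uniform-in-$k$ relative asymptotic $K_{\Lambda_k}(iy) = -\beta g(y)(1+o(1))$. That theorem gives equicontinuity with respect to the L\'evy metric $d_L$, i.e., control at the level of weak convergence, and $d_L$-closeness says essentially nothing about the ratio $K(iy)/g(y)$ at large $y$. Concretely, $\nu = (1-\epsilon)\delta_0 + \epsilon\delta_y$ satisfies $d_L(\delta_0,\nu) \leq \epsilon$ yet $|K_\nu(iy)| \asymp \epsilon y$, which swamps $g(y) = y^{1-\alpha}h(y) = o(y)$; so measures uniformly close in $d_L$ can have wildly different tail asymptotics of their $K$-transforms. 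This is exactly why the paper recasts membership in $\mathcal{C}_{\alpha,\theta}$ via Proposition \ref{prop:CandK}(3) \emph{before} taking the $k \to \infty$ limit: the statement to be interchanged is then itself a $d_L$-statement, in precisely the metric that Theorem \ref{thm:equicontinuity} controls, and no uniform $K$-transform asymptotics are ever needed. To salvage your argument for (3) you would have to prove a quantitative, tree-uniform version of your part (2) asymptotics (in effect redoing the fixed-point analysis of Theorem \ref{thm:equicontinuity} in weighted form at infinity), or simply switch to characterization (3), which is the paper's proof.
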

	
	\begin{proof}
		Let $\alpha \in (0,1)$.  By Proposition \ref{prop:CandK} (3) and Lemma \ref{lem:regularvariation} (2) and (3), we have $\mu \in \mathcal{C}_{\alpha,\theta}$ if and only if there is a function $f$ that varies regularly with index $1$ such that $c^{-1/\alpha} \mu^{\uplus f(c)} \to \nu_{\alpha,\theta}$ as $c \to \infty$.
		
		In the remainder of the argument, assume $\mu \in \mathcal{C}_{\alpha,\theta}$ and let $f$ be a function that varies regularly with index $1$ with $c^{-1/\alpha} \mu^{\uplus f(c)} \to \nu_{\alpha,\theta}$.
		
		(1) If $t > 0$, then $c^{-1/\alpha} \cdot (\mu^{\uplus t})^{\uplus f(c)/t} = c^{-1/\alpha} \cdot \mu^{\uplus f(c)} \to \nu_{\alpha,\theta}$.  The function $f(c) / t$ also varies regularly with index $1$, so $\mu^{\uplus t} \in \mathcal{C}_{\alpha,\theta}$.
		
		(2) Let $\Phi_N$ be as in Theorem \ref{thm:equicontinuity}.  Then using Lemma \ref{lem:dilation}, we have
		\begin{align*}
			c^{-1/\alpha} \cdot \boxplus_{\cT}(\mu)^{\uplus f(c)/n(\cT)} &= \boxplus_{\cT}((c^{-1/\alpha} \cdot \mu^{\uplus f(c)})^{\uplus 1/f(c)})^{\uplus f(c)/n(\cT)} \\
			&= \Phi_N\Bigl(\cT,N/f(c),(c^{-1/\alpha} \cdot \mu^{\uplus f(c)})^{\uplus 1/f(c)} \Bigr)^{\uplus N / n(\cT)}.
		\end{align*}
		Of course, $f(c) \to \infty$ as $c \to \infty$.  Thus, by joint continuity of $\Phi_N$, we obtain that
		\[
		\lim_{c \to \infty} c^{-1/\alpha} \cdot \boxplus_{\cT}(\mu)^{\uplus f(c)/n(\cT)} = \Phi_N(\cT,0,\nu_{\alpha,\theta})^{\uplus N/n(\cT)} = (\nu_{\alpha,\theta}^{\uplus n(\cT)/N})^{\uplus N/n(\cT)} = \nu_{\alpha,\theta}.
		\]
		Thus, $\boxplus_{\cT}(\mu)$ satisfies the desired condition with the function $f / n(\cT)$.
		
		(3) Let $n = n(\cT)$.  As in the proof of Theorem \ref{thm:BP2}, fix $M \geq (N-1)/(n - 1)$, and let $\cT_k \in \Tree(Mn^k)$ be isomorphic to $\mathcal{T}^{\circ k}$.  Recall that
		\[
		\mathbb{BP}(\cT,\mu) = \lim_{k \to \infty} \boxplus_{\cT^{\circ k}}(\mu^{\uplus 1/n^k}) = \lim_{k \to \infty} \boxplus_{\cT_k}(\mu^{\uplus 1/n^k})
		\]
		Then observe that
		\begin{align*}
			c^{-1/\alpha} \cdot \left(\boxplus_{\cT_k}\left(\mu^{\uplus \frac{1}{n^k}}\right) \right)^{\uplus f(c)} &= \boxplus_{\cT_k}\left((c^{-1/\alpha} \cdot \mu^{\uplus f(c)})^{\frac{1}{f(c)n^k}} \right)^{\uplus f(c)} \\
			&= \Phi_{Mn^k}\left(\cT_k, M / f(c), (c^{-1/\alpha} \cdot \mu^{\uplus f(c)}) \right)^{\uplus M}.
		\end{align*}
		By Theorem \ref{thm:equicontinuity}, we have
		\[
		\lim_{c \to \infty} \Phi_{Mn^k}\left(\cT_k, M / f(c), (c^{-1/\alpha} \cdot \mu^{\uplus f(c)}) \right)^{\uplus M} = \Phi_{Mn^k}(\cT_k,0,\nu_{\alpha,\theta})^{\uplus M} = \left(\nu_{\alpha,\theta}^{\uplus \frac{n^k}{Mn^k}} \right)^{\uplus M} = \nu_{\alpha,\theta},
		\]
		and the rate of convergence is uniform for all $k$.  Uniform convergence implies that
		\[
		\lim_{c \to \infty} \lim_{k \to \infty} c^{-1/\alpha} \cdot \left(\boxplus_{\cT_k}\left(\mu^{\uplus \frac{1}{n^k}}\right) \right)^{\uplus f(c)} = \lim_{k \to \infty} \lim_{c \to \infty} c^{-1/\alpha} \cdot \left(\boxplus_{\cT_k}\left(\mu^{\uplus \frac{1}{n^k}}\right) \right)^{\uplus f(c)},
		\]
		and hence
		\[
		\lim_{c \to \infty} c^{-1/\alpha} \mathbb{BP}(\cT,\mu)^{\uplus f(c)} = \nu_{\alpha,\theta},
		\]
		so $\mathbb{BP}(\cT,\mu) \in \mathcal{C}_{\alpha,\theta}$.
		
		This concludes the proof for $\alpha \in (0,1)$.  The same proof works for $\alpha \in (1,2)$ with $\mathcal{C}_{\alpha,\theta}$ replaced by $\mathcal{C}_{\alpha,\theta}^0$. 
	\end{proof}
	
	In the cases of $\alpha = 1$ and $\alpha = 2$, the tools which Bercovici and Pata used to prove the characterization of $\mathcal{C}_{\alpha,\theta}$ in terms of Cauchy transforms \cite[Propositions 5.10 and 5.11]{BP1999} are not available in the same form; specifically, \cite[Proposition 5.8]{BP1999} does not handle the case $\alpha = 2$ and the later parts of that proposition do not handle the case $\alpha = 1$.  To study the $\cT$-free convolution for the regions $\mathcal{C}_{1,\theta}$ and $\mathcal{C}_{2}$ requires either a much more delicate analysis or a different approach.  We will be content here to deduce limit theorems from the classical theory and Bercovici and Pata's results.
	
	\begin{proposition} \label{prop:limit1}
		Let $\theta \in [-1,1]$ and let $\mu \in \mathcal{C}_{1,\theta}$.  Then there exists a sequence of measures $(\mu_j)_{j \in \N}$ equivalent to $\mu$ such that, for all $N$, for all $\cT \in \Tree(N)$ with $n(\cT)$, we have
		\[
		\boxplus_{\cT^{\circ k}}(\mu_{n(\cT)^k}) \to \mathbb{BP}(\cT,\nu_{1,\theta}),
		\]
		where for each $N$, the convergence is uniform over $\cT$.
	\end{proposition}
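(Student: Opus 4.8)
The plan is to reduce the statement to a purely boolean limit theorem and then invoke Theorem~\ref{thm:BP1}. Concretely, I would first produce a single sequence $(\mu_j)_{j \in \N}$ with $\mu_j \sim \mu$ and
\[
\mu_j^{\uplus j} \to \nu_{1,\theta} \quad \text{as } j \to \infty,
\]
where $\nu_{1,\theta}$ is the measure of Proposition~\ref{prop:limitmeasures}; note that we need the boolean limit to be \emph{exactly} $\nu_{1,\theta}$ in order to feed $\nu = \nu_{1,\theta}$ into Theorem~\ref{thm:BP1}. Granting such a sequence, the proposition follows quickly. Fix $N$ and decompose $\{\cT \in \Tree(N) : n(\cT) > 1\} = \bigcup_{n=2}^N \Tree(N,n)$ into the finitely many clopen pieces $\Tree(N,n) = \{\cT : n(\cT) = n\}$ used in the proof of Theorem~\ref{thm:BP2}. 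For each fixed $n \in \{2,\dots,N\}$ the subsequence $\tilde\mu_k := \mu_{n^k}$ satisfies $\tilde\mu_k^{\uplus n^k} = \mu_{n^k}^{\uplus n^k} \to \nu_{1,\theta}$ since $n^k \to \infty$, so Theorem~\ref{thm:BP1}, applied with this sequence, with $k_\ell = \ell$, and with $\nu = \nu_{1,\theta}$, gives $\boxplus_{\cT^{\circ k}}(\mu_{n(\cT)^k}) \to \mathbb{BP}(\cT,\nu_{1,\theta})$ uniformly over $\cT \in \Tree(N,n)$ (this is the subset of trees for which the hypothesis holds with the fixed limit). Taking the maximum of the finitely many convergence rates over $n = 2, \dots, N$ yields uniformity over all of $\{\cT \in \Tree(N) : n(\cT) > 1\}$.

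So the real content is constructing $(\mu_j)$, and here I would use the classical theory together with Bercovici and Pata's bijection, as flagged in the discussion preceding Proposition~\ref{prop:limit1}. The classical domain-of-attraction theorem provides a sequence $\hat\mu_n \sim \mu$ with $\hat\mu_n^{*n} \to \nu_{1,\theta}^*$, a $*$-stable representative in $\mathcal{C}_{1,\theta}$. By \cite[Theorem~6.3]{BP1999}, convergence of $\hat\mu_n^{*n}$ forces convergence of $\hat\mu_n^{\uplus n}$ to the boolean infinitely divisible law $\rho$ corresponding to $\nu_{1,\theta}^*$ under the Bercovici--Pata bijection; since that bijection preserves the stability index and the asymmetry parameter, $\rho$ is a boolean $1$-stable law in the same equivalence class as $\nu_{1,\theta}$, that is, $\rho = a_0 + b_0 \cdot \nu_{1,\theta}$ for some $a_0 \in \R$ and $b_0 > 0$.

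The last step is to absorb the normalizing constants into the sequence so that the boolean limit becomes exactly $\nu_{1,\theta}$. The dilation is harmless: by Lemma~\ref{lem:dilation}(1) dilation commutes with boolean convolution powers, so replacing $\hat\mu_n$ by $b_0^{-1} \cdot \hat\mu_n$ (still equivalent to $\mu$) replaces the limit by $a_0/b_0 + \nu_{1,\theta}$. I expect the remaining additive drift $a_0/b_0$ to be the main obstacle, and it is exactly the feature that makes $\alpha = 1$ delicate: a classical translation does not translate a boolean $n$-th convolution power in a simple way, since $K_{c_n + \lambda}(z) = c_n + K_\lambda(z - c_n)$ rather than being a pure additive shift of $K_\lambda$. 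I would handle this by choosing a translation sequence with $c_n \to 0$ of order $c_n \asymp 1/n$, so that $n c_n$ stays bounded while $z - c_n \to z$ locally uniformly, and track the effect on the limiting $K$-transform $2\theta \log(-iz) - i\pi$ using that $K$-transforms form the normal family $\Hol(\h,-\overline{\h})$; the cancellation of the drift then reduces to the slow (logarithmic) growth of $K_{\nu_{1,\theta}}$ at infinity. A cleaner alternative is to cite the boolean domain-of-attraction statement of \cite{BP1999} directly, noting that $\nu_{1,\theta}$ (with the sign correction recorded after Proposition~\ref{prop:CandK}) is precisely their boolean $1$-stable representative, so that a suitably normalized $\hat\mu_n$ already satisfies $\hat\mu_n^{\uplus n} \to \nu_{1,\theta}$, bypassing the explicit drift computation. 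Either way, once $\mu_j^{\uplus j} \to \nu_{1,\theta}$ is in hand, the uniform tree statement follows from the reduction in the first paragraph.
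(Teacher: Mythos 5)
Your overall architecture is the same as the paper's: split $\{\cT \in \Tree(N): n(\cT) > 1\}$ into the clopen pieces $\Tree(N,n)$, feed the subsequence $\mu_{n^\ell}$ with $k_\ell = \ell$ into Theorem \ref{thm:BP1}, and take the maximum of the finitely many rates. That reduction is fine. The genuine gap is in your identification of the boolean limit law $\rho$. By \eqref{eq:freebooleanBP}, the Bercovici--Pata bijection converts classical translation into the \emph{boolean} shift $\mu \mapsto \delta_a \uplus \mu$ (adding the constant $a$ to the $K$-transform), so the boolean image of the classical $1$-stable equivalence class consists of laws of the form $\delta_{a_0} \uplus (b_0 \cdot \nu_{1,\theta})$, not ordinary affine transforms $a_0 + b_0 \cdot \nu_{1,\theta}$ --- this is exactly the caveat the paper records just before the proposition (``stability in the boolean setting should be understood with respect to $\mu \mapsto \delta_a \uplus \mu$''). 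Your drift-removal sketch inherits this confusion: translating $\hat\mu_n$ by $c_n \asymp 1/n$ turns $K_{\hat\mu_n^{\uplus n}}(z) = n K_{\hat\mu_n}(z)$ into $nc_n + nK_{\hat\mu_n}(z - c_n)$, whose limit (by normality, with $nc_n \to a$) is $K_\rho + a$, i.e.\ again a boolean shift of the limit; no boolean shift can cancel a genuine translation, since $K_{a + \nu}(z) = a + K_\nu(z - a)$ is not of the form $K_\nu + \mathrm{const}$. So the main line of your construction, as written, does not cohere: with your (incorrect) form of $\rho$ the $1/n$-translation trick cannot reach $\nu_{1,\theta}$, while with the corrected form $\rho = \delta_{a_0} \uplus (b_0 \cdot \nu_{1,\theta})$ it would work but requires the argument you only gesture at.

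The paper avoids the drift analysis entirely by normalizing on the classical side, \emph{before} applying the bijection: let $\rho_{1,\theta}$ be the classical stable law corresponding under the bijection to the boolean law $\nu_{1,\theta}$; since $(c + \lambda)^{*n} = nc + \lambda^{*n}$, ordinary affine renormalization is harmless for classical convolution powers, so the classical domain-of-attraction theorem gives $\mu_j \sim \mu$ with $\mu_j^{*j} \to \rho_{1,\theta}$ \emph{exactly}, and then \cite[Theorem 6.3]{BP1999} transfers this verbatim to $\mu_j^{\uplus j} \to \nu_{1,\theta}$, after which Theorem \ref{thm:BP1} finishes the proof. Your closing ``cleaner alternative'' --- citing a boolean domain-of-attraction statement with limit exactly $\nu_{1,\theta}$ --- is in substance this same argument, since the boolean limit theorems at $\alpha = 1$ are themselves deduced from the classical ones via the bijection (the direct Cauchy-transform analysis of \cite[Propositions 5.10--5.11]{BP1999} does not cover $\alpha = 1$, which is why the paper takes this detour). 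Had you led with that alternative and made the normalization on the classical side explicit, your proof would match the paper's.
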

	
	\begin{proposition} \label{prop:limit2}
		Let $\mu \in \mathcal{C}_2$ with mean zero.  Then there exists a sequence $R_j$ tending to infinity such that for all $N$, for all $\cT \in \Tree(N)$ with $n(\cT)$, we have
		\[
		R_{n(\cT)^k}^{-1} \cdot \boxplus_{\cT^{\circ k}}(\mu) \to \mathbb{BP}(\cT,\nu_2),
		\]
		where for each $N$, the convergence is uniform over $\cT$.
	\end{proposition}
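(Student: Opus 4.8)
The plan is to reduce the statement to a limit theorem for boolean convolution powers of dilates of $\mu$, and then feed it into Theorem \ref{thm:BP1}. Concretely, I will first produce a single sequence $R_j\to\infty$ with
\[
R_j^{-1}\cdot\mu^{\uplus j}\to\nu_2\qquad(j\to\infty),
\]
where $\nu_2=\tfrac12(\delta_{-1}+\delta_1)$ is the boolean analogue of the Gaussian appearing in Proposition \ref{prop:CLT}. Since $\mu\in\mathcal{C}_2$ has mean zero, classical domain-of-attraction theory (\cite{GK1954}, see also \cite{BGT1987}) provides norming constants $R_j\to\infty$ with $(R_j^{-1}\cdot\mu)^{*j}\to\nu_2^*$, the standard Gaussian, \emph{without any centering}: this is exactly where the mean-zero hypothesis enters, since it forces the usual centering constants $A_j=(j/R_j)\int_{|t|<R_j}t\,d\mu(t)$ to vanish in the limit (a Karamata estimate, using that $y\mapsto\int_{-y}^y t^2\,d\mu$ varies slowly, gives $A_j\to0$). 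Applying the Bercovici--Pata bijection \cite[Theorem 6.3]{BP1999} to the sequence $(R_j^{-1}\cdot\mu)_{j}$ with exponents $k_j=j$, the convergence $(R_j^{-1}\cdot\mu)^{*j}\to\nu_2^*$ is equivalent to $(R_j^{-1}\cdot\mu)^{\uplus j}\to\nu_2$, because the bijection matches the classical Gaussian with the boolean Bernoulli law (the boolean central limit distribution of \cite{SW1997}). Finally, by Lemma \ref{lem:dilation}(1) one has $(R_j^{-1}\cdot\mu)^{\uplus j}=R_j^{-1}\cdot\mu^{\uplus j}$, which yields the displayed boolean limit.

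With the boolean limit in hand, fix $N$ and decompose $\{\cT\in\Tree(N):n(\cT)>1\}$ into the finitely many clopen pieces $\Tree(N,n)=\{\cT:n(\cT)=n\}$ for $n\in\{2,\dots,N\}$. For a fixed $n$, I will apply Theorem \ref{thm:BP1} to the sequence $\mu_\ell:=R_{n^\ell}^{-1}\cdot\mu$ with $k_\ell:=\ell$ and $\nu:=\nu_2$: for every $\cT\in\Tree(N,n)$ the hypothesis holds, since
\[
\mu_\ell^{\uplus n(\cT)^{k_\ell}}=(R_{n^\ell}^{-1}\cdot\mu)^{\uplus n^\ell}=R_{n^\ell}^{-1}\cdot\mu^{\uplus n^\ell}\to\nu_2.
\]
Theorem \ref{thm:BP1}, whose uniformity in $\cT$ is inherited from the uniform convergence in Theorem \ref{thm:BP2}, then yields $\boxplus_{\cT^{\circ\ell}}(\mu_\ell)\to\mathbb{BP}(\cT,\nu_2)$ uniformly over $\cT\in\Tree(N,n)$. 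Using Lemma \ref{lem:dilation}(2) to pull the dilation outside the convolution,
\[
\boxplus_{\cT^{\circ\ell}}(\mu_\ell)=\boxplus_{\cT^{\circ\ell}}(R_{n^\ell}^{-1}\cdot\mu)=R_{n^\ell}^{-1}\cdot\boxplus_{\cT^{\circ\ell}}(\mu)=R_{n(\cT)^\ell}^{-1}\cdot\boxplus_{\cT^{\circ\ell}}(\mu),
\]
so that $R_{n(\cT)^k}^{-1}\cdot\boxplus_{\cT^{\circ k}}(\mu)\to\mathbb{BP}(\cT,\nu_2)$ uniformly over $\Tree(N,n)$. Taking the maximum over the finitely many values $n\in\{2,\dots,N\}$ upgrades this to uniform convergence over all $\cT\in\Tree(N)$ with $n(\cT)>1$, which is the assertion; note that $R_j$ is a single universal sequence, evaluated at the index $n(\cT)^k$.

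I expect the main obstacle to be the first step: verifying that a mean-zero measure in $\mathcal{C}_2$ is attracted to the Gaussian \emph{with zero centering}, so that the normalization is a pure dilation $R_j^{-1}$ and survives both the Bercovici--Pata transfer and Lemma \ref{lem:dilation}. This is precisely the point at which translations would otherwise intrude: they are incompatible with $\cT$-free convolution (as noted before Proposition \ref{prop:limitmeasures}), and in fact boolean convolution does not commute with translation, so the centering \emph{cannot} simply be pulled out of $\mu^{\uplus j}$ after the fact. This is the structural reason the statement requires $\mu$ to have mean zero. A secondary, purely bookkeeping point is that the auxiliary sequence $\mu_\ell=R_{n^\ell}^{-1}\cdot\mu$ depends on $n=n(\cT)$; this is dealt with by the finite clopen partition above, since for each fixed $n$ Theorem \ref{thm:BP1} already supplies uniformity over $\Tree(N,n)$ and there are only finitely many such $n$ for each $N$.
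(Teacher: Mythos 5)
Your proposal is correct and follows essentially the same route as the paper's (very terse) proof: classical domain-of-attraction convergence $(R_j^{-1}\cdot\mu)^{*j}\to$ Gaussian, transfer to $(R_j^{-1}\cdot\mu)^{\uplus j}\to\nu_2$ via \cite[Theorem 6.3]{BP1999}, then Theorem \ref{thm:BP1} together with the dilation identities of Lemma \ref{lem:dilation}. The only difference is that you spell out two points the paper leaves implicit --- the Karamata argument showing the mean-zero hypothesis kills the classical centering constants, and the clopen partition into $\Tree(N,n)$ to handle the $n(\cT)$-dependence of the auxiliary sequence --- both of which are accurate and welcome detail.
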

	
	To set the stage for the proof, we recall the results from \cite{BP1999} in more detail.  The infinitely divisible distributions for $*$, $\boxplus$, and $\uplus$ are parametrized by a $\gamma \in \R$ and finite measure $\sigma$ on $\R$, and the infinitely divisible distributions corresponding to $(\gamma,\sigma)$ for the three convolutions are denoted respectively by $\nu_*^{\gamma,\sigma}$, $\nu_{\boxplus}^{\gamma,\sigma}$, and $\nu_{\uplus}^{\gamma,\sigma}$.  For a sequence of probability measures $\mu_j$ and $k_j \to \infty$, we have $\mu_j^{*k_j} \to \nu_*^{\gamma,\sigma}$ if and only if $\mu_j^{\boxplus k_j} \to \nu_{\boxplus}^{\gamma,\sigma}$ if and only if $\mu_j^{\uplus k_j} \to \nu_{\uplus}^{\gamma,\sigma}$.
	
	It follows that for every $N$,
	\[
	\mathbb{BP}(\cT_{N,\free}, \nu_{\uplus}^{\gamma,\sigma}) = \lim_{k \to \infty} ((\nu_{\uplus}^{\gamma,\sigma})^{\uplus 1/N^k})^{\boxplus N^k} = \nu_{\boxplus}^{\gamma,\sigma}.
	\]
	Moreover, let $\Phi_\mu$ denote the Voiculescu transform $\Phi_\mu(z) = F_\mu^{-1}(z) - z$, defined in a non-tangential neighborhood of $\infty$.  The correspondence between the free and boolean cases is such that
	\begin{equation} \label{eq:freebooleanBP}
		\Phi_{\nu_{\boxplus}^{\gamma,\sigma}}(z) = \gamma + \int_{\R} \frac{1 + tz}{z - t}\,d\mu(t) = K_{\nu_{\uplus}^{\gamma,\sigma}}(z).
	\end{equation}
	
	It follows from \cite[\S 5]{BP1999} that the freely stable laws correspond precisely to the classically stable laws. However, these do \emph{not} correspond to boolean stable laws in the na{\"i}ve sense.  Rather, for $a \in \R$, it follows from \eqref{eq:freebooleanBP} that
	\[
	\mathbb{BP}(\cT_{2,\free},\delta_a \uplus \mu) = \delta_a \boxplus \mathbb{BP}(\cT_{2,\free},\mu) = a + \mathbb{BP}(\cT_{2,\free},\mu),
	\]
	and thus stability in the boolean setting should be understood with respect to the shift operations $\mu \mapsto \delta_a \uplus \mu$ for $a \in \R$ rather than $\mu \mapsto a + \mu$.  The laws $\nu_{\alpha,\theta}$ in Proposition \ref{prop:limitmeasures} above are the boolean stable laws with this modified notion of stability, and the freely stable distributions in \cite[Proposition 5.12]{BP1999} are exactly the distributions $\mathbb{BP}(\cT_{N,\free},\nu_{\alpha,\theta})$, where $\nu_{\alpha,\theta}$.
	
	Proposition \ref{prop:limit1} is now proved as follows:  Let $\rho_{\alpha,\theta}$ be the classical stable distribution corresponding to the boolean infinitely divisible distribution $\nu_{\alpha,\theta}$. From classical results, if $\mu \in \mathcal{C}_{1,\theta}$, there are measures $\mu_j$ equivalent to $\mu$ such that $\mu_j^{*j} \to \rho_{1,\theta}$.  Hence by \cite[Theorem 6.3]{BP1999}, we have $\mu_j^{\uplus j} \to \nu_{\alpha,\theta}$.  Then by Theorem \ref{thm:BP1}, we have $\boxplus_{\cT^{\circ k}}(\mu_{n(\cT)^k}) \to \mathbb{BP}(\cT,\nu_{1,\theta})$.  The proof of Proposition \ref{prop:limit2} is the same.
	
	\section{Open questions} \label{sec:questions}
	
	The following questions around Theorems \ref{thm:BP1} and \ref{thm:limittheorem} remain unanswered.
	
	\begin{question}
		Does the converse implication hold in Theorem \ref{thm:BP1}?  More precisely, let $\cT \in \Tree(N)$ with $n(\cT) > 1$.  If $\boxplus_{\cT^{\circ k_\ell}}(\mu_\ell)$ converges as $\ell \to \infty$, then does $\mu_\ell^{\uplus n(\cT)^{k_\ell}}$ converge?  A positive answer is known for free independence by \cite{BP1999} and in the monotone case, provided that the limit measure is monotonically infinitely divisible, by \cite{AW2014}.
	\end{question}
	
	\begin{question}
		Is the map $\mathbb{BP}(\cT,\cdot)$ injective and is the inverse continuous?  For compactly supported measures, the inverse map was studied using combinatorial methods in \cite[\S 9]{JekelLiu2020}.  We anticipate that the answer to this question and the previous one will be easier in the case of finite variance than in the general case.
	\end{question}
	
	\begin{question}
		What is the correct notion of stable law for $\cT$-free convolution?  Do we get a classification of such laws that is parallel to the classical case?  Of course, this question is one of the main motivations for the previous two questions.
	\end{question}
	
	\begin{question}
		Is there a limit theorem which allows us to bring the translation operation outside the convolution operations?  That is, if $\mu \in \mathcal{C}_{\alpha,\theta}$ or $\mathcal{C}_2$, then can we describe the asymptotic behavior of some sequence $\mu_k \sim \boxplus_{\cT^{\circ k}}(\mu)$?
	\end{question}
	
	\begin{question}
		Does Proposition \ref{prop:Cclosure} generalize to the $\alpha = 1$ and $\alpha = 2$ cases?  What is the correct substitute for Proposition \ref{prop:CandK} in these cases?
	\end{question}
	
	There are many interesting questions about the limiting distributions $\mathbb{BP}(\cT,\nu_{\alpha,\theta})$ themselves.  We know that $\boxplus_{\cT^{\circ k}}(\nu_{\alpha,\theta}^{\uplus n(\cT)^{-k}}) \to \mathbb{BP}(\cT,\nu_{\alpha,\theta})$, and in the case $\alpha \in (0,1) \cup (1,2)$, we also have $\boxplus_{\cT^{\circ k}}(\nu_{\alpha,\theta}^{\uplus n(\cT)^{-k}}) = n(\cT)^{-1/\alpha} \cdot \boxplus_{\cT^{\circ k}}(\nu_{\alpha,\theta})$.  Furthermore, the Stieltjes inversion formula says that under sufficient regularity conditions, the probability density of a measure $\mu$ can be recovered from the Cauchy transform by $\rho(x) = \lim_{\epsilon \to 0^+} -\frac{1}{\pi} G_\mu(x + iy)$.  As an example, we considered the tree $\cT = \{\emptyset,1,2,3,21,31,12,13\}$.  To approximate the density for $\mathbb{BP}(\cT,\nu_{\alpha,\theta})$, we computed
	\[
	-\frac{1}{\pi} \im G_{\boxplus_{\cT^{\circ 6}}(\nu_{\alpha,\theta}^{\uplus 3^{-6}})}(x + i \epsilon)
	\]
	for $\epsilon = 10^{-5}$ and for values of $x$ spaced at intervals of $0.1$, and the results are shown in Figures \ref{fig:pictures1} and \ref{fig:pictures2}.  Experimentally, replacing $6$ by $7$ or shrinking $\epsilon$ did not change the values much.  However, because the size of the tree $\cT^{\circ k}$ increases very quickly with $k$, this approximation scheme has high computational complexity and is thus impractical to evaluate for large $k$.
	
	\begin{question}
		Are there practical numerical error bounds for the convergence of $\boxplus_{\cT^{\circ k}}(\nu_{\alpha,\theta}^{\uplus n(\cT)^{-k}}) \to \mathbb{BP}(\cT,\nu_{\alpha,\theta})$?  Similarly, what is the rate of convergence to $\boxplus_{\cT}(\mu_1,\dots,\mu_N)$ of the approximations given by truncation of $\cT$ to finite trees?  Are there better estimates for special classes of trees?
	\end{question}
	
	Already for a single tree $\cT = \{\emptyset,1,2,3,21,31,12,13\}$, we saw a variety of phenomena occur.  For $\alpha \in (0,1)$ there is a singularity at $0$ in the boolean case, but in the free case the stable laws have analytic densities on their supports \cite[Propositions A.1.2-A.1.4]{BP1999}.  For this $\cT$, the presence or absence of a singularity appears to depend on the value of $\alpha \in (0,1)$.  For $\alpha \in (1,2)$, the distribution can have several local extrema and inflection points.  By contrast, the free case, the stable distributions are unimodal \cite[Proposition A.2.2]{BP1999}; in the monotone and boolean cases, they are either unimodal or bimodal \cite{HS2015}.
	
	\begin{question}
		What can we say about the regularity of the limit distributions $\mathbb{BP}(\cT,\nu_{\alpha,\theta})$?  Do they have analytic densities?  How does this vary with $\cT$, $\alpha$, and $\theta$?  In general, what can we say about the regularity of $\cT$-free convolutions of several measures?  Under what conditions on $\cT$ do the regularity results from the free case \cite{Biane1997b,BV1998,Belinschi2003,BB2004,Belinschi2006b} generalize?
	\end{question}

	\begin{figure}
		
		\begin{tikzpicture}[yscale=5]
			
			\begin{scope}
				
				\node at (-2,0.5) {$\alpha = 1.7$};
				\node at (2,0.5) {$\theta = 0.0$};
				
				\draw[->] (0,-0.05) -- (0,0.5); \node[text=gray] at (0,-0.1) {$0$};
				\draw[<->] (-2.8,0) -- (2.8,0);
				\draw (-0.1,0.2) -- (0.1,0.2); \node[text=gray] at (0.4,0.2) {$0.2$};
				\draw (-0.1,0.4) -- (0.1,0.4); \node[text=gray] at (0.4,0.4) {$0.4$};
				\draw (-1,-0.05) -- (-1,0); \node[text=gray] at (-1,-0.1) {$-1$};
				\draw (-2,-0.05) -- (-2,0); \node[text=gray] at (-2,-0.1) {$-2$};
				\draw (1,-0.05) -- (1,0); \node[text=gray] at (1,-0.1) {$1$};
				\draw (2,-0.05) -- (2,0); \node[text=gray] at (2,-0.1) {$2$};
				
				\draw[blue,smooth] plot coordinates {
					(-2.5000, 0.022745)
					(-2.4000, 0.026702)
					(-2.3000, 0.031700)
					(-2.2000, 0.038113)
					(-2.1000, 0.046481)
					(-2.0000, 0.057592)
					(-1.9000, 0.072593)
					(-1.8000, 0.093097)
					(-1.7000, 0.12115)
					(-1.6000, 0.15851)
					(-1.5000, 0.20423)
					(-1.4000, 0.25003)
					(-1.3000, 0.27997)
					(-1.2000, 0.28397)
					(-1.1000, 0.26847)
					(-1.0000, 0.24628)
					(-0.90000, 0.22799)
					(-0.80000, 0.22027)
					(-0.70000, 0.22483)
					(-0.60000, 0.24357)
					(-0.50000, 0.27865)
					(-0.40000, 0.31016)
					(-0.30000, 0.30499)
					(-0.20000, 0.29316)
					(-0.10000, 0.30895)
					(0.00000, 0.30970)
					(0.10000, 0.30895)
					(0.20000, 0.29316)
					(0.30000, 0.30499)
					(0.40000, 0.31016)
					(0.50000, 0.27865)
					(0.60000, 0.24357)
					(0.70000, 0.22483)
					(0.80000, 0.22027)
					(0.90000, 0.22799)
					(1.0000, 0.24628)
					(1.1000, 0.26847)
					(1.2000, 0.28397)
					(1.3000, 0.27997)
					(1.4000, 0.25003)
					(1.5000, 0.20423)
					(1.6000, 0.15851)
					(1.7000, 0.12115)
					(1.8000, 0.093097)
					(1.9000, 0.072593)
					(2.0000, 0.057592)
					(2.1000, 0.046481)
					(2.2000, 0.038113)
					(2.3000, 0.031700)
					(2.4000, 0.026702)
					(2.5000, 0.022745)
				};
			\end{scope}
			
			\begin{scope}[shift = {(0,-0.9)}]
				
				\node at (-2,0.5) {$\alpha = 1.7$};
				\node at (2,0.5) {$\theta = 0.4$};
				
				\draw[->] (0,-0.05) -- (0,0.5); \node[text=gray] at (0,-0.1) {$0$};
				\draw[<->] (-2.8,0) -- (2.8,0);
				\draw (-0.1,0.2) -- (0.1,0.2); \node[text=gray] at (0.4,0.2) {$0.2$};
				\draw (-0.1,0.4) -- (0.1,0.4); \node[text=gray] at (0.4,0.4) {$0.4$};
				\draw (-1,-0.05) -- (-1,0); \node[text=gray] at (-1,-0.1) {$-1$};
				\draw (-2,-0.05) -- (-2,0); \node[text=gray] at (-2,-0.1) {$-2$};
				\draw (1,-0.05) -- (1,0); \node[text=gray] at (1,-0.1) {$1$};
				\draw (2,-0.05) -- (2,0); \node[text=gray] at (2,-0.1) {$2$};
				
				\draw[blue,smooth] plot coordinates {
					(-2.5000, 0.015319)
					(-2.4000, 0.018272)
					(-2.3000, 0.022127)
					(-2.2000, 0.027281)
					(-2.1000, 0.034375)
					(-2.0000, 0.044479)
					(-1.9000, 0.059467)
					(-1.8000, 0.082779)
					(-1.7000, 0.12090)
					(-1.6000, 0.18543)
					(-1.5000, 0.28898)
					(-1.4000, 0.40342)
					(-1.3000, 0.42439)
					(-1.2000, 0.36062)
					(-1.1000, 0.29578)
					(-1.0000, 0.24349)
					(-0.90000, 0.21397)
					(-0.80000, 0.21059)
					(-0.70000, 0.22043)
					(-0.60000, 0.24928)
					(-0.50000, 0.30611)
					(-0.40000, 0.34203)
					(-0.30000, 0.31057)
					(-0.20000, 0.29065)
					(-0.10000, 0.30978)
					(0.00000, 0.30391)
					(0.10000, 0.29720)
					(0.20000, 0.28340)
					(0.30000, 0.28526)
					(0.40000, 0.27909)
					(0.50000, 0.25514)
					(0.60000, 0.23106)
					(0.70000, 0.21667)
					(0.80000, 0.21144)
					(0.90000, 0.21281)
					(1.0000, 0.21740)
					(1.1000, 0.22071)
					(1.2000, 0.21813)
					(1.3000, 0.20672)
					(1.4000, 0.18680)
					(1.5000, 0.16182)
					(1.6000, 0.13610)
					(1.7000, 0.11265)
					(1.8000, 0.092756)
					(1.9000, 0.076520)
					(2.0000, 0.063500)
					(2.1000, 0.053113)
					(2.2000, 0.044810)
					(2.3000, 0.038134)
					(2.4000, 0.032726)
					(2.5000, 0.028307)
				};
				
			\end{scope}
			
			\begin{scope}[shift = {(0,-2.5)}]
				
				\node at (-2,1.2) {$\alpha = 1.7$};
				\node at (2,1.2) {$\theta = 0.8$};
				
				\draw[->] (0,-0.05) -- (0,1.2); \node[text=gray] at (0,-0.1) {$0$};
				\draw[<->] (-2.8,0) -- (2.8,0);
				\draw (-0.1,0.2) -- (0.1,0.2); \node[text=gray] at (0.4,0.2) {$0.2$};
				\draw (-0.1,0.4) -- (0.1,0.4); \node[text=gray] at (0.4,0.4) {$0.4$};
				\draw (-0.1,0.6) -- (0.1,0.6); \node[text=gray] at (0.4,0.6) {$0.6$};
				\draw (-0.1,0.8) -- (0.1,0.8); \node[text=gray] at (0.4,0.8) {$0.8$};
				\draw (-0.1,1.0) -- (0.1,1.0); \node[text=gray] at (0.4,1.0) {$1.0$};
				\draw (-1,-0.05) -- (-1,0); \node[text=gray] at (-1,-0.1) {$-1$};
				\draw (-2,-0.05) -- (-2,0); \node[text=gray] at (-2,-0.1) {$-2$};
				\draw (1,-0.05) -- (1,0); \node[text=gray] at (1,-0.1) {$1$};
				\draw (2,-0.05) -- (2,0); \node[text=gray] at (2,-0.1) {$2$};
				
				\draw[blue,smooth] plot coordinates {
					(-2.5000, 0.0056975)
					(-2.4000, 0.0069003)
					(-2.3000, 0.0085187)
					(-2.2000, 0.010769)
					(-2.1000, 0.014033)
					(-2.0000, 0.019025)
					(-1.9000, 0.027240)
					(-1.8000, 0.042211)
					(-1.7000, 0.074044)
					(-1.6000, 0.16108)
					(-1.5000, 0.51611)
					(-1.4000, 1.0650)
					(-1.3000, 0.44602)
					(-1.2000, 0.31846)
					(-1.1000, 0.24532)
					(-1.0000, 0.17054)
					(-0.90000, 0.16909)
					(-0.80000, 0.19584)
					(-0.70000, 0.20568)
					(-0.60000, 0.25035)
					(-0.50000, 0.37021)
					(-0.40000, 0.36794)
					(-0.30000, 0.29771)
					(-0.20000, 0.27721)
					(-0.10000, 0.30039)
					(0.00000, 0.28850)
					(0.10000, 0.27783)
					(0.20000, 0.26470)
					(0.30000, 0.26004)
					(0.40000, 0.25025)
					(0.50000, 0.23159)
					(0.60000, 0.21313)
					(0.70000, 0.20026)
					(0.80000, 0.19289)
					(0.90000, 0.18912)
					(1.0000, 0.18660)
					(1.1000, 0.18298)
					(1.2000, 0.17643)
					(1.3000, 0.16609)
					(1.4000, 0.15228)
					(1.5000, 0.13626)
					(1.6000, 0.11960)
					(1.7000, 0.10360)
					(1.8000, 0.089085)
					(1.9000, 0.076389)
					(2.0000, 0.065539)
					(2.1000, 0.056386)
					(2.2000, 0.048712)
					(2.3000, 0.042287)
					(2.4000, 0.036900)
					(2.5000, 0.032371)
				};
				
			\end{scope}
			
			\begin{scope}[shift = {(8,0)}]
				
				\node at (-2,0.5) {$\alpha = 1.2$};
				\node at (2,0.5) {$\theta = 0.0$};
				
				\draw[->] (0,-0.05) -- (0,0.5); \node[text=gray] at (0,-0.1) {$0$};
				\draw[<->] (-2.8,0) -- (2.8,0);
				\draw (-0.1,0.2) -- (0.1,0.2); \node[text=gray] at (0.4,0.2) {$0.2$};
				\draw (-0.1,0.4) -- (0.1,0.4); \node[text=gray] at (0.4,0.4) {$0.4$};
				\draw (-1,-0.05) -- (-1,0); \node[text=gray] at (-1,-0.1) {$-1$};
				\draw (-2,-0.05) -- (-2,0); \node[text=gray] at (-2,-0.1) {$-2$};
				\draw (1,-0.05) -- (1,0); \node[text=gray] at (1,-0.1) {$1$};
				\draw (2,-0.05) -- (2,0); \node[text=gray] at (2,-0.1) {$2$};
				
				\draw[blue,smooth] plot coordinates {
					(-2.5000, 0.044102)
					(-2.4000, 0.048062)
					(-2.3000, 0.052512)
					(-2.2000, 0.057523)
					(-2.1000, 0.063175)
					(-2.0000, 0.069558)
					(-1.9000, 0.076773)
					(-1.8000, 0.084926)
					(-1.7000, 0.094125)
					(-1.6000, 0.10448)
					(-1.5000, 0.11607)
					(-1.4000, 0.12894)
					(-1.3000, 0.14310)
					(-1.2000, 0.15844)
					(-1.1000, 0.17473)
					(-1.0000, 0.19161)
					(-0.90000, 0.20859)
					(-0.80000, 0.22510)
					(-0.70000, 0.24061)
					(-0.60000, 0.25473)
					(-0.50000, 0.26736)
					(-0.40000, 0.27857)
					(-0.30000, 0.28838)
					(-0.20000, 0.29626)
					(-0.10000, 0.30104)
					(0.00000, 0.30260)
					(0.10000, 0.30104)
					(0.20000, 0.29626)
					(0.30000, 0.28838)
					(0.40000, 0.27857)
					(0.50000, 0.26736)
					(0.60000, 0.25473)
					(0.70000, 0.24061)
					(0.80000, 0.22510)
					(0.90000, 0.20859)
					(1.0000, 0.19161)
					(1.1000, 0.17473)
					(1.2000, 0.15844)
					(1.3000, 0.14310)
					(1.4000, 0.12894)
					(1.5000, 0.11607)
					(1.6000, 0.10448)
					(1.7000, 0.094125)
					(1.8000, 0.084926)
					(1.9000, 0.076773)
					(2.0000, 0.069558)
					(2.1000, 0.063175)
					(2.2000, 0.057523)
					(2.3000, 0.052512)
					(2.4000, 0.048062)
					(2.5000, 0.044102)
				};
			\end{scope}
			
			\begin{scope}[shift = {(8,-0.9)}]
				
				\node at (-2,0.5) {$\alpha = 1.2$};
				\node at (2,0.5) {$\theta = 0.4$};
				
				\draw[->] (0,-0.05) -- (0,0.5); \node[text=gray] at (0,-0.1) {$0$};
				\draw[<->] (-3.8,0) -- (2.8,0);
				\draw (-0.1,0.2) -- (0.1,0.2); \node[text=gray] at (0.4,0.2) {$0.2$};
				\draw (-0.1,0.4) -- (0.1,0.4); \node[text=gray] at (0.4,0.4) {$0.4$};
				\draw (-1,-0.05) -- (-1,0); \node[text=gray] at (-1,-0.1) {$-1$};
				\draw (-2,-0.05) -- (-2,0); \node[text=gray] at (-2,-0.1) {$-2$};
				\draw (-3,-0.05) -- (-3,0); \node[text=gray] at (-3,-0.1) {$-3$};
				\draw (1,-0.05) -- (1,0); \node[text=gray] at (1,-0.1) {$1$};
				\draw (2,-0.05) -- (2,0); \node[text=gray] at (2,-0.1) {$2$};
				
				\draw[blue,smooth] plot coordinates {
					(-3.5000, 0.028450)
					(-3.4000, 0.031503)
					(-3.3000, 0.035056)
					(-3.2000, 0.039217)
					(-3.1000, 0.044127)
					(-3.0000, 0.049968)
					(-2.9000, 0.056973)
					(-2.8000, 0.065448)
					(-2.7000, 0.075794)
					(-2.6000, 0.088535)
					(-2.5000, 0.10435)
					(-2.4000, 0.12409)
					(-2.3000, 0.14879)
					(-2.2000, 0.17952)
					(-2.1000, 0.21704)
					(-2.0000, 0.26099)
					(-1.9000, 0.30849)
					(-1.8000, 0.35286)
					(-1.7000, 0.38451)
					(-1.6000, 0.39574)
					(-1.5000, 0.38611)
					(-1.4000, 0.36210)
					(-1.3000, 0.33185)
					(-1.2000, 0.30152)
					(-1.1000, 0.27505)
					(-1.0000, 0.25477)
					(-0.90000, 0.24117)
					(-0.80000, 0.23253)
					(-0.70000, 0.22608)
					(-0.60000, 0.21976)
					(-0.50000, 0.21218)
					(-0.40000, 0.20200)
					(-0.30000, 0.18824)
					(-0.20000, 0.17447)
					(-0.10000, 0.16300)
					(0.00000, 0.15208)
					(0.10000, 0.14171)
					(0.20000, 0.13196)
					(0.30000, 0.12283)
					(0.40000, 0.11433)
					(0.50000, 0.10642)
					(0.60000, 0.099062)
					(0.70000, 0.092224)
					(0.80000, 0.085872)
					(0.90000, 0.079978)
					(1.0000, 0.074515)
					(1.1000, 0.069458)
					(1.2000, 0.064781)
					(1.3000, 0.060460)
					(1.4000, 0.056470)
					(1.5000, 0.052788)
					(1.6000, 0.049392)
					(1.7000, 0.046258)
					(1.8000, 0.043367)
					(1.9000, 0.040699)
					(2.0000, 0.038235)
					(2.1000, 0.035959)
					(2.2000, 0.033855)
					(2.3000, 0.031908)
					(2.4000, 0.030106)
					(2.5000, 0.028436)
					%(2.6000, 0.026886)
					%(2.7000, 0.025448)
					%(2.8000, 0.024111)
				};
				
			\end{scope}
			
			\begin{scope}[shift = {(8,-2.5)}]
				
				\node at (-2,1.2) {$\alpha = 1.2$};
				\node at (2,1.2) {$\theta = 0.8$};
				
				\draw[->] (0,-0.05) -- (0,1.2); \node[text=gray] at (0,-0.1) {$0$};
				\draw[<->] (-3.8,0) -- (2.8,0);
				\draw (-0.1,0.2) -- (0.1,0.2); \node[text=gray] at (0.4,0.2) {$0.2$};
				\draw (-0.1,0.4) -- (0.1,0.4); \node[text=gray] at (0.4,0.4) {$0.4$};
				\draw (-0.1,0.6) -- (0.1,0.6); \node[text=gray] at (0.4,0.6) {$0.6$};
				\draw (-0.1,0.8) -- (0.1,0.8); \node[text=gray] at (0.4,0.8) {$0.8$};
				\draw (-0.1,1.0) -- (0.1,1.0); \node[text=gray] at (0.4,1.0) {$1.0$};
				\draw (-1,-0.05) -- (-1,0); \node[text=gray] at (-1,-0.1) {$-1$};
				\draw (-2,-0.05) -- (-2,0); \node[text=gray] at (-2,-0.1) {$-2$};
				\draw (-3,-0.05) -- (-3,0); \node[text=gray] at (-3,-0.1) {$-3$};
				\draw (1,-0.05) -- (1,0); \node[text=gray] at (1,-0.1) {$1$};
				\draw (2,-0.05) -- (2,0); \node[text=gray] at (2,-0.1) {$2$};
				
				\draw[blue,smooth] plot coordinates {
					(-3.5000, 0.041591)
					(-3.4000, 0.052846)
					(-3.3000, 0.069329)
					(-3.2000, 0.094772)
					(-3.1000, 0.13674)
					(-3.0000, 0.21194)
					(-2.9000, 0.35969)
					(-2.8000, 0.65979)
					(-2.7000, 1.0608)
					(-2.6000, 0.94835)
					(-2.5000, 0.58630)
					(-2.4000, 0.38957)
					(-2.3000, 0.28982)
					(-2.2000, 0.24156)
					(-2.1000, 0.21518)
					(-2.0000, 0.18613)
					(-1.9000, 0.15490)
					(-1.8000, 0.13228)
					(-1.7000, 0.12210)
					(-1.6000, 0.11491)
					(-1.5000, 0.11662)
					(-1.4000, 0.12687)
					(-1.3000, 0.13337)
					(-1.2000, 0.13443)
					(-1.1000, 0.13220)
					(-1.0000, 0.13100)
					(-0.90000, 0.12814)
					(-0.80000, 0.12249)
					(-0.70000, 0.11854)
					(-0.60000, 0.11099)
					(-0.50000, 0.099684)
					(-0.40000, 0.092573)
					(-0.30000, 0.086824)
					(-0.20000, 0.082609)
					(-0.10000, 0.078232)
					(0.00000, 0.073812)
					(0.10000, 0.069774)
					(0.20000, 0.065991)
					(0.30000, 0.062446)
					(0.40000, 0.059126)
					(0.50000, 0.056015)
					(0.60000, 0.053099)
					(0.70000, 0.050362)
					(0.80000, 0.047792)
					(0.90000, 0.045378)
					(1.0000, 0.043108)
					(1.1000, 0.040972)
					(1.2000, 0.038963)
					(1.3000, 0.037073)
					(1.4000, 0.035293)
					(1.5000, 0.033616)
					(1.6000, 0.032037)
					(1.7000, 0.030549)
					(1.8000, 0.029146)
					(1.9000, 0.027824)
					(2.0000, 0.026577)
					(2.1000, 0.025400)
					(2.2000, 0.024289)
					(2.3000, 0.023240)
					(2.4000, 0.022249)
					(2.5000, 0.021312)
					(2.6000, 0.020426)
					(2.7000, 0.019588)
					(2.8000, 0.018795)
				};
			\end{scope}
			
		\end{tikzpicture}
		
		\caption{Approximations of $\mathbb{BP}(\cT,\nu_{\alpha,\theta})$ for $\cT = \{\emptyset, 1, 2, 3, 21, 31, 12, 13\}$ and for $(\alpha,\theta) \in \{1.7,1.2\} \times \{0.0,0.4,0.8\}$.} \label{fig:pictures1}
		
	\end{figure}
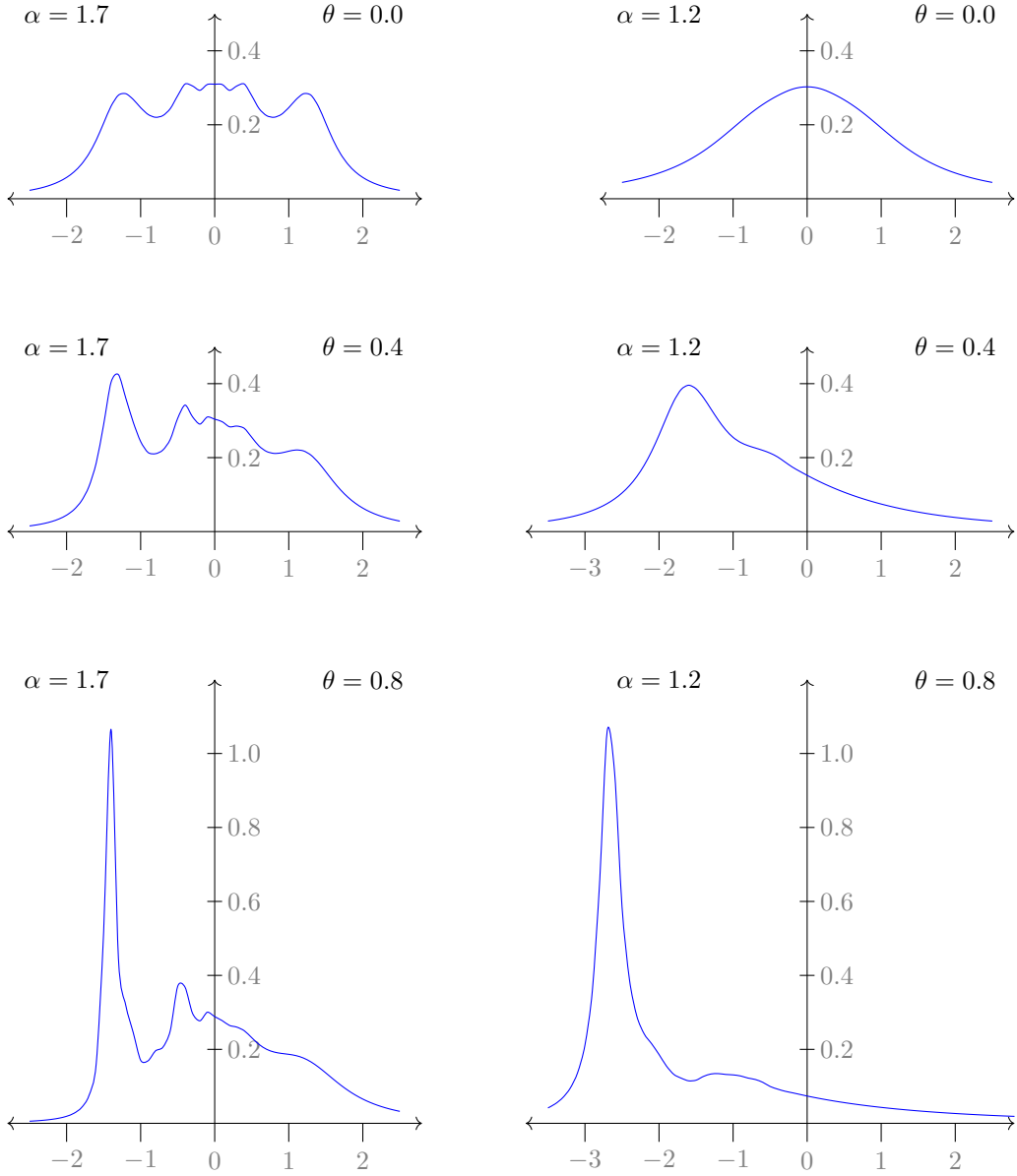
	
	\begin{figure}
		\begin{tikzpicture}[yscale=5]
			
			\begin{scope}
				
				\node at (-2,0.5) {$\alpha = 0.7$};
				\node at (2,0.5) {$\theta = 0.0$};
				
				\draw[->] (0,-0.05) -- (0,0.5); \node[text=gray] at (0,-0.1) {$0$};
				\draw[<->] (-2.8,0) -- (2.8,0);
				\draw (-0.1,0.2) -- (0.1,0.2); \node[text=gray] at (0.4,0.2) {$0.2$};
				\draw (-0.1,0.4) -- (0.1,0.4); \node[text=gray] at (0.4,0.4) {$0.4$};
				\draw (-1,-0.05) -- (-1,0); \node[text=gray] at (-1,-0.1) {$-1$};
				\draw (-2,-0.05) -- (-2,0); \node[text=gray] at (-2,-0.1) {$-2$};
				\draw (1,-0.05) -- (1,0); \node[text=gray] at (1,-0.1) {$1$};
				\draw (2,-0.05) -- (2,0); \node[text=gray] at (2,-0.1) {$2$};
				
				\draw[blue,smooth] plot coordinates {
					(-2.5000, 0.037846)
					(-2.4000, 0.039975)
					(-2.3000, 0.042301)
					(-2.2000, 0.044851)
					(-2.1000, 0.047654)
					(-2.0000, 0.050745)
					(-1.9000, 0.054168)
					(-1.8000, 0.057972)
					(-1.7000, 0.062219)
					(-1.6000, 0.066979)
					(-1.5000, 0.072343)
					(-1.4000, 0.078419)
					(-1.3000, 0.085340)
					(-1.2000, 0.093273)
					(-1.1000, 0.10243)
					(-1.0000, 0.11307)
					(-0.90000, 0.12554)
					(-0.80000, 0.14029)
					(-0.70000, 0.15787)
					(-0.60000, 0.17904)
					(-0.50000, 0.20479)
					(-0.40000, 0.23638)
					(-0.30000, 0.27534)
					(-0.20000, 0.32326)
					(-0.10000, 0.38057)
					(0.00000, 0.43978)
					(0.10000, 0.38057)
					(0.20000, 0.32326)
					(0.30000, 0.27534)
					(0.40000, 0.23638)
					(0.50000, 0.20479)
					(0.60000, 0.17904)
					(0.70000, 0.15787)
					(0.80000, 0.14029)
					(0.90000, 0.12554)
					(1.0000, 0.11307)
					(1.1000, 0.10243)
					(1.2000, 0.093273)
					(1.3000, 0.085340)
					(1.4000, 0.078419)
					(1.5000, 0.072343)
					(1.6000, 0.066979)
					(1.7000, 0.062219)
					(1.8000, 0.057972)
					(1.9000, 0.054168)
					(2.0000, 0.050745)
					(2.1000, 0.047654)
					(2.2000, 0.044851)
					(2.3000, 0.042301)
					(2.4000, 0.039975)
					(2.5000, 0.037846)
				};
			\end{scope}
			
			\begin{scope}[shift = {(0,-0.9)}]
				
				\node at (-2,0.5) {$\alpha = 0.7$};
				\node at (2,0.5) {$\theta = 0.4$};
				
				\draw[->] (0,-0.05) -- (0,0.5); \node[text=gray] at (0,-0.1) {$0$};
				\draw[<->] (-2.8,0) -- (2.8,0);
				\draw (-0.1,0.2) -- (0.1,0.2); \node[text=gray] at (0.4,0.2) {$0.2$};
				\draw (-0.1,0.4) -- (0.1,0.4); \node[text=gray] at (0.4,0.4) {$0.4$};
				\draw (-1,-0.05) -- (-1,0); \node[text=gray] at (-1,-0.1) {$-1$};
				\draw (-2,-0.05) -- (-2,0); \node[text=gray] at (-2,-0.1) {$-2$};
				\draw (1,-0.05) -- (1,0); \node[text=gray] at (1,-0.1) {$1$};
				\draw (2,-0.05) -- (2,0); \node[text=gray] at (2,-0.1) {$2$};
				
				\draw[blue,smooth] plot coordinates {
					(-2.5000, 0.015140)
					(-2.4000, 0.015876)
					(-2.3000, 0.016673)
					(-2.2000, 0.017538)
					(-2.1000, 0.018481)
					(-2.0000, 0.019511)
					(-1.9000, 0.020639)
					(-1.8000, 0.021880)
					(-1.7000, 0.023251)
					(-1.6000, 0.024771)
					(-1.5000, 0.026465)
					(-1.4000, 0.028363)
					(-1.3000, 0.030501)
					(-1.2000, 0.032925)
					(-1.1000, 0.035693)
					(-1.0000, 0.038880)
					(-0.90000, 0.042585)
					(-0.80000, 0.046938)
					(-0.70000, 0.052119)
					(-0.60000, 0.058381)
					(-0.50000, 0.066094)
					(-0.40000, 0.075827)
					(-0.30000, 0.088513)
					(-0.20000, 0.10585)
					(-0.10000, 0.13153)
					(0.00000, 0.18137)
					(0.10000, 0.25507)
					(0.20000, 0.30020)
					(0.30000, 0.32437)
					(0.40000, 0.33160)
					(0.50000, 0.32652)
					(0.60000, 0.31341)
					(0.70000, 0.29572)
					(0.80000, 0.27595)
					(0.90000, 0.25572)
					(1.0000, 0.23603)
					(1.1000, 0.21745)
					(1.2000, 0.20024)
					(1.3000, 0.18448)
					(1.4000, 0.17015)
					(1.5000, 0.15718)
					(1.6000, 0.14546)
					(1.7000, 0.13488)
					(1.8000, 0.12532)
					(1.9000, 0.11669)
					(2.0000, 0.10887)
					(2.1000, 0.10178)
					(2.2000, 0.095345)
					(2.3000, 0.089484)
					(2.4000, 0.084139)
					(2.5000, 0.079253)
				};
			\end{scope}
			
			\begin{scope}[shift = {(0,-1.8)}]
				
				\node at (-2,0.5) {$\alpha = 0.7$};
				\node at (2,0.5) {$\theta = 0.8$};
				
				\draw[->] (0,-0.05) -- (0,0.5); \node[text=gray] at (0,-0.1) {$0$};
				\draw[<->] (-2.8,0) -- (2.8,0);
				\draw (-0.1,0.2) -- (0.1,0.2); \node[text=gray] at (0.4,0.2) {$0.2$};
				\draw (-0.1,0.4) -- (0.1,0.4); \node[text=gray] at (0.4,0.4) {$0.4$};
				\draw (-1,-0.05) -- (-1,0); \node[text=gray] at (-1,-0.1) {$-1$};
				\draw (-2,-0.05) -- (-2,0); \node[text=gray] at (-2,-0.1) {$-2$};
				\draw (1,-0.05) -- (1,0); \node[text=gray] at (1,-0.1) {$1$};
				\draw (2,-0.05) -- (2,0); \node[text=gray] at (2,-0.1) {$2$};
				
				\draw[blue,smooth] plot coordinates {
					(-2.5000, 0.0034438)
					(-2.4000, 0.0035865)
					(-2.3000, 0.0037396)
					(-2.2000, 0.0039041)
					(-2.1000, 0.0040813)
					(-2.0000, 0.0042728)
					(-1.9000, 0.0044801)
					(-1.8000, 0.0047053)
					(-1.7000, 0.0049508)
					(-1.6000, 0.0052192)
					(-1.5000, 0.0055139)
					(-1.4000, 0.0058388)
					(-1.3000, 0.0061987)
					(-1.2000, 0.0065994)
					(-1.1000, 0.0070482)
					(-1.0000, 0.0075542)
					(-0.90000, 0.0081291)
					(-0.80000, 0.0087881)
					(-0.70000, 0.0095514)
					(-0.60000, 0.010447)
					(-0.50000, 0.011513)
					(-0.40000, 0.012810)
					(-0.30000, 0.014429)
					(-0.20000, 0.016536)
					(-0.10000, 0.019476)
					(0.00000, 0.024734)
					(0.10000, 0.048661)
					(0.20000, 0.075027)
					(0.30000, 0.10348)
					(0.40000, 0.13255)
					(0.50000, 0.16062)
					(0.60000, 0.18631)
					(0.70000, 0.20849)
					(0.80000, 0.22638)
					(0.90000, 0.23955)
					(1.0000, 0.24799)
					(1.1000, 0.25198)
					(1.2000, 0.25206)
					(1.3000, 0.24890)
					(1.4000, 0.24319)
					(1.5000, 0.23561)
					(1.6000, 0.22675)
					(1.7000, 0.21709)
					(1.8000, 0.20703)
					(1.9000, 0.19686)
					(2.0000, 0.18681)
					(2.1000, 0.17702)
					(2.2000, 0.16760)
					(2.3000, 0.15862)
					(2.4000, 0.15010)
					(2.5000, 0.14206)
				};
			\end{scope}
			
			\begin{scope}[shift = {(7,0)}]
				
				\node at (-2,0.5) {$\alpha = 0.2$};
				\node at (2,0.5) {$\theta = 0.0$};
				
				\draw[->] (0,-0.05) -- (0,0.5); \node[text=gray] at (0,-0.1) {$0$};
				\draw[<->] (-2.8,0) -- (2.8,0);
				\draw (-0.1,0.2) -- (0.1,0.2); \node[text=gray] at (0.4,0.2) {$0.2$};
				\draw (-0.1,0.4) -- (0.1,0.4); \node[text=gray] at (0.4,0.4) {$0.4$};
				\draw (-1,-0.05) -- (-1,0); \node[text=gray] at (-1,-0.1) {$-1$};
				\draw (-2,-0.05) -- (-2,0); \node[text=gray] at (-2,-0.1) {$-2$};
				\draw (1,-0.05) -- (1,0); \node[text=gray] at (1,-0.1) {$1$};
				\draw (2,-0.05) -- (2,0); \node[text=gray] at (2,-0.1) {$2$};
				
				\clip (-2.5,0) rectangle (2.5,0.45);
				\draw[blue,smooth] plot coordinates {
					(-2.5000, 0.013958)
					(-2.4000, 0.014552)
					(-2.3000, 0.015196)
					(-2.2000, 0.015898)
					(-2.1000, 0.016667)
					(-2.0000, 0.017512)
					(-1.9000, 0.018444)
					(-1.8000, 0.019478)
					(-1.7000, 0.020633)
					(-1.6000, 0.021929)
					(-1.5000, 0.023395)
					(-1.4000, 0.025066)
					(-1.3000, 0.026988)
					(-1.2000, 0.029224)
					(-1.1000, 0.031855)
					(-1.0000, 0.034998)
					(-0.90000, 0.038818)
					(-0.80000, 0.043560)
					(-0.70000, 0.049605)
					(-0.60000, 0.057580)
					(-0.50000, 0.068591)
					(-0.40000, 0.084803)
					(-0.30000, 0.11111)
					(-0.20000, 0.16154)
					(-0.10000, 0.30064)
					(-0.050000, 0.54600)
				};
				
				\draw[blue,smooth] plot coordinates {
					(0.050000, 0.54600)
					(0.10000, 0.30064)
					(0.20000, 0.16154)
					(0.30000, 0.11111)
					(0.40000, 0.084803)
					(0.50000, 0.068591)
					(0.60000, 0.057580)
					(0.70000, 0.049605)
					(0.80000, 0.043560)
					(0.90000, 0.038818)
					(1.0000, 0.034998)
					(1.1000, 0.031855)
					(1.2000, 0.029224)
					(1.3000, 0.026988)
					(1.4000, 0.025066)
					(1.5000, 0.023395)
					(1.6000, 0.021929)
					(1.7000, 0.020633)
					(1.8000, 0.019478)
					(1.9000, 0.018444)
					(2.0000, 0.017512)
					(2.1000, 0.016667)
					(2.2000, 0.015898)
					(2.3000, 0.015196)
					(2.4000, 0.014552)
					(2.5000, 0.013958)
				};
				
			\end{scope}
			
			\begin{scope}[shift = {(7,-0.9)}]
				
				\node at (-2,0.5) {$\alpha = 0.2$};
				\node at (2,0.5) {$\theta = 0.4$};
				
				\draw[->] (0,-0.05) -- (0,0.5); \node[text=gray] at (0,-0.1) {$0$};
				\draw[<->] (-2.8,0) -- (2.8,0);
				\draw (-0.1,0.2) -- (0.1,0.2); \node[text=gray] at (0.4,0.2) {$0.2$};
				\draw (-0.1,0.4) -- (0.1,0.4); \node[text=gray] at (0.4,0.4) {$0.4$};
				\draw (-1,-0.05) -- (-1,0); \node[text=gray] at (-1,-0.1) {$-1$};
				\draw (-2,-0.05) -- (-2,0); \node[text=gray] at (-2,-0.1) {$-2$};
				\draw (1,-0.05) -- (1,0); \node[text=gray] at (1,-0.1) {$1$};
				\draw (2,-0.05) -- (2,0); \node[text=gray] at (2,-0.1) {$2$};
				
				\clip (-2.5,0) rectangle (2.5,0.45);
				\draw[blue,smooth] plot coordinates {
					(-2.5000, 0.0081460)
					(-2.4000, 0.0084911)
					(-2.3000, 0.0088659)
					(-2.2000, 0.0092744)
					(-2.1000, 0.0097213)
					(-2.0000, 0.010212)
					(-1.9000, 0.010754)
					(-1.8000, 0.011356)
					(-1.7000, 0.012027)
					(-1.6000, 0.012780)
					(-1.5000, 0.013631)
					(-1.4000, 0.014602)
					(-1.3000, 0.015719)
					(-1.2000, 0.017017)
					(-1.1000, 0.018545)
					(-1.0000, 0.020369)
					(-0.90000, 0.022586)
					(-0.80000, 0.025338)
					(-0.70000, 0.028845)
					(-0.60000, 0.033471)
					(-0.50000, 0.039856)
					(-0.40000, 0.049256)
					(-0.30000, 0.064508)
					(-0.20000, 0.093745)
					(-0.10000, 0.17444)
					(-0.050000, 0.31704)
					(-0.030000, 0.48481)
				};
				\draw[blue,smooth] plot coordinates {
					(0.050000, 0.77398)
					(0.10000, 0.42827)
					(0.20000, 0.23090)
					(0.30000, 0.15904)
					(0.40000, 0.12147)
					(0.50000, 0.098287)
					(0.60000, 0.082529)
					(0.70000, 0.071109)
					(0.80000, 0.062449)
					(0.90000, 0.055654)
					(1.0000, 0.050179)
					(1.1000, 0.045673)
					(1.2000, 0.041900)
					(1.3000, 0.038694)
					(1.4000, 0.035936)
					(1.5000, 0.033540)
					(1.6000, 0.031437)
					(1.7000, 0.029577)
					(1.8000, 0.027922)
					(1.9000, 0.026437)
					(2.0000, 0.025100)
					(2.1000, 0.023888)
					(2.2000, 0.022786)
					(2.3000, 0.021778)
					(2.4000, 0.020853)
					(2.5000, 0.020002)
				};
				
			\end{scope}
			
			\begin{scope}[shift = {(7,-1.8)}]
				
				\node at (-2,0.5) {$\alpha = 0.2$};
				\node at (2,0.5) {$\theta = 0.8$};
				
				\draw[->] (0,-0.05) -- (0,0.5); \node[text=gray] at (0,-0.1) {$0$};
				\draw[<->] (-2.8,0) -- (2.8,0);
				\draw (-0.1,0.2) -- (0.1,0.2); \node[text=gray] at (0.4,0.2) {$0.2$};
				\draw (-0.1,0.4) -- (0.1,0.4); \node[text=gray] at (0.4,0.4) {$0.4$};
				\draw (-1,-0.05) -- (-1,0); \node[text=gray] at (-1,-0.1) {$-1$};
				\draw (-2,-0.05) -- (-2,0); \node[text=gray] at (-2,-0.1) {$-2$};
				\draw (1,-0.05) -- (1,0); \node[text=gray] at (1,-0.1) {$1$};
				\draw (2,-0.05) -- (2,0); \node[text=gray] at (2,-0.1) {$2$};
				
				\clip (-2.5,0) rectangle (2.5,0.45);
				\draw[blue,smooth] plot coordinates {
					(-2.5000, 0.0026296)
					(-2.4000, 0.0027404)
					(-2.3000, 0.0028608)
					(-2.2000, 0.0029919)
					(-2.1000, 0.0031353)
					(-2.0000, 0.0032929)
					(-1.9000, 0.0034667)
					(-1.8000, 0.0036596)
					(-1.7000, 0.0038746)
					(-1.6000, 0.0041160)
					(-1.5000, 0.0043889)
					(-1.4000, 0.0046997)
					(-1.3000, 0.0050572)
					(-1.2000, 0.0054726)
					(-1.1000, 0.0059613)
					(-1.0000, 0.0065445)
					(-0.90000, 0.0072528)
					(-0.80000, 0.0081314)
					(-0.70000, 0.0092505)
					(-0.60000, 0.010725)
					(-0.50000, 0.012760)
					(-0.40000, 0.015751)
					(-0.30000, 0.020598)
					(-0.20000, 0.029875)
					(-0.10000, 0.055416)
					(-0.050000, 0.10044)
					(-0.020000,0.21260)
					(-0.007000, 0.47718)
				};
				\draw[blue,smooth] plot coordinates {
					%(0.050000, 0.98544)
					(0.10000, 0.55055)
					(0.20000, 0.29904)
					(0.30000, 0.20666)
					(0.40000, 0.15815)
					(0.50000, 0.12814)
					(0.60000, 0.10769)
					(0.70000, 0.092858)
					(0.80000, 0.081594)
					(0.90000, 0.072747)
					(1.0000, 0.065614)
					(1.1000, 0.059739)
					(1.2000, 0.054817)
					(1.3000, 0.050633)
					(1.4000, 0.047033)
					(1.5000, 0.043903)
					(1.6000, 0.041156)
					(1.7000, 0.038726)
					(1.8000, 0.036561)
					(1.9000, 0.034621)
					(2.0000, 0.032872)
					(2.1000, 0.031287)
					(2.2000, 0.029845)
					(2.3000, 0.028526)
					(2.4000, 0.027316)
					(2.5000, 0.026202)};
			\end{scope}
			
		\end{tikzpicture}
		
		\caption{Approximations of $\mathbb{BP}(\cT,\nu_{\alpha,\theta})$ for $\cT = \{\emptyset, 1, 2, 3, 21, 31, 12, 13\}$ and for $(\alpha,\theta) \in \{0.7,0.2\} \times \{0.0,0.4,0.8\}$.} \label{fig:pictures2}
		
	\end{figure}
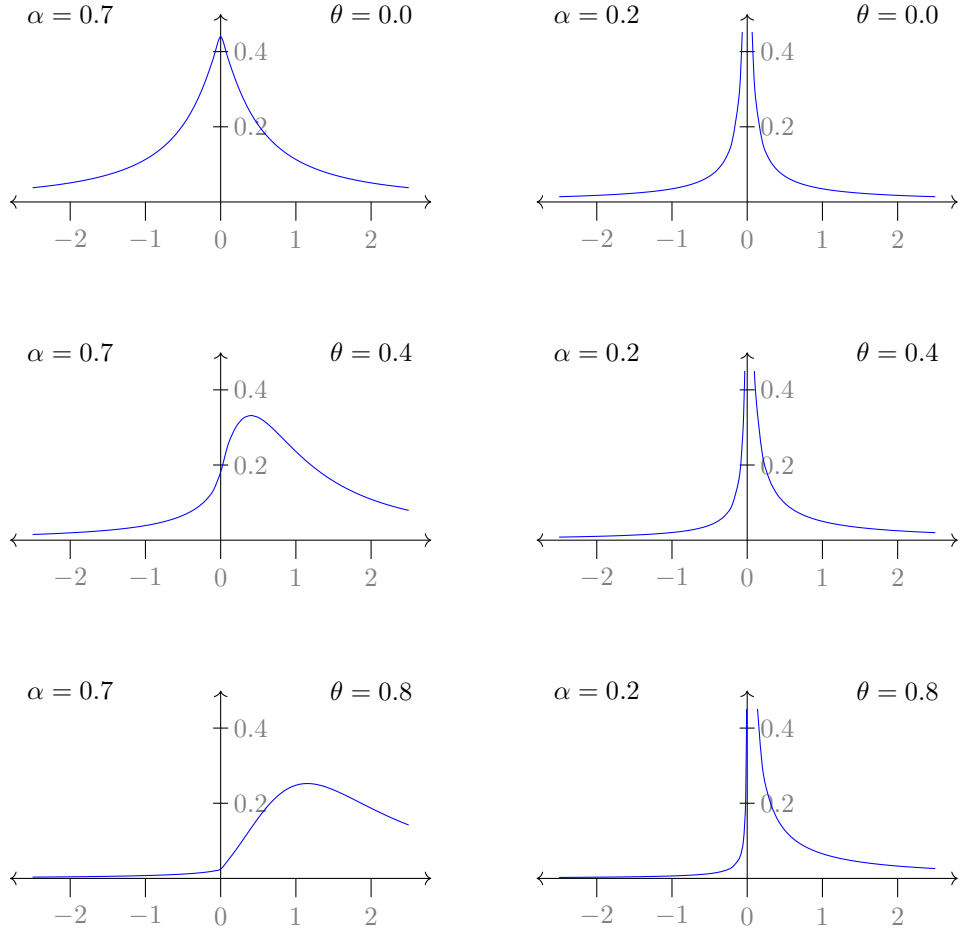
	
	Another open question concerns the operator models for $\cT$-free convolution.  In this paper, we focused exclusively on the complex-analytic viewpoint for $\cT$-free convolutions, even though the original definition of the convolution for compactly supported measures was in terms of addition of ``independent'' bounded self-adjoint operators \cite{JekelLiu2020}.  Moreover, the free convolution of arbitrary measures on $\R$ can be expressed using the addition of freely independent unbounded self-adjoint operators, thanks to the theory of unbounded operators affiliated to a tracial von Neumann algebra \cite{BV1992}.
	
	\begin{question}
		Can the $\cT$-free convolution of arbitrary probability measures on $\R$ be formulated in terms of addition $\cT$-free independent unbounded self-adjoint operators?
	\end{question}
	
	Because arbitrary self-adjoint operators cannot necessarily be added, the challenge is to use the additional structure of $\cT$-free independence (or perhaps of the $\cT$-free product Hilbert space) to show that the sum actually makes sense.  Again, we believe that the solution for finite-variance measures is significantly easier than for the general case.
	
	\bibliographystyle{plain}
	\bibliography{convolutions}
	
\end{document}